\documentclass[10pt, reqno]{amsart}
\numberwithin{equation}{section}
\newtheorem{thm}{Theorem}[section]
\newtheorem{cor}[thm]{Corollary}
\newtheorem{lem}[thm]{Lemma}
\newtheorem{prp}[thm]{Proposition}
\usepackage{amssymb,latexsym}
\usepackage{eucal}
\usepackage{tikz}
\newtheorem*{P1}{Proposition 1.1}
\newtheorem*{T1}{Theorem 1.2}
\newtheorem*{T2}{Theorem 1.4}
\newtheorem*{T3}{Theorem 1.5}
\newtheorem*{T4}{Theorem 1.7}
\newtheorem*{T5}{Theorem 1.8}
\newtheorem*{T6}{Theorem 1.9}

\newtheorem*{C1}{Corollary 1.3}
\newtheorem*{C2}{Corollary 1.6}
\newtheorem*{C3}{Corollary 1.10}

\newcommand{\al}{\alpha}

\newcommand{\D}{\Delta}

\newcommand{\p}{\partial}
\newcommand{\e}{\varepsilon}

\newcommand{\A}{\mathcal{A}}
\newcommand{\ph}{\varphi}

\newcommand{\GG}{\mathcal{G}}
\newcommand{\FF}{\mathcal{F}}
\newcommand{\R}{\mathcal{R}}

\newcommand{\TT}{\mathcal{T}}

\newcommand{\LL}{\mathcal{L}}
\newcommand{\N}{\mathcal{N}}

\newcommand{\wtl}{\widetilde}
\newcommand{\wht}{\widehat}

\newcommand{\er}{\eqref}
\newcommand{\BS}{bracket system}
\newcommand{\PBS}{pseudobracket system}
\newcommand{\EO}{elementary operation}
\newcommand{\EH}{elementary homotopy}
\newcommand{\EHs}{elementary homotopies}
\newcommand{\pbb}{pseudobracket}

\newcommand{\MF}{Magnus's Freiheitssatz}
\newcommand{\BWPP}{bounded word problem}
\newcommand{\PWPP}{precise word problem}
\newcommand{\ifff}{if and only if}
\newcommand{\ddd}{disk diagram}

\DeclareMathOperator{\ff}{\textsf{fact}}
\DeclareMathOperator{\sll}{\textsf{sl}}

\usepackage{lipsum}

\begin{document}
\title[Bounded and precise word problems for group presentations]
{The bounded and precise  word problems for presentations of groups}
\author{S.\ V.\ Ivanov}
\address{ Department of Mathematics\\
University of Illinois \\1409 West Green Street\\ Urbana\\   IL
61801\\ USA} \email{ivanov@illinois.edu}
\thanks{The work on this article was supported in smaller part by NSF grant  DMS 09-01782}
\keywords{Presentations of groups, diagrams,  word problems,  polylogarithmic space, polygonal curves}

\subjclass[2010]{Primary 20F05, 20F06, 20F10, 68Q25, 68U05; Secondary  52B05, 20F65, 68W30.}

\begin{abstract}
We introduce and study the bounded word problem and the precise word problem  for groups given by means of generators and defining relations. For example,  for every finitely presented  group, the bounded word problem is in \textsf{NP}, i.e., it can be solved in nondeterministic polynomial time, and  the precise word problem is in \textsf{PSPACE}, i.e., it can be solved in polynomial space. The main technical result of the paper states that,  for certain finite presentations of groups, which include the Baumslag-Solitar one-relator groups and free products of cyclic groups,  the bounded word problem and  the precise word problem  can be solved   in polylogarithmic space. As consequences of developed techniques that can be described as calculus of brackets, we obtain polylogarithmic space bounds for the computational complexity of the diagram  problem for free groups, for the width problem for elements of  free groups, and for computation of the area defined by polygonal singular closed curves in the plane.
We also obtain polynomial time bounds for these problems.
\end{abstract}
\maketitle
\tableofcontents

\section{Introduction}

Suppose that a finitely generated group $\GG$ is defined by a presentation
by means of generators and defining relations
\begin{equation}\label{pr3}
\GG  = \langle \ \A \ \| \ R=1, \ R \in \R  \ \rangle \ ,
\end{equation}
where $\A = \{ a_1, \dots , a_m\}$ is a finite alphabet  and $\R$ is a
set of defining { relators} which are nonempty cyclically reduced words
over the alphabet $\A^{\pm 1} := \A \cup \A^{-1}$.
Let $\FF(\A)$ denote the
free group over $\A$, let $|W|$ be  the length of a word $W$ over
the alphabet $\A^{\pm 1}$, and let $\N( \R )$ denote
the normal closure of $\R$ in $\FF(\A)$. The notation \eqref{pr3}
implies that $\GG$ is the quotient group $\FF(\A)/\N( \R)$.
We will say that a word $W$ over $\A^{\pm 1}$  is equal to 1 in the group $\GG$, given by \eqref{pr3},
if  $W \in \N( \R)$ in which case we also write $W \overset \GG = 1$.
Recall that the  presentation \eqref{pr3} is called {\em finite}  if  both the sets $\A$ and $\R$ are finite, in which  case $\GG$ is called {\em finitely presented}. The presentation \eqref{pr3}  is called {\em decidable} if there is an algorithm that decides whether a given word over $\A^{\pm 1}$ belongs to $\R$.

The classical word problem for a finite group presentation \eqref{pr3},
put forward by Dehn \cite{Dehn} in 1911, asks whether, for a given word $W$ over $\A^{\pm 1}$, it is true that $W \overset \GG = 1$. The word problem is said to be {\em solvable} (or {\em decidable}) for a decidable presentation \eqref{pr3} if there exists an algorithm which, given a word $W$ over $\A^{\pm 1}$,  decides whether or not $W \overset \GG = 1$.

Analogously to Anisimov \cite{A}, one might consider the set  of {\em all} words $W$ over $\A^{\pm 1}$, not necessarily reduced, such that $W \overset \GG = 1$ as a language $\LL(\GG) := \{ W \mid W \overset \GG = 1 \}$  over $\A^{\pm 1}$  and inquire  about  a computational complexity class $\textsf K$  which would contain this language. If $\LL(\GG)$ is in $\textsf K$, we say that the word problem for $\GG$ is in $\textsf K$.
For example, it is well known  that the word problem is in \textsf{P}, i.e., solvable in deterministic polynomial time, for  surface groups,  for finitely presented    groups with  small cancellation condition $C'(\lambda)$, where $\lambda \le \tfrac 16$, for word hyperbolic groups, for finitely presented  groups given by Dehn presentations etc., see \cite{Gromov}, \cite{LS}. On the other hand, according to results of Novikov \cite{NovikovA}, \cite{Novikov} and Boone \cite{BooneA}, \cite{Boone}, based on earlier semigroup constructions of Turing \cite{Turing} and Post \cite{Post}, see also Markov's papers  \cite{M1}, \cite{M2}, there exists a finitely presented group $\GG$ for which the  word problem is unsolvable, i.e., there is no algorithm that decides whether $W \overset \GG = 1 $. The proof of this remarkable Novikov--Boone theorem
was later significantly simplified by Borisov \cite{Borisov}, see also \cite{Rt}.

In this paper,  we introduce and study two related problems which we call the bounded
word problem and the precise word problem  for a decidable  group presentation \eqref{pr3}.

The {\em bounded word problem} for a decidable presentation \eqref{pr3} inquires whether,  for given  a word $W$ over $\A^{\pm 1}$   and an integer $n \ge 0$ written in unary, denoted   $1^n$, one can represent  the word $W$ as a product in  $\FF(\A)$  of at most $n$ conjugates of some words in $\R^{\pm 1} := \R \cup \R^{-1}$, i.e., whether there are $R_1, \dots, R_k \in \R^{\pm 1}$ and   $S_1, \dots, S_k   \in \FF(\A)$  such that
$
W = S_1  R_1  S_1^{-1}   \dots S_k  R_k  S_k^{-1}
$
in  $\FF(\A)$ and $k \le n$.
Equivalently, for  given an input $(W, 1^n)$, the bounded
word  problem asks whether there exists a disk diagram over \eqref{pr3}, also called a van Kampen diagram, whose boundary label is $W$ and whose number of faces is at most $n$.

As above for the word problem, we say that the bounded
word problem for a decidable presentation \eqref{pr3} is solvable  if there exists an algorithm that, given an input $(W, 1^n)$, decides whether $W$ is a  product in $\FF(\A)$  of at most $n$ conjugates of words in $\R^{\pm 1}$.  Analogously, if the language of those pairs $(W, 1^n)$, for which the bounded word problem has a positive answer, belongs to a computational complexity class $\textsf K$, then we say that  the  bounded   word problem for $\GG$ is in $\textsf K$.

The {\em precise word problem} for a decidable presentation \eqref{pr3} asks whether,  for given  a word $W$ over $\A^{\pm 1}$   and a nonnegative integer  $1^n$, one can represent  the word $W$ as a product in  $\FF(\A)$  of  $n$ conjugates of words in $\R^{\pm 1}$ and $n$ is minimal with this property.
Equivalently, for  given an input $(W, 1^n)$, the precise
word  problem asks whether there exists a disk diagram over \eqref{pr3}  whose boundary label is $W$, whose number of faces is  $n$,  and there are no such diagrams with fewer number of faces.

The definitions for the  precise word problem for  \eqref{pr3} being     solvable and being in a  complexity class $\textsf K$ are similar to the corresponding definitions for the (bounded) word problem.

In the following proposition we list basic comparative  properties of the standard word problem and its bounded and precise versions.

\begin{prp} \label{prp1}
$(\mathrm{a})$ There exists a decidable group presentation
\eqref{pr3}  for which the word problem is solvable while the bounded  and precise  word problems are not solvable.

$(\mathrm{b})$ If the bounded  word problem is solvable for \eqref{pr3}, then the precise  word problem is also solvable.

$(\mathrm{c})$ For every finite  group  presentation
\eqref{pr3}, the bounded  word problem is in ${\textsf{NP}}$, i.e.,   it can be solved in nondeterministic polynomial time, and the precise  word problem is in  \textsf{PSPACE}, i.e., it can be solved in polynomial space.

$(\mathrm{d})$ There exists a  finite  group  presentation
\eqref{pr3} for which the bounded  and precise     word problems are solvable while the word problem is not solvable.

$(\mathrm{e})$ There exists a finitely  presented group
\eqref{pr3} for which the bounded  word problem is \textsf{NP}-complete
and the  precise word problem is \textsf{NP}-hard.
\end{prp}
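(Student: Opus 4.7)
Parts (b)--(d) I would deduce from the van Kampen diagram interpretation of the bounded word problem together with standard computability bounds. For (b), observe that PWP Turing-reduces to BWP: an input $(W, 1^n)$ is a YES-instance of PWP iff $(W, 1^n)$ is YES for BWP and either $n = 0$ with $W$ freely trivial, or $n \ge 1$ with $(W, 1^{n-1})$ a NO-instance. For (c), recall that $W$ is a product of at most $n$ conjugates of elements of $\R^{\pm 1}$ iff there is a disk diagram $\Delta$ over \eqref{pr3} with boundary label $W$ and at most $n$ faces. When \eqref{pr3} is finite, setting $M = \max_{R \in \R} |R|$ gives a bound of $O(Mn + |W|)$ on the total number of edges, vertices, and faces of $\Delta$, so $\Delta$ admits a combinatorial encoding of bit-size polynomial in $|W| + n$. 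A nondeterministic machine that guesses this encoding and verifies in polynomial time that every face is labeled by a cyclic rotation of some relator and that the boundary reads $W$ establishes $\textsf{BWP} \in \textsf{NP}$. The reduction from (b) then places PWP in $\Delta_2^p \subseteq \textsf{PSPACE}$. Part (d) is now immediate: take any Novikov--Boone finite presentation with unsolvable word problem; by (c) its BWP is in $\textsf{NP}$, hence decidable, and by (b) so is its PWP.

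Part (a) requires an infinite decidable presentation, since by (c) any finite presentation has decidable BWP. My plan is to start from a finite base $\langle \A \| \R_0 \rangle$ of a group with solvable word problem (for instance the trivial group with $\R_0 = \{a\}$, or a free product of cyclics), and adjoin a computable sequence of additional relators $R_1, R_2, \dots \in \N(\R_0)$, so that $\N(\R) = \N(\R_0)$ and hence WP is unchanged and remains solvable. The $R_i$ are constructed diagonally so that each has a uniformly computable shape (ensuring $\R = \R_0 \cup \{R_i\}$ is decidable), yet the minimum number of conjugates of $\R^{\pm 1}$ needed to express a prescribed test word $W_i$ encodes whether the $i$-th Turing machine halts on a fixed input; any algorithm for BWP would then decide the halting problem, a contradiction.

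For part (e), I would invoke a Sapir--Birget--Rips style encoding of an $\textsf{NP}$-complete language, say SAT, into the bounded word problem for a suitable finitely presented group: each instance $\varphi$ is mapped to an input $(W_\varphi, 1^{n(\varphi)})$ such that $\varphi$ is satisfiable iff $W_\varphi$ is a product of at most $n(\varphi)$ relator conjugates. Together with (c) this makes BWP $\textsf{NP}$-complete. Padding the reduction so that the precise minimum conjugate count equals $n(\varphi)$ exactly when $\varphi$ is satisfiable yields $\textsf{NP}$-hardness of PWP.

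The main obstacle is part (a): the adjoined relators must simultaneously be decidable as a set, lie in $\N(\R_0)$ (so as not to change WP), and drive the area function beyond any recursive bound on a prescribed diagonal sequence of test words, all without creating shortcuts that would let a word-problem algorithm decide the same undecidable question. Controlling the area contribution of each $R_i$ in the presence of the accumulating pool of previously adjoined relators---and doing so uniformly enough to be encoded by a halting predicate---is the delicate part of the construction.
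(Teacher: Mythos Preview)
Your treatments of (b)--(d) coincide with the paper's.

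For (a), your framework is right---adjoin redundant relators to a trivially-presented group so that WP is unchanged but BWP encodes an undecidable set---yet you stop short of a construction and flag the ``accumulating pool'' of relators as the obstacle. The paper bypasses that obstacle entirely with a one-line presentation (attributed to Jockush and Kapovich):
\[
\langle\, a, b \mid a^i = 1,\ a^i b^{k_i} = 1,\ i \in \mathbb{N}\,\rangle,
\]
where $i \mapsto k_i$ is a fixed recursive enumeration of an r.e.\ non-recursive set $\mathbb{K} \subset \mathbb{N}$ with $k_1 = 1$. The relator set is decidable (given a candidate $a^i b^j$, compute $k_i$ and compare), the group is trivial so WP is trivial, but BWP for the pair $(b^k, 2)$ holds iff $k \in \mathbb{K}$. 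There is no need to push areas beyond recursive bounds on a diagonal sequence; it suffices that the single threshold question ``area $\le 2$?'' already decides membership in $\mathbb{K}$. Your worry about interference among relators evaporates because the test words $b^k$ are so simple that only the obvious two-face diagram $b^k = (a^i)^{-1}(a^i b^{k_i})$ can realise area $\le 2$.

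For (e), you cite the right source but organise the reduction differently from the paper, and your version has a soft spot. The paper uses the Birget--Ol'shanskii--Rips--Sapir group whose word problem is NP-complete \emph{and} whose Dehn function is bounded by a polynomial $p(x)$. The polynomial Dehn function is the crux: for $n \ge p(|W|)$ the BWP instance $(W, 1^n)$ is literally equivalent to the WP instance $W$, so BWP inherits NP-hardness directly; combined with (c) it is NP-complete. For PWP, the paper notes that $W \overset{\GG}{=} 1$ iff PWP holds for some $(W, 1^k)$ with $0 \le k \le p(|W|)$, a polynomial-size disjunction, which immediately gives NP-hardness of PWP. Your alternative---``padding the reduction so that the precise minimum conjugate count equals $n(\varphi)$ exactly when $\varphi$ is satisfiable''---would require controlling the exact area of $W_\varphi$ in both the satisfiable and unsatisfiable cases, which is not something the BORS construction hands you and is not obviously arrangeable. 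The disjunction argument needs no such control.
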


Note that a number of interesting results on solvability of the word problem, solvability of the bounded word problem and computability of the Dehn function for decidable group presentations can be found in a preprint of Cummins \cite{DC} based on his PhD thesis written under the author's supervision.

It is of interest to look at the bounded and precise word problems for finitely presented groups for which  the word problem could be solved very easily.  Curiously, even for very simple presentations such as $ \langle \ a, b  \ \| \ a=1  \ \rangle $ and $ \langle \ a, b  \ \|  \ ab=ba  \ \rangle $, for which the word problem is obviously in $\textsf{L}$,   i.e., solvable in deterministic logarithmic space,
it does not seem to be possible to solve   the bounded and precise word problem in  logarithmic space.
In this article, we will show that  the bounded and precise word problems for these ``simple" presentations and their
generalizations can be solved in polynomial time. With much more effort,  we will also prove that  the bounded and precise word problems for such presentations can be solved in  polylogarithmic space.  Similarly to \cite{ATWZ}, we adopt
\textsf{NC}-style notation and denote $\textsf{L}^\alpha :=  \mbox{DSPACE}((\log s)^\alpha)$, i.e.,  $\textsf{L}^\alpha$ is the class of decision problems that can be solved in deterministic space $O((\log s)^\alpha)$, where $s$ is the size of input.

\begin{thm}\label{thm1}   Let  the group $\GG_2$   be defined by a presentation of the form
\begin{equation}\label{pr1}
    \GG_2 :=  \langle \,  a_1, \dots, a_m  \ \|  \   a_{i}^{k_i} =1, \ k_i \in E_i,   \ i = 1, \ldots,  m \,  \rangle ,
\end{equation}
where for  every $i$, one of the following holds: $E_i = \{ 0\}$ or, for some integer $n_i >0$,  $E_i = \{ n_i \}$ or $E_i = n_i \mathbb N = \{ n_i, 2n_i, 3n_i,\dots  \}$.
Then both the bounded and  precise  word  problems for  \eqref{pr1}  are in $\textsf{L}^3$ and in $\textsf{P}$. Specifically, the problems can be solved in  deterministic space $O((\log|W|)^3)$ or in deterministic time $O( |W|^4\log|W| )$.
\end{thm}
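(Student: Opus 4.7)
The plan is to recast the bounded and precise word problems for the presentation \eqref{pr1} as a combinatorial optimization over non-crossing \emph{bracket systems} on $W$, then solve this optimization first by straightforward dynamic programming (to obtain the \textsf{P} bound) and then by a balanced recursive decomposition (to obtain the $\textsf{L}^3$ bound). In both cases, once the minimum cost $m(W)$ is computable, the bounded problem is $m(W)\le n$ and the precise problem is $m(W)=n$, so both variants reduce to the same computation.

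First I would exploit the fact that \eqref{pr1} presents a free product of cyclic groups: when $E_i=\{0\}$ the generator $a_i$ is free, and otherwise $a_i$ has order $n_i$. Consequently, in any reduced \ddd{} $\D$ over \eqref{pr1} every face is \emph{monochromatic} — its boundary label is a power of a single generator. This rigidity lets one encode $\D$ by a planar, non-crossing matching on the positions of $W$: each free cancellation contributes a zero-cost bracket, while a face with boundary label $a_i^{t}$ (where $t\in E_i$) contributes a group of $t$ jointly outlined brackets. The face count of a minimum $\D$ thus equals the minimum total cost of a bracket system on $W$, where a relator group covering $k n_i$ positions of $W[i..j]$ costs $k$ faces when $E_i=\{n_i\}$ and a single face when $E_i = n_i \mathbb{N}$.

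Second, for the \textsf{P} bound I would run a dynamic program over substrings. For every $1 \le i \le j \le |W|$, let $f(i,j,\sigma)$ record the minimum cost of a bracket system on $W[i..j]$, where $\sigma$ is a small boundary flag that tracks whether this substring is being embedded inside a larger, only partly completed relator group (how many letters of $a_i$'s are already ``spent'' to the left or right). Splitting on the position $k$ matched to the leftmost index $i$ gives the recurrence
\[
f(i,j,\sigma) \;=\; \min_{i < k \le j}\bigl(f(i{+}1,k{-}1,\sigma') + f(k{+}1,j,\sigma'') + c(i,k,\sigma,\sigma',\sigma'')\bigr),
\]
with local cost $c$ charging new relator groups as they are closed. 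Filling the $O(|W|^2)$ cells, each via an $O(|W|\log|W|)$-time minimization over $k$ and over $O(1)$ flag choices, gives the claimed $O(|W|^4 \log |W|)$ time.

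Third, for the $\textsf{L}^3$ bound I would replace the table by a Savitch-style recursive evaluation, where at each level one enumerates not an arbitrary splitting bracket but a \emph{balanced} one that cuts the current subword into pieces of comparable length; such a balanced bracket exists in any bracket system on a long enough substring by a standard centroid argument on the bracket tree. This bounds the recursion depth by $O(\log |W|)$, and each recursive frame stores only the endpoints $(i,j)$, the guessed bracket index $k$, the flag $\sigma$, and a partial cost bounded by a polynomial in $|W|$ — i.e., $O(\log |W|)$ bits per frame and thus $O((\log |W|)^2)$ bits on the recursion stack; an additional logarithmic factor, coming from iterating over the flag values and candidate splits at each level as one sweeps over choices, brings the total to $O((\log |W|)^3)$. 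The principal obstacle will be the bookkeeping for relator groups: since a single face may touch $W$ in several disjoint arcs, the local cost $c$ is not determined by the split alone, and ``half-used'' relator groups must be carried across recursive calls. The paper's \PBS{}s appear to formalize exactly this residual data, so the delicate technical core is to verify that only $O(\log|W|)$ bits of such data per frame suffice and that the balanced-split strategy is compatible with matching actual disk-diagram face counts.
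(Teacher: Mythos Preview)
Your overall strategy is the paper's: dynamic programming on substrings with a partially-consumed-face parameter for the \textsf{P} bound, and a balanced-split lemma plus a Savitch-type conversion for the $\textsf{L}^3$ bound.

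There is, however, a concrete miscount in your DP. Your flag $\sigma$ records ``how many letters of $a_i$ are already spent'', which is an integer ranging up to $|W|$, not an $O(1)$ datum. The paper makes this explicit by indexing states as $W[i,j,k,\ell] := W(i,j)\,a_k^{\ell}$ with $|\ell|\le |W|$, giving $O(|W|^3)$ states rather than your $O(|W|^2)$. With the correct count the arithmetic does give $O(|W|^4\log|W|)$; as you wrote it, $O(|W|^2)$ cells times $O(|W|\log|W|)$ work per cell is only $O(|W|^3\log|W|)$, so your final bound is right only by accident. The paper's recurrence is also organized differently from ``split on the partner of position $i$'': it case-splits on whether the first and last letters of the current word are equal or inverse (property~(E)), and when neither holds it appeals to Lemma~5.2 to factor $U\equiv U_1U_2$ with $\mu_2(U)=\mu_2(U_1)+\mu_2(U_2)$, which is precisely why appending a tail $a_k^{\ell}$ is the right state augmentation.

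For $\textsf{L}^3$ you have the crucial geometric ingredient --- the balanced split keeping each piece at most a fixed fraction of the current length (the paper proves $\tfrac{5}{6}$ via Lemmas~4.1--4.2) --- but you yourself flag the bookkeeping of half-consumed faces as unresolved, and that bookkeeping is the technical heart of the argument. The paper's execution is cleaner than the direct top-down recursion you sketch: it defines a \emph{nondeterministic} process on pseudobracket systems (each pseudobracket is a $6$-tuple of integers $\le |W|$), proves (Lemma~4.6) that the empty system can be driven to a final one in $O(|W|)$ steps while holding at most $C(\log|W|+1)$ pseudobrackets at any instant, and then invokes Savitch's theorem on this $O((\log|W|)^2)$-space, $O(|W|)$-time nondeterministic computation. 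Your ``additional logarithmic factor from iterating over splits'' is replaced by the $\log T$ factor in Savitch's $O(S\log T)$ conversion; the two arguments are morally the same, but the paper's version makes it transparent that a single configuration really does fit in $O((\log|W|)^2)$ bits, which is exactly the point you leave open.
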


It is worth mentioning that, to prove  $\textsf{L}^3$ part of Theorem~\ref{thm1}, we will not devise a concrete algorithm that solves the bounded and   precise  word  problems for  presentation  \eqref{pr1} in   deterministic space $O((\log|W|)^3)$. Instead, we will develop a certain nondeterministic procedure that solves the bounded   word  problem for  presentation   \eqref{pr1}
nondeterministically in space $O((\log|W|)^2)$ and time  $O(|W|)$  and then use  Savitch's theorem \cite{Sav} on conversion of nondeterministic computations in  space $S$ and time $T$ into   deterministic computations in space $O(S \log T)$.

The proof of $\textsf{P}$ part of Theorem~\ref{thm1} is much easier. Here our arguments are analogous to  folklore arguments \cite{folk}, \cite{Ril16} that solve the  \PWPP\ for presentation
$\langle \, a, b \,  \|  \,  a=1, b =1  \,  \rangle$ in polynomial time and that utilize
the method of dynamic programming.  Interestingly, these folklore arguments are strikingly similar to
the arguments that have been used in computational biology to efficiently solve the problem of planar folding of long chains such as RNA and DNA biomolecules, see  \cite{CB1}, \cite{CB2}, \cite{CB3}, \cite{CB4}.


The techniques to prove   $\textsf{L}^3$ part of Theorem~\ref{thm1} and their generalizations,
that  occupy a significant part of
this article and that could be described as calculus of \BS s,  have applications
to other problems. For example, Grigorchuk and Kurchanov
\cite{GK} defined the {\em width} of an element $W$ of the free group $\FF(\A)$ over  $\A$ as the minimal number $h = h(W)$ so that
\begin{equation}\label{ff11}
W = S_1 a_{j_1}^{k_1} S_1^{-1} \dots S_h a_{j_h}^{k_h} S_h^{-1}
\end{equation}
in $\FF(\A)$, where $a_{j_1}, \dots, a_{j_h} \in \A$, $S_{1}, \dots, S_{h} \in \FF(\A)$ and  $k_{1}, \dots, k_{h}$ are some integers.
Alternatively,  the width $h(W)$ of $W$ can be defined as an integer such that
the \PWPP\ holds for the pair $(W, h(W))$ for the presentation \er{pr1} in which $E_i = \mathbb N$ for every $i$.
Grigorchuk and Kurchanov \cite{GK} found an algorithm that computes the width $h(W)$ for given $W \in \FF(\A)$ and inquired whether computation  of the  width  $h(W)$ can be done in deterministic
polynomial time. Ol'shanskii \cite{Ol} gave a different geometric proof to this  result of Grigorchuk and Kurchanov and suggested some generalizations.

Majumdar, Robbins, and Zyskin  \cite{SL1}, \cite{SL2}  introduced and investigated the {\em spelling length}  $h_1(W)$ of a word $W \in \FF(\A)$ defined by a similar to \er{ff11} formula in which  $k_j=\pm 1$ for every $j$. Alternatively,  the spelling length $h_1(W)$ is an integer such that
the \PWPP\ holds for the pair $(W, h_1(W))$ for the presentation \er{pr1} in which $E_i = \{ 1 \}$ for every $i$.

As a corollary of Theorem~\ref{thm1},  we obtain a positive solution to the problem of Grigorchuk and Kurchanov  and also compute both the width and the spelling length of $W$ in cubic logarithmic space. We remark that Riley \cite{Ril16} gave an independent solution to the Grigorchuk--Kurchanov problem.

\begin{cor}\label{cor1}  Let  $W$ be a word over $\A^{\pm 1}$ and $n \ge 0$ be an integer. Then the decision problems that inquire  whether the width $ h(W)$ or the spelling length $h_1(W)$  of $W$ is equal to $n$ belong to $\textsf{L}^3$ and $\textsf{P}$. Specifically, the problems can be solved in  deterministic space $O((\log|W|)^3)$ or in deterministic time $O( |W|^4\log|W| )$.
\end{cor}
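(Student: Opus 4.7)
The plan is to obtain Corollary~\ref{cor1} as a direct specialization of Theorem~\ref{thm1}. The key observation, already foreshadowed in the discussion preceding the corollary, is that both $h(W)$ and $h_1(W)$ are precise-word-problem invariants for particular instances of the presentation~\er{pr1}, so the only real work is to translate the definitions carefully and then invoke Theorem~\ref{thm1}.

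For the width $h(W)$, I would take the presentation of the form~\er{pr1} with alphabet $\A = \{a_1,\dots,a_m\}$ and $E_i = \mathbb{N}$ for every $i$ (the third permitted form, with $n_i = 1$). Then the set $\R^{\pm 1}$ equals $\{a_i^{k} : 1 \le i \le m,\ k \in \mathbb{Z} \setminus \{0\}\}$, and a factorization $W = S_1 R_1 S_1^{-1} \cdots S_n R_n S_n^{-1}$ in $\FF(\A)$ with each $R_j \in \R^{\pm 1}$ is literally a factorization of the form~\er{ff11}. Consequently the pair $(W, 1^n)$ is a positive instance of the precise word problem for this presentation if and only if $h(W) = n$.

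For the spelling length $h_1(W)$, I would use the analogous instance of~\er{pr1} with $E_i = \{1\}$ for every $i$ (the second permitted form, with $n_i = 1$), so that $\R^{\pm 1} = \{a_i^{\pm 1} : 1 \le i \le m\}$. By the very definition of $h_1(W)$, the pair $(W, 1^n)$ is then a positive instance of the precise word problem if and only if $h_1(W) = n$.

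With the two reductions in place, Theorem~\ref{thm1} applies directly to both presentations and yields the claimed $O((\log|W|)^3)$ space and $O(|W|^4\log|W|)$ time bounds for the two decision problems. The only bookkeeping to verify is input-size accounting: since $W$ always admits the trivial letter-by-letter factorization in both settings, we have $h(W) \le h_1(W) \le |W|$, so inputs $(W, 1^n)$ with $n > |W|$ can be rejected immediately and reading $n$ in unary contributes only $O(|W|)$ to the input size. I do not anticipate a real obstacle here—the entire computational content is carried by Theorem~\ref{thm1}, and this corollary is essentially a dictionary translation between the \textsf{width}/\textsf{spelling length} language of \cite{GK,SL1,SL2} and the precise-word-problem language adopted in this paper.
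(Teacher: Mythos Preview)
Your proposal is correct and follows exactly the paper's own approach: identify the width and spelling length problems with the precise word problem for presentation~\eqref{pr1} with $E_i=\mathbb{N}$ and $E_i=\{1\}$ respectively, then invoke Theorem~\ref{thm1}. The paper's proof is a two-sentence version of what you wrote; your added remarks about input-size accounting and the bound $h(W)\le h_1(W)\le |W|$ are sound but not strictly needed.
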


Making many technical modifications but keeping the general strategy of arguments unchanged, we will  obtain similar to Theorem~\ref{thm1} results for Baumslag--Solitar one-relator groups.

\begin{thm}\label{thm2}   Let  the group $\GG_3$ be defined by a presentation of the form
\begin{equation}\label{pr3b}
    \GG_3 :=  \langle \  a_1, \dots, a_m  \ \|  \   a_2 a_1^{n_1} a_2^{-1} = a_1^{ n_2}  \,  \rangle ,
\end{equation}
where $n_1, n_2$ are some nonzero integers. Then both  the bounded  and  precise  word  problems for  \eqref{pr3b}  are in $\textsf{L}^3$ and in $\textsf{P}$. Specifically, the problems can be solved in  deterministic space
$O((\max(\log|W|,\log n) (\log|W|)^2)$ or in deterministic time $O(|W|^4)$.
\end{thm}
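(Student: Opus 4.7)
The plan is to adapt the calculus of bracket systems that drives Theorem~\ref{thm1} to the Baumslag--Solitar setting. The key structural feature I would exploit is that, in any reduced disk diagram $\Delta$ over \eqref{pr3b} with boundary label $W$, the $a_2^{\pm 1}$-edges organize themselves into disjoint \emph{$a_2$-corridors}: maximal strips of faces joined along their $a_2$-sides. Each such corridor either runs between two boundary positions labeled $a_2^{\pm 1}$ or closes up into an annulus, and the standard reduction argument rules out annuli in reduced diagrams. Consequently, the endpoints of corridors on $\p\Delta$ yield a non-crossing matching of the $a_2^{\pm 1}$-positions in $W$, which is precisely the combinatorial situation that brackets were designed to encode.

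First I would introduce two kinds of brackets on positions of $W$: \emph{corridor brackets} pairing the $a_2^{\pm 1}$-positions spanned by an $a_2$-corridor, and \emph{cancellation brackets} pairing the $a_1^{\pm 1}$-positions that are identified inside the complementary subdiagrams and along the two $a_1$-sides of each corridor. The weight of a corridor bracket is set to equal the length of the corresponding corridor, so that the total weight of the bracket system equals the face count of $\Delta$. The bounded word problem for \eqref{pr3b} then reduces to existence of a consistent bracket system of total weight at most $n$. Since corridor brackets do not cross, the search for such a system splits recursively at a balanced midpoint of $W$, giving recursion depth $O(\log|W|)$; running the search nondeterministically and applying Savitch's theorem \cite{Sav} will yield the claimed deterministic space bound, exactly as in the proof of Theorem~\ref{thm1}.

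The main technical obstacle will be bookkeeping the $a_1$-exponents that run along an $a_2$-corridor. A single face of \eqref{pr3b} rescales an adjacent $a_1$-power by the factor $n_2/n_1$, so an $a_2$-corridor of combinatorial length $k$ can produce $a_1$-exponents of size $(\max(|n_1|,|n_2|))^{\Omega(k)}$, exponentially larger than $|W|$. These numbers must be stored in binary, which is exactly what forces the extra $\log n$ factor in the space bound $O(\max(\log|W|,\log n)(\log|W|)^2)$. The consistency check between a parent recursion frame and its children reduces to verifying a single arithmetic equation of the form $\alpha\, n_1^{p} = \beta\, n_2^{q}$, which can be carried out by repeated division in space $O(\log|W| + \log n)$, so the total space used stays within the stated budget.

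For the $\textsf{P}$ part, I would run a dynamic programming scheme analogous to the one behind the $\textsf{P}$ part of Theorem~\ref{thm1}. One tabulates, for each subarc of $W$ together with a binary decoration that records the ``outgoing'' $a_1$-exponent of the corridor incident to its leftmost endpoint, the minimum number of faces needed to fill the corresponding subregion. There are $O(|W|^{3})$ meaningful table entries, each computed in time $O(|W|)$ by trying every way of attaching the outermost corridor and reading off the two children, for a total of $O(|W|^{4})$ operations. The precise word problem is answered along the way, by returning the minimum value at the top-level entry instead of comparing it against the input $n$.
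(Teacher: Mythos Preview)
Your overall architecture matches the paper's: $a_2$-bands (corridors) give a non-crossing structure, a bracket calculus with logarithmic recursion depth plus Savitch handles the space bound, and dynamic programming handles the time bound. Two points, however, need correction.

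For the $\textsf{L}^3$ part, the paper does \emph{not} introduce separate ``cancellation brackets'' for $a_1^{\pm1}$-positions. There can be up to $|W|$ such positions, and bracketing them individually would destroy the $O(\log|W|)$ bound on the number of brackets held at once. Instead, each bracket in the paper is a quadruple $(b(1),b(2),b(3),b(4))$ where $b(3)$ is a single integer recording the net $a_1$-exponent of a ``complementary arc'' joining $\al(b(1))$ to $\al(b(2))$; the $a_1$-letters are absorbed one at a time by extensions that increment or decrement $b(3)$. The balanced split (Lemma~\ref{b2}) is taken with respect to $|W|_{\bar a_1}$, not $|W|$: one collapses all $a_1$-edges to obtain a tree $\rho_{a_1}(\D)$ and applies Lemma~\ref{lem1} there. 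Your sketch does not explain why a split point compatible with the diagram structure (i.e.\ joined by a special $a_1$-arc) must exist, and this is exactly what the collapsed-tree argument supplies.

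For the $\textsf{P}$ part there is a genuine gap. You correctly note that $a_1$-exponents along a corridor can be of size $(\max(|n_1|,|n_2|))^{\Omega(|W|)}$, but then claim a table indexed by ``subarc plus outgoing $a_1$-exponent'' has only $O(|W|^3)$ entries; these two statements are incompatible. The paper's resolution is that for a fixed subword $W(i,j)$, the exponent $\ell$ with $W(i,j)\overset{\GG_3}{=}a_1^{\ell}$ is \emph{unique} when it exists (Magnus's Freiheitssatz gives $a_1$ infinite order), so one does not index by $\ell$ at all. The table is keyed by $(i,j)$ alone---$O(|W|^2)$ entries---and stores the pair $(\lambda(W(i,j)),\mu_3(W(i,j)))$ as its \emph{value}. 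Each entry is filled in time $O(|W|^2)$: there are $O(|W|)$ candidate factorizations to try, and the arithmetic on the $O(|W|)$-bit integers $\lambda(\cdot)$ costs $O(|W|)$ per operation. This yields the stated $O(|W|^4)$ total.
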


As another application of our techniques, we will obtain a  solution in polylogarithmic  space for the (minimal) diagram problem for presentation  \eqref{pr1} which includes the case of the free
group  $\FF(\A) = \langle \A \  \|  \  \varnothing \rangle$ over $\A$ without relations.
Recall that the {\em  diagram problem}    for a decidable presentation \eqref{pr3} is a search problem that, given a word $W$ over $\A^{\pm 1}$ with $W \overset \GG  = 1$, asks to algorithmically construct a disk diagram $\D$ over \eqref{pr3} whose boundary $\p \D$   is labeled by $W$, denoted $\ph(\p \D) \equiv W$, for the definitions see Sect.~2.
Analogously, the {\em   minimal diagram problem}    for a decidable presentation \eqref{pr3} is a search problem that, given a word $W$ over $\A^{\pm 1}$ with $W \overset \GG  = 1$, asks to algorithmically construct a disk diagram $\D$ over \eqref{pr3} such that $\ph(\p \D) \equiv W$ and $\D$ contains a minimal number of faces.

Recall that, according to Lipton and Zalcstein \cite{LZ}, the word problem for the free group $\FF(\A)$, given by presentation \eqref{pr3} with  $\R = \varnothing$, is in  \textsf{L}. However, construction of an actual diagram $\D$ over $\FF(\A)$ for a word $W$ over $\A^{\pm 1}$ such that $\ph(\p \D) \equiv W$, is a different matter and it is not known whether this construction could be done in polylogarithmic space (note that it is easy to construct such a diagram, which is a tree, in polynomial time). In fact, many results of this article grew out of the attempts to solve the diagram problem for  free groups with no relations in subpolynomial space.

\begin{thm}\label{thm3}   Both the diagram problem  and the minimal diagram problem
for  group presentation \eqref{pr1} can be solved in deterministic space  $O((\log|W|)^3)$ or in deterministic time $O(|W|^4\log|W|)$.

Furthermore,  let $W$ be a word such that $W \overset {\GG_2} =1$ and let
$$
\tau(\D) = (\tau_1(\D), \ldots, \tau_{s_\tau}(\D))
$$
be a tuple of integers, where the absolute value $| \tau_i(\D) |$ of each $\tau_i(\D)$  represents the number of certain vertices or faces in a disk diagram $\D$ over \eqref{pr1} such that $\ph(\p \D) \equiv W$. Then, in  deterministic  space $O( (\log |W|)^3 )$, one can algorithmically construct such a minimal diagram $\D$ which is also smallest relative to  the tuple $\tau(\D)$ (the tuples are ordered lexicographically).
\end{thm}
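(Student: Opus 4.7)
The plan is to bootstrap on Theorem~\ref{thm1} by a self-reducibility argument. A minimal disk diagram $\D$ over \eqref{pr1} with $\ph(\p\D) \equiv W$ is, up to obvious redundancies, determined by a bracket system on $W$: a matching of mutually inverse positions corresponding to edge cancellations together with a marking of those cyclic subwords that bound $a_i^{k_i}$-faces. First, I would compute the minimal number of faces $n^\ast$ by invoking the precise word problem algorithm of Theorem~\ref{thm1} on inputs $(W, 1^n)$; doing a search for the smallest $n$ with positive answer takes $O(\log|W|)$ queries, each reusing the same $O((\log|W|)^3)$ workspace.

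With $n^\ast$ fixed, I would construct the bracket system of a minimal diagram greedily, one bracket at a time. Processing the boundary positions $i=1,2,\dots,|W|$ in order, and having committed to brackets $B_1,\dots,B_{i-1}$, I would iterate over the $O(|W|)$ candidate partners or face-types for position $i$ and use the decision procedure from Theorem~\ref{thm1} to test whether there exists a minimal diagram consistent with $B_1,\dots,B_{i-1}$ and the candidate $B_i$. The key point, which must be verified from the proof of Theorem~\ref{thm1}, is that imposing a prescribed sub-bracket on $W$ amounts to pruning the search tree of the underlying calculus-of-brackets nondeterministic procedure, so the relativized query still runs in nondeterministic space $O((\log|W|)^2)$; Savitch's theorem then yields the stated $O((\log|W|)^3)$ deterministic bound. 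The lex-smallest admissible $B_i$ is then recorded (implicitly; we only keep current pointers), and the procedure advances.

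To obtain the diagram that is smallest relative to the tuple $\tau(\D)$, I would first determine the optimal tuple $\tau^\ast = (\tau_1^\ast,\dots,\tau_{s_\tau}^\ast)$ coordinate by coordinate: for each $i$ in turn, binary-search for the least $\tau_i^\ast$ such that a minimal diagram exists with $\tau_j(\D)=\tau_j^\ast$ for $j<i$ and $\tau_i(\D)=\tau_i^\ast$. Each entry of $\tau(\D)$ is polynomially bounded in $|W|$, so each search uses $O(\log|W|)$ queries. Here too the decision procedure of Theorem~\ref{thm1} must be enriched so that specified vertices or faces are counted and constrained to meet a prescribed total; since these counts can be carried as auxiliary $O(\log|W|)$-bit counters along the nondeterministic bracket-calculus computation, the workspace remains within $O((\log|W|)^2)$ nondeterministically, and hence $O((\log|W|)^3)$ deterministically. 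Once $\tau^\ast$ is known, I would re-run the greedy bracket-construction from the previous paragraph, restricted to diagrams realizing $\tau^\ast$, and output the resulting diagram.

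The main obstacle is precisely this relativization step: one must check that the bracket-calculus machinery of Theorem~\ref{thm1} (and its \textsf{P}-version via dynamic programming on subwords, which handles the $O(|W|^4 \log |W|)$ time bound analogously by augmenting the DP tables with backpointers and the counters $\tau_i$) is flexible enough to be conditioned on both a partial bracket system and on vertex/face counts without inflating the complexity. Given the combinatorial nature of the calculus of brackets, where each local step is a monotone choice, this relativization should be routine; the bookkeeping rather than any new idea is the work. The final output is produced position by position, so only logarithmic output-relevant state is kept in memory at any time, and the total working space remains $O((\log|W|)^3)$.
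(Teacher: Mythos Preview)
Your self-reducibility plan has a real gap in the space analysis. At step $i$ you want to answer the relativized query ``is there a minimal diagram consistent with the commitments $B_1,\dots,B_{i-1}$ and the candidate $B_i$?'' in nondeterministic space $O((\log|W|)^2)$. But the constraints $B_1,\dots,B_{i-1}$ are themselves of total size $\Theta(|W|\log|W|)$ in the worst case, so the verifier cannot simply carry them; and if the verifier recomputes $B_j$ on demand by rerunning your greedy procedure, that recursion has depth $\Theta(|W|)$ with $O((\log|W|)^3)$ work space per level, which is polynomial, not polylogarithmic. The phrase ``we only keep current pointers'' does not explain how the relativized oracle accesses the earlier commitments. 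Unlike $s$--$t$ connectivity, where the state after $i$ steps is a single vertex, the ``state'' of a partially specified planar matching at position $i$ is the stack of currently open brackets, which can be $\Theta(|W|)$ deep; the $O(\log|W|)$ bound on the number of \emph{active} pseudobrackets in Lemma~\ref{lem6} concerns the paper's specific operational sequence, not an arbitrary left-to-right scan of the boundary.

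The paper avoids this entirely. It does not do self-reducibility. Instead it observes that Savitch's algorithm, applied to configurations augmented with a time counter $k$, can be rerun to extract the $k$th configuration of a fixed accepting computation; iterating over $k$ outputs the entire operational sequence $B_0,B_1,\dots,B_\ell$ of pseudobracket systems one term at a time, each in space $O((\log|W|)^3)$. The diagram $\D(\Omega)$ is then read off directly from this sequence (each extension of type 1--2 contributes a face-boundary edge, each extension of type 3 identifies two boundary edges), and the identity $\D(\Omega(\D_W))=\D_W$ from the proof of Lemma~\ref{lem6} guarantees that every diagram with property~(A) arises this way. For the $\tau$-minimization the paper augments each pseudobracket with six extra integer entries $b(7),\dots,b(12)$ that track intermediate vertex degrees and face degrees, and carries the running tuple $\tau(\D)$ as part of the configuration; Savitch is then run on pairs $(B,\tau)$, and one searches for the lexicographically least final $\tau$. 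Your idea of carrying $O(\log|W|)$-bit counters for the $\tau_i$ is in the same spirit, but the actual bookkeeping (how a turn, merger, or extension updates the partial degree information) is nontrivial and is spelled out in the paper.

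Your remark that the $\textsf{P}$ bound follows by augmenting the dynamic-programming tables of Theorem~\ref{thm1} with backpointers is correct and matches the paper's one-line argument for the time part.
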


We point out that the case of the free group $\FF(\A) = \langle \A \  \|  \  \varnothing \rangle$ with no relations is covered by Theorem~\ref{thm3} and, since there are no relations, every diagram $\D$ over
$\FF(\A) = \langle \A \  \|  \  \varnothing \rangle$ is minimal.  Hence, for the free group  $\FF(\A)$, Theorem~\ref{thm3} implies the following.

\begin{cor}\label{cor2} There is a deterministic algorithm that, for given word $W$ over the alphabet $\A^{\pm 1}$ such that
$W \overset{\FF(\A)}{=} 1$, where  $\FF(\A) = \langle \A \  \|  \  \varnothing \rangle$ is the free group over $\A$, constructs a pattern of cancellations of letters in $W$ that result in the empty word and the  algorithm operates in space $O( (\log |W|)^3 )$.

Furthermore, let $\D$ be a disk diagram over $\FF(\A) $ that corresponds to a pattern of cancellations of letters in $W$, i.e., $\ph(\p \D) \equiv W$, and let
$$
\tau(\D) = (\tau_1(\D), \ldots, \tau_{s_\tau}(\D))
$$
be a tuple of integers, where the absolute value $| \tau_i(\D) |$ of each  $\tau_i(\D)$  represents the number  of vertices in $\D$ of certain degree. Then, also in  deterministic  space $O( (\log |W|)^3 )$, one can algorithmically construct such a diagram $\D$ which is smallest relative to the tuple $\tau(\D)$.
\end{cor}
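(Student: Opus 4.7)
My plan is to deduce Corollary~\ref{cor2} directly from Theorem~\ref{thm3} by realizing the free group $\FF(\A) = \langle \A \ \| \ \varnothing \rangle$ as a degenerate instance of the presentation scheme \eqref{pr1}. Specifically, I would take $E_i = \{0\}$ for every index $i$, so that the ``relator'' $a_i^0$ is empty and no non-trivial defining relation is imposed; the resulting presentation coincides with $\FF(\A)$. This syntactic identification is essentially free, and it licenses invoking Theorem~\ref{thm3} without any change of framework.

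The next step is the geometric observation that, since $\FF(\A)$ has no non-trivial relators, every disk diagram $\D$ over $\FF(\A)$ has zero faces and is topologically a planar tree-like $2$-complex. Consequently the diagram problem and the minimal diagram problem coincide in this setting, and the disk diagrams $\D$ with $\ph(\p \D)\equiv W$ correspond bijectively to complete patterns of free cancellations of letters of $W$ that reduce it to the empty word. This dictionary is the conceptual bridge that converts the output of Theorem~\ref{thm3} into the object demanded by the corollary.

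With that in place, the first assertion follows by applying the (minimal) diagram-problem part of Theorem~\ref{thm3} to the above instance of \eqref{pr1}: it constructs in deterministic space $O((\log|W|)^3)$ a diagram $\D$ with $\ph(\p\D)\equiv W$, which I then reinterpret as a cancellation pattern for $W$. For the second assertion, I apply the $\tau$-refined part of Theorem~\ref{thm3} with $\tau(\D) = (\tau_1(\D),\dots,\tau_{s_\tau}(\D))$ taken to record counts of vertices of prescribed degrees (face counts are vacuously zero and simply drop out of $\tau$). The lexicographically smallest admissible $\tau(\D)$ then yields the desired $\tau$-minimal cancellation diagram, still within space $O((\log|W|)^3)$.

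The only genuine obstacle I foresee is the bookkeeping verification that $\FF(\A)$ is legitimately captured by the scheme \eqref{pr1} under the $E_i = \{0\}$ convention, together with a careful description of the bijection between cancellation patterns of $W$ and face-free disk diagrams with boundary label $W$. Once these two housekeeping points are confirmed, no new algorithmic content is needed and Corollary~\ref{cor2} is a clean specialization of Theorem~\ref{thm3}.
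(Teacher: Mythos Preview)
Your proposal is correct and matches the paper's approach: the paper likewise states that Corollary~\ref{cor2} is immediate from Theorem~\ref{thm3}, via the identification of $\FF(\A)$ with the instance of \eqref{pr1} having $E_i=\{0\}$ for all $i$. The paper adds only the incidental remark that in this degenerate case the bracket calculus simplifies (no type~B2 brackets, no turns or extensions of types~1--2), but this is commentary rather than additional argument.
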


Here is  the analogue of Theorem~\ref{thm3} for presentations \eqref{pr3b} of one-relator Baumslag--Solitar groups.

\begin{thm}\label{thm4}  Suppose that $W$ is a word  over the alphabet
$\A^{\pm 1}$ such that the bounded word problem for presentation \eqref{pr3b}  holds for the pair $(W, n)$.
Then a minimal diagram $\D$ over \eqref{pr3b} such that  $\ph( \p  \D) \equiv W$ can be algorithmically  constructed in deterministic space $O( \max(\log |W|, \log n)(\log |W|)^2)$ or in deterministic time $O( |W|^4)$.

In addition, if $|n_1 | = |n_2 |$ in \eqref{pr3b}, then  the minimal diagram problem for  presentation \eqref{pr3b}  can be solved in deterministic space $O( (\log |W|)^3 )$  or in deterministic time $O( |W|^3\log |W|)$.
\end{thm}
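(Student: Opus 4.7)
The plan is to upgrade the decision algorithm of Theorem~\ref{thm2} to an actual construction algorithm by means of a self-reducibility sweep, following the same template that underlies Theorem~\ref{thm3}. By repeated application of Theorem~\ref{thm2} we first determine the minimal face count $n_0 \le n$ of any disk diagram $\D$ over~\eqref{pr3b} with $\ph(\p\D) \equiv W$, and thereafter use the decision procedure of Theorem~\ref{thm2} as an oracle to test whether any given piece of partial combinatorial data attached to a prospective diagram can still be extended to a minimal $\D$.

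The combinatorial data being fixed is naturally captured by the calculus of bracket systems developed in the earlier sections of the paper. Every face of~\eqref{pr3b} carries exactly one pair of oriented $a_2$-edges, so a disk diagram $\D$ over~\eqref{pr3b} induces a noncrossing matching of the $a_2^{\pm 1}$-occurrences on $\p\D$ (its maximal $a_2$-bands), together with, for each matched pair, a signed $a_1$-exponent recording the content of the band; this is precisely a bracket system in the sense of the paper. To construct a minimal such system, I would scan $\p\D$ from left to right and, for each unmatched $a_2^{\pm 1}$-occurrence, enumerate the admissible partners and admissible exponents, invoking the oracle of Theorem~\ref{thm2} on the residual instance to decide which local choices are still consistent with a minimum diagram. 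Iterating this commit-if-consistent scheme lexicographically along the coordinates of the output tuple $\tau(\D)$, as in the proof of Theorem~\ref{thm3}, produces the $\tau$-smallest minimal diagram. The enumerated exponent at each bracket ranges in an interval of size $O(n)$, so each local choice needs $O(\log n + \log|W|)$ workspace, and a single pointer of size $O(\log|W|)$ suffices to track the sweep across $\p\D$; combined with the deterministic oracle from Theorem~\ref{thm2}, this yields the claimed $O(\max(\log|W|,\log n)(\log|W|)^2)$ space bound, and a parallel use of the dynamic-programming form of Theorem~\ref{thm2} yields the $O(|W|^4)$ time bound.

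In the symmetric case $|n_1| = |n_2|$, the defining relator is balanced on its $a_1$-blocks, so the two $a_1$-exponents on either side of each $a_2$-band coincide in absolute value and are bounded in magnitude by the total $a_1$-length of $\p\D$. Consequently each bracket-system exponent fits in $O(\log|W|)$ bits independently of $n$, which removes the $\max(\log|W|,\log n)$ factor from the space bound and yields $O((\log|W|)^3)$ deterministic space; an analogous economy in the dynamic program gives the sharper $O(|W|^3\log|W|)$ time bound.

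The main obstacle I anticipate is verifying that, after committing to a partial bracket system, the residual problem is again an instance of~\eqref{pr3b} to which Theorem~\ref{thm2} applies with the advertised parameters. Cutting along a committed $a_2$-band produces sub-diagrams whose boundary labels are concatenations of segments of $W$ with the chosen $a_1^k$ blocks, and one must check that the total length and face budget of these sub-instances stay bounded by $O(|W| + n)$, so that the oracle calls do not escape the promised complexity class. Once this bookkeeping is in place, combining the self-reducibility sweep with Theorem~\ref{thm2} and Savitch's theorem completes the argument.
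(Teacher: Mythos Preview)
Your approach diverges from the paper's and, as stated, has a real gap in the space bound. A left-to-right commit-if-consistent sweep cannot be carried out in polylogarithmic workspace: to form the residual instance for the $i$th $a_2$-pair you must know all $i-1$ earlier commitments (their partner indices and $a_1$-exponents), and there can be $\Theta(|W|_{\bar a_1})$ of them. A ``single pointer of size $O(\log|W|)$'' does not suffice, because the residual word you feed to the oracle depends on the entire history of choices, and in a space-bounded model you cannot read back what you have already written to output. Recomputing each earlier commitment on demand does not help either, since that recursion has depth $\Theta(|W|)$. (Your time argument via the dynamic program is fine; the issue is purely with the space part.)

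The paper never calls Theorem~\ref{thm2} as a black box on residual instances. Instead it reruns Savitch's procedure with a counter (the same device used in the proof of Theorem~\ref{thm3}) to output, term by term, the operational sequence $\Omega=(\omega_1,\dots,\omega_\ell)$ of elementary operations supplied by Lemma~\ref{4bb}. The extensions of type~2 in $\Omega$ are exactly the $a_2$-bands $\Gamma_1,\dots,\Gamma_{\ell_2}$; a second pass (the algorithm $\mathfrak B_v$) then walks around the boundary of each complementary component and hands that boundary word to the free-group diagram constructor of Corollary~\ref{cor2}. What makes this polylogarithmic is Lemma~\ref{4bb}'s guarantee that each intermediate bracket system has only $O(\log|W|)$ brackets; your sweep has no analogue of that bound. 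Incidentally, neither Theorem~\ref{thm3} nor Theorem~\ref{thm4} is proved by self-reducibility, so the template you invoke is not actually in the paper.

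For the symmetric case $|n_1|=|n_2|$, the assertion that band exponents are bounded by $|\p\D|_{a_1}$ is in the right spirit but is not the required statement: you also need $b(4)$, the face count of a subdiagram, to be polynomial in $|W|$. The paper obtains this via Lemma~\ref{91}, a quadratic isoperimetric inequality ($|\D(2)|\le|\p\D|^2/(4|n_1|)$) proved by subdividing each relator into unit squares and showing that $a_1$- and $a_2$-bands cross at most once. Without that lemma you cannot conclude $\log n=O(\log|W|)$. Finally, the $\tau(\D)$-minimization you mention is part of Theorem~\ref{thm3}, not of the present statement.
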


As further applications of the techniques of the proof of Theorems~\ref{thm2}, \ref{thm4},  we obtain computational
results on discrete homotopy of polygonal closed curves in the plane.

Let $\TT$  denote a tessellation of the plane $\mathbb  R^2$ into unit squares whose vertices are points with integer coordinates. Let $c$ be a finite closed path in $\TT$  so that edges of $c$ are edges of $\TT$. Consider the following two types of elementary operations over $c$. If $e$ is an oriented edge of $c$, $e^{-1}$ is the edge with an opposite to $e$ orientation, and $ee^{-1}$ is a subpath of $c$ so that  $c = c_1 ee^{-1} c_2$, where $c_1, c_2$ are subpaths of $c$, then the operation $c \to c_1 c_2$ over $c$ is called an {\em \EH\ of type 1}.  Suppose that $c =c_1 u c_2$, where
$c_1, u, c_2$ are subpaths of $c$, and a boundary path $\p s$ of a unit square $s$ of  $\TT$ is $\p s = uv$, where
$u, v$ are subpaths of $\p s$ and either of $u, v$  could be of zero length, i.e., either of $u, v$  could be a single vertex of $\p s$. Then the operation $c \to c_1 v^{-1} c_2$ over $c$ is called an {\em \EH\ of type 2}.

\begin{thm}\label{thm5} Let $c$ be a finite closed path in a tessellation $\TT$  of the plane $\mathbb  R^2$
into unit squares so that edges of $c$ are edges of $\TT$. Then a minimal number $m_2(c)$
such that there is a finite sequence of \EHs\ of type 1--2, which turns $c$ into a single point and which contains
$m_2(c)$ \EHs\ of type 2, can be computed in deterministic space $O((\log |c|)^3)$  or in deterministic time
$O( |c|^3\log |c|)$,
where $|c|$  is the length of $c$.

Furthermore, such a sequence of  \EHs\ of type 1--2,
which turns $c$ into a single point and which contains  $m_2(c)$ \EHs\ of type 2,  can also be computed in  deterministic space $O((\log |c|)^3)$ or in deterministic time $O( |c|^3\log |c|)$.
\end{thm}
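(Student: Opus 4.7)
The plan is to translate the problem into the minimal diagram problem for
\begin{equation*}
\GG_3 \, := \, \langle \, a_1, a_2 \, \| \, a_2 a_1 a_2^{-1} = a_1 \, \rangle \cong \mathbb Z^2,
\end{equation*}
which is the special case of \eqref{pr3b} with $|n_1| = |n_2| = 1$, and then to invoke Theorem~\ref{thm4}. Concretely, I would identify $\TT$ with the Cayley complex of $\GG_3$ on the generators $\{a_1, a_2\}$, so that horizontal and vertical unit edges of $\TT$ are labeled $a_1^{\pm 1}$ and $a_2^{\pm 1}$ respectively, and each unit square of $\TT$ is a Cayley $2$-cell bounded by the defining relator $a_2 a_1 a_2^{-1} a_1^{-1}$. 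The closed path $c$ is then encoded by its label $W := \ph(c)$, a word of length $|c|$ over $\{a_1, a_2\}^{\pm 1}$ satisfying $W \overset{\GG_3}{=} 1$.

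The key step is to establish the equality
\begin{equation*}
m_2(c) \; = \; \min\bigl\{\, \#\mathrm{faces}(\D) \,:\, \D \text{ is a disk diagram over } \GG_3 \text{ with } \ph(\p\D) \equiv W \,\bigr\}.
\end{equation*}
One inequality comes from building a diagram out of any given sequence of \EHs: each type-2 \EH\ attaches exactly one unit-square face, while a type-1 \EH\ is a free cancellation contributing no face. For the reverse inequality, a standard shelling of a minimal diagram $\D$ (peeling faces off along the evolving boundary and interspersing the freely cancelling $e e^{-1}$ collapses) reads off a sequence of \EHs\ of types 1--2 that contracts $c$ to a single point and uses exactly $\#\mathrm{faces}(\D)$ type-2 \EHs.

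Since $|n_1| = |n_2| = 1$, the enhanced part of Theorem~\ref{thm4} now yields a deterministic algorithm that, given $W$, both computes this minimum face count and produces a realizing minimal diagram $\D$ in space $O((\log|W|)^3)$ or time $O(|W|^3\log|W|)$. Because $|W| = |c|$, this settles the first assertion of the theorem, and running the shelling procedure on the constructed $\D$ produces the required explicit sequence of \EHs. The main obstacle I expect is implementing that shelling within $O((\log|c|)^3)$ space, since a naive run would store the entire evolving boundary word, which has $\Omega(|c|)$ bits. To get around this, I would represent intermediate boundary states implicitly as short indices into the compressed description of $\D$ supplied by Theorem~\ref{thm4}, and recompute local boundary data on demand, reusing the nondeterministic bracket-system machinery together with the Savitch-type conversion already employed in the proofs of Theorems~\ref{thm2} and~\ref{thm4}.
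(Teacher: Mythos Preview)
Your reduction to the $\mathbb Z^2$ presentation, the equivalence $m_2(c)=\min\{|\D(2)|:\ph(\p\D)\equiv W\}$, and the appeal to Theorem~\ref{thm4} are exactly what the paper does (the paper packages the equivalence as Lemma~\ref{eh}). This already settles the computation of $m_2(c)$ in both complexity models, and for the \emph{time} bound on producing an explicit sequence of \EHs\ your shelling argument is the paper's argument as well.

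The one place where the paper diverges from your sketch is the \emph{space} bound for outputting the sequence of \EHs. The paper explicitly rejects the shelling route, noting that the inductive argument of Lemma~\ref{eh} stores intermediate boundary paths of polynomial size. Instead of indexing into a compressed description of $\D$ as you propose, it bypasses the diagram entirely and works directly with the operational sequence $B_0,B_1,\dots,B_\ell$ of \PBS s produced by the algorithm $\mathfrak A_n$. To each $B_i$ it associates a concrete path $\Gamma(B_i)$ in $\TT$, obtained from $c$ by replacing the arc $a(b)$ of each \pbb\ $b\in B_i$ by a horizontal geodesic $\delta(a(b))$ (Lemma~\ref{f2} shows $\ph(\delta(a(b)))\equiv a_1^{b(3)}$). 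A case analysis over additions, extensions of types~1--3, and mergers then shows that passing from $\Gamma(B_{i-1})$ to $\Gamma(B_i)$ is realized by at most $|b(3)|$ \EHs, each computable in $O(\log|W|)$ space from the two \pbb s involved. Since the \pbb s themselves satisfy $|b(3)|,b(4)=O(|W|^2)$ by Lemma~\ref{91}, everything fits in $O((\log|W|)^3)$. This is cleaner than trying to drive a shelling from the streamed diagram, because the \PBS\ sequence already \emph{is} a polylog-sized encoding of the evolving boundary; your proposed ``indices into the compressed description of~$\D$'' would effectively have to reconstruct this encoding.
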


We remark that this number $m_2(c)$ defined in Theorem~\ref{thm5} can be regarded as the  area ``bounded" by a closed path $c$  in $\TT$. Clearly, if $c$ is simple, i.e., $c$  has no self-intersections,
then  $m_2(c)$  is the area of the compact region  bounded by $c$.  If $c$ is simple then the area bounded by $c$ can be computed in logarithmic space $O(\log |c|)$ as follows from the ``shoelace"  formula for the area of a simple polygon.

More generally, assume that $c$ is a continuous closed curve in $\mathbb R^2$, i.e., $c$ is the image of a continuous map $\mathbb S^1 \to \mathbb R^2$, where $\mathbb S^1$ is a circle. Consider a homotopy
$H : \mathbb S^1 \times [0,1] \to \mathbb R^2$ that turns the curve $c = H(\mathbb S^1 \times \{ 0 \} )$
into a point  $H(\mathbb S^1 \times \{ 1 \} )$ so that for every $t$, $0\le t <1$,  $H(\mathbb S^1 \times \{ t \} )$ is a closed curve. Let $A(H)$ denote the area swept by the curves $H(\mathbb S^1 \times \{ t \} )$, $0\le t <1$, and let $A(c)$ denote the infimum $\inf_H A(H)$ over all such homotopies $H$. As above, we remark that this
number $A(c)$ can be regarded as the area defined (or ``bounded")  by  $c$.
Note that this number $A(c)$ is different from the signed area of $c$ defined by applying the ``shoelace"  formula to singular polygons.

Other applications that we discuss here involve  polygonal (or piecewise linear) closed curves in the plane and computation and approximation of the area  defined  by these curves in polylogarithmic space or in polynomial time.

We say that $c$ is a {\em polygonal} closed curve in the plane $\mathbb R^2$ with given tessellation $\TT$
into unit squares if $c$ consists of finitely many line segments $c_1, \dots, c_k$, $k >0$, whose endpoints
are vertices of $\TT$,  $c=  c_1 \dots c_k$, and $c$ is closed.  If $c_i \subset \TT$ then the $\TT$-length $|c_i|_{\TT}$ of $c_i$ is the number of edges of  $\TT$ in $c_i$. If
$c_i \not\subset \TT$ then the $\TT$-length $|c_i|_{\TT}$ of $c_i$
is the number of connected components in $c_i  \setminus \TT$. We assume that $|c_i|_{\TT} >0$ for every $i$ and set $|c|_{\TT} := \sum_{i=1}^k |c_i|_{\TT}$.

\begin{thm}\label{thm6} Suppose that $n \ge 1$ is a fixed integer and  $c$ is a polygonal closed curve in the plane $\mathbb R^2$ with given tessellation $\TT$
into unit squares. Then, in deterministic space $O( (\log |c|_{\TT} )^3)$  or in deterministic time $O( |c|_{\TT}^{n+3}\log |c|_{\TT} )$, one can compute a rational number $r_n$
such that $|A(c) - r_n | < \tfrac {1}{ |c|_{\TT}^n  }$.

In particular, if the area  $A(c)$ defined by $c$ is known to be an integer multiple of $\tfrac 1 L$, where $L>0$ is a given integer and $L< |c|_{\TT}^{n}/2$,  then
$A(c)$ can be computed in deterministic space $O( (\log |c|_{\TT} )^3)$
or in deterministic time $O( |c|_{\TT}^{n+3} \log |c|_{\TT})$.
\end{thm}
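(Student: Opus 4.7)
The plan is to reduce to Theorem~\ref{thm5} by scaling and grid approximation. Fix $N := \lceil C |c|_{\TT}^{n+1} \rceil$ with $C>0$ a sufficiently large absolute constant, and consider the dilated curve $Nc$ inside the unit tessellation $\TT$. Since every vertex of $c$ has integer coordinates, every vertex of $Nc$ is a vertex of $\TT$. For each segment $c_i$ of $c$ with displacement $(a_i,b_i) \in \mathbb Z^2$, replace the corresponding segment of $Nc$ by a monotone staircase grid path $c_i'$ in $\TT$ consisting of $N(|a_i|+|b_i|)$ horizontal and vertical unit edges; using $|a_i|+|b_i| \le 2|c_i|_{\TT}$, one obtains a closed $\TT$-grid path $c' = c_1' \ldots c_k'$ of length $|c'| \le 2N |c|_{\TT}$.

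Next, apply Theorem~\ref{thm5} to $c'$ to compute $m_2(c')$ in deterministic space $O((\log|c'|)^3)$ and time $O(|c'|^3\log|c'|)$, and output $r_n := m_2(c')/N^2$. For the error, note that $c'$ stays within Euclidean distance $O(1)$ of $Nc$, so $c'/N$ stays within distance $O(1/N)$ of $c$. An optimal null-homotopy of $c$ can be perturbed to a null-homotopy of $c'/N$ whose swept area exceeds $A(c)$ by at most the area of a strip of width $O(1/N)$ along $c$, bounded by $O(|c|_{\TT}/N)$; the reverse comparison is analogous. Hence
\[
|r_n - A(c)| = O(|c|_{\TT}/N) < 1/|c|_{\TT}^n
\]
for $C$ large enough. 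The second assertion of the theorem then follows by rounding: when $A(c) \in \tfrac 1 L \mathbb Z$ with $L < |c|_{\TT}^n/2$, distinct multiples of $1/L$ differ by more than $2/|c|_{\TT}^n$, so $A(c)$ is the unique such multiple within distance $1/|c|_{\TT}^n$ of $r_n$.

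The space bound transfers directly: since $n$ is fixed, $\log|c'| = O(\log|c|_{\TT})$, and the $O((\log|c'|)^3)$ bound from Theorem~\ref{thm5} becomes $O((\log|c|_{\TT})^3)$. The main obstacle is the time bound $O(|c|_{\TT}^{n+3}\log|c|_{\TT})$: a naive invocation of Theorem~\ref{thm5} yields only $O(|c|_{\TT}^{3n+6}\log|c|_{\TT})$, far weaker than claimed. Closing this gap should require reorganizing the dynamic programming underlying Theorem~\ref{thm5} to exploit the structured nature of $c'$, namely, that it is a concatenation of only $O(|c|_{\TT})$ long monotone staircases, and to process these staircases as bulk units instead of edge-by-edge, in the spirit of the calculus of brackets developed for Theorems~\ref{thm2} and~\ref{thm4}. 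I expect this structural refinement to be the principal technical difficulty.
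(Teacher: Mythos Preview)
Your approach is essentially the paper's: scale by a factor $M\sim |c|_{\TT}^{\,n+O(1)}$, replace each segment by a nearby grid path, apply Theorem~\ref{thm5} to the resulting closed grid path, and read off $r_n$ as the computed area divided by $M^2$. The paper phrases the scaling as passing to a refined tessellation $\TT_M$ (squares of side $1/M$) rather than dilating the curve, and it approximates each segment $c_i$ not by an arbitrary monotone staircase but by the specific path $q_{1,M}(c_i)$ running along one side of the union $N_M(c_i)$ of $\TT_M$-squares meeting $c_i$; these are cosmetic differences. Your strip/perturbation argument for the error is softer than what the paper does: there the bound is obtained directly as $|A(c)-A(\zeta_M(c))|\le \sum_i A\bigl(c_i\,\zeta_M(c_i)^{-1}\bigr)\le M^{-2}\sum_i |c_i|_{\TT_M}\le 2M^{-1}|c|_{\TT}$, with no appeal to perturbing homotopies. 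The rounding argument for the second assertion is the same.

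On the time bound, your diagnosis is exactly right, and it applies to the paper as well. The paper takes $M=|c|_{\TT}^{\,n+2}$, so the approximating path $\zeta_M(c)$ has length of order $M|c|_{\TT}=|c|_{\TT}^{\,n+3}$, and then simply writes
\[
O\bigl(|c|_{\TT_M}^{3}\log|c|_{\TT_M}\bigr)=O\bigl(|c|_{\TT}^{\,n+3}\log|c|_{\TT}\bigr),
\]
which does not follow; a direct invocation of Theorem~\ref{thm5} gives only $O\bigl(|c|_{\TT}^{\,3(n+3)}\log|c|_{\TT}\bigr)$. So the ``principal technical difficulty'' you anticipate---reorganizing the dynamic programming to process the $O(|c|_{\TT})$ long monotone pieces in bulk---is not addressed in the paper either; the stated exponent $n+3$ appears to be an unjustified (and likely erroneous) step there. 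Your space analysis, by contrast, is fine and matches the paper.
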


\begin{cor}\label{cor3}
Let $K \ge 1$ be a fixed integer and let $c$ be a  polygonal closed curve in the plane
$\mathbb R^2$ with given tessellation $\TT$ into unit squares such that $c$ has one
of the following two properties $\mathrm{(a)}$--$\mathrm{(b)}$.

$\mathrm{(a)}$ If $c_i, c_j$ are two nonparallel line segments of $c$ then their intersection point, if it exists,  has coordinates that are integer multiples of $\tfrac 1 K$.

$\mathrm{(b)}$ If $c_i$ is a line segment of $c$ and  $a_{i,x}, a_{i,y}$ are coprime integers such that
the line given by an equation $a_{i,x}x + a_{i,y}y = b_{i}$, where $b_i$ is an integer,
contains $c_i$, then $\max(|a_{i,x}|,|a_{i,y}|) \le K$.

 Then the area $A(c)$ defined by $c$ can be computed in deterministic space $O( (\log |c|_{\TT} )^3)$  or in deterministic time $O( |c|_{\TT}^{n+3}  \log |c|_{\TT} )$, where $n$ depends on $K$.

In particular, if $\TT_\ast$ is a tessellation  of the plane $\mathbb R^2$ into  equilateral  triangles of unit area,
or into regular hexagons of unit area, and $q$ is a finite closed path in $\TT_\ast$ whose edges are edges of $\TT_\ast$, then the area $A(q)$ defined by $q$ can be computed in deterministic space $O( (\log |q|)^3)$ or in deterministic time $O( |q|^5\log |q|)$.
\end{cor}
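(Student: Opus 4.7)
The plan is to deduce Corollary~\ref{cor3} from the ``in particular'' clause of Theorem~\ref{thm6} by showing that, under either hypothesis (a) or (b), $A(c)$ is a rational of the form $k/L$ with denominator $L$ depending only on $K$. Once this is in hand, the exponent $n := \lceil \log_2(2L+2)\rceil$ is a function of $K$ alone and satisfies $|c|_{\TT}^n > 2L$ whenever $|c|_{\TT} \ge 2$; the case $|c|_{\TT} \le 1$ is trivial, so Theorem~\ref{thm6} immediately yields $A(c)$ exactly in the stated deterministic space $O((\log|c|_{\TT})^3)$ or deterministic time $O(|c|_{\TT}^{n+3}\log|c|_{\TT})$.

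The denominator bound rests on the identity
\[
A(c) \; = \; \sum_{R} |w_c(R)| \cdot \mathrm{area}(R),
\]
where $R$ ranges over the bounded connected components of $\mathbb{R}^2 \setminus c$ and $w_c(R)$ is the winding number of $c$ about any point of $R$; this formula should be extractable from the proof of Theorem~\ref{thm6}, whose optimal homotopies sweep each region $R$ exactly $|w_c(R)|$ times. Every bounded region $R$ is a polygon whose vertices are either endpoints of segments of $c$---integer points, as they are vertices of $\TT$---or transversal intersections of two nonparallel segments $c_i, c_j$. Under (a) any such intersection lies in $(1/K)\mathbb{Z}^2$ by assumption. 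Under (b), Cramer's rule applied to the two defining lines produces intersection coordinates whose denominators divide $|a_{i,x}a_{j,y} - a_{i,y}a_{j,x}| \le 2K^2$, so all intersection points lie in $(1/M)\mathbb{Z}^2$ with $M := \mathrm{lcm}(1,2,\dots,2K^2)$. In either case the shoelace formula gives $\mathrm{area}(R) \in \tfrac{1}{2M^2}\mathbb{Z}$, whence $A(c) \in \tfrac{1}{L}\mathbb{Z}$ with $L := 2M^2$ (taking $M := K$ in case~(a)), completing this step.

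For the last assertion of the corollary, apply a fixed linear map $\Lambda : \mathbb{R}^2 \to \mathbb{R}^2$ of constant determinant that sends the vertex lattice of $\TT_\ast$ (after triangulating each hexagon into six triangles, if necessary) into $\mathbb{Z}^2$ in such a way that the three edge-directions of $\TT_\ast$ become $(1,0)$, $(0,1)$, and $(1,1)$. Then $c := \Lambda(q)$ is a polygonal closed curve in the unit-square tessellation $\TT$ whose segments lie on lines $y=b$, $x=b$, $y-x=b$ with $b \in \mathbb{Z}$, so hypothesis (b) holds with the absolute constant $K_0 = 1$. A direct check shows that each edge of $q$ maps to a segment of $\TT$-length $1$, so $|c|_{\TT} = |q|$; and $A(q) = |\det\Lambda|^{-1}A(c)$ with $|\det\Lambda|$ a fixed rational. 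Since $L$ is now a small absolute constant, one may take $n = 2$ in Theorem~\ref{thm6}, producing $A(q)$ in deterministic space $O((\log|q|)^3)$ and deterministic time $O(|q|^5\log|q|)$.

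The main obstacle I anticipate is not the denominator computation, which is routine, but a clean justification of the winding-number identity for $A(c)$: the infimum defining $A(c)$ ranges over all continuous homotopies of $c$ to a point, not merely polygonal or combinatorial ones, so one must verify that continuous homotopies cannot improve on the piecewise-linear lower bound. I expect this fact is already established inside the proof of Theorem~\ref{thm6}; granting it, the three steps above deliver the corollary.
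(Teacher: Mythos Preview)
Your approach is essentially the same as the paper's: both show that under (a) or (b) the area $A(c)$ lies in $\tfrac{1}{L}\mathbb{Z}$ for some $L$ depending only on $K$, then invoke the ``in particular'' clause of Theorem~\ref{thm6}; and both handle the triangle/hexagon tessellation by an affine map sending the triangular lattice to $\mathbb{Z}^2$ with edge directions $(1,0),(0,1),(1,1)$, reducing to the square case with $K=1$ and $n=2$. The only cosmetic differences are that the paper takes $K'=(2K^2)!$ in reducing (b) to (a) whereas you use $M=\operatorname{lcm}(1,\dots,2K^2)$, and the paper phrases the denominator bound via a triangle decomposition of $A(c)$ rather than the winding-number identity; your formulation is actually more explicit, and your honest flagging of the identity $A(c)=\sum_R |w_c(R)|\,\mathrm{area}(R)$ as requiring justification points to a step the paper simply asserts (``$A(c)$ is the sum of areas of triangles such as $t$'') without proof.
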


It is tempting to try to lift the restrictions of Corollary~\ref{cor3} to be able to compute, in
polylogarithmic space, the  area $A(c)$ defined by an arbitrary polygonal closed curve $c$ in the plane equipped with a tessellation $\TT$ into unit squares.
However, in the general situation, this idea would not work because the rational number $A(c)$ might have an exponentially large denominator, hence, $A(c)$ could take polynomial space just to store (let alone the computations), see
an example in the end of Sect.~10.

We remark in passing that there are decision problems in \textsf{NP}  that are not known to be
\textsf{NP}-complete or in
\textsf{P}, called \textsf{NP}-intermediate problems, that are solvable in polylogarithmic space.
For example, a restricted version of the \textsf{NP}-complete
clique problem asks whether a graph on $n$ vertices  contains a  clique with at most $[ \log n ]$ vertices,
where $[k]$ is the integer part of $k$, and this restriction is obviously a problem solvable in nondeterministic
space $O([ \log n ]^2)$. More natural examples of such \textsf{NP}-intermediate problems would be
a decision version of the problem on finding a minimum dominating set in a tournament,  \cite{MV}, \cite{PY},  and the problem on isomorphism of two finite groups given by their multiplication tables, \cite{BKLM}, \cite{LSZ}.

\section{Preliminaries}

If $U, V$ are words over an alphabet $\A^{\pm 1} := \A \cup \A^{-1}$,
then $U \overset 0 = V$ denotes the equality of $U, V$ as elements of the free group $\FF(\A)$ whose set of free generators is  $\A$.  The equality of natural images of words
$U, V$  in the group $\GG$, given by presentation \er{pr3}, is denoted $U \overset \GG = V$.

The letter-by-letter equality of words $U, V$ is denoted $U \equiv V$.
If $U \equiv a_{i_1}^{\e_1} \dots a_{i_\ell}^{\e_\ell}$, where  $a_{i_1}, \dots, a_{i_\ell} \in \A$ and $\e_1, \dots, \e_\ell \in \{ \pm 1\}$,  then the length of $U$ is $|U| =\ell$.
A nonempty word $U$ over  $\A^{\pm 1}$ is called {\em reduced} if $U$ contains no subwords of the form $a a^{-1}$, $a^{-1}a$, where $a \in \A$.

Let $\D$ be a 2-complex and let $\D(i)$ denote the set of nonoriented $i$-cells  of $\D$, $i=0,1,2$.
 We also consider the set $\vec \D(1)$ of oriented  1-cells of  $\D$.
If $e \in  \vec \D(1)$ then $e^{-1} $ denotes $e$ with the  opposite orientation,  note that $e \ne e^{-1}$.  For every $e \in \vec \D(1)$,  let $e_-$, $e_+$ denote the initial, terminal, resp., vertices of $e$. In particular, $(e^{-1})_- = e_+$ and $(e^{-1})_+ = e_-$. The closures of
$i$-cells  of  $\D$ are called {\em vertices, edges, faces} when  $i=0, 1, 2$, resp.

A path $p = e_1 \dots e_\ell$ in $\D$ is a sequence of oriented edges $e_1, \dots, e_\ell$ of $\D$
such that
$(e_i)_+ = (e_{i+1})_-$, $i =1,\dots, \ell-1$.
The length of a path $p= e_1 \dots e_\ell$ is $|p| = \ell$.
The initial vertex of $p$ is $p_- := (e_1)_-$ and the terminal vertex of $p$ is $p_+ := (e_\ell)_+$.
We allow the possibility that $|p| = 0$ and $p = p_-$.

A path $p$ is called {\em reduced} if $|p| > 0$ and
$p$ contains no subpath of the form $e e^{-1}$, where $e$ is an edge.
A path $p$ is called {\em closed} if $p_- = p_+$.

A {\em cyclic } path is a closed path with no distinguished initial vertex.
A path $p= e_1 \dots e_\ell$ is called {\em simple} if the  vertices
$(e_1)_-, \dots,   (e_\ell)_-, (e_\ell)_+$ are  all  distinct.
A closed path  $p = e_1 \dots e_\ell$    is  {\em simple} if the
vertices $(e_1)_-, \dots,   (e_\ell)_-$  are all distinct.

A {\em disk diagram}, also called a {\em van Kampen diagram},
$\D$ over a presentation \eqref{pr3} is a planar connected and
simply connected finite 2-complex which is equipped with a labeling function
$$
\ph : \vec \D(1) \to \A \cup \A^{-1} = \A^{\pm 1}
$$
such that, for every $e \in \vec \D(1)$,
$\ph(e^{-1}) = \ph(e)^{-1}$ and, for every face $\Pi$ of $\D$, if
$\p \Pi = e_1 \dots e_\ell$ is a boundary path of $\Pi$, where $e_1, \dots, e_\ell \in \vec \D(1)$,
then
$$
\ph(\p \Pi) :=   \ph(e_1) \dots \ph(e_\ell)
$$
is a cyclic permutation of one of the words in  $\R^{\pm 1} = \R \cup \R^{-1}$.

A disk diagram $\D$ over presentation \eqref{pr3} is always considered with an embedding
$\D \to \mathbb R^2$ into the plane $\mathbb R^2$. This  embedding makes it possible to define
positive (=counterclockwise) and  negative (=clockwise) orientations for  boundaries of
faces of $\D$, for the boundary path $\p \D$ of $\D$, and, more generally,
for  boundaries of   disk  subdiagrams of $\D$. It is convenient to assume, as in  \cite{Iv94}, \cite{Ol89}, that the boundary path $\p \Pi$ of every face $\Pi$ of a disk diagram $ \D$ has the positive orientation while the boundary path  $\p \D$ of  $\D$ has the negative orientation.

If $o \in \p \D$ is a vertex, let  $\p |_o \D$ denote a boundary path  of $\D$ (negatively oriented) starting (and ending) at $o$. Using this notation, we now state van Kampen lemma on geometric
interpretation of consequences of defining relations.

\begin{lem}\label{vk}  Let  $W$ be a nonempty word  over $\A^{\pm 1}$
and let a group $\GG$ be defined by presentation \er{pr3}.
Then $W \overset \GG = 1$ if and only if there is a disk diagram
over presentation \er{pr3} such that $\ph( \p |_o \D ) \equiv W$
for some vertex $o \in \p \D$.
\end{lem}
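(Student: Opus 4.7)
The plan is to prove the two implications separately; the forward direction is a construction from an algebraic expression, and the reverse direction is an induction on the number of faces of the diagram.

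For the forward direction, suppose $W \overset \GG = 1$, so that $W \in \N(\R)$ and there exist $R_1,\dots,R_k \in \R^{\pm 1}$ and $S_1,\dots,S_k \in \FF(\A)$ with $W \overset 0= S_1 R_1 S_1^{-1}\cdots S_k R_k S_k^{-1}$. First I would build a preliminary diagram $\D_0$ as a bouquet of $k$ ``lollipops" glued at a common vertex $o$: the $i$-th lollipop consists of a simple path (the ``stem") labeled by the word $S_i$ emanating from $o$, with a face $\Pi_i$ attached at its far end whose boundary cycle is labeled by the relator $R_i$. Reading $\p|_o \D_0$ around the bouquet then yields, letter-by-letter, the word $S_1 R_1 S_1^{-1}\cdots S_k R_k S_k^{-1}$. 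Since this word represents $W$ in $\FF(\A)$, it differs from $W$ only by a sequence of free cancellations of pairs $aa^{-1}$ on the boundary. Each such cancellation on the boundary corresponds to identifying (``folding") two adjacent boundary edges with inverse labels; performing all of these folds one by one yields a disk diagram $\D$ with $\ph(\p|_o \D) \equiv W$, as required.

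For the reverse direction, I induct on the number $n$ of faces of $\D$. If $n=0$, then $\D$ is a finite planar simply connected 2-complex with no faces, hence a finite tree, and it is well known that the boundary cycle of a tree, read off as a word, freely reduces to the empty word; thus $W \overset 0= 1$ and in particular $W \overset\GG = 1$. For $n \geq 1$, I locate a face $\Pi$ whose boundary meets $\p\D$, or more generally connect a boundary vertex to $\p\Pi$ by a simple path $T$ in $\D$ from $o$ to some vertex $v \in \p\Pi$. Cutting $\D$ along $T$ produces a diagram whose boundary path, read from $o$, has the form $U_1 \cdot T \cdot \p|_v \Pi \cdot T^{-1} \cdot U_2$, where $U_1 U_2$ is the boundary word of the diagram $\D'$ obtained from $\D$ by removing the interior of $\Pi$ and contracting $T$ away. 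Since $\ph(\p|_v \Pi)$ is a cyclic permutation of some $R \in \R^{\pm 1}$, the word $\ph(T \cdot \p|_v \Pi \cdot T^{-1})$ is a conjugate of $R$ in $\FF(\A)$. Applying the induction hypothesis to $\D'$ (which has $n-1$ faces and boundary label $U_1 U_2$) gives $U_1 U_2 \overset \GG = 1$, and combining this with $\ph(T \p|_v\Pi T^{-1}) \overset\GG = 1$ yields $W \overset\GG = 1$.

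The main obstacle will be carrying out the boundary reductions in the forward direction rigorously: the sequence of folds must be described so that at each step one still has a legitimate planar disk diagram (simply connected, with a single outer face), and so that the labeling function $\ph$ on oriented edges remains consistent with $\ph(e^{-1}) = \ph(e)^{-1}$ after identifications. This is a standard but slightly delicate verification, typically handled by observing that folding a pair of adjacent oppositely-oriented boundary edges does not change planarity or simple connectedness. A secondary technical point in the reverse direction is producing the path $T$ inside $\D$; since $\D$ is connected, such a path exists, and one should keep track of orientations so that the resulting boundary word factors exactly as described.
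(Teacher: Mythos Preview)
Your proposal is correct and follows the standard argument for van Kampen's lemma. The paper itself does not give a proof; it simply states that the proof is straightforward and refers the reader to \cite{Iv94}, \cite{Ol89}, and \cite{LS}, which contain precisely the lollipop-bouquet construction with boundary folding for the forward direction and the face-removal induction for the reverse direction that you outline.
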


\begin{proof} The proof is straightforward, for details the reader is referred to \cite{Iv94}, \cite{Ol89}, see also \cite{LS}.
\end{proof}

A disk diagram $\D$ over presentation  \er{pr3} with    $\ph( \p  \D ) \equiv W$
is called {\em minimal } if $\D$ contains a minimal number of faces among all disk
diagrams $\D'$ such that  $\ph( \p  \D' ) \equiv W$.

A disk diagram $\D$ over \er{pr3} is called {\em reduced} if $\D$ contains
no two faces $\Pi_1$, $\Pi_2$ such that there is a vertex $v \in \p \Pi_1$, $v \in \p \Pi_2$
and  the boundary paths $\p |_v \Pi_1$, $\p |_v \Pi_2$ of the faces,
starting at $v$, satisfy  $\ph ( \p |_v \Pi_1 ) \equiv \ph (  \p |_v \Pi_2)^{-1}$.
If $\D$ is not reduced and $\Pi_1$, $\Pi_2$  is a pair of faces that violates
the definition of being reduced for $\D$, then these faces $\Pi_1$, $\Pi_2$  are
called  a {\em  reducible pair} (cf. similar definitions in  \cite{Iv94}, \cite{LS}, \cite{Ol89}).  A reducible pair of faces $\Pi_1$, $\Pi_2$ can be removed from $\D$ by a surgery that cuts through the vertex $v$ and identifies the boundary paths
$\p |_v \Pi_1$ and $(\p |_v \Pi_2)^{-1}$, see Fig.~2.1, more details can be found in
\cite{LS}, \cite{Ol89}.
As a result, one obtains a disk diagram $\D'$ such that
$\ph( \p  \D' ) \equiv \ph( \p  \D )$ and $|\D'(2) | = |\D(2) | -2$,
where $|\D(2)|$ is the number of faces in $\D$.
In particular, a minimal disk diagram is always reduced.

\begin{center}
\usetikzlibrary{arrows}
\begin{tikzpicture}[scale=.86]
\node at (-1,-.7) {$v$};
\node at (-2,0) {$\Pi_1$};
\node at (0,0) {$\Pi_2$};
\draw  (-1,0)[fill = black]circle (0.05);
\tikzstyle{myedgestyle} = [-open triangle 45]
\draw [-open triangle 45]  (1.5,0) -- (2.4,0);
\draw  (3,-1.2) -- (3,1.2);
\draw  (3,-1.2)[fill = black]circle (0.05);
\draw  (3,1.2)[fill = black]circle (0.05);
\node at (3.5,-1.5) {$v''$};
\node at (3.5,1.5) {$v'$};
\node at (1,-2) {Fig.~2.1};
\draw  (0,0) ellipse (1 and 0.7);
\draw  (-2,0) ellipse (1 and 0.7);
\end{tikzpicture}
\end{center}

Note that if   $o \in \p \D$ is a vertex and  $\p |_o \D = q_1 q_2$ is
a factorization of the boundary path $\p |_o \D$  of $\D$, $q_1$ is closed,
$0 < |q_1|, |q_2| <   | \p  \D |$,  then  the notation
$\p |_{o} \D $    is in fact
ambiguous,  because the path $q_2 q_1$ also has the form $\p |_o \D$. To avoid this (and other) type of ambiguity, for a given pair $(W, \D)$, where $W \overset \GG = 1$ and $\D$ is a disk diagram for $W$ as in
Lemma~\ref{vk},  we consider a ``model"  simple  path $P_W$ such that $| P_W | = |W|$,
$P_W$  is equipped with a labeling function $\ph : \vec P_W(1) \to \A^{\pm 1} $ on the
set $\vec P_W(1)$ of its oriented edges so that $\ph(e^{-1}) = \ph(e)^{-1}$ and $\ph( P_W) \equiv W$.

It will be convenient to identify  vertices of $P_W$ with integers $0, 1, \dots, |W|$
so that $(P_W)_- =0$ and the numbers corresponding to vertices increase
by one as one goes along $P_W$ from  $(P_W)_- =0$ to $(P_W)_+ =|W|$.
This makes it possible to compare vertices
$v_1, v_2 \in P_W(0)$ by the standard order $\le$ defined on the integers,
consider vertices $v_1\pm 1$ etc.

For a given pair $(W, \D)$, where $W \overset \GG = 1$, let
$$
\al : P_W \to \p \D
$$
be a continuous cellular map that preserves dimension of cells,
$\ph$-labels of edges, and has the property that $\al((P_W)_-) = \al(0) = o$,
where $o$ is a fixed vertex of $\p \D$ with    $\ph( \p |_o \D ) \equiv W$.

If $v$ is a vertex of $P_W$, let
$$
P_W(\ff, v)  = p_1p_2
$$
denote the factorization of   $P_W$ defined by $v$ so that $(p_1)_+ = v$.
Analogously, if $v_1, v_2$ are vertices of  $P_W$ with $v_1 \le v_2$,
we let
$$
P_W(\ff, v_1, v_2)  = p_1p_2p_3
$$
denote the factorization of   $P_W$ defined by $v_1, v_2$ so that $(p_2)_- = v_1$
and $(p_2)_+ = v_2$. Note that if $v_1 = v_2$, then $p_2 = \{ v_1\}$ and $|p_2 | = 0$.
Clearly, $|p_2| = v_2 - v_1$.

Making use of the introduced notation, consider a vertex $v$ of $P_W$   and let $P_W(\ff, v)  = p_1p_2$. Define $\p |_v \D : = \al(p_2) \al(p_1)$. This notation $\p |_v \D $, in place of   $\p |_{\al(v)} \D $, will help us to avoid the potential ambiguity when writing  $\p |_{\al(v)} \D $. In particular, if $\bar W$
is a  cyclic permutation of $W$ so that the first $k$, where $0 \le k \le |W|-1$, letters of $W$ are put at the end of $W$, then   $\ph(\p |_{k} \D) \equiv \bar W$. It is clear that  $\ph(\p |_{0} \D) \equiv W$.

Consider the following property.

\begin{enumerate}
\item[(A)] Suppose that $\D$ is a \ddd\ over \er{pr1}. If $\Pi$ is a face of $\D$ and
$e \in \p \Pi$ is an edge then $e^{-1} \in \p \D$.
 \end{enumerate}

We now state a lemma in which we record some simple properties of \ddd s over  \er{pr1}  related to property (A).

\begin{lem}\label{vk2} Let $\D$ be a disk diagram over presentation \er{pr1}. Then the following hold true.

$(\mathrm{a})$  If $\D$  has property (A), then the degree of every vertex of
$\D$ is at most $2|\p \D|$, the boundary path $\p \Pi$ of every face $\Pi$ of $\D$ is simple, and
$$
|\D(2)| \le  | \p \D | , \quad  \sum_{\Pi \in \D(2)} |\p \Pi | \le | \p \D | .
$$

$(\mathrm{b})$  There exists a  disk diagram $\D'$ over \er{pr1} such that $\ph( \p \D') \equiv \ph( \p \D)$, $| \D'(2) | \le |\D(2)|$  and $\D'$ has property (A).
\end{lem}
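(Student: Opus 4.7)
The plan is to treat part (a) by a direct combinatorial analysis of how each edge of $\D$ can contribute to $\p \D$ under hypothesis (A), and part (b) by induction on $|\D(2)|$, using a merging operation that exploits the special form of the relators in \eqref{pr1}.

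For part (a), I would first argue that property (A) precludes any edge of $\D$ from being shared between two distinct faces: if $e$ separated faces $\Pi_1$ and $\Pi_2$ in the plane, then both orientations of $e$ would be interior oriented edges and could not appear on the boundary walk $\p \D$, contradicting (A). Consequently, every edge of $\D$ is either a filament edge (both topological sides exterior, contributing both orientations to $\p \D$) or a face-boundary edge (one side a unique face, the other exterior, contributing exactly one orientation to $\p \D$). From this $|\D(1)| \le |\p \D|$, so the degree of any vertex is bounded by the total number of oriented edges $2|\D(1)| \le 2|\p \D|$. Each face-boundary edge appears exactly once in the boundary of a single face, so $\sum_\Pi |\p \Pi|$ equals the number of face-boundary edges and is at most $|\p \D|$; and since relators are nonempty, $|\D(2)| \le \sum_\Pi |\p \Pi| \le |\p \D|$. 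Simplicity of $\p \Pi$ then follows from the observation that, under (A), $\bar \Pi$ is embedded in the plane with its entire boundary lying on $\p \D$; a repeated vertex in $\p \Pi$ would pinch $\bar \Pi$ and, after an Euler-characteristic accounting against the simple connectivity of $\D$, would contradict property (A).

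For part (b), I would induct on $|\D(2)|$, with base case vacuous. If $\D$ fails property (A), pick an edge $e$ shared by two faces $\Pi_1, \Pi_2$. Because every relator of \eqref{pr1} has the form $a_i^{n_i}$, the labels along $\p \Pi_1$ and $\p \Pi_2$ use only the single generator $a_i^{\pm 1}$, so $\Pi_1$ and $\Pi_2$ correspond to relators $a_i^{n_1}$ and $a_i^{n_2}$ with $n_1, n_2 \in E_i$. If $n_1 = n_2$, then $\Pi_1, \Pi_2$ form a reducible pair in the sense defined in the paper and can be cancelled by the standard surgery, dropping $|\D(2)|$ by two. If $n_1 \ne n_2$, which is only possible when $E_i = n_i \mathbb N$, the union $\bar \Pi_1 \cup \bar \Pi_2$ along $e$ is a disk whose boundary word is $a_i^{n_1-1} a_i^{-(n_2-1)}$, equal in $\FF(\A)$ to $a_i^{n_1 - n_2}$; since $|n_1 - n_2| \in n_i \mathbb N = E_i$, I can replace this disk by a single new face with relator $a_i^{\pm|n_1 - n_2|}$ together with a short tree of cancelling edges, lowering $|\D(2)|$ by at least one. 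In either case the modification is purely interior, so $\ph(\p \D)$ is preserved letter by letter, and the inductive hypothesis finishes the argument.

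The delicate step is the merging in (b) when $\Pi_1$ and $\Pi_2$ happen to share more than one edge or when other faces or filaments are incident to the shared edge; in that case $\bar \Pi_1 \cup \bar \Pi_2$ need not be a simply connected region and the naive replacement fails. To handle this I would either pick the pair $(\Pi_1, \Pi_2, e)$ extremally (for example, by choosing an outermost shared edge in the planar order) or perform an initial sequence of local surgeries that separate the two faces along a single edge before merging. The analogous planar-topology issue underlies the simplicity of $\p \Pi$ in (a), and I would handle both with the same style of local surgery and Euler-characteristic bookkeeping.
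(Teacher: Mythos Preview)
Your part (a) is essentially correct and close to the paper's argument, except that the simplicity of $\p\Pi$ is left at the level of ``Euler-characteristic bookkeeping'', which is vaguer than it needs to be. The paper gives a one-line argument instead: if $\p\Pi = u_1u_2$ with both $u_i$ nontrivial closed subpaths, then one of the loops, say $u_2$, encloses a disk subdiagram $\D_2$ not containing $\Pi$; for each edge $e$ of $u_2$ the inverse edge $e^{-1}$ then lies on the $\D_2$ side and so cannot belong to $\p\D$, contradicting (A) directly.

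For part (b) the overall induction is right, but the ``delicate step'' you flag is the actual crux, and neither of your suggested patches (pick an outermost shared edge, or do unspecified preliminary surgeries) is carried far enough to close the gap. The paper resolves it with two devices you are missing. First, it treats separately the case of a face $\Pi$ with non-simple boundary: writing $\p\Pi = u_1u_2$ as above, the free-product structure of $\GG_2$ forces $\ph(u_2)\equiv a_i^{\e k_2}$ with $k_2\in E_i$ and $k-k_2\in E_i$; one then deletes the enclosed subdiagram $\D_2$ (which necessarily has faces, since its boundary word is a nontrivial power of $a_i$) and replaces $\Pi$ by a single smaller face with label $a_i^{\e(k-k_2)}$. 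Second, once every $\p\Pi$ is simple and distinct faces $\Pi_1,\Pi_2$ share an edge, the paper does \emph{not} work with $\bar\Pi_1\cup\bar\Pi_2$ at all. Instead it takes a disk subdiagram $\Gamma$ containing $\Pi_1,\Pi_2$ that is minimal with respect to $|\Gamma(2)|+|\Gamma(1)|$; minimality together with the simplicity of $\p\Pi_1,\p\Pi_2$ forces $\p\Gamma = r_1r_2$ with each $r_j^{-1}$ a subpath of $\p\Pi_j$, whence $\ph(\p\Gamma)\overset{0}{=}a_i^{\e k}$ with $k\in E_i$, and the whole of $\Gamma$ is replaced by a single face (or by nothing if $k=0$). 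This minimal-$\Gamma$ trick is precisely what lets one avoid ever asking whether $\bar\Pi_1\cup\bar\Pi_2$ is a disk, which is where your argument currently stalls.
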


\begin{proof} (a) Let $v$ be a vertex of $\D$. By property (A), $v \in \p \D$ and if $e$ is an edge such that $e_- = v$, then either $e \in \p \D$ or $e \in \p \Pi$, where $\Pi$ is a face of $\D$,  and $e^{-1} \in \p \D$. This implies that $\deg v \le 2|\p \D|$.

If the  boundary path $\p \Pi$ of a face $\Pi$ of $\D$ is not simple, then there is a factorization $\p \Pi = u_1 u_2$, where $u_1, u_2$ are closed subpaths of $\p \Pi$ and
$0 < |u_1|, |u_2| < |\p \Pi|$, see Fig.~2.2.  Clearly, the edges of one of the paths $u_1, u_2$ do not belong to the boundary $\p \D$ of $\D$, contrary to property (A) of $\D$.

\begin{center}
\begin{tikzpicture}[scale=.44]
\draw  (0,-1) ellipse (2.8 and 3);
\draw  (0,-2) ellipse (1.5 and 2);
\draw [-latex]  (.1,2) -- (-.3,2);
\draw [-latex]  (-.1,0) -- (.2,0);
\draw  (0,-4) [fill = black] circle (.07);
\node at (0,1.4) {$u_1$};
\node at (0,-.7) {$u_2$};
\node at (1.6,0) {$\Pi$};
\node at (0,-5) {Fig. 2.2};
\node at (5,-3) {$\partial \Pi = u_1u_2$};
\node at (0,-2.2) {$\Delta_2$};
\end{tikzpicture}
\end{center}

The inequality $|\D(2)| \le  | \p \D |$ and its stronger version $\sum_{\Pi \in \D(2)} |\p \Pi | \le | \p \D |$ are immediate from property (A).

\smallskip
(b) Suppose that a disk diagram $\D$ over \er{pr1} does not have property (A).

First assume that $\D$ contains a face $\Pi$ whose  boundary path $\p \Pi$ is not simple.
Then, as in the proof of part (a),   there is a factorization $\p \Pi = u_1 u_2$, where $u_1, u_2$ are closed subpaths of $\p \Pi$ and
$0 < |u_1|, |u_2| < |\p \Pi|$.
Renaming $u_1$ and $u_2$ if necessary, we may assume that $u_2$ bounds  a disk subdiagram $\D_2$ of $\D$ such that $\D_2$ does not contain $\Pi$, see Fig.~2.2. If $u_2$ is not simple, then we can replace $u_2$ by its closed subpath $u'_2$ such that
$0 < |u'_2| < |u_2|$ and $u'_2$ bounds  a disk subdiagram $\D'_2$  that contains no $\Pi$.
Hence, choosing a shortest path $u_2$ as above, we may assume that $u_2$ is simple.
By Lemma~\ref{vk} applied to $\D_2$ with  $\p \D_2 = u_2$, we have
\begin{equation}\label{ud2}
   \ph(u_2) \equiv \ph(\p \D_2)  \overset{\GG_2} = 1.
\end{equation}

Denote $\ph(\p \Pi) \equiv a_i^{\e k}$, where $\e = \pm 1$, $k \in E_i$.

Note that the group ${\GG_2}$ is the free product of cyclic groups generated by the images of generators $a_1, \dots, a_m$ and the image of $a_j$ has order $n_j >0$ if $E_j \ne \{ 0\}$
or the image of $a_j$ has infinite order if $E_j = \{ 0\}$. Hence, an equality
$a_j^\ell \overset{\GG_2} = 1$, where $\ell \ne 0$, implies that $E_j \ne \{ 0\}$ and
$n_j$ divides $\ell$.

It follows from \er{ud2} and $0 < |u_2| < |\p \Pi|$ that $\ph(u_2) \equiv a_i^{\e k_2}$, where $k_2 \in E_i$, $k_2 < k$. Therefore, $E_i = \{ n_i, 2n_i, \dots \}$ and  $\ph(u_1) \equiv a_i^{\e (k-k_2)}$, where $k-k_2 \in E_i$. Hence, we can consider a face $\Pi'$ such that $\ph(\p \Pi') \equiv \ph(u_1) \equiv a_i^{\e (k-k_2)}$.
Now we take the subdiagram $\D_2$ out of $\D$ and replace the face $\Pi$ with $\ph(\p \Pi) \equiv a_i^{\e k}$ by the face  $\Pi'$ with $\ph(\p \Pi') \equiv a_i^{\e (k-k_2)}$.
Doing this results in a \ddd\ $\D'$ such that $\ph(\p \D')\equiv \ph(\p \D')$ and
$| \D'(2) | < | \D(2) |$ as $| \D_2(2) | >0$.

Assume that, for every face $\Pi$ in $\D$, the boundary path $\p \Pi$ is simple. Also, assume that the property (A) fails for $\D$. Then there are faces  $\Pi_1$, $\Pi_2$, $\Pi_1 \ne \Pi_2$, and an edge $e$ such that $e \in \p \Pi_1$ and $e^{-1} \in \p \Pi_2$.
Consider a disk subdiagram $\Gamma$ of $\D$ that contains $\Pi_1$, $\Pi_2$ and
$\Gamma$  is minimal with this property relative to  $|\Gamma(2)|+|\Gamma(1)|$.
Since  $\p \Pi_1$,  $\p \Pi_2$ are simple paths, it follows that $\p \Gamma = r_1 r_2$,
where $r_1^{-1}$ is a subpath of  $\p \Pi_1$ and $r_2^{-1}$ is a subpath of  $\p \Pi_2$.
Denote $\ph(\p \Pi_1) \equiv  a_i^{\e_1 k_1}$, where $\e_1 = \pm 1$ and $k_1 \in E_i$.
Clearly, $\ph(\p \Pi_2) \equiv  a_i^{-\e_1 k_2}$, where  $k_2 \in E_i$ and
$$
\ph(\p \Gamma) \equiv \ph(r_1) \ph(r_2) \overset{0} =   a_i^{\e k} ,
$$
where  $\e = \pm 1$ and $k \ge 0$. As above, we observe that $k \in E_i$ following from
$a_i^{\e k} \overset{\GG_2} = 1$. Hence, we may consider a disk diagram $\Gamma'$ such that
$\p \Gamma' = r'_1 r'_2$, where $\ph(r'_1)\equiv \ph(r_1)$,  $\ph(r'_2)\equiv \ph(r_2)$,
and $\Gamma'$ contains a single face $\Pi$  such that $\ph(\p \Pi) \equiv  a_i^{-\e k}$ if $k \ne 0$ or $\Gamma'$ contains no faces if $k = 0$. We take the subdiagram $\Gamma$ out of $\D$ and replace $\Gamma$ with $\Gamma'$, producing thereby a \ddd\  $\D'$ such that
$\ph(\p \D')\equiv \ph(\p \D')$ and $| \D'(2) | < | \D(2) |$.

We now observe that if  property (A) fails for $\D$ then there is a face $\Pi$ in $\D$ such that $\p \Pi$ is not simple or there are distinct faces $\Pi_1$, $\Pi_2$ and an edge $e$ such that $e \in \p \Pi_1$ and $e^{-1} \in \p \Pi_2$. In either case, as was shown above, we can find a \ddd\ $\D'$ such that $\ph(\p \D')\equiv \ph(\p \D')$ and $| \D'(2) | < | \D(2) |$.
Now obvious induction on $| \D(2) |$ completes the proof of part (b).
\end{proof}

In view of Lemma~\ref{vk2}(b), we will be assuming
in Sects. 4--5, 8 that if $\D$ is a \ddd\ over presentation \er{pr1}, then $\D$  has property  (A).

\section{Proof of Proposition \ref{prp1}}

\begin{P1}
$(\mathrm{a})$ There exists a decidable group presentation
\eqref{pr3}  for which the word problem is solvable while the bounded  and precise  word problems are not solvable.

$(\mathrm{b})$ If the bounded  word problem is solvable for \eqref{pr3}, then the precise  word problem is also solvable.

$(\mathrm{c})$ For every finite  group  presentation
\eqref{pr3}, the bounded  word problem is in ${\textsf{NP}}$, i.e.,   it can be solved in nondeterministic polynomial time, and the precise  word problem is in  \textsf{PSPACE}, i.e., it can be solved in polynomial space.

$(\mathrm{d})$ There exists a  finite  group  presentation
\eqref{pr3} for which the bounded  and precise     word problems are solvable while the word problem is not solvable.

$(\mathrm{e})$ There exists a finitely  presented group
\eqref{pr3} for which the bounded  word problem is \textsf{NP}-complete
and the  precise word problem is \textsf{NP}-hard.
\end{P1}

\begin{proof}
(a) We will use the construction of \cite[Example~3]{GI08} suggested by
 C.~Jockush and I.~Kapovich.  Consider the group presentation
\begin{equation}\label{jkp}
 \langle \ a, b \  \| \ a^i=1, \  a^i b^{k_i}=1,  \ \
 i \in \mathbb N \,   \rangle  ,
 \end{equation}
where $\mathbb K = \{ k_1, k_2, \dots \}$ is a recursively
enumerable but not recursive subset of the set of natural numbers $\mathbb N$ with the
indicated enumeration and $k_1 =1$. It is clear that the
set of relations is decidable  and this presentation defines the
trivial group, hence  the word problem is solvable for \er{jkp}. On the other
hand, it is easy to see that the bounded word problem for a pair  $(b^k, 2)$, where $k \in \mathbb N$, holds true
if and only if $k \in \mathbb K$. Analogously, the precise word problem for a pair  $(b^k, 2)$ holds true
if and only if $k \in \mathbb K$.  Since the set $\mathbb K$ is not
recursive, it follows that both the bounded word problem and  the precise word problem for presentation \er{jkp} are unsolvable.
\smallskip

(b) Note that the \PWPP\ holds true for a pair $(W, n)$  \ifff\
the \BWPP\ is true for $(W, n)$  and the \BWPP\ is false for $(W, n-1)$.  This remark means that the solvability
of the \BWPP\ for   \er{pr3} implies the solvability of the \PWPP\ for   \er{pr3}. On the other hand, the \BWPP\ holds for a pair $(W, n)$  \ifff\
the \PWPP\ holds  for $(W, k)$ with some $k \le n$. This remark  means that the solvability
of the \PWPP\ for  \er{pr3} implies the solvability of the \BWPP\ for   \er{pr3}, as required.
\smallskip

(c)  Suppose that  presentation  \er{pr3} is finite, i.e., both $\A$ and $\R$ are finite, and  we are given a pair
 $(W, 1^n)$. It follows from definitions and  Lemma~\ref{vk} that the \BWPP\ holds for the pair    $(W, 1^n)$ \ifff\ there is a disk diagram  $\D$ such that $\ph( \p \D) \equiv W$ and $ | \D(2) |  \le  n$.
 Observe that $ | \vec \D(1) |  \le  M n+ |W|$,  where $M = \max \{ |R| \, : \, R \in \R \}$ is a constant. Therefore, the size
 of a disk diagram  $\D$ with $\ph( \p \D) \equiv W$ is bounded by a linear function in $n +|W|$ and such a
 diagram  $\D$  can be used as a certificate to verify in polynomial time that the \BWPP\ holds for the pair   $(W, 1^n)$.
 Thus the \BWPP\ for   finite presentation  \er{pr3} is in \textsf{NP}.

Recall that the \PWPP\ holds  for a pair $(W, 1^n)$  \ifff\  the \BWPP\ is true for $(W, 1^n)$
and the \BWPP\ is false for $(W, 1^{n-1})$. As we saw above, the  \BWPP\  for
\er{pr3} is in \textsf{NP}, hence, the complement of the  \BWPP\  for
\er{pr3} is in \textsf{coNP}. Since both \textsf{coNP} and \textsf{NP} are subsets of \textsf{PSPACE},
it follows that the  \PWPP\  for   finite presentation  \er{pr3} is in \textsf{PSPACE}.
\smallskip

(d) According to Boone \cite{BooneA}, \cite{Boone}   and Novikov \cite{NovikovA}, \cite{Novikov}, see also  \cite{LS}, there exists a finite group presentation  \er{pr3} such that the word problem for this presentation is not solvable. In view of part (c) both the \BWPP\ and  \PWPP\ for this presentation are solvable.
\smallskip

(e) According to Birget, Sapir, Ol'shanskii and Rips \cite{BORS}, there exists a finite group presentation  \er{pr3}
whose isoperimetric function is bounded by a polynomial $p(x)$ and for which the word problem is \textsf{NP}-complete.
It follows from definitions that if $W \overset \GG = 1 $ and $\D$ is a minimal diagram over  presentation  \er{pr3}
such that  $\ph( \p \D) \equiv W$ then $ | \D(2) |  \le  p(|W|)$. Therefore, the \BWPP , whose input is  $(W, 1^n)$,
where $n \ge  p(|W|)$, is equivalent to the  word problem, whose input is $W$.
Since the latter problem is  \textsf{NP}-complete,  it follows that the  \BWPP\  for  \er{pr3} is \textsf{NP}-hard.
By part (c), the  \BWPP\  for  \er{pr3} is in \textsf{NP}, whence the \BWPP\  for  \er{pr3} is \textsf{NP}-complete.

Note that the word problem for given word $W$ is equivalent to the  disjunction of the claims that the \PWPP\  holds for the
pairs $(W, 1^1), (W, 1^2), \dots$, $(W, 1^{p(|W|)})$. Since $p(x)$ is a polynomial, it follows that the  \PWPP\ for
this presentation  \er{pr3} is \textsf{NP}-hard.
\end{proof}

\section{Calculus of Brackets for Group Presentation \eqref{pr1}  }

As in Theorem~\ref{thm1},  consider a group presentation of the form
\begin{equation*}\tag{1.2}
    \GG_2 =  \langle \  a_1, \dots, a_m  \ \|  \   a_{i}^{k_i} =1, \ k_i \in E_i,   \ i = 1, \ldots,  m  \,   \rangle ,
\end{equation*}
where, for  every $i$, one of the following holds: $E_i = \{ 0\}$,  or, for some
integer $n_i >0$,  $E_i = \{ n_i \}$, or $E_i = n_i \mathbb N = \{ n_i, 2n_i, 3n_i,\dots  \}$.

Suppose that $W$ is a nonempty word over $\A^{\pm 1}$, $W \overset {\GG_2} = 1$
and  $\D$ is a  disk diagram
over presentation  \eqref{pr1} such that $\ph(\p |_0 \D) \equiv W$ and $\D$ has property (A). Recall that the existence of such a diagram $\D$ follows from Lemmas~\ref{vk}--\ref{vk2}(b).

\begin{lem}\label{lem1} Suppose that $\D$ is a disk diagram over  presentation  \eqref{pr1} and
 $\D$ contains no faces, i.e.,  $\D$ is a tree or, equivalently,  $\D$ is a disk diagram
 over  presentation  $F(\A) = \langle \  a_1, \dots, a_m  \ \|  \  \varnothing  \   \rangle$ of the free group $F(\A)  $ with no relations,  and assume that $\ph(\p |_0 \D) \equiv W$, where $|W| > 2$.  Then there are vertices $v_1, v_2 \in P_W$  such that   $v_1< v_2$,
 $\al(v_1) = \al(v_2)$ and if $P_W(\ff,v_1, v_2) = p_1 p_2 p_3$
 is the factorization of $P_W$ defined by $v_1, v_2$, then
\begin{equation*}
 \min(|p_2|,  |p_1|+|p_3|) \ge \tfrac 13 | \p |_0 \D | =  \tfrac 13  |W| .
\end{equation*}
\end{lem}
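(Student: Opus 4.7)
Since $\D$ contains no faces, it is a finite tree, and the boundary walk $\p|_0\D$ is an Euler tour that traverses every edge of $\D$ exactly twice; hence $|W|=2|\D(1)|$ is even, and the hypothesis $|W|>2$ forces $|W|\ge 4$. The required bound $\min(|p_2|,|p_1|+|p_3|)\ge\tfrac13|W|$ is equivalent to asking that $v_2-v_1\in[\tfrac13|W|,\tfrac23|W|]$, so the task reduces to locating two distinct times at which the walk $\al\colon P_W\to\p\D$ returns to the same vertex of $\D$, with a gap in this middle third.

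The plan is to apply the classical Jordan centroid argument to $\D$. I pick a vertex $u$ of $\D$ such that every connected component of $\D\setminus\{u\}$ contains at most $\lfloor|V(\D)|/2\rfloor$ vertices; such a $u$ exists in every finite tree having at least one edge. Because $|W|\ge 4$ forces $\D$ to have at least three vertices, $u$ has degree $\ge 2$ in $\D$, so the preimage set $\al^{-1}(u)=\{i_1<i_2<\dots<i_k\}$ has size $k\ge 2$. Each consecutive gap $g_j:=i_{j+1}-i_j$ equals twice the size of the component of $\D\setminus\{u\}$ into which the walk excurses between times $i_j$ and $i_{j+1}$, and the integer-valued centroid bound $2\lfloor|V(\D)|/2\rfloor\le\tfrac23|W|$ (which one checks for every even $|W|\ge 4$, including the tight cases $|W|=4,6$) gives $g_j\le\tfrac23|W|$. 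The ``outer'' gap $i_1+(|W|-i_k)$ is twice the size of the component of $\D\setminus\{u\}$ containing $o$ (it vanishes exactly when $u=o$) and is bounded the same way, whence $i_k-i_1\ge\tfrac13|W|$.

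Write $n=|W|$. A one-line pigeonhole on the partial sums $s_j:=i_j-i_1$ then finishes the proof. If some $g_j$ already lies in $[\tfrac n3,\tfrac{2n}{3}]$, then $(v_1,v_2):=(i_j,i_{j+1})$ works immediately. Otherwise the centroid bound forces every $g_j<\tfrac n3$, so the partial sums step by strictly less than $\tfrac n3$ from $s_1=0$ up to $s_k\ge\tfrac n3$; consequently the smallest index $j^{\ast}$ with $s_{j^{\ast}}\ge\tfrac n3$ automatically satisfies $s_{j^{\ast}}<\tfrac n3+\tfrac n3=\tfrac{2n}{3}$, and $(v_1,v_2):=(i_1,i_{j^{\ast}})$ completes the argument. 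The only delicate step is verifying the centroid bound $g_j\le\tfrac23|W|$ at the borderline values $|W|=4,6$, where the continuous inequality $(|V(\D)|)/2\le\tfrac{2n}{3}$ is not quite sharp; this is precisely where one must appeal to the integer-valued $2\lfloor|V(\D)|/2\rfloor$ rather than the weaker $|V(\D)|$, and it is handled by a short direct inspection.
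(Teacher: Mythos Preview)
Your argument is correct and takes a genuinely different route from the paper's.

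The paper proceeds by an extremal argument: it picks a pair $(v_1,v_2)$ with $\al(v_1)=\al(v_2)$ that \emph{maximizes} $\min(|p_2|,|p_1|+|p_3|)$, assumes this maximum is below $\tfrac13|W|$, and derives a contradiction by examining the edge decomposition of the boundary walk at the common image vertex and exhibiting a strictly better pair through a somewhat lengthy case analysis (handling the cases $|p_2|\ge|p_1|+|p_3|$ and $|p_2|<|p_1|+|p_3|$ separately, and within each case dealing with large versus small branches).

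Your approach instead invokes the tree centroid directly: pick $u$ so that every component of $\D\setminus\{u\}$ has at most $\lfloor|V(\D)|/2\rfloor$ vertices, translate this into the bound $g_j\le\tfrac23|W|$ on consecutive return gaps, and finish with a partial-sum pigeonhole. This is shorter, avoids the contradiction scaffolding, and makes the structural reason for the $\tfrac13$--$\tfrac23$ split transparent (it is the standard balanced-separator property of trees). The paper's argument, by contrast, is self-contained in the sense that it does not quote the centroid theorem, and its extremal framework is reused later in the paper when Lemma~4.1 is lifted to diagrams with faces (Lemma~4.2). Your centroid viewpoint would adapt there as well, but the paper's ``pick the best pair and improve it'' template is what actually gets recycled.

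One small remark: your verification that $2\lfloor|V(\D)|/2\rfloor\le\tfrac23|W|$ for all even $|W|\ge4$ is exactly right, and you are correct that the floor is essential at $|W|=4$; the paper sidesteps this by simply checking $|W|\le6$ by hand and assuming $|W|>6$ for the main argument.
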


\begin{proof} It is easy to  verify that if $|W| \le 6$, then Lemma~\ref{lem1} is true. Hence, we may assume that $|W| > 6$.

For every pair $v'_1, v'_2$ of vertices of $P_W$ such that $v'_1 < v'_2$ and
$\al(v'_1) = \al(v'_2)$, consider the factorization  $P_W(\ff,v'_1, v'_2) = p'_1 p'_2 p'_3$ and pick such a pair that maximizes
$\min(|p'_2|,  |p'_1|+|p'_3|)$. Let $v_1, v_2$ be such a  maximal pair and
denote  $P_W(\ff, v_1, v_2) = p_1 p_2 p_3$.
Arguing on the contrary, assume that
\begin{equation}\label{p13}
\min(|p_2|,  |p_1|+|p_3|) <  \tfrac 13 |W|  .
\end{equation}
Denote $q_i = \al(p_i)$, $i =1,2,3$. Let $e_1, \dots, e_k$, $f_1, \dots, f_\ell$, $k, \ell \ge 1$,   be all edges that start at the vertex $\al(v_1) = \al(v_2)$ so that
\begin{equation*}
q_2 = e_1 s_1 e_1^{-1} \dots e_k s_k e_k^{-1} , \quad
q_3q_1 = f_1 t_1 f_1^{-1} \dots f_\ell t_\ell f_\ell^{-1} ,
\end{equation*}
where  $s_1, \dots, s_k$ and $t_1, \dots, t_\ell$ are subpaths of $q_2$ and $q_3q_1 $, resp., see Fig.~4.1.

\begin{center}
\begin{tikzpicture}[scale=.31]
\draw (0,0) node (v1) {};
\draw [](-2,0) -- (v1);
\draw [](-1,0) node (v3) {} -- (v1);
\draw (0,2)  -- (v1) -- (v1) -- (-2,-2);
\draw [](0,1)  -- (v1) -- (v1) -- (-1,-1) node (v5) {};
\draw (8,0) -- (0,0) node (v4) {};
\draw  (0,4) circle (2);
\draw  (-4,0) node (v2) {} circle (0);
\draw  (v2) circle (2);
\draw  (-2,-4) circle (2);
\draw  (10,0) circle (2);
\draw (0,0) -- (2,-6);
\draw (0,0) -- (4,-4);
\draw  (2,-8) circle (2);
\draw  (6,-4) circle (2);
\draw [-latex](2,0) -- (2,0) -- (4,0) -- (0,0) -- (2,0) -- (4,0) -- (6,0);
\draw [-latex](0,0) -- (0,1.2);
\draw [-latex](0,0) -- (-1.2,0);
\draw [-latex](0,0) node (v6) {} -- (-1.2,-1.2);
\draw [-latex](v6) -- (1,-3);
\draw [-latex](v6) -- (2,-2);
\draw [-latex](-6,-0.3);
\draw [-latex](0,6)--(0.1,6);
\draw [-latex](12,0) -- (12,-0.1);
\draw [-latex](8,-4)--(8,-4.1);
\draw [-latex](4,-8)--(4,-8.1);
\draw [-latex](-4,-4.3);
\draw  (0,2) [fill = black] circle (.06);
\draw  (0,0) [fill = black] circle (.06);
\draw  (-2,0) [fill = black] circle (.06);
\draw  (-2,-2) [fill = black] circle (.06);
\draw  (2,-6) [fill = black] circle (.06);
\draw  (4,-4) [fill = black] circle (.06);
\draw  (8,0) [fill = black] circle (.06);
\node at (-0.9,0.7) {$e_1$};
\node at (-6.7,0) {$s_1$};
\node at (0,6.7) {$s_2$};
\node at (0.8,1) {$e_2$};
\node at (6,0.7) {$e_k$};
\node at (12.7,0) {$s_k$};
\node at (2.7,-1.6) {$f_1$};
\node at (8.7,-4) {$t_1$};
\node at (1.7,-2.8) {$f_2$};
\node at (2,-2) {};
\node at (4.7,-8) {$t_2$};
\node at (-0.3,-1.2) {$f_\ell$};
\node at (-4.7,-4) {$t_\ell$};
\node at (4,2) {$\ldots$};
\node at (0.7,-5) {$\ldots$};
\node at (10,-8) {Fig.~4.1};
\end{tikzpicture}
\end{center}

First we assume that  $|p_2|\ge  |p_1|+|p_3|$. Then, in view of inequality \eqref{p13},
\begin{equation}\label{gt0}
  |p_2| > \tfrac 23 |W| , \quad  |p_1|+|p_3| <  \tfrac 13 |W| .
\end{equation}
Suppose that for some $i$ we have
\begin{equation}\label{gt12}
| e_i s_i e_i^{-1}| \ge  \tfrac 12 |W| .
 \end{equation}

Pick vertices $v'_1, v'_2 \in P_W$  for  which if $P_W(\ff,v'_1, v'_2) = p'_1 p'_2 p'_3$ then $\al(p'_2) =  e_i s_i e_i^{-1}$.
If $k >1$,  then $| e_i s_i e_i^{-1}| <   |p_2|$  and we have a contradiction to the maximality of the pair  $v_1$, $v_2$ because  $\al(v'_1) =  \al(v'_2)$.  Hence, $k =1$ and $i=1$.

Now we pick vertices $v'_1, v'_2 \in P_W$  for  which    if $P_W(\ff,v'_1, v'_2) = p'_1 p'_2 p'_3$ then    $\al(p'_2) = s_1$.
Note $|p'_2|= |s_1| = |p_2|-2 >   \tfrac 23 |W| -2 \ge \tfrac 13 |W|  $ for $|W|>  6$ and  $|p'_1|+|p'_3|= |p_1|+|p_3|+2  < \tfrac 13 |W|  +2 \le    \tfrac 23 |W|$ for $|W|> 6$. Hence, either
\begin{equation}\label{ss11}
\min(|p'_2|,  |p'_1|+|p'_3|) \ge  \tfrac 13 |W|
\end{equation}
if $|p'_2| \le |p'_1|+|p'_3|$ or
\begin{equation}\label{ss12}
\min(|p'_2|,  |p'_1|+|p'_3|) > \min(|p_2|,  |p_1|+|p_3|)
\end{equation}
if $|p'_2| > |p'_1|+|p'_3|$. In either case, we obtain a contradiction to the maximality of the pair $v_1, v_2$  because  $\al(v'_1) =  \al(v'_2)$. Thus it is shown that  the inequality \eqref{gt12} is false, hence,  for every $i =1, \dots, k$,  we have $| e_i s_i e_i^{-1}| < \tfrac 12 |W|$.

Assume $| e_i s_i e_i^{-1}| \ge  \tfrac 13 |W|$ for some $i$.
Pick vertices $v'_1, v'_2 \in P_W$  for  which if  $P_W(\ff,v'_1, v'_2) = p'_1 p'_2 p'_3$ then   $\al(p'_2) =  e_i s_i e_i^{-1}$. Since
 $| e_i s_i e_i^{-1}| <  \tfrac 12 |W|$, it follows that
$ \min(|p'_2|,  |p'_1|+|p'_3|) = |p'_2| \ge  \tfrac 13 |W|$.
A contradiction to the  maximality of the pair  $v_1, v_2$  proves that
$| e_i s_i e_i^{-1}| < \tfrac 13 |W|$ for every $i =1, \dots, k$.
According to  \eqref{gt0}, $| p_2| >  \tfrac 23 |W|$, hence,  $k \ge 3$ and, for some $i \ge 2$, we obtain
\begin{equation*}
\tfrac 13 |W| \le     | e_1 s_1 e_1^{-1} \dots e_i s_i e_i^{-1} |
\le \tfrac 23 |W|  .
\end{equation*}
This means the existence of vertices  $v'_1, v'_2 \in P_W$  for  which,  if
$$
P_W(\ff,v'_1, v'_2) = p'_1 p'_2 p'_3 ,
$$
then the paths $p'_1, p'_2, p'_3$ have the properties that
$\al(p'_2) =  e_1 s_1 e_1^{-1} \dots e_i s_i e_i^{-1}$ and
\begin{equation*}
\min(|p'_2|,  |p'_1|+|p'_3|) \ge  \tfrac 13 |W|  .
\end{equation*}
This contradiction to the maximality of the pair  $v_1, v_2$ completes the first main case
 $|p_2| \ge  |p_1|+|p_3|$.
\smallskip

Now  assume that  $|p_2| <   |p_1|+|p_3|$. In this case, we repeat the above arguments with necessary changes.
By the  inequality \eqref{p13},  $|p_2| < \tfrac 13 |W|$ and $ |p_1|+|p_3| >  \tfrac 23 |W|$.
Suppose that  for some $j$ we have
 \begin{equation}\label{gt12b}
 | f_j t_j f_j^{-1}| \ge  \tfrac 12 |W|
 \end{equation}

Pick vertices $v'_1, v'_2 \in P_W$  so that $\al(v'_1) =  \al(v'_2)$, $v'_1 < v'_2$  and, if
$$
P_W(\ff,v'_1, v'_2) = p'_1 p'_2 p'_3 ,
$$
then  either   $\al(p'_2) = f_j t_j f_j^{-1}$ in case when $f_j t_j f_j^{-1}$ is a subpath of one of  $q_1, q_3$, or  $\al(p'_3) \al(p'_1)= f_j t_j f_j^{-1}$ in  case when $f_j t_j f_j^{-1}$
has common edges with both $q_1$ and $q_3$. In either case,
\begin{equation*}
\min(|p'_2|,  |p'_1|+|p'_3|) > \min(|p_2|,  |p_1|+|p_3|)
\end{equation*}
whenever  $\ell > 1$. By the maximality of the pair  $v_1$, $v_2$, we conclude that $\ell = 1$ and $j=1$.

 In the case $\ell=j=1$, we consider two subcases: $\min(|p_1|,  |p_3|) > 0$
 and  $\min(|p_1|,  |p_3|) = 0$.

Assume  that  $\min(|p_1|,  |p_3|) > 0$. Then
  we can pick vertices  $v'_1, v'_2 \in P_W$  for  which,   if $P_W(\ff,v'_1, v'_2) = p'_1 p'_2 p'_3$,   then
  the  subpaths $p'_1, p'_2, p'_3$ of $P_W$  have the properties that $\al(p'_2) = f_1^{-1} q_2 f_1$  and  $\al(p'_3) \al(p'_1) = t_1$.
 Similarly to the above arguments that led to inequalities \eqref{ss11}--\eqref{ss12}, it follows from the inequality $|W|>  6$ that either
$$
\min(|p'_2|,  |p'_1|+|p'_3|) \ge  \tfrac 13 |W|
$$
if $|p'_1|+|p'_3| < |p'_2|$ or
$$
\min(|p'_2|,  |p'_1|+|p'_3|) > \min(|p_2|,  |p_1|+|p_3|)
$$
if $|p'_1|+|p'_3| \ge |p'_2|$. In either case, we obtain a contradiction to the maximality of the pair $v_1, v_2$.

Now assume  that  $\min(|p_1|,  |p_3|) =0$. For definiteness, let  $|p_i|=0$, $i \in \{1,3\}$. Then
  we can pick vertices  $v'_1, v'_2 \in P_W$  for  which $\al(v'_1) =  \al(v'_2)$, $v'_1 < v'_2$ and,
     if $P_W(\ff,v'_1, v'_2) = p'_1 p'_2 p'_3$,  then
   the  subpaths $p'_1, p'_2, p'_3$ of $P_W$ have the properties that $p'_2 = p_{4-i}$, $p'_i = p_{2}$, $|p'_{4-i}| =| p_{i}| =0$.   Hence,  $|p'_2| > |p'_1|+|p'_3|$ and
  $$
  \min(|p'_2|,  |p'_1|+|p'_3|) = \min(|p_2|,  |p_1|+|p_3|) .
  $$
 This means that the  subcase   $\min(|p_1|,  |p_3|) =0$  is reduced to the case $|p'_2| \ge |p'_1|+|p'_3|$ which was considered above.

The case  $\ell=j=1$ is complete and  it is shown that the inequality  \eqref{gt12b} is false, hence, for every $j =1, \dots, \ell$,  we have $| f_j t_j f_j^{-1}  | < \tfrac 12 |W|$.

Suppose that $| f_j t_j f_j^{-1}  | \ge  \tfrac 13 |W|$ for some $j$.
Pick vertices $v'_1, v'_2 \in P_W$ so that if $P_W(\ff,v'_1, v'_2)= p'_1 p'_2 p'_3$, then  either
  $\al(p'_2) = f_j t_j f_j^{-1}$ in case when  $f_j t_j f_j^{-1}$ is a subpath of one of  $q_1, q_3$, or  $\al(p'_3) \al(p'_1)= f_j t_j f_j^{-1}$ in  case when $f_j t_j f_j^{-1}$
 has common edges with both $q_1$ and $q_3$. Since $| f_j t_j f_j^{-1}  | <  \tfrac 12 |W|$,  it follows  that
 $$
 \min(|p'_2|,  |p'_1|+|p'_3|) \ge  \tfrac 13 |W|  .
 $$
A contradiction to the maximality of the pair  $v_1$, $v_2$ proves that
$| f_j t_j f_j^{-1}  | <  \tfrac 13 |W|$ for every $j=1, \dots, \ell$.
Since $| p_1|+| p_3| >  \tfrac 23 |W|$, we get $\ell \ge 3$ and, for some $j \ge 2$, we obtain
\begin{equation*}
\tfrac 13 |W| \le     | f_1 t_1 f_1^{-1} \dots f_j t_j f_j^{-1}   |
\le \tfrac 23 |W| \ .
\end{equation*}
This means the existence of vertices  $v'_1, v'_2 \in P_W$  for  which $\al(v'_1)=\al(v'_2)$ and   if $P_W(\ff,v'_1, v'_2) = p'_1 p'_2 p'_3$  then the
 subpaths $p'_1$, $p'_2$, $p'_3$  have the following properties: Either
 $\al(p'_2) =  f_1 t_1 f_1^{-1} \dots f_j t_j f_j^{-1}$ or $\al(p'_3)\al(p'_1) =  f_1 t_1 f_1^{-1} \dots f_j t_j f_j^{-1}$. The it is clear that
$$
\min(|p'_2|,  |p'_1|+|p'_3|) \ge  \tfrac 13 |W| .
$$
This contradiction to the choice of the pair  $v_1, v_2$ completes the second main case when
 $|p_2| < |p_1|+|p_3|$.
\end{proof}

\begin{lem}\label{lem2}
Suppose $\D$ is a disk diagram with property (A) over  presentation
\eqref{pr1}   and $\ph(\p |_0 \D) \equiv W$ with $|W| > 2$. Then one of the following two claims holds.

$(\mathrm{a})$ There are vertices $v_1, v_2 \in P_W$  such that $\al(v_1) = \al(v_2)$, $v_1 < v_2$, and if
$P_W(\ff, v_1, v_2) = p_1 p_2 p_3$    is the factorization of $P_W$ defined by $v_1, v_2$, then
\begin{equation*}
 \min(|p_2|,  |p_1|+|p_3|) \ge  \tfrac 16 |W|  .
\end{equation*}

$(\mathrm{b})$  There exists a face $\Pi$ in $\D$ with  $| \p \Pi | \ge 2$  and there are vertices $v_1, v_2 \in P_W$  such that $\al(v_1), \al(v_2) \in \p \Pi$, $v_1 < v_2$, and if $P_W(\ff, v_1, v_2) = p_1 p_2 p_3$  then
\begin{equation} \label{in5}
 \min(|p_2|,  |p_1|+|p_3|) \ge
 \tfrac 16 |W|  .
\end{equation}
In addition, if $(\p \Pi )^{-1} = e_1  \dots e_{|\p \Pi |}$, where $e_i \in \vec \D(1)$,
and  $\p \D =   e_1 h_1 \dots e_{|\p \Pi |} h_{|\p \Pi |}$, where $h_i$ is a closed subpath of $\p \D$,  then, for every $i$, $h_i$ is a subpath of either $\al( p_2)$ or $\al( p_3) \al( p_1)$ and $|h_i | \le \tfrac 56 |W|$, see Fig.~4.2.
\end{lem}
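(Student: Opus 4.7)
The plan is to reduce Lemma~\ref{lem2} to Lemma~\ref{lem1} by passing to a tree quotient of $\D$. Since $\D$ has property (A), every edge of a face has its inverse on $\p\D$, so no edge lies on two face boundaries; edges of $\D$ split into \emph{face edges} (those on some $\p\Pi$) and \emph{filament edges} (those on none). Form a quotient space $T$ by deleting the interiors of all faces of $\D$ and contracting each face boundary $\p\Pi$ to a single vertex $u_\Pi$; since $\D$ is simply connected and each $\p\Pi$ is a simple cycle by Lemma~\ref{vk2}(a), $T$ is a tree. The composition $\al_T : P_W \to T$ of $\al$ with the quotient $\D \to T$ sends $P_W$ to a closed walk in $T$ in which each filament edge is traversed twice and each face edge collapses to a stationary step at $u_\Pi$.

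Two vertices $v_1, v_2 \in P_W$ satisfy $\al_T(v_1) = \al_T(v_2)$ iff either $\al(v_1) = \al(v_2)$ in $\D$ (case~(a)) or $\al(v_1), \al(v_2)$ both lie on $\p\Pi$ for a common face $\Pi$ (case~(b)). Following the strategy of Lemma~\ref{lem1}, I would pick a pair $(v_1, v_2)$ with $v_1 < v_2$ and $\al_T(v_1) = \al_T(v_2)$ that maximizes $\min(|p_2|,|p_1|+|p_3|)$ (where $P_W(\ff, v_1, v_2) = p_1 p_2 p_3$), giving priority to vertex-type pairs to resolve ties. Adapting the branching analysis of Lemma~\ref{lem1} at the image vertex in $T$ — decomposing the incoming walk at that vertex into loops $e_i s_i e_i^{-1}$ and bounding the length of each loop by the maximality assumption — would then yield $\min(|p_2|,|p_1|+|p_3|) \ge |W|/6$. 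The factor-of-$2$ loss compared to the $|W|/3$ bound of Lemma~\ref{lem1} reflects the fact that $T$ absorbs two qualitatively different sources of identification (same $\D$-vertex vs.\ same face boundary) into the single maximizer.

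The additional conditions in case~(b) follow from the geometry of $\D$: each excursion $h_i$ in the decomposition $\p\D = e_1 h_1 \dots e_{|\p \Pi|} h_{|\p \Pi|}$ is a closed subpath based at a single vertex of $\p\Pi$, so if $\al(v_1), \al(v_2) \in \p\Pi$ then each $h_i$ lies entirely on one side of the split determined by $v_1, v_2$, that is, as a subpath of either $\al(p_2)$ or $\al(p_3)\al(p_1)$. The bound $|h_i| \le \tfrac56 |W|$ follows from maximality: if some $|h_i| > \tfrac56 |W|$, then $h_i$ bounds a subdiagram $\D_i$ of $\D$ with boundary word of length $>\tfrac56 |W|$, and applying Lemma~\ref{lem1} (or iterating Lemma~\ref{lem2} inductively on $|\D(2)|$) inside $\D_i$ would produce an identified pair of vertices in $P_W$ strictly interior to $h_i$ with strictly larger $\min(|p_2|,|p_1|+|p_3|)$, contradicting the maximality of $(v_1, v_2)$.

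The main obstacle is making the $|W|/6$ constant precise: the maximality argument must simultaneously accommodate both vertex-type and face-type identifications, and a delicate case split on whether $|p_2| \le |p_1|+|p_3|$ or the reverse (in direct parallel to the two cases considered in the proof of Lemma~\ref{lem1}) is needed to show that any maximizer with $\min < |W|/6$ would be strictly improvable by a closer-to-median pair. The extra case~(b) conclusions are also subtle because they must be stated for the \emph{particular} face $\Pi$ witnessing the identification $\al_T(v_1) = \al_T(v_2)$, not just some face of $\D$, so the maximality argument must be carried out with this face fixed throughout.
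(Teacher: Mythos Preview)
Your plan shares the right overall idea—reduce to a tree and invoke a Lemma~\ref{lem1}-type split—but the paper's execution is different and avoids the obstacles you flag. The paper does not contract faces; it \emph{expands} each face $\Pi$ into a star by inserting a new interior vertex $o_\Pi$, deleting the edges of $\p\Pi$, and joining $o_\Pi$ to each vertex of $\p\Pi$. The resulting graph $\Gamma_\D$ is a genuine tree (Euler's formula gives $|E(\Gamma_\D)|=|\D(1)|=|\D(0)|+|\D(2)|-1=|V(\Gamma_\D)|-1$), and its boundary word $W'$ has length $|W'|\ge|W|$ with no stationary steps. Lemma~\ref{lem1} then applies \emph{verbatim} as a black box to $(W',\Gamma_\D)$, yielding $u_1,u_2$ with the $\tfrac13|W'|$ bound. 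Transfer back to $\D$ costs exactly a factor of $2$ because each face edge of $\D$ becomes two star-edges in $\Gamma_\D$, so subpath lengths satisfy $|p_i|\le|r_i|\le 2|p_i|$; hence $\min(|p_2|,|p_1|+|p_3|)\ge\tfrac12\cdot\tfrac13|W'|\ge\tfrac16|W|$. No maximality argument is redone, and the source of the constant $\tfrac16$ is completely transparent. Your contraction tree, by contrast, forces you to rerun the whole branching analysis with stationary steps (and with distinct faces sharing a vertex collapsed to the same point in $T$, so your dichotomy for $\al_T(v_1)=\al_T(v_2)$ is already not quite right).

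There is also a genuine gap in your treatment of the extra conditions in case~(b). The claim that each $h_i$ lies on one side of the split does \emph{not} follow from $\al(v_1),\al(v_2)\in\p\Pi$ alone: an excursion $h_i$ can revisit its base point (or another vertex of $\p\Pi$), so a position $v_1$ with $\al(v_1)\in\p\Pi$ may lie strictly inside some $h_i$. What is needed is that $v_1,v_2$ sit precisely at the seams of the decomposition $\p\D=e_1h_1\cdots e_{|\p\Pi|}h_{|\p\Pi|}$. In the paper this is automatic: when $\al'(u_1)=\al'(u_2)=o_\Pi$, the shifted vertices $u_1\pm1,u_2\pm1$ land exactly at these seams, and then $|h_i|\le\tfrac56|W|$ is immediate since each $h_i$ is contained in $\al(p_2)$ or $\al(p_3)\al(p_1)$, both of length $\le\tfrac56|W|$. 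Your separate inductive route for $|h_i|\le\tfrac56|W|$ is both unnecessary and does not close: Lemma~\ref{lem1} cannot be applied inside $\D_i$ (which may contain other faces), and an inductive application of Lemma~\ref{lem2} to $\D_i$ only gives a pair with $\min\ge\tfrac16|h_i|$, which need not exceed the value at your maximizer.
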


\begin{center}
\begin{tikzpicture}[scale=.48]
\draw  (-2.1,-2.15) [fill = black] circle (.05);
\draw  (0,-3) [fill = black] circle (.05);
\draw  (-3,0) [fill = black] circle (.05);
\draw  (-2.1,2.1) [fill = black] circle (.05);
\draw  (0,2.98) [fill = black] circle (.056);
\draw  (2.1,-2.1) [fill = black] circle (.05);
\draw  (3,0) [fill = black] circle (.05);
\draw  (1.7,2.45) [fill = black] circle (.05);
\draw  (2.64,1.45) [fill = black] circle (.05);
\draw [-latex](.2,-4.9) --(.0,-4.95);
\draw [-latex](-2.8,1.1) --(-2.77,1.2);
\draw [-latex](-2.7,-1.3) --(-2.72,-1.228);
\draw [-latex](-5,-.1) --(-5,0.1);
\draw [-latex](-4.3,-3.2) --(-4.3,-3.);
\draw  (0,0) circle (3);
\draw  plot[smooth, tension=.7] coordinates {(2.64,1.45)  (3.2, 2.6)   (4, 2.6)  (4, 1.7) (2.64,1.45)};
\draw  plot[smooth, tension=.7] coordinates {(1.7,2.45)  (1.6,3.2)   (2.5,3.5)  (2.6,2.5) (1.7,2.45)};
\draw  plot[smooth, tension=.7] coordinates {(0,-3) (-1,-4.5) (0.4,-4.8) (0,-3)};
\draw  plot[smooth, tension=.7] coordinates {(0,-4)};
\draw  plot[smooth, tension=.7] coordinates {(-2.1,-2.1) (-2,-3) (-4,-4) (-4,-2) (-2.1,-2.1) };
\draw [-latex](-1.1,-2.79) -- (-1.2, -2.74);
\draw  plot[smooth, tension=.7] coordinates {(-3,0) (-4,-1) (-5,0) (-4,1) (-3,0)};
\draw  plot[smooth, tension=.7] coordinates {(-2.1,2.1) (-3.3,2) (-3.6,3) (-2.5,3) (-2.1,2.1)};
\draw  plot[smooth, tension=.7] coordinates {(-2,-2)};
\draw  plot[smooth cycle, tension=.5] coordinates {(0,3) (-1,4) (0,5) (1,4) (0,3)};
\draw  plot[smooth, tension=.7] coordinates {(3,0) (4,1) (5,0) (4,-1) (3,0)};
\draw  plot[smooth, tension=.7] coordinates {(2.1,-2.1) (4,-2) (4,-4) (2,-4) (2.1,-2.1)};
\draw  [-latex] (1.2,-2.73) -- (1.1,-2.78);
\draw  [-latex] (3.2,-4.27) -- (3.1,-4.28);
\draw  [-latex] (-3.2,3.14) -- (-3.1,3.14);
\node at (1.,-3.5) {$e_{| \partial \Pi  |}$};
\node at (0.2,-5.55) {$h_{| \partial \Pi  |}$};
\node at (-1.3,-3.4) {$e_{1}$};
\node at (-5,-3.1) {$h_{1}$};
\node at (-3.3,-1.4) {$e_{2}$};
\node at (-5.8,0) {$h_{2}$};
\node at (-3.4,1.4) {$e_{3}$};
\node at (0,0) {$\Pi$};
\node at (-5,-5.4) {Fig.~4.2};
\node at (3.4,-4.9) {$h_{| \partial \Pi  |-1}$};
\node at (-3.2,3.7) {$h_3$};
\end{tikzpicture}
\end{center}

\begin{proof} Since $\D$ has property (A), it follows that if $e \in \p \Pi $, where $\Pi$ is a face of $\D$,  then $e^{-1} \in \p \D$ and if $e\in \p \D$,
then $e^{-1}  \in \p \Pi' \cup  \p \D $, where $\Pi'$ is a face of $\D$.

Consider a planar graph $\Gamma_\D$ constructed from $\D$ as follows.
For every face $\Pi$ of $\D$, we pick a vertex $o_\Pi$ in the interior of $\Pi$. The vertex set of $\Gamma_\D$ is
\begin{equation*}
V(\Gamma_\D) := \D(0) \cup \{   o_\Pi \mid \Pi \in \D(2) \} \ ,
\end{equation*}
where $\D(i)$ is the set of $i$-cells of $\D$, $i =0,1,2$.
For every face $\Pi$ of $\D$, we delete   nonoriented edges of $\p \Pi$ and
draw $|\p \Pi |$ nonoriented edges that connect $o_\Pi$  to all vertices of $\p \Pi$, see Fig.~4.3.

\begin{center}
\usetikzlibrary{arrows}
\begin{tikzpicture}[scale=.37]
\draw  (-4,0) [fill = black] circle (.06);
\draw  (-6,2) [fill = black] circle (.06);
\draw  (-4,4) [fill = black] circle (.06);
\draw  (-2,2) [fill = black] circle (.06);
\draw  (4,0) [fill = black] circle (.06);
\draw  (2,2) [fill = black] circle (.06);
\draw  (4,4) [fill = black] circle (.06);
\draw  (6,2) [fill = black] circle (.06);
\draw  (4,2) [fill = black] circle (.06);
\draw  (-4,2) circle (2);
\tikzstyle{myedgestyle} = [-open triangle 45]
\draw[-latex] (-1,2) -- (1,2);
\draw (4,0) -- (4,4);
\draw (2,2) -- (6,2);
\node at (4.8,1.2) {$o_\Pi$};
\node at (2.7,0.7) {$E_\Pi$};
\node at (-4,2) {$\Pi$};
\node at (.3,-.8) {Fig.~4.3};
\end{tikzpicture}
\end{center}

\noindent
We draw these edges so that their interiors are disjoint and are contained in the interior of $\Pi$.  Let $E_\Pi$ denote the set of these $|\p \Pi |$ edges. The set $E(\Gamma_\D)$  of nonoriented edges of $\Gamma_\D$   is
$\D(1)$, without edges of faces of $\D$, combined with $\cup_{\Pi \in \D(2) } E_\Pi$, hence,
\begin{equation*}
E(\Gamma_\D) := \cup_{\Pi \in \D(2) } E_\Pi  \cup \left( \D(1) \setminus  \{ e \mid e \in \D(1),  e \in \p \Pi, \Pi  \in \D(2) \}    \right)   \ .
\end{equation*}
It follows from definitions that $| V(\Gamma_\D)| = |\D(0)| + |\D(2)|$,
$| E(\Gamma_\D)| = | \D(1)|$ and that $\Gamma_\D$ is a tree. Assigning labels to
oriented edges of $\Gamma_\D$, by using letters from $\A^{\pm 1}$, we can turn   $\Gamma_\D$  into a disk diagram over presentation     $\langle \  a_1, \dots, a_m  \ \|  \  \varnothing  \   \rangle$
of the free group $F(\A)$.

Denote   $\ph( \p |_{o'} \Gamma_\D    ) \equiv W'$ for some vertex $o' \in \D(0)$, where $| W'| =  |\vec \D (1)|$, and let $\al'(P_{W'}) =  \p |_{o'} \Gamma_\D    =  \p |_{0} \Gamma_\D $.
 Since  $|W' | \ge |W| > 2$,  Lemma~\ref{lem1} applies to   $\Gamma_\D$ and yields the existence of vertices  $u_1, u_2 \in P_{W'}$ such that $\al'(u_1) = \al'(u_2)$ in $\p \Gamma_\D$, $u_1 < u_2$,  and if
$P_{W'}(\ff, u_1, u_2) = r_1 r_2 r_3$, then
\begin{equation} \label{in5aa}
 \min(|r_2|,  |r_1|+|r_3|) \ge  \tfrac 13 |W'|  .
\end{equation}

First suppose that $\al'(u_1)$ is a vertex of $\D$. It follows from the definition of the tree $\Gamma_\D$ that there is a factorization
$P_{W} = p_1 p_2 p_3$ of the path $P_W$  such that the vertex
$\al((p_2)_-) = \al((p_2)_+)$ is $\al'(u_1) \in \D(0)$ and $|p_i| \le |r_i| \le 2|p_i|$, $i=1,2,3$.
Indeed, to get from $\p \D$ to $\p \Gamma_\D$ we replace every edge $e \in \p \Pi$, $\Pi \in \D(2)$, by
 two edges of $E_\Pi$, see Fig.~4.3.  Hence, if $r$ is a subpath of  $\p \Gamma_\D$ and $p$ is a corresponding to $r$ subpath of  $\p \D$    with $r_- = p_- \in \D(0)$, $r_+ = p_+ \in \D(0)$, then $|p | \le |r| \le 2 |p | $. Then it follows from \eqref{in5aa} that
  \begin{equation*}
 \min(|p_2|,  |p_1|+|p_3|) \ge \tfrac 12 \min(|r_2|,  |r_1|+|r_3|) \ge   \tfrac 16 |W'| \ge \tfrac 16 |W|  , \end{equation*}
 as required.

Now assume that  $\al'(u_1) = \al'(u_2) =o_\Pi$ for some face $\Pi \in \D(2)$.
 Let $e_1, \dots, e_k$, $f_1, \dots, f_\ell$, $k, \ell \ge 0$,   be all oriented edges of $\Gamma_\D$
  that start at the vertex $\al'(u_1) = \al'(u_2) = o_\Pi$ so that
\begin{equation*}
\al'(r_2) = e_1 s_1 e_1^{-1} \dots e_k s_k e_k^{-1} , \quad
\al'(r_3) \al'(r_1) = f_1 t_1 f_1^{-1} \dots f_\ell t_\ell f_\ell^{-1} ,
\end{equation*}
where  $s_1, \dots, s_k$ and $t_1, \dots, t_\ell$ are closed subpaths of $\al'(r_2) $
and $\al'(r_3) \al'(r_1)$, resp.
Since  $\al'(u_1) = \al'(u_2) =o_\Pi$, it follows that $k+\ell = |\p \Pi |$.
Since $\min(|r_2|,  |r_1|+|r_3|) \ge   \tfrac 13 |W'| >0$ is an integer, we also have that
$k,\ell \ge 1$ and $|\p \Pi | >1$.
If $|r_3| >0$, we consider vertices $u_1' := u_1+1$, $u_2' := u_2+1$. On the other hand, if
 $|r_3| =0$, then $|r_1| >0$ and we consider vertices  $u_1' := u_1-1$, $u_2' := u_2-1$. In either case, denote $P_{W'}(u_1', u_2' ) =  r'_1 r'_2 r'_3$. Then $|r'_2| = |r_2|$ and $|r'_1| +  |r'_3|= |r_1| +  |r_3|$, hence, by virtue of   \er{in5aa},
  $$
 \min(|r'_2|,  |r'_1|+|r'_3|) \ge   \tfrac 13 |W'| .
 $$
Note that the vertices
$\al'(  (r'_2)_- )$, $\al'(  (r'_2)_+ )$ belong to the boundary $\p \Pi$. Hence, as above, there is also a factorization
$P_{W}(\ff, v_1, v_2) = p_1 p_2 p_3$  such that $\al(v_1) =  \al'(u'_1)$, $\al(v_2) =  \al'(u'_2)$
and $|p_i| \le |r'_i| \le 2|p_i|$, $i=1,2,3$.  Therefore,
\begin{equation*}
 \min(|p_2|,  |p_1|+|p_3|) \ge \tfrac 12 \min(|r'_2|,  |r'_1|+|r'_3|) \ge   \tfrac 16 |W'| \ge \tfrac 16 |W| , \end{equation*}
 as required.

It remains to observe that it follows from the definition of vertices $v_1, v_2$ that every $h_i$ is a subpath of one of the paths $\al( p_2)$, $\al( p_3) \al( p_1)$. In particular, $|h_i| \le  \tfrac 56 |W|$, as desired.
\end{proof}

In the definitions below, we assume that $\D$ is a disk diagram over presentation \er{pr1} such that $\D$ has property (A), $\ph(\p |_0 \D ) \equiv W$, $|W| >0$,  and that the pair $(W, \D)$ is fixed.
\smallskip

A 6-tuple
$$
 b = (b(1), b(2), b(3), b(4), b(5), b(6))
$$
of integers   $b(1), b(2), b(3), b(4), b(5), b(6)$
is called a {\em bracket } for the pair $(W, \D)$ if $b(1), b(2)$ satisfy the inequalities
$0 \le b(1) \le b(2) \le |W|$ and, in the notation $P_W(\ff, b(1), b(2)   ) = p_1 p_2 p_3$,
one of the following two properties (B1)--(B2) holds true.
\begin{enumerate}
\item[(B1)]  $b(3)= b(4)=b(5)=0$,  $\al(b(1))=\al(b(2))$,   and the disk subdiagram $\D_b$ of $\D$, defined by $\p |_{b(1)} \D_b   = \al(p_2)$, contains  $b(6)$ faces, see Fig.~4.4(B1).

\item[(B2)]  $b(3) >0$ and    $\al(b(1)), \al(b(2)) \in \p \Pi$,  where $\Pi$ is a face of $\D$ such that  $\ph(\p \Pi) \equiv a_{b(3)}^{\e b(4)}$, $b(4) >0$, $\e = \pm 1$, and if  $\D_b $ is the disk subdiagram  of $\D$, defined by  $\p |_{b(1)} \D_b   = \al(p_2) u$, where $u$ is the subpath of $\p \Pi$ with
 $u_-=  \al(b(2))$  and $u_+=  \al(b(1))$, then $\ph(u) \equiv a_{b(3)}^{-b(5)}$ and $|\D_b(2)| = b(6)$, see Fig.~4.4(B2).
 \end{enumerate}

\begin{center}
\begin{tikzpicture}[scale=.5] 
\draw  (5,4) [fill = black] circle (.074);
\draw  (5,-1.5) [fill = black] circle (.074);
\draw  plot[smooth, tension=1.2] coordinates {(5,4) (2,6.5) (5,9) (8,6.5) (5,4)};
\draw  plot[smooth, tension=1.5] coordinates { (5,4) (2,1.5) (5,-1.5) (8,1.5) (5,4)};
\draw[-latex](2,1.5) -- (2,1.55);
\draw[-latex] (8,1.5) -- (8,1.45) ;
\draw[-latex](5,9) -- (5.1,9);
\node at (5,10) {$\alpha(p_2)= a(b)$};
\node at (0.96,4.1) {$\alpha(b(1))= \alpha(b(2))$};
\node at (9.2,1.5) {$\alpha(p_3)$};
\node at (0.8,1.5) {$\alpha(p_1)$};
\node at (5,-0.8) {$\alpha(0)$};
\node at (5,6.2) {$\Delta_b$};
\node at (5,-3) {Fig.~4.4(B1)};

\draw  (16.9,6) [fill = black] circle (.074);
\draw  (21,6) [fill = black] circle (.074);
\draw  (19,-2) [fill = black] circle (.074);
\draw[-latex](18.9,10.3) -- (19.1,10.3);

\draw[-latex](19.1,6.2) -- (18.9,6.2);
\draw[-latex](17,6) -- (17.1,6);
\draw[-latex](16.2,-1.1) -- (16.1,-1);
\draw[-latex](21.7,-1.13) -- (21.6,-1.21);
\draw[-latex](22.6,3.9) -- (22.6,4.1);
\draw[-latex](14.68,4.7) -- (14.8,4.87);
\draw  plot[smooth, tension=.9] coordinates {(17,6) (21,6) (23,4) (21,2) (17,2) (15,4) (17,6)};
\draw  plot[smooth, tension=.7] coordinates {(17,2) (15,0) (19,-2) (23,0) (21,2)};
\draw  plot[smooth, tension=.7] coordinates {(17,6) (15,8) (17,10) (21,10) (23,8) (21,6)};
\draw  plot[smooth, tension=.7] coordinates {(22.4,3.3) (22.6,4) (22.4,4.7)};
\node at (21.8,4) {$\partial \Pi$};
\node at (19,4) {$\Pi$};
\node at (15.2,-1.4) {$\alpha(p_1)$};
\node at (22.3,-1.6) {$\alpha(p_3)$};
\node at (19,-1.35) {$\alpha(0)$};
\node at (19,5.5) {$u$};
\node at (19,7.9) {$\Delta_b$};
\node at (18.9,9.2) {$\alpha(p_2)=a(b)$};
\node at (19,-3) {Fig.~4.4(B2)};
\draw  plot[smooth, tension=.7] coordinates {(14.8,2.8) (14.4,4) (14.9,5)};
\node at (12.2,5.7) {$ \alpha(p_1)\alpha(p_2)\alpha(p_3) = \partial \Delta$};
\end{tikzpicture}
\end{center}

A bracket $b$ is said to have {\em type B1} or {\em type B2} if the property (B1) or (B2), resp., holds for $b$. Note that
the equality $b(4) = 0$ in property (B1) and the inequality $b(4) >0$ in property (B2) imply that the type of a bracket is unique.

The boundary subpath $\al(p_2)$ of the disk subdiagram $\D_b$ associated with a bracket $b$ is denoted $a(b)$ and is called the {\em arc} of $b$, see Figs.~4.4(B1)--4.4(B2).

For example, $b_v = (v, v, 0, 0, 0,0)$ is a bracket of type B1 for every vertex $v$ of $P_W$, called a {\em  starting} bracket at $v$.    Note that $a(b_v) =  \al( v )= \al( b_v(1) )$.

The {\em final}  bracket for $(W, \D)$ is
$b_F = (0, |W|, 0, 0, 0, | \D(2) | )$, it has type B1  and   $a(b_F) =  \p |_{0} \D$.

Let $B$ be a set of brackets for the pair $(W, \D)$, perhaps, $B$ is empty, $B = \varnothing$.
We say that $B$ is a {\em bracket system} if,  for every pair $b, c \in B$ of distinct brackets,  either $b(2) \le c(1)$  or $c(2) \le b(1)$. In particular, the arcs of distinct brackets in $B$ have no edges in common.  A \BS\   consisting of a single final bracket  is called a {\em final \BS }.

Now we describe four kinds of  operations over brackets and over \BS s: additions, extensions, turns, and mergers. Let $B$  be a \BS .
\medskip

{\em Additions.}

Suppose $b$ is a starting bracket,  $b \not\in B$, and $B \cup \{ b\}$
is a \BS    . Then we may add $b$ to $B$ thus making an {addition} operation over $B$.
\medskip

{\em Extensions.}

Suppose $b \in B$, $b = (b(1), b(2), b(3), b(4), b(5), b(6))$,   and $e_1 a(b) e_2$ is a subpath of $\p |_{0} \D$, where $a(b)$ is the arc of $b$ and $e_1, e_2$ are edges one of which could be missing.

Assume that $b$ is of type B2, in particular, $b(3), b(4) >0$. Using the notation of the condition (B2),
suppose  $e_1^{-1}$ is an edge of $\p \Pi$, and $\ph(e_1) = a_{b(3)}^{\e_1}$, where $\e_1 = \pm 1$.

If $| b(5) | \le b(4)-2$ and $\e_1 b(5)  \ge 0$, then we consider a bracket $b'$ such that
\begin{align*}
&  b'(1) = b(1)-1, \  b'(2) = b(2) , \  b'(3) = b(3) ,  \\
& b'(4) = b(4) ,  \ b'(5) = b(5)+\e_1 ,  b'(6) =   b(6) .
\end{align*}
Note that $a(b') = e_1 a(b)$.
We say that $b'$ is obtained from $b$ by an extension of type 1 (on the left).
If $(B \setminus \{ b \}) \cup \{ b' \}$ is a \BS , then
replacement of $b \in B$ with $b'$ in $B$ is called an {\em extension } operation over $B$ of type~1.

On the other hand, if $| b(5) | = b(4)-1$   and $\e_1 b(5)  \ge 0$, then we  consider a  bracket $b'$ such that
$$
b'(1) = b(1)-1 , \ b'(2) = b(2) ,   b'(3) = b'(4) = b'(5) = 0 ,  b'(6) = b(6)+1 .
$$
In this case, we say that $b'$ is obtained from $b$ by an extension of type 2 (on the left).
Note that $a(b') = e_1 a(b)$ and $b'$ has type B1. If $(B \setminus \{ b \}) \cup \{ b' \}$ is a \BS , then
replacement of $b \in B$ with $b'$ in $B$ is called an {\em extension } operation over $B$ of type~2.

Analogously, assume that  $b$ has type B2,
$e_2^{-1}$ is an edge of $\p \Pi$, and $\ph(e_2) = a_{b(3)}^{\e_2}$, where $\e_2 = \pm 1$.

If $| b(5) | \le b(4)-2$ and $\e_2 b(5)  \ge 0$, then we consider a  bracket $b'$ such that
\begin{align*}
& b'(1) = b(1) , \ b'(2) = b(2)+1,  \ b'(3) = b(3) , \\
& b'(4) = b(4) , \ b'(5) = b(5)+\e_2 , \ b'(6) = b(6) .
\end{align*}
Note that $a(b') =  a(b)e_2$ and $b'$ has type B2. We say that $b'$ is obtained from $b$ by an extension of type 1 (on the right).
If $(B \setminus \{ b \}) \cup \{ b' \}$ is a \BS , then
replacement of $b \in B$ with $b'$ in $B$ is called an {\em extension } operation over $B$ of type~1.

On the other hand, if $| b(5) | = b(4)-1$, then we may consider a bracket $b'$ such that
$$
b'(1) = b(1) , \ b'(2) = b(2)+1 , \  b'(3) =b'(4) = b'(5) =  0 , \ b'(6) = b(6)+1 .
$$
Note that $a(b') = a(b) e_2$ and $b'$ has type B1.
We say that $b'$ is obtained from $b$ by an extension of type 2 (on the right).
If $(B \setminus \{ b \}) \cup \{ b' \}$ is a \BS , then
replacement of $b \in B$ with $b'$ in $B$ is called an {\em extension } operation over $B$ of type~2.

Assume that $b \in B$ is a bracket of type B1, $e_1 a(b) e_2$  is a subpath of  $\p |_{0} \D$,
both $e_1, e_2$ exist, and $e_1 = e_2^{-1}$.
Consider a  bracket $b'$ of type B1 such that
$$
b'(1) = b(1)-1 , \ b'(2) = b(2)+1 , \ b'(3) = b'(4) =b'(5) =0 , b'(6) = b(6) .
$$
Note that $a(b') = e_1 a(b)e_2$.
We say that $b'$ is obtained from $b$ by an extension of type 3.
If $(B \setminus \{ b \}) \cup \{ b' \}$ is a \BS , then
replacement of $b \in B$ with $b'$ in $B$ is called an {\em extension } operation over $B$ of type~3.
\medskip

{\em Turns.}

Let $b \in B$ be a bracket of type  B1.  Then $b(3) = b(4) =b(5) =0$.
Suppose $\Pi$  is  a  face in $\D$ such that $\Pi$ is not in the disk subdiagram $\D_b$,
associated with $b$ and bounded by the arc $a(b)$ of $b$, and $\al(b(1)) \in \p \Pi$. If $\ph(\p \Pi) = a_{j}^{\e n_{\Pi}}$, where $\e = \pm 1$, $n_{\Pi} \in E_j$, then we consider a bracket $b'$ with $b'(i) = b(i)$ for $i=1,2,5,6$, and  $b'(3) = j$,  $b'(4) = n_\Pi$. Note that $b'$ has type B2.
We say that $b'$ is obtained from $b$ by a {\em turn} operation.
Replacement of $b \in B$ with $b'$ in $B$ is also called a { turn } operation over $B$.
Note that $(B \setminus \{ b \}) \cup \{ b' \}$ is automatically a \BS\ (because so is $B$).
\medskip

{\em Mergers.}

Now suppose that $b, c \in B$ are distinct brackets such that  $b(2) = c(1)$ and one of $b(3), c(3)$ is 0. Then one of
 $b, c$ is of type B1  and the other has type  B1 or B2.
Consider a  bracket $b'$ such that $b'(1) = b(1)$, $b'(2) = c(2)$, and $b'(i) =  b(i) + c(i)$ for $i =3,4,5,6$. Note that $a(b') = a(b_1)a(b_2)$ and $b'$ is of type  B1 if both  $b_i, b_j$ have type B1 or $b'$ is of type B2 if one of  $b, c$ has type B2.
We say that $b'$ is obtained from $b, c$ by a merger operation.
Taking both $b, c$ out of $B$ and putting  $b'$ in $B$ is a {\em merger } operation over $B$.
Note that $(B \setminus \{ b, c \}) \cup \{ b' \}$ is automatically a \BS .
\medskip

We will say that additions, extensions, turns and mergers, as defined above, are {\em  \EO s} over brackets and \BS  s for the pair $(W, \D)$.

Assume that one \BS\   $B_\ell$ is obtained from another \BS\    $B_0$
by a finite sequence $\Omega$ of \EO s  and $B_0, B_1, \dots, B_\ell$ is the corresponding to $\Omega$ sequence of \BS  s. Such a sequence $B_0, B_1,  \dots, B_\ell$ of \BS  s will be called {\em operational}.

We will say that the sequence     $B_0,  B_1,  \dots$, $B_\ell$ has {\em size bounded by}
$ (k_1, k_2)$  if $\ell \le k_1$ and, for every $i$, the number of brackets in $B_i$ is at most $k_2$.
Whenever it is not ambiguous, we will also say that $\Omega$ has size bounded by $(k_1, k_2)$ if so does the corresponding to $\Omega$ sequence  $B_0,  B_1,  \dots, B_\ell$ of \BS s.

\begin{lem}\label{lem3}  There exists a sequence of \EO s  that converts the empty
\BS\   for $(W, \D)$ into the final \BS\    and has size  bounded by $(8|W|, |W|+1)$.
\end{lem}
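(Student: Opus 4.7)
The plan is to prove the statement by strong induction on $|W|$, driven by the decomposition provided by Lemma~\ref{lem2}. Each application of Lemma~\ref{lem2} splits the construction of the final bracket into two (or a bounded number of) subproblems whose combined boundary lengths sum to roughly $|W|$, with each subproblem's boundary length bounded by $5|W|/6$. The full sequence of \EO s is then assembled recursively, concatenating the sub-sequences with a small constant number of mergers, turns, and extensions at each level. A mildly strengthened inductive hypothesis is needed to allow the ``initial'' \BS\ at the start of a recursive call to contain brackets outside the region being processed.

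I would apply Lemma~\ref{lem2} to $(W,\D)$ with $n:=|W|$. In Case~(a), vertices $v_1<v_2$ with $\al(v_1)=\al(v_2)$ separate $\D$ into an inner subdiagram $\D_2$ (bounded by $\al(p_2)$, with $|\p\D_2|=|p_2|\le 5n/6$) and an outer subdiagram $\D_{13}$ (bounded by $\al(p_3)\al(p_1)$, with $|\p\D_{13}|=|p_1|+|p_3|\le 5n/6$). Applying the inductive hypothesis to $\D_2$ yields a sequence of at most $8|p_2|$ \EO s ending with the bracket $b_2=(v_1,v_2,0,0,0,|\D_2(2)|)$. For $\D_{13}$, whose boundary loop wraps around the base vertex $\al(0)=\al(|W|)$ of $P_W$, the cleanest approach is to iterate Lemma~\ref{lem2} on $\D_{13}$ as a disk diagram in its own right: each successive split either produces a contiguous $P_W$-subpath handled by a direct recursive call, or invokes Case~(b), which provides a face $\Pi$ for a turn followed by alternating extensions of types 1 and 2 along $\p\Pi$. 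After each sub-bracket is built, a single merger combines it with the growing structure. Case~(b) at the top level is handled analogously: build the inner bracket recursively, turn it into a type B2 bracket attached to $\p\Pi$, and apply extensions along $\p\Pi$ interleaved with recursive invocations to absorb each outer subpath $h_i$ of length at most $5n/6$.

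The main obstacle is the bookkeeping that yields the $8|W|$ operation bound. To close the induction, strengthen the claim to $T(n)\le 8n-C$ for a fixed absolute constant $C$ chosen larger than the per-step constant overhead $c$ (at most one merger, one turn, and a bounded number of extensions per recursive step). The recurrence $T(n)\le T(an)+T((1-a)n)+c$ with $a\in[1/6,5/6]$ then yields
\[
T(n)\le (8an-C)+(8(1-a)n-C)+c=8n-2C+c\le 8n-C
\]
whenever $C\ge c$, and the base cases for small $n$ are verified by direct enumeration. The lower bound $a\ge 1/6$ furnished by Lemma~\ref{lem2} is essential, since without a positive lower bound on the split ratio the recurrence could develop a logarithmic blowup. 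The bracket-count bound $|W|+1$ is straightforward: in any \BS, distinct brackets have pairwise non-overlapping interiors of intervals $[b(1),b(2)]\subseteq [0,|W|]$, so sorting them left to right shows there can be at most $|W|+1$ such intervals at any moment.
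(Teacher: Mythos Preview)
Your recursive approach via Lemma~\ref{lem2} is the machinery the paper reserves for the much harder Lemma~\ref{lem6}, where the point is to get a \emph{logarithmic} bound on the number of brackets. For Lemma~\ref{lem3} the paper gives a three-line direct argument: add all $|W|+1$ starting brackets $(k,k,0,0,0,0)$, then simply read off a sequence of extensions, turns, and mergers from the diagram $\D$. The counting is elementary --- each extension increases $\eta(B):=\sum_{b\in B}(b(2)-b(1))$ by $1$ or $2$, so there are at most $|W|$ extensions; each merger decreases $|B|$ by $1$, so there are exactly $|W|$ mergers; and each turn must be immediately preceded by a non-turn, so turns number at most $3|W|+1$. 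Total $\le 6|W|+2\le 8|W|$. No recursion, no Lemma~\ref{lem2}.

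Your plan could be made to work, but it is heavily over-engineered for this lemma, and one remark in it is wrong: you claim the lower bound $a\ge 1/6$ from Lemma~\ref{lem2} is ``essential'' for the linear operation count, to avoid logarithmic blowup. It is not. The recurrence $T(n)\le T(an)+T((1-a)n)+c$ closes to $T(n)\le 8n-C$ for \emph{any} $a\in(0,1)$, as your own computation shows; the balanced split matters only when you want to bound the \emph{depth} of the recursion, i.e., for the second coordinate in Lemma~\ref{lem6}. Here the second-coordinate bound $|W|+1$ is, as you note, automatic from the definition of a bracket system and needs no induction at all.
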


\begin{proof} For every $k$ with $0 \le k \le |W|$, consider a starting bracket $(k,k,0,0,0,0)$ for
 $(W, \D)$. Making $|W|+1$ additions, we get a \BS\
 $$
 B_{W} = \{ (k,k,0,0,0,0) \mid  0 \le k \le |W| \}
 $$
of  $|W|+1$ starting brackets. Now, looking at the disk diagram $\D$, we can easily find a sequence of extensions, turns and mergers that  converts  $B_{W}$ into the final \BS , denoted $B_{\ell}$. Note that the inequality $b(4) \le |W|$ for every bracket
$b$ of intermediate systems follows from definitions and Lemma~\ref{vk2}.  To estimate the total number  of
\EO s, we note that the number of additions is $|W|+1$.  The  number  of extensions is at most $|W|$ because every extension applied to a \BS\ $B$ increases the number
$$
\eta(B) := \sum_{b \in B} (b(2)-b(1) )
$$
by 1 or 2 and $\eta( B_{W}) = 0$, whereas $\eta( B_{\ell}) = |W|$.
The  number  of mergers is $ |W|$ because the number of brackets $|B|$ in a \BS\  $B$ decreases by 1 if $B$ is obtained by a merger and
$|  B_{W}  | =  |W|+1$, $|B_{\ell} | =  1$. The number of turns does not exceed the total number  of additions, extensions, and mergers, because a turn operation is applied to a bracket of type B1 and results in a bracket of type B2 to which a turn operation may not be applied, whence a turn operation succeeds an addition, or an extension, or a merger. Therefore, the number of turns is at most $3|W|+1$ and so  $\ell \le 6|W|+2 \le 8|W|$.
\end{proof}

\begin{lem}\label{lem4}  Suppose there is  a sequence $\Omega$  of \EO s  that converts the empty
\BS\  $E$  for $(W, \D)$ into the final \BS\  $F$  and has size  bounded by $(k_1, k_2)$. Then there is also a sequence  of \EO s  that  transforms $E$ into $F$ and has size bounded by $(11|W|, k_2)$.
\end{lem}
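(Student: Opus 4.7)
The plan is to transform $\Omega$ by pruning, while bounding each operation type by a linear function of $|W|$. Let $a, e, t, m$ denote the numbers of additions, extensions, turns, and mergers in $\Omega$, respectively. First, since only additions and mergers change the bracket count (by $+1$ and $-1$, respectively) and we pass from $|E|=0$ to $|F|=1$, we have $m=a-1$. Introducing the total arc length $\eta(B):=\sum_{b \in B}(b(2)-b(1))$, which is invariant under additions, turns, and mergers but increased by $1$ (types 1 and 2) or $2$ (type 3) under any extension, the identities $\eta(E)=0$ and $\eta(F)=|W|$ yield $e \le |W|$.

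Next, I would establish a bijection between turns and extensions of type 2, yielding $t \le |W|$. The only operation converting a bracket from type B1 to type B2 is a turn; extensions of type 1 and mergers of one B1 with one B2 bracket both preserve the B2 type (in the latter case propagating it to the merger output), while B2+B2 mergers are disallowed by definition. The only operation returning a bracket from B2 to B1 is an extension of type 2. Since the final bracket $F$ has type B1, every B2 run initiated by a turn must terminate via a unique extension of type 2. Moreover, each such extension increments $b(6)$ by $1$ and $b_F(6) = |\D(2)|$, so $t = |\D(2)| \le |W|$ by Lemma~\ref{vk2}(a).

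The crucial step is pruning. Suppose $\Omega$ contains an addition of a starting bracket $b = (v,v,0,0,0,0)$ that is later merged with some bracket $c$ without any intervening \EO\ acting on $b$. The merger formula yields $b'(i) = b(i) + c(i) = c(i)$ for $i = 3,4,5,6$, while $b'$ has the same interval endpoints as $c$; hence $b' = c$. Both the addition and the merger can therefore be deleted from $\Omega$: every subsequent reference to $b'$ becomes a reference to $c$ (the same bracket), the terminal \BS\ is unchanged, and the maximum size of any \BS\ only decreases (by $1$ between the removed pair). After iterating, every remaining addition is followed by at least one extension or turn acting on its bracket before that bracket is merged. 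Charging each addition to the first subsequent extension or turn on its bracket -- and noting that distinct additions are charged to distinct operations, since each extension or turn acts on a single bracket -- gives $a \le e + t \le 2|W|$.

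Combining these counts, the pruned sequence has length at most
\begin{equation*}
a + e + t + m = 2a - 1 + e + t \le 4|W| - 1 + 2|W| = 6|W| - 1 \le 11|W|,
\end{equation*}
and the maximum size of any \BS\ in it is bounded by $k_2$. The hardest part is the bijection in the second paragraph: it requires tracing how the B2 state propagates through B1+B2 mergers (so that a single B2 run may involve several brackets successively) and using the prohibition of B2+B2 mergers to rule out any initiated B2 state that fails to terminate.
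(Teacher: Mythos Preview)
Your proof is correct and takes a genuinely different route from the paper's argument. The paper performs \emph{two} rounds of pruning (removing not only addition--merger pairs, but also addition--turn--merger triples, replacing each by a single turn), and then uses a positional argument to show that after this pruning each vertex $v$ can be the site of at most two remaining additions: a third one would force an extension that is blocked by the intervals already occupied by the first two. This yields $a \le 2|W|+2$; the paper then bounds turns crudely by $t \le a+e+m$ and obtains $10|W|+6 \le 11|W|$.

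Your approach is cleaner on two counts. First, your observation that the number of type-B2 brackets in $B_i$ changes only under turns ($+1$) and type-2 extensions ($-1$) gives $t = \#(\text{type-2 extensions}) \le e \le |W|$ directly; this is sharper than the paper's bound $t \le a+e+m$ and avoids the second pruning round entirely. Second, after the single pruning round, your injective charging of each remaining addition to the first extension or turn acting on it gives $a \le e+t$, bypassing the paper's case analysis on left/right extensions. The net effect is a bound of $6|W|-1$, well under the claimed $11|W|$. One small note: the phrase ``bijection between turns and extensions of type~2'' is most cleanly justified by the global count $N_\ell - N_0 = t - \#(\text{type-2 extensions}) = 0$, which is exactly what your propagation remarks amount to; the chain-tracing picture is correct but the telescoping count is what makes it airtight.
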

\begin{proof} Assume that the sequence $\Omega$ has an addition operation which introduces a starting bracket $c = (k,k,0,0,0,0)$ with $0 \le k \le |W|$. Since the final \BS\ contains no starting brackets, $c$ must disappear and an \EO\ is applied to $c$. If a merger is applied to $c$ and another bracket $b$ and the merger yields $\wht c$, then $\wht c = b$. This means that the addition of $c$ and the merger could be skipped without affecting the sequence otherwise. Note that the size of the new sequence $\Omega'$ is  bounded by $(k_1-2, k_2)$. Therefore, we may assume that no merger is applied to a starting bracket in $\Omega$.

Now suppose that a turn is applied to $c$, yields $c'$ and then a merger is applied to $c'$,  $b$ and
the merger produces $\wht c$. Note that  $c'$ has type B2 and $b$ has type B1. Then it is possible to apply a turn to $b$ and get $b'$ with $b' = \wht c$. Hence, we can skip the addition of $c$, the turn of $c$,  the merger and use, in their place, a turn of $b$. Clearly, the size of the new sequence $\Omega'$ is  bounded by $(k_1-2, k_2)$.

Thus, by induction on $k_1$,  we may assume that, for every starting bracket $c$ which is added by $\Omega$, an extension is applied to $c$ or an  extension is applied to $c'$ and $c'$ is obtained from $c$ by a turn.

Now we will show that, for every starting bracket $c$
which is added by $\Omega$, there are at most 2 operations of additions of $c$ in $\Omega$. Arguing on the contrary, assume that  $c_1, c_2, c_3$ are the brackets equal to $c$ whose additions are done in $\Omega$. By the above remark,
for every $i=1,2,3$, either an extension is applied to $c_i$, resulting in a bracket $\wht c_i$, or a turn
is applied to $c_i$, resulting in  $c'_i$, and then an extension is applied to $c'_i$, resulting in  a bracket $\wht c_i$.

Let  $c_1, c_2, c_3$  be listed in the order in which the brackets $\wht c_1, \wht c_2, \wht c_3$ are created by $\Omega$. Note that if $\wht c_1$ is obtained from $c_1$ by an extension (with no turn), then
the extension has type 3 and
$$
\wht c_1(1) = c_1(1)-1 , \ \wht c_1(2) = c_1(2)+1 .
$$
This means that brackets
$\wht c_2$, $\wht c_3$ could not be created by an extension after  $\wht c_1$ appears, as  $d(2) \le d'(1) $
or $d(1) \ge d'(2) $ for distinct brackets $d, d' \in B$ of any \BS\ $B$. This contradiction proves that $\wht c_1$ is obtained from $c_1'$ by an extension. Then $c_1'$ has type B2, the extension has type 1 or 2 (and is either on the left or on the  right). Similarly to the forgoing argument, we can see that if
$\wht c_1$ is obtained by an  extension on the left/right, then $\wht c_2$ must be obtained by an  extension on the right/left, resp., and that $\wht c_3$ cannot be obtained by any  extension. This contradiction proves that it is not possible to have in $\Omega$ more than two additions of any starting bracket $c$.
Thus, the number of additions in   $\Omega$  is at most $2|W|+2$.

As in the proof of Lemma~\ref{lem3}, the number of extensions is at most $|W|$ and the number of mergers is    $\le  2|W|+1$. Hence, the number of turns is $\le 5|W|+3$ and the total number of \EO s is  at most  $10|W|+6 \le 11|W|$ as desired.
\end{proof}

\begin{lem}\label{lem5} Let there be a  sequence  $\Omega$  of \EO s    that transforms  the empty
\BS\   for the pair $(W, \D)$ into the final \BS\  and has size  bounded by $(k_1, k_2)$ and let $c$ be a starting bracket for $(W, \D)$. Then there is also a  sequence   of \EO s    that  converts  the
\BS\  $\{ c \}$   into the final \BS\  and has size  bounded by $(k_1+1, k_2+1)$.
\end{lem}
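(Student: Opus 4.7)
The plan is to construct the desired sequence $\Omega'$ by a parallel simulation of the given $\Omega = (B_0 = \varnothing, B_1, \dots, B_\ell = F)$ in which we carry $c = (k,k,0,0,0,0)$ alongside the running \BS\ (whenever $c$ is not already present) and insert at most one auxiliary merger to reconcile the extra bracket. The fundamental observation is that a bracket $b$ is incompatible with $c$ in the sense of the \BS\ condition precisely when $b(1) < k < b(2)$; in particular $c$ is compatible with every other starting bracket, so additions in $\Omega$ never create new incompatibilities with $c$.

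Split into two cases according to whether $c$ ever appears in some $B_j$. A brief check of the four kinds of operations (addition, extension, turn, merger) shows that only an addition can introduce a starting bracket, so if $c \in B_j$ for some $j$ then $\Omega$ adds $c$ at a unique step $j^\star$. Then $B_{j^\star - 1} \cup \{c\} = B_{j^\star}$ is a valid \BS , so no bracket in $B_{j^\star - 1}$ is incompatible with $c$; moreover, every bracket $b \in B_j$ with $j \le j^\star - 1$ has a descendant in $B_{j^\star - 1}$ whose interval contains $[b(1), b(2)]$ (since extensions and mergers only enlarge intervals and turns preserve them), so $b$ is itself $c$-compatible, and hence every $B_j \cup \{c\}$, $j \le j^\star$, is a valid \BS . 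We thus run $\Omega$ carrying $c$ through steps $1, \dots, j^\star - 1$, skip the now redundant addition at step $j^\star$, and follow $\Omega$ unchanged thereafter; this uses $\ell - 1$ operations with at most $k_2 + 1$ brackets at every stage.

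In the remaining case $c \notin B_j$ for every $j$, so carrying $c$ alongside is well-defined until some $B_j \cup \{c\}$ becomes invalid (which must happen when $0 < k < |W|$, since then $b_F$ is incompatible with $c$). Let $i$ be the largest index for which $B_i \cup \{c\}$ is still a valid \BS . If $i = \ell$, which happens only in the edge cases $k=0$ or $k=|W|$, we simulate all of $\Omega$ carrying $c$ and append a single final merger of $c$ with $b_F$; they meet at the common endpoint $k \in \{0, |W|\}$ and, because $c(3) = c(4) = c(5) = c(6) = 0$, the merger output equals $b_F$ coordinate-wise, so the sequence terminates at $F$. Otherwise the step $B_i \to B_{i+1}$ creates a bracket incompatible with $c$, and this operation must be an extension or a merger. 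A direct case check on the four extension types and on the merger formula $b'(1) = b_1(1),\ b'(2) = b_2(2)$ forces the input bracket of an extension to satisfy $b(1) = k$ or $b(2) = k$, and the inputs of a merger to satisfy $b_1(2) = b_2(1) = k$; so some $b^\ast \in B_i$ has $b^\ast(1) = k$ or $b^\ast(2) = k$. We insert a merger of $c$ with such $b^\ast$, whose hypotheses are satisfied (since $c(3) = 0$ and $c$ shares an endpoint with $b^\ast$) and whose output equals $b^\ast$ coordinate-wise, thereby removing $c$ from the system without otherwise changing it; we then follow $\Omega$ for steps $i+1, \dots, \ell$. This uses $\ell + 1$ operations with at most $k_2 + 1$ brackets at any stage, completing the size bound $(k_1 + 1, k_2 + 1)$.

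The main technical point is the adjacency claim used in the second case: whenever an operation newly introduces a bracket incompatible with $c$, some input bracket of that operation must already satisfy $b(1) = k$ or $b(2) = k$. This is a short but case-intensive verification resting on the fact that extensions shift $b(1)$ or $b(2)$ by only $\pm 1$ and that mergers combine brackets along a common endpoint $b_1(2) = b_2(1)$, so none of these operations can leap across the position $k$ without first landing on it.
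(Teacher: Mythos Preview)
Your proof is correct and follows essentially the same strategy as the paper's: carry $c$ alongside the running sequence and absorb it by a single merger at the first moment this becomes necessary or convenient. Your case split (whether $c$ ever appears in some $B_j$) is organized differently from the paper's (whether $c(1)\in\{0,|W|\}$), but the mechanics are identical; in fact your version is a bit cleaner, since it explicitly invokes the monotonicity of bracket intervals to justify that all $B_j\cup\{c\}$ with $j$ below your threshold are valid bracket systems, whereas the paper relies on the minimality of its index $i^*$ and does not separately address what happens if $\Omega$ itself adds $c$ in the range $0<c(1)<|W|$.

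One small wording issue: the phrase ``$\Omega$ adds $c$ at a \emph{unique} step $j^\star$'' is not quite right, since $c$ could in principle be added, consumed (say by a merger or a turn), and added again. What your argument actually needs, and what your reasoning establishes, is that only an addition can first introduce $c$, so you should take $j^\star$ to be the \emph{first} index with $c\in B_{j^\star}$; the rest of your Case~1 argument then goes through verbatim.
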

\begin{proof} Let
\begin{equation} \label{bso}
B_0,  B_1,  \dots, B_\ell
\end{equation}
be the corresponding to $\Omega$ sequence of \BS s, where $B_0$ is empty and $B_\ell$ is final.

First  suppose that $c(1) = 0$ or $c(1) = |W|$.

Assume that no addition of $c$ is used in  $\Omega$. Setting
$B'_i := B_i \cup \{ c \}$, $i=0,1, \dots, \ell$, we obtain an operational sequence of \BS s $B'_0, \dots,  B'_\ell$ that starts with $\{ c \}$ and ends in the \BS\  $ B'_\ell = \{ c, d_F \}$, where $d_F$ is the final bracket for $(W, \D)$. Note that $B_{i+1}'$ can be  obtained from $B_{i}'$  by the same elementary operation that was used to get  $B_{i+1}$  from $B_{i}$. A merger operation applied to $B'_\ell$ yields the final \BS\ $ B'_{\ell+1} = \{ d_F \}$ and $B'_0, \dots,  B'_\ell,  B'_{\ell+1}$ is a desired sequence of \BS s of size bounded by $(k_1+1, k_2+1)$.

Now suppose that an  addition of $c$ is used in  $\Omega$,   $B_{i^*+1} = B_{i^*} \cup \{ c \}$ is obtained from $B_{i'}$ by addition of $c$, and $i^*$ is minimal with this property. Define $B'_j := B_j \cup \{ c \}$ for $j=0, \dots, i^*$ and
 $B'_{i^*+1} := B_{i^*+2}, \dots, B'_{\ell -1} := B_{\ell}$.  Then  $B'_0, \dots,  B'_{\ell -1}$  is a desired operational sequence of
 \BS s that starts with $\{ c \}$,  ends in the final \BS , and has size bounded by $(k_1-1, k_2+1)$.

We may now assume that $0 < c(1) < |W|$.
 Let $B_{i^*}$ be the first \BS\ of the sequence \eqref{bso} such that $B_{i^*} \cup \{ c \}$ is not a \BS . The existence of such $B_{i^*}$ follows from the facts   that    $B_0 \cup \{ c \}$ is a \BS\ and     $B_\ell \cup \{ c \}$ is not.
Since
$B_0 \cup \{ c \}$ is a \BS , it follows that $i^* \ge 1$ and $B_{i^*-1} \cup \{ c \}$ is a \BS .  Since $B_{i^*-1}  \cup \{ c \}$ is a  \BS\ and  $B_{i^*} \cup \{ c \}$ is not, there is a bracket $b \in B_{i^*}$ such that
$b(1) < c(1) < b(2)$ and $b$ is obtained from a bracket  $d_1 \in B_{i^*-1}$ by an extension or
  $b$ is obtained from    brackets $d_1, d_2 \in B_{i^*-1}$ by a merger. In either case, it follows from
  definitions of \EO s that $d_i(j) = c(1)$ for some $i, j \in \{ 1,2\}$. Hence, we can use a merger applied to  $d_i(j)$ and $c$ which would result in $d_i(j)$, i.e., in elimination of $c$ from $B_{i^*-1} \cup \{ c \}$ and in getting thereby $B_{i^*-1}$ from $B_{i^*-1} \cup \{ c \}$.  Now we can see that the original sequence of  \EO s, together with the merger $B_{i^*-1} \cup \{ c \} \to  B_{i^*-1}$ can be used to produce the following operational sequence of \BS s
$$
B_{0} \cup \{ c \}, \dots,  B_{i^*-1} \cup \{ c \} ,   B_{i^*-1},  B_{i^*}, \dots, B_\ell .
$$
Clearly, the size of this new sequence is bounded by  $(k_1+1, k_2+1)$, as required.
\end{proof}

\begin{lem}\label{lem6} There exists a sequence of \EO s  that converts the empty
\BS\   for the pair $(W, \D)$ into the final \BS\    and has size bounded by
\begin{equation*}
  ( 11|W| , \  C(\log |W| +1) )  ,
\end{equation*}
where $C = (\log \tfrac 65)^{-1}$.
\end{lem}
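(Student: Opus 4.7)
The plan is a strong induction on $|W|$, driven by the recursive split in Lemma~\ref{lem2}. It is slightly more convenient to prove the equivalent strengthened claim that, for any starting bracket $c$, there is a sequence from $\{c\}$ to the final bracket system with width at most $C(\log|W|+1)$; the original statement then follows by prepending a single addition. The length bound $11|W|$ is not carried through the induction but is recovered at the end in one shot by Lemma~\ref{lem4}, which shortens any already existing sequence without increasing its width. The base case handles $|W|$ up to some small constant $M$: here Lemma~\ref{lem3} combined with Lemma~\ref{lem5} yields width at most $M+2$, which is at most $C(\log|W|+1)$ with $C=(\log\tfrac65)^{-1}$.

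For the inductive step, apply Lemma~\ref{lem2} to $(W,\D)$. In case~(b), the face $\Pi$ partitions $\p\D$ into closed subpaths $h_1,\dots,h_{|\p\Pi|}$, each of length at most $\tfrac56|W|$, bounding sub-diagrams $\D_1,\dots,\D_{|\p\Pi|}$. I process them sequentially using a single running accumulator bracket: invoke the inductive hypothesis on $h_1$ to produce $b_1$, turn $b_1$ into a type~B2 bracket associated with $\Pi$, then iteratively extend by the next face-edge $e_i^{-1}$ (type~1 extension) and merge with the freshly built $b_i$; a final type~2 extension closes $\Pi$ and returns the accumulator to type~B1, yielding the final bracket. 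During each sub-call the system contains the accumulator plus the brackets of that sub-call, so its size is bounded by $1+C(\log\tfrac56|W|+1)=C(\log|W|+1)$ using the identity $C\log\tfrac65=1$.

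Case~(a) is substantive: $\D=\D_b\cup\D'$ glued at the cut vertex $\al(v_1)=\al(v_2)$, with $|\al(p_2)|,|\al(p_3 p_1)|\le \tfrac56|W|$. I first construct $b_1=(v_1,v_2,0,0,0,|\D_b(2)|)$ by applying the inductive hypothesis to $(\ph(\al(p_2)),\D_b)$: if $c(1)\in[v_1,v_2]$ then $c$ itself is taken as the starting bracket and the sub-call ends with just $\{b_1\}$, otherwise a fresh starting bracket at $v_1$ is added and the sub-call ends with $\{c,b_1\}$. In both cases the width during this sub-call is at most $1+C(\log|p_2|+1)\le C(\log|W|+1)$. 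To then drive the current system to the final bracket I apply the inductive hypothesis to $(\ph(\al(p_3 p_1)),\D')$ and simulate the resulting sequence on $P_W$ via the translation: positions $[0,|p_3|]$ of $P_{W'}$ map to $[v_2,|W|]$ of $P_W$, positions $[|p_3|,|W'|]$ map to $[0,v_1]$, and $|p_3|$ itself is identified with $|W|$ or $0$ (both representing the vertex $\al(0)$). A bracket whose arc does not cross $|p_3|$ translates verbatim to a bracket on $P_W$ with coordinates in $[0,v_1]\cup[v_2,|W|]$; a bracket whose arc does cross $|p_3|$ is realized on $P_W$ as a bracket with swapped endpoints whose arc traverses $\al(p_2)$, constructed operationally by a merger with $b_1$. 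The bracket $b_1$ thereby serves as a permanent bridge across $[v_1,v_2]$, and the width stays at most $1+C(\log|\al(p_3 p_1)|+1)\le C(\log|W|+1)$.

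The main obstacle is the translation step in case~(a): one must verify, by a case analysis of additions, turns, mergers, and extensions of each of the three types, that every elementary operation of the $\D'$-subsequence corresponds either to a legal EO on $P_W$ or to an EO combined with a merger through $b_1$, preserving the \BS\ ordering condition and the intended face-count interpretations throughout. The key invariant is that at most one bracket of the translated $\D'$-subsystem can straddle position $|p_3|$ at a given time, which forces the single merger with $b_1$ to occur exactly when the straddle first appears and accounts for the additive $+1$ in the width estimate.
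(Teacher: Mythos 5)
Your scaffolding --- strong induction on $|W|$ driven by Lemma~\ref{lem2}, with Lemmas~\ref{lem3}--\ref{lem5} used to recover the length bound and to seed a sub-call with a prescribed starting bracket --- matches the paper's, and your treatment of case~(b), threading a running accumulator bracket around the face $\Pi$, is essentially what the paper does. The gap is in the case~(a) translation, and it is a genuine one, not a matter of constants.

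You cut $P_{W'}$ at position $|p_3|$, so the seam maps to the base vertex $\al(0)$ of $\D$, not to the cut vertex $\al(v_1)=\al(v_2)$. A bracket $d$ of the $\D'$-sequence whose arc crosses $|p_3|$ then has no direct image on $P_W$ (there is no wrap-around on $P_W$), and your fix --- realize it with swapped endpoints as an arc through $\al(p_2)$, ``constructed operationally by a merger with $b_1$'' --- breaks in two ways. First, the merger is generally illegal: if $d$ covers $[x,y]$ with $x<|p_3|<y$, the translated halves on $P_W$ sit at $[v_2+x,\,|W|]$ and $[0,\,y-|p_3|]$, and neither is adjacent to $b_1=(v_1,v_2,\dots)$ unless $x=0$ and $y=|W'|$; so at the moment the straddle first appears there is nothing to merge. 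Second, even granting adjacency, the face count is wrong: the swapped arc $[\,y-|p_3|,\,v_2+x\,]$ bounds the \emph{complement} in $\D$ of the region $\D'_d$ that $d$ bounds in $\D'$, so the correct value is $|\D(2)|-d(6)$, whereas a merger with $b_1$ produces $d(6)+|\D_2(2)|$; these agree only when $d(6)=|\D_1(2)|/2$. A symptom is that the translated image of the $\D'$-final bracket $(0,|W'|,0,0,0,|\D_1(2)|)$ comes out with endpoints $(v_1,v_2)$ --- i.e.\ as $b_1$ itself --- rather than as the required final bracket $(0,|W|,0,0,0,|\D(2)|)$.

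The paper avoids both problems by placing the seam at $|p_1|$, which maps to the cut vertex $\al(v_1)=\al(v_2)$. It invokes the inductive hypothesis for $(W_1,\D_1)$ \emph{starting from} the bracket $c_0=(|p_1|,|p_1|,0,0,0,0)$ sitting exactly on the seam (Lemma~\ref{lem5} supplies this at the cost of $+1$ in each size coordinate), and translates by a three-case rule keyed on the partial order $\succeq$: brackets not descended from $c_0$ live entirely on one side of $|p_1|$ and translate by a shift, while descendants of $c_0$ --- which are exactly the brackets straddling the seam, and at most one exists at a time --- translate by stretching the arc over $[v_1,v_2]$ and adding $|\D_2(2)|$ to the face count. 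Since the straddling arc now passes through the cut vertex rather than through $\al(0)$, it genuinely bounds $\D_{1,d}\cup\D_2$, so the additive face-count rule is correct, and the image of $c_0$ itself is precisely $b_1$, so the bridge is built into the translation from the start rather than patched in mid-stream. If you rotate your $W'$ to $\ph(p_1)\ph(p_3)$ and put the seam at $|p_1|$, your ``merger with $b_1$'' becomes the literal first translated move acting on $c_0$, and both the adjacency and the face-count problems disappear.
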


\begin{proof}  First suppose that  $|W| \le 2$. Then $\D$ consists of a single edge, or of a single face $\Pi$ with $|\p \Pi | \le 2$, or of two faces $\Pi_1, \Pi_2$ with $|\p \Pi_1 | =  |\p \Pi_2 | = 1$. In each of these three cases, we can easily find a sequence of \EO s that transforms the empty \BS\   for $(W, \D)$ into the final \BS\   by using a single addition, at most two turns, and at most two extensions.  Hence, the size of the sequence is bounded by
$$
(5,1) \le  ( 11|W| , \  C(\log |W| +1) $$
as $C > 1$.

Assuming  $|W| > 2$,  we proceed by induction on the length $|W|$. By Lemma~\ref{lem2} applied to $(W, \D)$, we obtain either the existence of vertices $v_1, v_2 \in P_W$ such that  $v_1 < v_2$, $\al(v_1) = \al(v_2)$ and, if $P_W(\ff,v_1, v_2 ) = p_1 p_2 p_3$,  then
\begin{equation}\label{in10}
 \min(|p_2|,  |p_1|+|p_3|) \ge  \tfrac 16|W|
\end{equation}
or we get the existence of a face $\Pi$ in $\D$ and  vertices $v_1, v_2 \in P_W$ with the properties stated in part (b) of Lemma~\ref{lem2}.

First assume that   Lemma~\ref{lem2}(a) holds true  and $\p |_0 \D = q_1 q_2 q_3$, where
$q_i = \al(p_i) $, $i =1,2,3$. Consider disk subdiagrams $\D_1, \D_2$ of $\D$ given by $\p |_{v_2} \D_2 = q_2$,  $\p |_{0} \D_1 = q_1q_3$. Denote $W_2 \equiv \ph(q_2)$, $W_1 \equiv \ph(q_1)\ph(q_3)$ and let $P_{W_i} = P_{W_i}(W_i, \D_i)$, $i=1,2$,  denote the corresponding paths such that  $\al_1(P_{W_1}) = q_1q_3$ and $\al_2(P_{W_2}) = q_2$.

Since $|W_1|, |W_2| < |W|$, it follows from the induction hypothesis that there is a sequence $\Omega_2$ of \EO s  for $(W_2, \D_2)$ that transforms the empty \BS\   into the final system  and has size bounded by
\begin{equation}\label{in11}
(11 |W_2|, C (\log |W_2| +1))  .
\end{equation}
Let $B_{2,0}, \dots, B_{2, \ell_2}$ denote the corresponding to $\Omega_2$  sequence of \BS s, where $B_{2,0}$ is empty and $B_{2, \ell_2}$ is final.

We also consider a sequence  $\Omega_1$ of \EO  s for $(W_1, \D_1)$  that transforms the \BS\  $\{ c_0 \}$, where
\begin{equation}\label{c0h}
c_0 := (|p_1|, |p_1|, 0,0,0,0)
\end{equation}
is a starting bracket,  into  the final bracket system. It follows from the  induction hypothesis and Lemma~\ref{lem5}   that there is such a sequence  $\Omega_1$ of size bounded by
\begin{equation}\label{in12}
(11 |W_1|+1, C (\log |W_1| +1)+1)  .
\end{equation}
Let  $B_{1,0}, \dots, B_{1, \ell_1}$ denote the corresponding to $\Omega_1$  sequence of \BS s, where $B_{1,0} =  \{ c_0 \} $ and $B_{1, \ell_1}$ is final.

We will show below that these two sequences  $\Omega_2$,  $\Omega_1$  of \EO  s, the first one for $(W_2, \D_2)$ and the second one for  $(W_1, \D_1)$, could be modified and
combined into a single sequence of \EO  s for  $(W, \D)$ that transforms the empty \BS\
into the final system and has size with the desired upper bound.

First we observe that every bracket $b = (b(1), \dots, b(6))$ for   $(W_2, \D_2)$  naturally gives rise to a bracket $\wht b = (\wht b(1), \dots, \wht b(6))$ for  $(W, \D)$. Specifically, define
$$
\wht b := (b(1)+|p_1|, b(2)+|p_1|, b(3), b(4), b(5), b(6)) .
$$
Let $\wht B_{2,j}$ denote the \BS\    obtained from $B_{2,j}$ by replacing every bracket $b \in B_{2,j}$ with $\wht b$. Then
 $\wht B_{2,0}, \dots, \wht B_{2, \ell_2}$ is a sequence of \BS s for    $(W, \D)$ that changes the empty \BS\ into
 $$
 \wht B_{2, \ell_2} = \{  (|p_1|, |p_1|+ |p_2|, 0, 0, |\D_2(2)|) \} .
 $$

Define a relation $\succeq$ on the set of pairs $(b,i)$, where $b \in   B_{1,i}$,  that is the reflexive and transitive closure of the relation $(c, i+1) \succ (b,i)$,  where  $c \in   B_{1,i+1} $ is obtained from $b, b' \in   B_{1,i}$
by an \EO\ $\sigma$, where $b'$ could be missing. It follows from the definitions that  $\succeq$ is a partial order on the set of such  pairs $(b,i)$   and that if $(b_2, i_2) \succeq (b_1, i_1)$ then $i_2 \ge i_1$ and
$$
b_2(1) \le b_1(1) \le b_1(2) \le b_2(2) .
$$
Note that the converse need not hold, e.g., if $b_1$ is a starting bracket and  $i_1 = i_2$,
$b_1(1) = b_1(2) = b_2(1)$, $b_1 \ne b_2$, then the above inequalities hold but $(b_2, i_2) \not\succeq (b_1, i_1)$.

Now we observe that every bracket $d = (d(1), \dots, d(6))$, $d \in  B_{1,i}$, for   $(W_1, \D_1)$  naturally gives rise to a bracket
$$
\wht d = (\wht d(1), \dots, \wht d(6))
$$
for  $(W, \D)$ in the following fashion.

If $(d, i)$ is not comparable with $(c_0, 0)$, where $c_0$ is defined in \er{c0h}, by the relation $\succeq$ and    $d(1) \le d(2) \le |p_1|$, then
$$
\wht d :=d .
$$

If $(d, i) $ is not comparable with $(c_0,0)$    by the relation $\succeq$ and    $|p_1| \le d(1) \le d(2) $, then
\begin{equation*}
\wht d :=  (d(1)+ |p_2|,  d(2)+ |p_2|, d(3), d(4),d(5),d(6)) .
\end{equation*}

If $(d, i)   \succeq (c_0,0)$, then
\begin{equation*}
\wht d :=  (d(1),  d(2)+ |p_2|, d(3), d(4),d(5),d(6)  +|\D_2(2)|) .
\end{equation*}

Note that the above three cases cover all possible situations because if $(d,i)$ is not comparable with $(c_0,0)$    by the relation $\succeq$, then $d(2) \le |p_1| = c_0(1)$ or $d(1) \ge |p_1| = c_0(2)$.

As above, let $\wht B_{1,i} := \{ \wht d \mid d \in B_{1,i} \}$. Then  $\wht B_{1,0}, \dots, \wht B_{1, \ell_1}$ is a sequence of \BS s for    $(W, \D)$ that changes the  \BS\
\begin{equation*}
\wht B_{1,0} = \wht B_{2,\ell_2} = \{ (|p_1|,|p_1p_2|, 0, 0, 0, |\D_2(2)| ) \}
\end{equation*}
into the final \BS\
$$
\wht B_{1, \ell_1} = (0, |p_1|+ |p_2|+ |p_3|, 0, 0, 0, |\D_1(2)|+ |\D_2(2)|) .
$$
More specifically, it is straightforward to verify that  $\wht B_{1,0}, \dots, \wht B_{1, \ell_1}$ is an operational sequence of \BS s for    $(W, \D)$ which corresponds to an analogue
$\wht \Omega_1$ of the sequence of \EO s $\Omega_1$ for  $(W_1, \D_1)$ so that if a bracket $b \in B_{1,i}$, $i \ge 1$, is obtained  from  brackets $d_1, d_2 \in B_{1,i-1}$, where
one of $d_1, d_2$ could be missing, by an \EO\ $\sigma$ of $\Omega_1$, then $\wht b \in \wht B_{1,i}$ is obtained from $\wht d_1, \wht d_2 \in \wht B_{1,i-1}$  by an \EO\ $\wht \sigma$ of $\wht  \Omega_1$ and $\wht \sigma$ has the same type as $\sigma$.

Thus, with the indicated changes, we can now combine the foregoing sequences of \BS  s for $(W_2, \D_2)$ and for $(W_1, \D_1)$ into a single sequence
of \BS s for $(W, \D)$ that transforms the empty \BS\       into the \BS\
 $\{ (|p_1|,|p_1p_2|, 0, 0, 0, |\D_2(2)| ) \}$ and then continues to turn the latter
  into the final \BS . It follows from definitions and bounds \er{in11}--\er{in12} that the size of    thus constructed sequence is bounded by
 \begin{gather*}
 ( 11|W_1|+11|W_2|+1,\   \max( C(\log |W_1|+ 1)+1, \ C( \log |W_2|+1 ) )  ) \
 \end{gather*}
 Therefore, in view of Lemma~\ref{lem4},    it remains to show that
 \begin{gather*}
  \max( C(\log |W_1| +1)+ 1, C(\log |W_2|+1 ) ) \le
 C (\log |W|+1)  .
\end{gather*}
In view of inequality \eqref{in10},
$$
\max( C(\log |W_1| +1)+ 1, C(\log |W_2|+1 ) )  \le  C (\log (\tfrac 56|W|)+1) +1 ,
$$
and   $C (\log (\tfrac 56|W|)+1) +1  \le  C (\log |W|+1)  $ if $C \ge (\log \tfrac 65)^{-1}$.
Thus the first main case, when Lemma~\ref{lem2}(a) holds for the pair $(W, \D)$, is complete.
\medskip

Now assume that  Lemma~\ref{lem2}(b) holds true for  the pair $(W, \D)$   and  $\Pi$  is the face in $\D$ with  $| \p \Pi | \ge 2$,  $v_1, v_2 \in P_W$ are    the vertices of  $P_W$ such that  $v_1 < v_2$, $\al(v_1), \al(v_2) \in \p \Pi$, and if $P_W(\ff,v_1, v_2) = p_1 p_2 p_3$  then
\begin{equation*}
 \min(|p_2|,  |p_1|+|p_3|) \ge
 \tfrac 16 |W|   .
\end{equation*}
Furthermore, if  $(\p \Pi )^{-1} = e_1  \dots e_{|\p \Pi |}$, where $e_i \in \vec \D(1)$,
and the cyclic boundary $\p \D$ of $\D$ is $\p \D =   e_1 h_1 \dots$ $ e_{|\p \Pi |} h_{|\p \Pi |}$, where $h_i$ is a closed subpath of $\p \D$,  then, for every $i$, $h_i$ is a subpath of either $\al( p_2)$ or $\al( p_3) \al( p_1)$ and $|h_i | \le \tfrac 56 |W|$.

For $i = 2, \dots,|\p \Pi |$, denote $W_i := \ph(h_i)$ and let $\D_i $ be the disk subdiagram of $\D$ with $\p|_{(h_i)_-} \D_i = h_i$. We also consider a path $P_{W_i}$ with $\ph(P_{W_i}) \equiv W_i$ and let
$\al_i : P_{W_i} \to \D_i$ denote the map whose definition is analogous to that of
$\al : P_{W} \to \D$ (note that  $|W_i| = 0$ is now possible).

By the induction hypothesis on $|W|$ applied to $(W_i, \D_i)$ (the case $|W_i| = 0$ is vacuous), there is a
sequence  $\Omega_i$  of \EO s for  $(W_i, \D_i)$  that transforms the empty \BS\ into the final one and has size bounded by $( 10|W_i|, C(\log |W_i|+ 1) )$.

Making a cyclic permutation of indices of $e_i, h_i$ in $\p \D =  e_1 h_1 \dots e_{|\p \Pi |} h_{|\p \Pi |}$ if necessary, we may assume that
\begin{equation*}
P_W = s_2 f_2 q_2 f_3 q_3 \dots f_{|\p \Pi |} q_{|\p \Pi |} f_1 s_1 ,
\end{equation*}
where $\al(f_i) = e_i$,  $i = 1, \dots,|\p \Pi |$, $\al(q_j) = h_j$,  $j = 2, \dots,|\p \Pi |$, and
 $\al(s_1)\al(s_2) = h_1$, see Fig.~4.2.  Note that $|s_1| =0$ or $|s_2| =0$ is possible.

Let $B_{i,0}, \dots, B_{i, \ell_i}$ denote the corresponding to $\Omega_i$  sequence of \BS s, where $B_{i,0}$ is empty and $B_{i, \ell_i}$ is final. As in the above arguments,  we can easily convert every bracket $b \in \cup_j B_{i,j}$, where $i >1$, into a bracket $\wht b$ for  $(W, \D)$  by using the rule
\begin{equation*}
  \wht b := (b(1) + | s_2 f_2 \dots f_i |, b(2)+ | s_2 f_2 \dots f_i |  ,  b(3), b(4), b(5), b(6)) .
\end{equation*}
Then the sequence $\wht B_{i,0}, \dots, \wht B_{i, \ell_i}$, where  $\wht B_{i,j} := \{ \wht b \mid b \in B_{i,j} \}$, becomes an operational  sequence of \BS s for  $(W, \D)$  that transforms the empty \BS\ into
$\wht B_{i,\ell_i} = \{  \wht d_i  \}$, where
\begin{equation*}
    \wht d_i = (| s_2 f_2 \dots f_i |,  | s_2 f_2 \dots f_i q_i|,  0, 0,0, |\D_i(2)| )  \} , \ i >1 .
\end{equation*}
We also remark that the  sequence  of \BS s    $\wht B_{i,0}, \dots, \wht B_{i, \ell_i}$
 corresponds to an analogue
$\wht \Omega_i$ of the sequence of \EO s $\Omega_i$ for  $(W_i, \D_i)$ so that if a bracket $b \in B_{i,j}$, $j \ge 1$, is obtained  from  brackets $g_1, g_2 \in B_{i,j-1}$, where
one of $g_1, g_2$ could be missing, by an \EO\ $\sigma$ of $\Omega_i$, then $\wht b \in \wht B_{i, j}$ is obtained from $\wht g_1, \wht g_2 \in \wht B_{i,j-1}$  by an \EO\ $\wht \sigma$ of $\wht  \Omega_i$ and $\wht \sigma$ has the same type as $\sigma$.

Denote $\ph( (\p \Pi)^{-1}) = a_{j_\Pi}^{\e n_\Pi}$. Using the operational sequence  $\wht B_{2,0}, \dots, \wht B_{2, \ell_2}$, we convert the empty \BS\ into     $\{  \wht d_2  \}$. Applying a turn operation to $\wht d_2$, we change  $\wht d_2(3)$ from 0 to   $j_\Pi$ and $d_2(4)$ from 0 to $n_\Pi$. Note that  $n_\Pi \le |W|$ by  Lemma~\ref{vk2}, hence this is correct to do so. Then we apply two extensions of type 1 on the left and on the right to increase  $\wht d_2(2)$ by 1 and to decrease  $\wht d_2(1)$ by 1, see Fig.~4.2.  Let
\begin{equation*}
\wtl d_2 = (|s_2|,  |s_2 f_2 q_2 f_3|, j_\Pi, n_\Pi, 2\e, |\D_2(2) |  )
\end{equation*}
denote the bracket of type B2 obtained this way. Now, starting with the \BS\  $\{ \wtl d_2 \}$, we apply those \EO s that are used to create
the sequence  $\wht B_{3,0}, \dots, \wht B_{3, \ell_3}$, and obtain the \BS\  $\{ \wtl d_2, \wht d_3 \}$. Applying a merger to $\wtl d_2, \wht d_3$, we get
\begin{equation*}
     \wht d_3' = (| s_2 |,  | s_2 f_2 q_2f_3 q_3|, j_\Pi,  n_\Pi, 2\e, |\D_2(2) |  + |\D_3(2)| )  .
\end{equation*}
Let  $\wtl d_3$ be obtained from $  \wht d_3'$ by extension of type 1 on the right, so
\begin{equation*}
\wtl d_3 = (| s_2 |,  | s_2 f_2 q_2f_3 q_3 f_4|, j_\Pi,  n_\Pi, 3\e, |\D_2(2) |  + |\D_3(2)| )   .
\end{equation*}
Iterating in this manner, we will arrive at a \BS\ consisting of the single bracket
\begin{equation*}
    \wht d_{| \p \Pi |}' = (| s_2 |,  | s_2 f_2 \dots  q_{| \p \Pi |}  |, j_\Pi,   n_\Pi, \e(| \p \Pi |-1), \sum_{i\ge 2}|\D_i(2) | )   .
\end{equation*}
Applying to $ \wht d_{| \p \Pi |}' $ an extension of type 2 on the right along the edge $f_1 = \al(e_1)$, see Fig.~4.2,  we obtain the bracket
\begin{equation*}
    \wtl d_{| \p \Pi |} = ( | s_2 |,  |W| - |s_1|, 0,0,0, 1+ \sum_{i\ge 2}|\D_i(2) | )
\end{equation*}
of type B1.

For $i =1$, we let  $W_1 := \ph ( s_2) \ph (s_1) $ and let $\D_1$ be the disk subdiagram of $\D$ with
$\p |_{(s_2)_-} \D_1 =  s_2 s_1$.
Referring to the induction hypothesis on $|W|$ for $(W_1, \D_1)$ and to Lemma~\ref{lem5}, we conclude
that there is a sequence $\Omega'_1$  of \EO s that changes the starting bracket
\begin{gather}\label{c2h}
c_2 := (|s_2|, |s_2|, 0,0,0,0)
\end{gather}
into the final \BS\ and has size bounded by
$$
( 11|W_1| +1, \ C(\log |W_1|+ 1) +1).
$$

Let $B'_{1,0}, \dots, B'_{1, \ell'_1}$ denote the corresponding to $\Omega_1'$  sequence of \BS s, where $B'_{1,0} = \{  c_2 \}$ and $B'_{1, \ell'_1}$ is final. Similarly to  the above construction of  $\wht B_{1,i}$ from  $B_{1,i}$,  we will make changes over brackets  $b \in \cup_j B'_{1,j}$ so that every  $b$ becomes a bracket $\wht b$ for $(W, \D)$ and if $\wht B'_{1,i} := \{ \wht b \mid b \in  B'_{1,i} \} $, then the sequence $\wht B'_{1,0}, \dots, \wht B'_{1, \ell'_1}$  transforms the \BS\
$\wht B'_{1, 0} =  \{   \wht c_2 \} =   \{    \wtl d_{| \p \Pi |}  \} $ into  the final one for $(W, \D)$.

Specifically, define a relation   $\succeq'$  on the set of all pairs $(b,i)$, where $b \in  B'_{1,i}$, that is reflexive and transitive closure of the relation $(c, i+1) \succ' (b,i)$,  where a bracket  $c \in   B_{1,i+1} $ is obtained from brackets $b, b' \in   B_{1,i}$ by an \EO\ $\sigma$, where $b'$ could be missing. As before, it follows from the definitions that  $\succeq'$ is a partial order on the set of all such  pairs $(b,i)$ and that if $(b_2, i_2) \succeq' (b_1, i_1)$ then $i_2 \ge i_1$ and
$$
b_2(1) \le b_1(1) \le b_1(2) \le b_2(2) .
$$

Furthermore, every bracket $b= (b(1), \dots, b(6))$, where $b \in B'_{1,i}$, for $(W_1, \D_1)$, naturally gives rise to a bracket
$$
\wht b= (\wht b(1), \dots, \wht b(6))
$$
for $(W, \D)$ in the following fashion.

If the pair $(b, i)$ is not comparable with $(c_2, 0)$, where $c_2$ is defined in \er{c2h},  by the relation $\succeq'$ and  $ b(2) \le |s_2|$, then
$$
\wht b :=b .
$$

If the $(b, i) $ is not comparable with $(c_2,0)$    by the relation $\succeq'$ and    $|s_1| \le b(1)$, then
\begin{equation*}
\wht b :=  (b(1)+ |W|- |h_1|,  b(2)+ |W|- |h_1|, b(3), b(4),b(5),b(6)) .
\end{equation*}

If $(d, i)   \succeq' (c_2,0)$, then
\begin{equation*}
\wht b :=  (b(1),  b(2)+ |W|- |h_1|, b(3), b(4), b(5), b(6)  +1+\sum_{i\ge 2} |\D_i(2)| ) .
\end{equation*}

As before, we note that these three cases cover all possible situations because if $(b,i)$ is not comparable with $(c_2,0)$    by the relation $\succeq'$, then  either
$$
b(2) \le |W|- |h_1| = c_2(1)
$$
or $b(1) \ge |W|- |h_1| = c_2(2)$.

Similarly to the foregoing arguments, we check that  $\wht B'_{1,0}, \dots, \wht B'_{1, \ell_1}$ is an operational sequence of \BS s for    $(W, \D)$ which corresponds to an analogue $\wht \Omega_1'$ of the sequence of \EO s $\Omega_1'$ for  $(W_1, \D_1)$ so that  if a bracket $b \in B'_{1,i}$, $i \ge 1$, is obtained  from  brackets $g_1, g_2 \in B'_{1,i-1}$, where
one of $g_1, g_2$ could be missing, by an \EO\ $\sigma$ of $\Omega'_1$, then $\wht b \in \wht B'_{1,i}$ is obtained from $\wht g_1, \wht g_2 \in \wht B'_{1,i-1}$  by an \EO\ $\wht \sigma$ of $\wht  \Omega'_1$ and $\wht \sigma$ has the same type as $\sigma$.

Summarizing, we conclude that there exists a sequence of \EO s $\Omega$, containing
subsequences $\wht  \Omega_2, \dots$, $\wht  \Omega_{|\p \Pi | }$, that transforms the empty
 bracket system for  $(W, \D)$  into the  \BS\  $B = \{  \wtl d_{|\p \Pi |} \}$ and then, via
 subsequence  $\wht \Omega'_1$, continues to transform
 $$B = \{  \wtl d_{|\p \Pi |} \} = \{   \wht c_2 \} $$
 into the final  \BS\   for  $(W, \D)$. It follows from the induction hypothesis for the pairs $(W_i, \D_i)$ and definitions that the size of thus constructed  sequence $\Omega$ is bounded  by
\begin{equation*}
( (11 \!  \cdot \! \! \!  \! \! \sum_{1 \le i \le |\p \Pi |} |W_i| ) + 2 |\p \Pi | +2,   \max_{1 \le i \le |\p \Pi |} \{ C(\log |W_i| +1 ) +1  \}    )   .
\end{equation*}
In view of  Lemma~\ref{lem4}, it suffices to show that
\begin{equation*}
\max_{1 \le i \le |\p \Pi |} \{ C(\log |W_i| +1 ) +1  \} \le C(\log |W| +1 ) .
\end{equation*}
Since $|W_i| \le \tfrac 56|W|$, the latter inequality holds true, as in the above case, for  $C = (\log \tfrac 56)^{-1}$.
\end{proof}

Let $W$ be an arbitrary nonempty word over the alphabet $\A^{\pm 1}$, not necessarily representing the trivial element of the group given by presentation \er{pr1} (and $W$ is not necessarily reduced).  As before, let $P_W$ be a labeled simple
path with $\ph( P_W ) \equiv W$ and let vertices of $P_W$ be identified along $P_W$ with integers $0, \dots, |W|$ so that $(P_W)_- = 0, \dots$, $(P_W)_+ = |W|$.
\medskip

A 6-tuple
$$
b = (b(1), b(2), b(3), b(4), b(5), b(6))
$$
of integers $b(1), b(2), b(3), b(4), b(5), b(6)$ is called a {\em pseudobracket}  for the word $W$ if $b(1), b(2)$ satisfy the inequalities
$0 \le b(1) \le b(2) \le |W|$ and one of the following two properties (PB1), (PB2) holds for $b$.

\begin{enumerate}
\item[(PB1)] \ $b(3)= b(4)=b(5)=0$ and $0 \le b(6) \le |W|$.
\item[(PB2)]  \  $b(3) \in \{ 1, \dots, m\}$,  $b(4) \in E_{b(3)}$, where the set $E_{b(3)}$ is defined in \er{pr1},  $0 < b(4) \le |W|$,  $-b(4) < b(5) < b(4)$, and  $0 \le b(6) \le |W|$.
 \end{enumerate}

We say that a \pbb\  $b$ has {\em type PB1}  if the property
(PB1) holds for $b$. We also say that a \pbb\  $b$ has {\em type PB2}  if the property
(PB2) holds for $b$. Clearly, these two types are mutually exclusive.
\medskip

Let $p$ denote the subpath of $P_W$ such that $p_- = b(1)$ and $p_+ = b(2)$, perhaps, $p_- = p_+$ and $|p| = 0$. The  subpath $p$ of $P_W$ is denoted $a(b)$ and is called the {\em arc} of the pseudobracket $b$.

For example, $b_v = (v, v, 0, 0, 0,0)$ is a pseudobracket of type PB1 for every vertex $v \in P_W$ and such $b_v$ is  called a {\em  starting} pseudobracket.  Note that $a(b) = \{ v \}$.
A {\em final}  pseudobracket for $W$ is  $c = (0, |W|, 0, 0,0, k)$, where $k \ge 0$ is an integer. Note that   $a(c) =  P_W$.

Observe that if $b$ is a bracket for the pair $(W, \D)$ of type B1 or B2, then $b$ is also
a \pbb\ of type PB1 or PB2, resp., for the word $W$.

Let $B$ be a finite set of \pbb s for  $W$, perhaps, $B$ is empty.   We say that $B$ is a {\em pseudobracket system} for $W$ if, for every pair $b, c \in B$ of distinct \pbb s, either $b(2) \le c(1)$  or $c(2) \le b(1)$.  It follows from the definitions that every \BS\ for $(W, \D)$ is also a \PBS\ for the word $W$.  $B$  is called a {\em final \PBS\ } for $W$ if $B$ consists of a single final \pbb\ for $W$.
\medskip

Now we describe four kinds of \EO s over \pbb s and over \PBS s: additions, extensions, turns, and mergers,  which are analogous to those definitions for brackets and \BS s, except there are no any diagrams and no faces involved.

Let $B$  be a \PBS\ for a nonempty word $W$ over $\A^{\pm 1}$.
\medskip

{\em Additions.}

Suppose $b$ is a starting \pbb ,  $b \not\in B$, and $B \cup \{ b\}$
is a \PBS  . Then we may add $b$ to $B$ thus making an {addition} operation over $B$.
\medskip

{\em Extensions.}

Suppose $b \in B$, $b = (b(1), b(2), b(3), b(4), b(5), b(6))$,   and $e_1 a(b) e_2$ is a subpath of $P_W$, where $a(b)$ is the arc of $b$ and $e_1, e_2$ are edges one of which could be missing.

Assume that $b$ is of type PB2. Suppose  $\ph(e_1) = a_{b(3)}^{\e_1}$, where $\e_1 = \pm 1$.

If $| b(5) | \le b(4)-2$ and $\e_1 b(5)  \ge 0$, then we consider a \pbb\ $b'$ of type PB2 such that
\begin{align*}
&  b'(1) = b(1)-1, \  b'(2) = b(2) , \  b'(3) = b(3) ,  \\
& b'(4) = b(4) ,  \ b'(5) = b(5)+\e_1 ,  b'(6) =   b(6) .
\end{align*}
Note that $a(b') = e_1 a(b)$.
We say that $b'$ is obtained from $b$ by an extension of type 1 (on the left).
If $(B \setminus \{ b \}) \cup \{ b' \}$ is a \PBS , then
replacement of $b \in B$ with $b'$ in $B$ is called an {\em extension }
operation over $B$ of type 1.

On the other hand, if $| b(5) | = b(4)-1$   and $\e_1 b(5)  \ge 0$, then we  consider a  \pbb\ $b'$ of type PB1 such that
$$
b'(1) = b(1)-1 , \
b'(2) = b(2) , \  b'(3) = b'(4) = b'(5) = 0, \ b'(6) = b(6)+1 .
$$
In this case, we say that $b'$ is obtained from $b$ by an extension of type 2 (on the left).
Note that $a(b') = e_1 a(b)$ and $b'$ has type PB1. If $(B \setminus \{ b \}) \cup \{ b' \}$ is a \PBS , then
replacement of $b \in B$ with $b'$ in $B$ is called an {\em extension } operation over $B$ of type 2.

Analogously, assume that  $b$ has type PB2 and $\ph(e_2) = a_{b(3)}^{\e_2}$, where $\e_2 = \pm 1$.

If $| b(5) | \le b(4)-2$ and $\e_2 b(5)  \ge 0$, then we consider a  \pbb\ $b'$  such that
\begin{align*}
&  b'(1) = b(1), \  b'(2) = b(2)+1 , \  b'(3) = b(3) ,  \\
& b'(4) = b(4) ,  \ b'(5) = b(5)+\e_2 ,  b'(6) =   b(6) .
\end{align*}
Note that $a(b') =  a(b)e_2$ and $b'$ has type PB2. We say that $b'$ is obtained from $b$ by an extension of type 1 (on the right).
If $(B \setminus \{ b \}) \cup \{ b' \}$ is a \PBS , then
replacement of $b \in B$ with $b'$ in $B$ is called an {\em extension } operation over $B$ of type 1.

On the other hand, if $| b(5) | = b(4)-1$ and    $\e_2 b(5)  \ge 0$, then we consider a \pbb\  $b'$ such that
$$
b'(1) = b(1) , \ b'(2) = b(2)+1 ,  \ b'(3) =b'(4) = b'(5) =0 ,  \ b'(6) = b(6)+1 .
$$
Note that $a(b') = a(b) e_2$ and $b'$ has type PB1.
We say that $b'$ is obtained from $b$ by an extension of type 2 (on the right).
If $(B \setminus \{ b \}) \cup \{ b' \}$ is a \PBS , then
replacement of $b \in B$ with $b'$ in $B$ is called an {\em extension } operation over $B$ of type 2.

Assume that $b \in B$ is a \pbb\  of type PB1, $e_1 a(b) e_2$  is a subpath of   $P_W$, where $a(b)$ is the arc of $b$ and $e_1, e_2$ are edges  with  $\ph(e_1) = \ph(e_2)^{-1}$.
Consider a \pbb\  $b'$ of type PB1 such that
$$
b'(1) = b(1)-1 , \ b'(2) = b(2)+1 , \ b'(3) = b'(4) =b'(5) =0 , \ b'(6) = b(6) .
$$
Note that $a(b') = e_1 a(b)e_2$.
We say that $b'$ is obtained from $b$ by an extension of type 3.
If $(B \setminus \{ b \}) \cup \{ b' \}$ is a \PBS , then
replacement of $b \in B$ with $b'$ in $B$ is called an {\em extension } operation over $B$ of type~3.
\medskip

{\em Turns.}

Let $b \in B$ be a \pbb\  of type  PB1.  Then, by the definition,  $b(3) = b(4) =b(5) =0$.
Pick $j \in \{ 1, \dots, m\}$ such that $E_j \ne \{ 0 \}$. Consider a \pbb\ $b'$ with $b'(i) = b(i)$ for $i=1,2,4,5,6$, and $b'(3) = j$. Note that $b'$ is of type PB2 and $a(b') = a(b)$.
We say that $b'$ is obtained from $b$ by a {\em turn} operation.
Replacement of $b \in B$ with $b'$ in $B$ is also called a {\em turn } operation over $B$.
Note that $(B \setminus \{ b \}) \cup \{ b' \}$ is a \PBS\ because so is $B$.
\medskip

{\em Mergers.}

Now suppose that $b, c \in B$ are distinct \pbb s such that  $b(2) = c(1)$ and one of $b(3), c(3)$ is 0. Then one of
 $b, c$ is of type PB1  and the other has type  PB1 or PB2.
Consider a  \pbb\  $b'$ such that $b'(1) = b(1)$, $b'(2) = c(2)$, and $b'(i) =  b(i) + c(i)$  for $i=3,4,5,6$. Note that $a(b') = a(b)a(c)$ and $b'$ is of type  PB1 if both  $b, c$ have type PB1 or $b'$ is of type PB2 if one of  $b, c$ has type PB2.  We say that $b'$ is obtained from $b, c$ by a {\em merger} operation.
Taking both $b, c$ out of $B$ and putting  $b'$ in $B$ is a {\em merger } operation over $B$.
\medskip

As before, we will say that additions, extensions, turns and mergers, as defined above, are {\em  \EO s} over \pbb s and \PBS  s for $W$.

Assume that one \PBS\   $B_\ell$ is obtained from another \PBS\    $B_0$
by a finite sequence $\Omega$ of \EO s  and $B_0,  B_1,  \dots, B_\ell$ is the corresponding to $\Omega$  sequence of \PBS  s. Such a sequence  $B_0,  B_1,  \dots, B_\ell$ of \PBS s  is called {\em operational}.  We say that a  sequence $B_0,  B_1,  \dots, B_\ell$ of \PBS s has {\em size bounded by} $ (k_1, k_2)$  if $\ell \le k_1$ and, for every $i$, the number of \pbb s in $B_i$ is at most $k_2$.   Whenever it is not ambiguous, we also say that $\Omega$ has size bounded by $(k_1, k_2)$ if so does the corresponding to $\Omega$ sequence  $B_0,  B_1,  \dots, B_\ell$ of \PBS s.
\medskip

The significance of \PBS s and \EO s over \PBS s introduced above is revealed in the following lemma.

\begin{lem}\label{lem7}  Suppose that the empty  \PBS\    $B_0$ for $W$ can be transformed by a finite sequence $\Omega$ of \EO  s  into a final \PBS\  $\{ b_F \}$. Then  $W \overset{\GG_2} = 1$, i.e.,  $W$ represents the trivial element of the group  defined by  presentation \er{pr1}, and there is a disk diagram $\D$  over \er{pr1}  such that $\D$ has property (A), $\ph(\p |_0 \D) \equiv W$ and $|\D(2) | = b_F(6)$.
\end{lem}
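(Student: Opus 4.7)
The plan is to prove the lemma by induction on the length of the operational sequence $\Omega$, maintaining an invariant that associates to every \pbb\ $b$ appearing in any \PBS\ $B_i$ of the sequence a \ddd\ $\D_b$ over presentation \er{pr1} with property (A) and exactly $b(6)$ faces. If $b$ has type PB1, then $\D_b$ admits a distinguished vertex $v$ with $\ph(\p|_v \D_b) \equiv \ph(a(b))$. If $b$ has type PB2, there is a vertex $v$ of $\D_b$ and a factorization $\p|_v \D_b = pq$ such that $\ph(p) \equiv \ph(a(b))$ and $\ph(q) \equiv a_{b(3)}^{-b(5)}$, so that $q$ may be interpreted as the partial boundary along which a ``pending'' face with relator $a_{b(3)}^{\e b(4)}$ has yet to be attached. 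This invariant mirrors the roles played by disk subdiagrams in the definitions of brackets of types B1 and B2.

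In the base case the empty \PBS\ carries no \pbb s and the invariant holds vacuously. For the inductive step, each \EO\ admits a natural diagram-level counterpart: an addition of a starting \pbb\ $b_v=(v,v,0,0,0,0)$ corresponds to taking $\D_{b_v}$ to be a single vertex; a turn modifies only the records $b(3),b(4)$ of a pending face and, since $b(5)=0$ keeps the boundary label unchanged, we set $\D_{b'}:=\D_b$; an extension of type 1 on a PB2 \pbb\ is realized by attaching a pendant edge to the appropriate endpoint of $\p\D_b$ with label chosen so as to extend both $a(b)$ and the pending-face segment by one letter; an extension of type 2, which occurs precisely when $|b(5)|=b(4)-1$, is realized by sewing a new face $\Pi$ with $\ph(\p\Pi)\equiv a_{b(3)}^{\e b(4)}$ onto the subpath of $\p \D_b$ labeled $a_{b(3)}^{-b(5)}$ together with the one further edge supplied by the extension, thereby closing off the pending face and increasing the face count by exactly one; an extension of type 3 is realized by attaching a pair of canceling pendant edges at the boundary vertex; and a merger of $b,c$ with $b(2)=c(1)$ is realized by identifying $\D_b$ and $\D_c$ at the common vertex $\al(b(2))=\al(c(1))$. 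In each case property (A) is preserved, because pendant edges bound no face at all and the face attached in an extension of type 2 is sewn onto a segment of $\p\D_b$ that, by the PB2 invariant, lies on the outer boundary.

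When $\Omega$ terminates with the final \PBS\ $\{b_F\}$, where $b_F=(0,|W|,0,0,0,b_F(6))$, the associated diagram $\D:=\D_{b_F}$ has property (A), contains exactly $b_F(6)$ faces, and satisfies $\ph(\p|_0 \D)\equiv \ph(a(b_F))\equiv W$. Invoking Lemma~\ref{vk} then gives $W\overset{\GG_2}{=}1$, completing the proof. The main obstacle is the bookkeeping of signs and orientations for the two subtypes of PB2 extensions (left/right, type 1/2): one must verify that the label of the new pendant edge (respectively, of the newly attached face) is consistent with the requirement $\e b'(5)\ge 0$ and produces precisely the segment labeled $a_{b(3)}^{-b'(5)}$ dictated by the updated parameters. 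This verification, while routine, must be done case by case, and it is the reason why the extension operations are subdivided into types 1, 2, and 3 and into left/right variants in the first place.
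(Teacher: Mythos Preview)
Your proposal is correct and follows essentially the same approach as the paper: both proceed by induction along the operational sequence, maintaining for each pseudobracket $b$ a disk diagram $\D_b$ with property~(A), $b(6)$ faces, and boundary factored as $p$ (labeled $\ph(a(b))$) and, in the PB2 case, a segment $q$ labeled $a_{b(3)}^{\pm b(5)}$; the elementary operations are realized by the same diagram-level moves you describe (pendant edges for type~1 and~3 extensions, face attachment for type~2, vertex identification for mergers).

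One small point worth tightening: your PB2 invariant should record not only that $\ph(q)\equiv a_{b(3)}^{-b(5)}$ but also that every edge of $q$ has its inverse already in $p$ (equivalently, $q$ is a pendant path). The paper includes this explicitly in its Claim~(C2). Your construction does maintain it---the $q$-segment is built solely from pendant edges added in type~1 extensions and is unaffected by mergers with PB1 diagrams---but without stating it you cannot cleanly argue that \emph{pre-existing} faces retain property~(A) after a type~2 extension: when the new face is sewn along $q$, those edges leave the outer boundary, and you need to know no old face was using them. With this clause added to the invariant, your sketch goes through exactly as in the paper.
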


\begin{proof}  Let $B_0, B_1, \dots, B_\ell$ be the corresponding to $\Omega$  sequence of \PBS s, where $B_0$ is empty and $B_\ell$ is final.  Consider the following Claims~(C1)--(C2) for a \pbb\ $b \in B_i$, $i = 1, \dots, \ell$, in which $a(b)$ denotes the arc of~$b$.

\begin{enumerate}
\item[(C1)] If a \pbb\  $b \in B_i$ has type PB1 then
 $\ph(a(b)) \overset {\GG_2} = 1$ in the group $\GG_2$ given by presentation \er{pr1} and there is a disk diagram $\D_b$ over \eqref{pr1}  such that $\D_b$ has property (A),  $\ph( \p \D_b ) \equiv  \ph(a(b))$ and  $| \D_b(2) | = b(6)$.

\item[(C2)] If  a \pbb\   $b \in B_i$  has type PB2, then  $\ph(a(b))  \overset {\GG_2} = a_{b(3)}^{b(5)}$   and there is a disk diagram $\D_b$ over \eqref{pr1}  such that  $\D_b$ has property (A), $\p \D_b = p q^{-1}$, where $p$, $q^{-1}$ are subpaths of $\p \D_b$, $\ph( p ) \equiv \ph(a(b))$,  $\ph( q ) \equiv a_{b(3)}^{b(5)}$, $| \D_b(2) | = b(6)$, and if $e$ is an edge of  $q^{-1}$ then $e^{-1} \in p$.
 \end{enumerate}

By induction on $i \ge 1$, we will prove that Claims (C1)--(C2) hold true for every \pbb\  $b' \in B_i$. Note that if  $b'$ is a starting \pbb , then $b'$ is of type  PB1 and Claim (C1) is obviously true for $b'$ (Claim (C2) is vacuously true for $b'$).  Since $B_1$ consists of a starting \pbb , the base of induction is established.

To make the induction step from $i-1$ to $i$, $i \ge 2$, we consider the cases corresponding to the type of the \EO\ that is used to get $B_i$ from $B_{i-1}$.

Suppose that $B_i$ is obtained from $B_{i-1}$ by an \EO\ $\sigma$ and $b' \in B_i$ is the \pbb\ obtained from $b, c \in B_{i-1}$ by application of $\sigma$, here one of $b, c$ or both, depending on  type of  $\sigma$, could be missing. By the induction hypothesis,  Claims (C1)--(C2) hold for every \pbb\ of $B_i$  different from $b'$ and it suffices to show that the  suitable Claim (C1)--(C2) holds for $b'$.

If $B_i$ is obtained from $B_{i-1}$ by an addition, then it suffices to refer to  the above remark that Claims (C1)--(C2) hold for a starting \pbb .

Suppose that  $B_i$ is obtained from $B_{i-1}$ by an extension of type 1 and let  $b' \in B_i$ be the \pbb\ created  from $b \in B_{i-1}$ by an extension of type 1 on the left (the ``right" subcase is symmetric).

Note that both $b$ and $b'$ have type PB2. By the induction hypothesis, which is Claim (C2) for $b$, there exists a disk diagram $\D_b$ such that $\D_b$ has property (A),
$$
\p \D_b = p q^{-1}, \ \ \ph( p ) \equiv \ph(a(b)),  \ \ \ph( q ) \equiv a_{b(3)}^{b(5)}, \ \
| \D_b(2) | = b(6),
$$
and if $e$ is an edge of  $q^{-1}$ then $e^{-1} \in p$. Here and below we use the notation of the definition of an extension of type 1.

Let $e_1$ denote the edge of $P_W$ such that $a(b') = e_1 a(b)$ and $\ph(e_1) = a_{b(3)}^{\e_1}$, $\e_1 = \pm 1$.
Consider a ``loose" edge $f$ with $\ph(f) = a_{b(3)}^{\e_1}$. We attach the vertex $f_+$ to the vertex $p_- = q_-$ of $\D_b$ to get a new disk  diagram $\D'_b$ such that  $\p \D'_b = f p q^{-1}f^{-1}$, see Fig.~4.5. Note that property (A) holds for $\D'_b$,
$$
\ph(fp) \equiv a_{b(3)}^{\e_1} \ph(a(b)) \equiv \ph(e_1 a(b)) \equiv \ph(a(b')) , \quad
\ph(q^{-1}f^{-1}) \equiv  a_{b(3)}^{b(5)+\e_1} \equiv a_{b(3)}^{b'(5)} ,
$$
 $| \D'_b(2) | = | \D_b(2) | = b(6)  = b'(6)$, and if $e$ is an edge of  $q^{-1}$ then $e^{-1} \in p$. Thus,  Claim (C2) holds for the \pbb\ $b'$ with $\D_{b'} := \D'_b$.

\begin{center}
\begin{tikzpicture}[scale=.64]
\draw (-26.1,-2)  arc (0:180:2);
\draw(-31.1,-2) -- (-26.1,-2);
\draw [-latex](-31.1,-2) --(-28,-2);
\draw [-latex](-31.1,-2) --(-30.5,-2);
\draw [-latex](-28.1,0) --(-28.05,0);
\draw  (-31.1,-2) [fill = black] circle (.05);
\draw  (-30.1,-2) [fill = black] circle (.05);
\draw  (-26.1,-2) [fill = black] circle (.05);
\node at (-28,-1) {$\Delta_b$};
\node at (-30.6,-2.7) {$f$};
\node at (-28,-2.7) {$q$};
\node at (-28.1,.6) {$p$};
\node at (-30.5,0) {$\Delta'_b$};
\node at (-28,-3.8) {Fig.~4.5};
\end{tikzpicture}
\end{center}

Assume that  $B_i$ is obtained from $B_{i-1}$ by an extension of type 2 and let  $b' \in B_i$ be the \pbb\ obtained  from $b \in B_{i-1}$ by an extension of type 2 on the left (the ``right" subcase is symmetric).

Note that $b$ has type PB2, while $b'$ has type PB1.  By the induction hypothesis, which is Claim (C2) for $b$, there exists a disk diagram $\D_b$ such that  $\D_b$ has property (A),
$$
\p \D_b = p q^{-1}, \ \ \ph( p ) \equiv \ph(a(b)),  \ \ \ph( q ) \equiv a_{b(3)}^{b(5)}, \ \
| \D_b(2) | = b(6),
$$
and if $e$ is an edge of  $q^{-1}$ then $e^{-1} \in p$. Here and below we use the notation of the definition of an extension of type 2.

Let $e_1$ denote the edge of $P_W$ such that $a(b') = e_1 a(b)$ and $\ph(e_1) = a_{b(3)}^{\e_1}$, $\e_1 = \pm 1$.
Consider a ``loose" edge $f$ with $\ph(f) = a_{b(3)}^{\e_1}$. We attach the vertex $f_+$ to  $p_- = q_-$ and   the vertex   $f_-$ to  $p_+ = q_+$ of $\D_b$. Since $\e_1 b(5) \ge 0$  and
$| b(5)| = b(4) -1$, it follows that $\e_1 + b(5) = \e_1 |b(4)|$. Therefore, $\ph(qf) \equiv  a_{b(3)}^{\e_1b(4)}$ and we can attach a new face $\Pi$ with $\ph(\p \Pi) \equiv a_{b(3)}^{-\e_1b(4)}$ so that the boundary path $\p \Pi$ is identified with the path $q^{-1}f^{-1}$, see Fig.~4.6.  This way we obtain a new disk  diagram $\D'_b$ such that
$\ph( \p|_{f_-} \D'_b ) \equiv  \ph( f ) \ph( p)$ and $| \D'_b(2) | = | \D_b(2) | +1$, see Fig.~4.6.  It follows from construction of $\D'_b$ that property (A) holds for $\D'_b$
$$
\ph(fp) \equiv a_{b(3)}^{\e_1} \ph(a(b)) \equiv \ph(e_1 a(b)) \equiv \ph(a(b')) \overset{\GG_2} = 1 , \ \
\ph(q^{-1}f^{-1}) \equiv  a_{b(3)}^{b(5)+\e_1} \equiv a_{b'(3)}^{b'(5)} ,
$$
and  $| \D'_b(2) | = | \D_b(2) |+1 = b(6)+1  = b'(6)$. Therefore,  Claim (C1) holds for the \pbb\ $b'$ if
we set $\D_{b'} := \D'_b$.

\begin{center}
\begin{tikzpicture}[scale=.64]
\draw (-19,-2)  arc (0:360:2);
\draw(-23,-2) -- (-19,-2);
\draw [-latex](-23,-2) --(-21,-2);
\draw [-latex](-21,0) --(-20.95,0);
\draw [-latex](-20.95,-4) --(-21,-4);
\draw  (-23,-2) [fill = black] circle (.05);
\draw  (-19,-2) [fill = black] circle (.05);
\node at (-21,-5) {Fig.~4.6};
\node at (-20.5,-1) {$\Delta_b$};
\node at (-21,-1.6) {$q$};
\node at (-21.,-.5) {$p$};
\node at (-23.5,-.5) {$\Delta'_b$};
\node at (-21.5,-2.7) {$\Pi$};
\node at (-21,-3.5) {$f$};
\end{tikzpicture}
\end{center}

Suppose that an extension of type 3 was applied to get $B_i$ from $B_{i-1}$ and  $b' \in B_i$ is the \pbb\
created from $b \in B_{i-1}$ by  the operation.

Note that both $b$ and $b'$ have type PB1. By the induction hypothesis, which is Claim (C1) for $b$, there exists a disk diagram $\D_b$ such that  $\D_b$ has property (A) and
$$
\ph(\p \D_b) \equiv \ph(a(b))   a_{b(3)}^{-b(5)}  , \quad | \D_b(2) | = b(6) .
$$
Here and below we use the notation of the definition of an extension of type 3.

Denote   $\p \D_b = p$, where
$\ph(p) \equiv \ph(a(b)) $ and let $e_1, e_2$ be the edges of $P_W$ such that $a(b') = e_1 a(b)e_2$ and $\ph(e_1) =\ph(e_2)^{-1}$.
Consider a ``loose" edge $f$ with $\ph(f) = \ph(e_1)$. We attach the vertex $f_+$ to the vertex $p_-$ of $\D_b$ to get a new disk  diagram $\D'_b$ such that  $\p \D'_b = f p f^{-1}$, see Fig.~4.7.  Since  $\D_b$ has property (A),
$$
\ph(   \p \D'_b ) \equiv \ph(fpf^{-1}) \equiv \ph(e_1)  \ph(a(b)) \ph(e_1)^{-1} \equiv  \ph(a(b'))
$$
and  $| \D'_b(2) | = | \D_b(2) | = b(6)  = b'(6)$, it follows that Claim (C1) holds for the
\pbb\ $b'$ with $\D_{b'} := \D'_b$.

\begin{center}
\begin{tikzpicture}[scale=.67]
\draw  (0,0) circle (1);
\draw (0,-2) --(0,-1);
\draw [-latex](0,-2) --(0,-1.4);
\draw [-latex](-.1,1) --(.1,1);
\draw  (0,-2) [fill = black] circle (.05);
\draw  (0,-1) [fill = black] circle (.05);
\node at (0,-2.7) {Fig.~4.7};
\node at (0,-.4) {$\Delta_b$};
\node at (-.4,-1.5) {$f$};
\node at (0,.5) {$p$};
\node at (1.2,-1.3) {$\Delta'_b$};
\end{tikzpicture}
\end{center}

Suppose that  $B_i$ is obtained from $B_{i-1}$ by a turn operation and let  $b' \in B_i$ be the \pbb\ created  from $b \in B_{i-1}$ by the turn operation. By the definition of a turn, $b$ has type PB1 and  $b'$ has type PB2. By the induction hypothesis, which is Claim (C1) for $b$, there exists a disk diagram $\D_b$ such that  $\D_b$ has property (A)
$$
\ph(\p \D_b) \equiv \ph(a(b))   , \quad | \D_b(2) | = b(6) .
$$
Here and below we use the notation of the definition of a turn operation.

Denote $\D'_b := \D_b$ and let $\p \D'_b := p q^{-1}$, where $p, q^{-1}$ are subpaths of $\p \D'_b$ such that
$\ph(p) \equiv \ph(a(b)) $ and  $|q| = 0$. Clearly,  $\D'_b$ has property (A). Since $a(b) = a(b')$, $b'(5) = b(5)=0$, and
$$
| \D'_b(2) | = | \D_b(2) | = b(6)  = b'(6) ,
$$
it follows that
$\ph(p) \equiv  \ph(a(b')) , \ \ph(q) \equiv  a_{b'(3)}^{b'(5)}$. Hence,  Claim (C2) holds for the \pbb\ $b'$ with $\D_{b'} := \D'_b$.
\medskip

Finally, assume that $B_i$  results from $B_{i-1}$ by a merger operation and let $b' \in B_i$ be  the \pbb\
created from \pbb s $b, c \in B_{i-1}$, where $b(2) = c(1)$, by the operation. By the definition of a merger operation, one of  the \pbb s  $b, c$ must have type PB1. By the induction hypothesis, there are disk diagrams $\D_{b},  \D_{c}$ for $b, c$, resp., as stated in Claims (C1)--(C2). Denote  $\p \D_{b} = p_b q_b^{-1}$, $\p \D_{c} = p_c q_c^{-1}$, where $\ph(p_b) \equiv \ph(a(b))$,  $\ph(p_c) \equiv \ph(a(c))$,  and, for $x \in \{ b,c \}$, $| q_x| =0$ if $x$ has type PB1 or $\ph(q_x) \equiv a_{x(3)}^{ x(5)}$ if $x$ has type PB2. Note that  $| \D_x(2) |  = x(6)$.

Consider a disk diagram $\D'$ obtained from $\D_{b},  \D_{c}$ by identification of the vertices
$(p_b)_+$ and $(p_c)_-$, see Fig.~4.8. Then
$$| \D'(2) | =  | \D_b(2) | +| \D_c(2) | , \qquad \p \D' = p_bp_c q_c^{-1} q_b^{-1} .
$$
Note that
$$
\ph(p_b p_c) \equiv \ph(a(b)) \ph(a(c)) \equiv \ph(a(b')) ,  \quad | \D'(2) | = b(6)+c(6)
$$
and  $|q_c^{-1} q_b^{-1} | = 0$ if both $b, c$ have type PB1, or $\ph(q_b q_c) \equiv a_{b(3)}^{b(5)}$ if     $b$ has type PB2 and   $c$ has type PB1, or
$\ph(q_b q_c) \equiv a_{c(3)}^{c(5)}$  if     $b$ has type PB1 and   $c$ has type PB2.
Therefore, it follows that Claim (C1) holds true for $b'$ with  $\D_{b'} = \D'$ if both $b, c$ have type PB1 or  Claim (C2) holds  for $b'$  with  $\D_{b'} = \D'$  if one of  $b, c$ has type PB2.

\begin{center}
\begin{tikzpicture}[scale=.64]
\draw (-20,-2)  arc (0:180:2);
\draw(-24,-2) node (v1) {}  -- (-20,-2);
\draw [-latex](-23,-2) --(-22,-2);
\draw [-latex](-22,0) --(-21.95,0);
\draw  (-24,-2) [fill = black] circle (.05);
\draw  (-20,-2) [fill = black] circle (.05);
\node at (-22,-1) {$\Delta_{c}$};
\node at (-22,0.5) {$p_c$};
\node at (-22,-2.7) {$q_c$};
\draw (-24,-2)  arc (0:180:2);
\draw(-28,-2) node (v1) {}  -- (-24,-2);
\draw [-latex](-27,-2) --(-26,-2);
\draw [-latex](-26,0) --(-25.95,0);
\draw  (-28,-2) [fill = black] circle (.05);
\draw  (-24,-2) [fill = black] circle (.05);
\node at (-26,-1) {$\Delta_{b}$};
\node at (-26,0.5) {$p_b$};
\node at (-24,0) {$\Delta'$};
\node at (-26,-2.7) {$q_b$};
\node at (-24,-3.5) {Fig.~4.8};
\end{tikzpicture}
\end{center}

All possible cases are  discussed and the induction step is complete. Claims (C1)--(C2) are proven.
\medskip

We can now finish the proof of Lemma~\ref{lem7}.  By Claim (C1) applied
to the \pbb\ $b_F = (0, |W|, 0, 0,0, b_F(6) )$ of the final \PBS\ $B_\ell = \{ b_F \}$, there is a disk diagram $\D_{b_F}$ over \er{pr1} such that   $\D_{b_F}$ has property (A),
$\ph( \p |_{0} \D_{b_F} ) \equiv W$ and
$| \D_{b_F}(2) | = b_F(6)$.  Thus, $\D_{b_F}$ is a desired \ddd\ and Lemma~\ref{lem7} is proven.
\end{proof}

\begin{lem}\label{lem8}  Suppose $W$ is a nonempty word over $\A^{\pm 1}$ and $n \ge 0$ is an integer. Then $W$ is a product of at most $n$ conjugates of words $R^{\pm 1}$, where $R=1$ is a relation of  presentation \eqref{pr1}, if and only if there is a sequence $\Omega$ of \EO  s such that $\Omega$   transforms the empty \PBS\    for $W$ into a final \PBS\
$\{ b_F \}$, where $b_F(6) \le n$, and $\Omega$ has size bounded by $ (11|W|,  C(\log|W| +1))$, where $C = (\log \tfrac 65)^{-1}$.
\end{lem}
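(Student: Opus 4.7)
The statement is an ``if and only if,'' so I would prove the two directions separately, using the machinery already developed in the preceding lemmas. The forward direction combines van Kampen's lemma with the bracket calculus results (Lemmas~\ref{vk2} and~\ref{lem6}), while the reverse direction is essentially a direct invocation of Lemma~\ref{lem7}. The key conceptual point is that every bracket for a pair $(W,\D)$ is automatically a pseudobracket for $W$, and every elementary operation over bracket systems is also a legitimate elementary operation over pseudobracket systems (the pseudobracket definitions are strictly weaker, since no diagram is involved).

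\smallskip

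\emph{Forward direction.} Suppose $W$ is a product in $\FF(\A)$ of at most $n$ conjugates of relators from $\R^{\pm 1}$ of presentation~\eqref{pr1}. Then $W \overset{\GG_2}{=} 1$, so by Lemma~\ref{vk} there exists a disk diagram $\D_0$ over \eqref{pr1} with $\ph(\p|_0 \D_0) \equiv W$ and $|\D_0(2)| \le n$. By Lemma~\ref{vk2}(b) applied to $\D_0$, we obtain a disk diagram $\D$ with property (A) such that $\ph(\p \D) \equiv \ph(\p \D_0) \equiv W$ and $|\D(2)| \le |\D_0(2)| \le n$. Apply Lemma~\ref{lem6} to the pair $(W,\D)$: there exists a sequence $\Omega$ of elementary operations over bracket systems that converts the empty bracket system into the final bracket system $\{b_F\}$ with $b_F = (0,|W|,0,0,0,|\D(2)|)$, and whose size is bounded by $(11|W|,\, C(\log|W|+1))$. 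Since every bracket is a pseudobracket and every elementary operation over bracket systems is (by direct inspection of the definitions) an elementary operation over pseudobracket systems of the same type, $\Omega$ may be reinterpreted as a sequence of elementary operations over pseudobracket systems with the same size bound. Finally, $b_F(6) = |\D(2)| \le n$, as required.

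\smallskip

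\emph{Reverse direction.} Suppose a sequence $\Omega$ of elementary operations over pseudobracket systems transforms the empty pseudobracket system for $W$ into a final pseudobracket system $\{b_F\}$ with $b_F(6) \le n$ (no size bound is needed here). By Lemma~\ref{lem7}, there is a disk diagram $\D$ over \eqref{pr1} such that $\ph(\p|_0 \D) \equiv W$ and $|\D(2)| = b_F(6) \le n$. By Lemma~\ref{vk} (the easy direction), $W$ is equal in $\FF(\A)$ to a product of $|\D(2)|$ conjugates of words in $\R^{\pm 1}$, i.e., to a product of at most $n$ such conjugates. This completes the proof.

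\smallskip

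\emph{Main obstacle.} The only nontrivial content lies in the already-proven Lemmas~\ref{lem6} and~\ref{lem7}; the present lemma is a clean packaging of those results that bridges the geometric notion (van Kampen diagrams with a controlled number of faces) and the combinatorial notion (pseudobracket derivations of logarithmic width). The one point one must verify carefully is that the passage from brackets to pseudobrackets is faithful at the level of elementary operations, i.e., that a sequence $\Omega$ of operations over brackets really does give a valid sequence of operations over pseudobracket systems with unchanged size; this is immediate from comparing the two lists of definitions side by side, but it is the place where one must not cut corners.
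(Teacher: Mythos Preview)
Your proof is correct and follows essentially the same approach as the paper: forward direction via Lemmas~\ref{vk}, \ref{vk2}(b), and \ref{lem6}, then reinterpret brackets as pseudobrackets; reverse direction via Lemma~\ref{lem7}. The one detail the paper makes explicit and you leave implicit is that the reinterpretation step requires $b(4)\le|W|$ and $b(6)\le|W|$ (these are part of the \emph{definition} of a pseudobracket of type PB1/PB2), which follow from property~(A) and Lemma~\ref{vk2}(a); since you already flagged this passage as the place ``not to cut corners,'' this is fine.
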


\begin{proof}   Assume that  $W$ is a product of at most $n$ conjugates of words $R^{\pm 1}$, where $R=1$ is a relation of presentation  \eqref{pr1}. By Lemmas~\ref{vk}--\ref{vk2}(b), there is a  disk diagram $\D$ over \eqref{pr1} such that  $\D$ has property (A),   $\ph(\p|_0 \D) \equiv W$ and $|\D(2)| \le \min( n, |W|)$. Then Lemma~\ref{lem6} applies  and yields  a sequence $\Omega$ of \EO  s over \BS  s for $(W, \D)$ that converts the empty \BS\    for $(W, \D)$ into the final one and has  size bounded by $ (11|W|,  C(\log|W| +1))$. It follows from
arguments of Lemma~\ref{lem6} and Lemma \ref{vk2}(a) that if $b$
is a bracket of one of the intermediate \BS s, associated with $\Omega$, then $b(4) \le |W|$.
Observe that every bracket $b \in B$  and every intermediate  \BS\  $B$ for $(W, \D)$, associated with $\Omega$,  could be considered as a \pbb\ and a \PBS\ for $W$, resp., we automatically have a desired sequence of \PBS s.

Conversely, the existence of a  sequence  $\Omega$
of \EO  s over \PBS  s, as specified in  Lemma~\ref{lem8}, implies, by Lemma~\ref{lem7}, that there is a disk diagram $\D$ over  \eqref{pr1}   such that  $\D$ has property (A),    $\ph(\p|_0 \D) \equiv W$ and $|\D(2)| =b_F(6) \le n$. The existence of such a \ddd\ $\D$
means that  $W$ is a product of $|\D(2)| =b_F(6) \le n$ conjugates  of words $R^{\pm 1}$, where $R=1$ is a relation of presentation  \eqref{pr1}.
\end{proof}

\section{Proofs of Theorem~\ref{thm1} and Corollary~\ref{cor1}}

\begin{T1}   Let  the group $\GG_2$   be defined by a presentation of the form
\begin{equation*}\tag{1.2}
    \GG_2 :=  \langle \,  a_1, \dots, a_m  \ \|  \   a_{i}^{k_i} =1, \ k_i \in E_i,   \ i = 1, \ldots,  m \,  \rangle ,
\end{equation*}
where for  every $i$, one of the following holds: $E_i = \{ 0\}$ or, for some integer $n_i >0$,  $E_i = \{ n_i \}$ or $E_i = n_i \mathbb N = \{ n_i, 2n_i, 3n_i,\dots  \}$.
Then both the bounded and  precise  word  problems for \eqref{pr1} are in $\textsf{L}^3$ and in $\textsf{P}$. Specifically, the problems can be solved in deterministic space $O((\log|W|)^3)$ or in deterministic time $O( |W|^4\log|W|)$.
\end{T1}

\begin{proof} We start with $\textsf{L}^3$ part of Theorem~\ref{thm1}. First we discuss a nondeterministic algorithm  which  solves the bounded word problem for presentation  \eqref{pr1}  and which is based on Lemma~\ref{lem8}.

Given an input $(W, 1^n)$, where $W$ is a nonempty word (not necessarily reduced) over the alphabet $\A^{\pm 1}$ and $n \ge 0$ is an integer, written in unary notation as $1^n$, we begin with the empty \PBS\   and nondeterministically  apply a sequence of \EO  s  of size bounded by $( 11|W|,  C(\log|W| +1) )$, where $C = (\log \tfrac 65)^{-1}$.  If such a  sequence of \EO  s  results in a final \PBS\   $\{ (0,|W|,0,0,0,n') \}$, where $n' \le n$, then our algorithm accepts and, in view of Lemma~\ref{lem8}, we may conclude that  $W$ is a product of $n' \le n$ conjugates of words $R^{\pm 1}$, where $R=1$ is a relation of \eqref{pr1}.
 It follows from the definitions and  Lemma~\ref{lem8}  that the  number of \EO  s needed for this algorithm to accept is at most $11|W|$.
 Hence, it follows from the definition of \EO s over \PBS s for   $W$ that  the time needed to run this nondeterministic algorithm is $O(|W|)$.
 To estimate  the space requirements of this algorithm, we note that if $b$ is a \pbb\ for $W$, then $b(1)$, $b(2)$ are  integers in the range from 0 to $|W|$, hence, when written in binary  notation, will take at most $C'(\log|W| +1)$ space, where $C'$ is a constant. Since  $b(3), b(4), b(5), b(6)$ are also integers  that satisfy inequalities
 \begin{equation*}
 0 \le b(3) \le m , \ 0 \le b(4) \le |W| , \ |b(5)| \le b(4),  \  0 \le b(6) \le |W| ,
\end{equation*}
 and $m$ is a constant,  it follows  that the total space required to run this algorithm is at most
$$
 5C'(\log|W|+1 +\log m)C(\log|W| +1) =  O((\log |W|)^2) .
 $$
Note that this bound is independent of $n$ because it follows from
Lemma~\ref{vk2}(b) that if $W$ is a product of $n'$ conjugates of words $R^{\pm 1}$, where $R=1$ is a relation of \eqref{pr1}, then it is possible to assume that $n' \le |W|$.

Furthermore, according to Savitch's theorem \cite{Sav}, see also \cite{AB}, \cite{PCC}, the existence of a nondeterministic algorithm that recognizes a language in space $S$ and time $T$ implies the  existence of a deterministic algorithm that recognizes the language in space $O(S \log T)$.
Therefore, by Savitch's theorem \cite{Sav}, there is a deterministic  algorithm that solves the bounded word  problem for presentation  \eqref{pr1} in space $O((\log |W|)^3)$.

To solve the precise word problem for presentation  \eqref{pr1},  suppose that we are given a  pair $(W, 1^n)$ and wish to find out if  $W$ is a product of $n$ conjugates of words $R^{\pm 1}$, where $R=1$ is a relation of \eqref{pr1}, and $n$ is minimal with this property. By Lemma~\ref{vk2}, we may assume that $n \le |W|$. Using the foregoing deterministic algorithm, we consecutively  check whether  the bounded word problem is solvable for the two pairs   $(W, 1^{n-1})$ and  $(W, 1^n)$. It is clear that   the precise word problem for the pair $(W, 1^n)$ has a positive solution if and only if  the bounded word problem has a negative solution for the pair $(W, 1^{n-1})$ and has a positive solution for the  pair $(W, 1^n)$ and that these two facts can be verified in deterministic space $O((\log |W|)^3)$.
\medskip

Now we describe an algorithm that solves the \PWPP\ for presentation \eqref{pr1} in polynomial time.
Our arguments are analogous to folklore arguments \cite{folk}, \cite{Ril16} that solve the precise word problem for presentation
$\langle \, a, b \,  \|  \,  a=1,  b=1 \,  \rangle$
in polynomial time and that are based on the method of dynamic programming.

For a word $U$ over $\A^{\pm 1}$ consider the following property.
\begin{enumerate}
\item[(E)] $U$ is nonempty and if $b_1, b_2 \in \A^{\pm 1}$ are the first, last, resp., letters of $U$ then
$b_1 \ne b_2^{-1}$ and $b_1 \ne b_2$.
\end{enumerate}

\begin{lem}\label{lema1}  Let $\D$ be a \ddd\ over presentation \er{pr1} such that $\D$ has property (A) and
the word $W \equiv \ph(\p |_0 \D)$ has property (E). Then there exists a factorization $\p |_0 \D = q_1 q_2$ such that
$|q_1|, |q_2| >0$ and $(q_1)_+ = (q_2)_- = \al(0)$. In particular,   $\ph(q_1)  \overset  {\GG_2} = 1$,  $\ph(q_2)  \overset  {\GG_2} = 1$.
\end{lem}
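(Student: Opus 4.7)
The plan is to argue by contradiction. Write $\p|_0 \D = f_1 f_2 \dots f_n$ with $n = |W|$, and set $o := \al(0)$, so $f_1$ starts at $o$ with label $b_1$ and $f_n$ ends at $o$ with label $b_2$. Suppose $(f_j)_+ \ne o$ for every $1 \le j \le n-1$, i.e., $\p|_0 \D$ visits $o$ only at its two endpoints. Property (E) immediately forces $n \ge 2$, since $n = 1$ would make $b_1 = b_2$. From any intermediate occurrence of $o$ one obtains the required factorization $\p|_0 \D = q_1 q_2$, and then $\ph(q_1) \overset{\GG_2}{=} \ph(q_2) \overset{\GG_2}{=} 1$ follows by cutting $\D$ at $o$ into two disk subdiagrams and applying Lemma~\ref{vk}.

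The structural input, from property (A) together with the simplicity of each $\p \Pi$ (Lemma~\ref{vk2}(a)), is that every edge of $\D$ either lies on the boundary of a unique face (in which case exactly one of its orientations is on that face) or is dangling (in which case $\p \D$ traverses both orientations). Apply this to $f_1$. If $f_1$ is dangling, then $f_1^{-1}$ occurs in $\p \D$ at some position $j \ge 2$, and $(f_j)_+ = (f_1)_- = o$; the hypothesis forces $j = n$, hence $f_n = f_1^{-1}$ and $b_2 = b_1^{-1}$, contradicting $b_1 \ne b_2^{-1}$.

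Otherwise $f_1^{-1} \in \p \Pi$ for some face $\Pi$; write $\p \Pi = f_1^{-1} g_1 \dots g_{m-1}$ cyclically starting at $(f_1)_+$, where $m = |\p \Pi|$ and $\ph(\p \Pi) = a_{j_\Pi}^{\epsilon n_\Pi}$ with $\epsilon \in \{\pm 1\}$. If $m = 1$, the face is a loop at $o$, so $f_1$ itself is a loop at $o$, and $(f_2)_- = (f_1)_+ = o$ is an intermediate visit (recall $n \ge 2$), contradicting the hypothesis. If $m \ge 2$, simplicity of $\p \Pi$ forces $(g_1)_- = o$ while $(f_1)_+ \ne o$; property (A) then gives $g_1^{-1} \in \p \D$ with $(g_1^{-1})_+ = o$, so the hypothesis forces $g_1^{-1} = f_n$. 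Since every edge of $\p \Pi$ carries label $a_{j_\Pi}^{\epsilon}$, one computes $b_2 = \ph(g_1^{-1}) = a_{j_\Pi}^{-\epsilon} = \ph(f_1) = b_1$, contradicting $b_1 \ne b_2$.

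The main technical point is the face case with $m \ge 2$: one must leverage simplicity of $\p \Pi$ to guarantee $g_1 \ne f_1^{-1}$ and then use property (A) to locate $g_1^{-1}$ as the unique edge of $\p \D$ landing at $o$ under the single-visit assumption. Both inequalities of property (E) are genuinely needed --- $b_1 \ne b_2^{-1}$ for the dangling case and $b_1 \ne b_2$ for the face case --- which explains why (E) combines both.
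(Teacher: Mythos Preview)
Your proof is correct and follows essentially the same approach as the paper's: both case-split on whether the first boundary edge $e_1$ has $e_1^{-1}$ on $\p\D$ (your dangling case) or on some $\p\Pi$, and in the face case both pass to the adjacent edge of $\p\Pi$ at $\al(0)$ and use property~(A) to locate its inverse on $\p\D$. The paper argues directly (also examining $e_2$ and its face $\Pi_2$, deducing $\Pi_1\ne\Pi_2$ from $b_1\ne b_2$, hence the adjacent edge's inverse is not $e_2$), whereas you phrase it as a contradiction and only need to look at the single face containing $f_1^{-1}$; this is a minor streamlining but not a different idea.
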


\begin{proof} Let $\p |_0 \D = e_1 q e_2$, where $e_1, e_2$ are edges, $q$ is a subpath of $\p |_0 \D$.

Suppose $e_1^{-1}$ is an edge of $\p |_0 \D$. Since $e_1^{-1} \ne e_2$ by property (E) of $W$, it follows that
$e_1^{-1}$ is an edge of $q$ and $q = r_1 e_1^{-1} r_2 $, where $r_1, r_2$ are subpaths of $q$.
Hence, $q_1 = e_1 r_1 e_1^{-1} $ and $q_2 = r_2 e_2$ are desired paths.

Hence, we may assume that $e_1^{-1}$ is an edge of the boundary path $\p \Pi_1$ of  a face $\Pi_1$.
Arguing in a similar fashion, we may assume that
$e_2^{-1}$ is an edge of the boundary path $\p \Pi_2$ of  a face $\Pi_2$.  If $\Pi_1 = \Pi_2$ then, in view of relations of the presentation \eqref{pr1}, we have $\ph(e_1) = \ph(e_2)$ contrary to property (E) of $W$.  Hence,  $\Pi_1 \ne \Pi_2$.

Since for every edge $f$ of $\p \Pi_1$ the edge $f^{-1}$ belongs to $\p  \D$ by property (A), we obtain the existence of a desired path $q_1$ of the form $q_1 = e_1$ if $|\p \Pi_1| = 1$ or $q_1 = e_1 r f^{-1}$ if $|\p \Pi_1| > 1$, where $r$ is a subpath of
$\p |_0 \D$, $f$ is the edge of $\p \Pi_1$ such that $e_1^{-1} f$ is a subpath of $\p \Pi_1$, $f_- = \al(0)$, and $|q_1| < |\p  \D |$.
\end{proof}

If $U  \overset  {\GG_2} = 1$, let $\mu_2(U)$ denote the integer such that the \PWPP\  for presentation \eqref{pr1}  holds for the pair  $(U, \mu_2(U))$. If $U  \overset  {\GG_2} \ne 1$, we set  $\mu_2(U) := \infty$.

\begin{lem}\label{lema2}  Let $U$ be a word such that  $U  \overset  {\GG_2} = 1$ and $U$ has property (E).
Then there exists a factorization $U \equiv U_1 U_2$ such that
$|U_1|, |U_2| >0$ and
$$
\mu_2(U)  = \mu_2(U_1) +\mu_2(U_2) .
$$
\end{lem}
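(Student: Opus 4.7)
The plan is to prove the lemma by decomposing a minimal disk diagram for $U$ at the pinch vertex produced by Lemma~\ref{lema1}. Since $U \overset{\GG_2}{=}1$, Lemmas~\ref{vk} and \ref{vk2}(b) provide a disk diagram $\D$ over \er{pr1} with $\ph(\p|_0\D) \equiv U$, $\D$ has property (A), and $|\D(2)| = \mu_2(U)$ (choose one that realizes the minimum). Since $U$ has property (E), Lemma~\ref{lema1} yields a factorization $\p|_0\D = q_1 q_2$ with $|q_1|, |q_2| > 0$ and $(q_1)_+ = (q_2)_- = \al(0)$, so that both $q_1$ and $q_2$ are closed paths based at $\al(0)$.

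The next step is to interpret the appearance of $\al(0)$ as a second endpoint in $\p|_0\D$ as a separating (pinch) vertex of $\D$. Because $\D$ is planar and simply connected, the closed subpath $q_1$ of $\p\D$ bounds a disk subdiagram $\D_1$ of $\D$, and its complement $\D_2$ is also a disk subdiagram with $\p|_0\D_2 = q_2$. Setting $U_i \equiv \ph(q_i)$ gives a factorization $U \equiv U_1 U_2$ with $|U_1|, |U_2|>0$, and each $\D_i$ is a disk diagram for $U_i$ with $|\D(2)| = |\D_1(2)| + |\D_2(2)|$. By minimality of $\D$ and the definition of $\mu_2$, we obtain
\[
\mu_2(U_1) + \mu_2(U_2) \le |\D_1(2)| + |\D_2(2)| = |\D(2)| = \mu_2(U).
\]

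For the reverse inequality, I would take minimal disk diagrams $\D_1'$, $\D_2'$ for $U_1$, $U_2$ respectively (so $|\D_i'(2)| = \mu_2(U_i)$), and glue them by identifying their base vertices $\al(0)$ to form a disk diagram $\D'$ with $\ph(\p|_0\D') \equiv U_1 U_2 \equiv U$ and $|\D'(2)| = \mu_2(U_1) + \mu_2(U_2)$. Hence $\mu_2(U) \le \mu_2(U_1) + \mu_2(U_2)$, and combining the two inequalities finishes the proof.

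The main obstacle is the topological assertion that the repeated appearance of $\al(0)$ in the boundary path of the simply connected planar complex $\D$ actually splits $\D$ into two disk subdiagrams $\D_1, \D_2$ whose face counts add up to $|\D(2)|$. This is a standard fact about van Kampen diagrams (a base vertex visited twice by $\p\D$ is a cut vertex separating the complex into two simply connected pieces joined at that vertex), but some care is required in citing or proving it; everything else in the argument is routine bookkeeping with Lemmas~\ref{vk}, \ref{vk2}(b), and \ref{lema1}.
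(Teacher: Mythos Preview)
Your proof is correct and follows essentially the same route as the paper: apply Lemma~\ref{lema1} to a minimal diagram with property~(A), split at the pinch vertex $\al(0)$ into subdiagrams $\D_1,\D_2$, and read off $\mu_2(U)=\mu_2(U_1)+\mu_2(U_2)$. The only cosmetic difference is that the paper compresses your two inequalities into the single observation that minimality of $\D$ forces each $\D_i$ to be minimal for $U_i$ (otherwise a replacement would contradict minimality of $\D$), which is exactly your gluing argument in contrapositive form.
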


\begin{proof} Consider a \ddd\  $\D$ over \er{pr1} such that $\ph(\p |_0 \D) \equiv U$ and $|  \D(2)| = \mu_2(U)$.
By Lemma~\ref{vk2}(b), we may assume that $\D$ has property (A). Hence, it follows from  Lemma~\ref{lema1} applied to $\D$ that
there is a  factorization $\p |_0 \D = q_1 q_2$ such that
$|q_1|, |q_2| >0$ and $(q_1)_+ = (q_2)_- = \al(0)$. Denote $U_i := \ph(q_i)$ and let $\D_i$ be the subdiagram of $\D$ bounded by $q_i$, $i=1,2$. Since $\D$ is a minimal \ddd\ for $U$, it follows that  $\D_i$ is a minimal \ddd\ for $U_i$, $i=1,2$.
Hence, $|\D_i(2) | = \mu_2(U_i)$ and
$$
\mu_2(U) = |\D(2) | = |\D_1(2) |+|\D_2(2) | = \mu_2(U_1)+\mu_2(U_2) ,
$$
as required.
\end{proof}

Let $U$ be a nonempty word over $\A^{\pm 1}$ and let $U(i,j)$, where $1 \le i \le |U|$ and $0 \le j \le |U|$, denote the subword of $U$ that starts
with the $i$th letter of $U$ and has length $j$. For example, $U = U(1, |U|)$ and  $U(i, 0)$ is the empty subword.

If $a_j \in \A$, let $|U|_{a_j}$ denote the total number of occurrences of letters $a_j, a_j^{-1}$ in $U$.
Note that we can decide whether $U  \overset  {\GG_2} = 1$ in time $O(|U|^2)$ by cancelling subwords $a_j^{\pm n_j}$,
$a_j^{-1} a_j$, $a_j a_j^{-1}$, and checking whether the word obtained by a process of such cancellations is empty.
\smallskip

Let  $W$ be a nonempty word over $\A^{\pm 1}$.  Define a parameterized word $W[i, j, k,\ell]$, where  $1 \le i \le |W|$, $0 \le j \le |W|$,
$1\le k \le m$, and $\ell$ is an integer that satisfies $|\ell | \le |W|_{a_k} -  |W(i,j)|_{a_k}$ so that
\begin{gather}\label{eeqq1}
 W[i, j, k, \ell] :=       W(i,j)a_k^\ell
\end{gather}
and  the word $W(i,j)a_k^\ell $ is not empty, i.e., $j+|\ell| \ge 1$.

Note that the total number of such parameterized words  $W[i, j, k,\ell]$ is bounded by $O(|W|^3)$, here $m = |\A |$ is a constant as the presentation \eqref{pr1} is fixed. Let  $\mathcal S_2(W)$ denote the set   of all parameterized words  $W[i, j, k,\ell]$. Elements $W[i, j, k,\ell]$ and $W[i', j', k',\ell']$ of $\mathcal S_2(W)$  are defined to be equal  \ifff\ the quadruples $(i, j, k,\ell)$, $(i', j', k',\ell')$ are equal. Hence, we wish to distinguish between elements of $\mathcal S_2(W)$ and actual words represented by elements of $\mathcal S_2(W)$.
It is clear that we can have $W(i,j)a_k^\ell \equiv W(i',j')a_{k'}^{\ell'}$ when  $W[i, j, k,\ell] \ne W[i', j', k',\ell']$.

If $U$ is the word represented by $W[i, j, k,\ell]$, i.e., $U \equiv W(i,j)a_k^\ell$, then we denote this by writing
$$
U  \overset \star = W[i, j, k,\ell] .
$$

We introduce a partial order on the set $\mathcal S_2(W)$ by setting
$$
W[i', j', k',\ell']  \prec W[i, j, k,\ell]
$$
if $j' < j$ or $j' = j$ and $|\ell'| < |\ell|$. In other words, we partially order the set $\mathcal S_2(W)$ by using the lexicographical order on pairs $(j, |\ell|)$ associated with elements $W[i, j, k,\ell]$ of $\mathcal S_2(W)$.

Define
$$
\mu_2(W[i, j, k,\ell]) := \mu_2(W(i,j)a_k^\ell) .
$$

To compute the number $\mu_2(W) = \mu_2(W[1, |W|, 1,0])$ in polynomial time, we use the method of dynamic programming
in which the parameter is  $(j, |\ell|)$. In other words, we compute the number
$\mu_2(W[i, j, k,\ell])$ by induction on parameter $(j, |\ell|)$.
The base of induction (or initialization) for $j + |\ell| = 1$ is obvious as  $\mu_2(W[i, j, k,\ell])$ is $0$ or $1$ depending on the presentation \er{pr1}.

To make the induction step, we assume that the numbers $\mu_2(W[i', j', k',\ell'])$ are already computed for all $W[i', j', k',\ell']$ whenever $(j', |\ell'|)   \prec (j, |\ell|)$ and we compute the number $\mu_2(W[i, j, k,\ell])$.

If  $W(i,j)a_k^\ell \overset  {\GG_2} \ne 1$, we set  $\mu_2( W[i', j', k',\ell'] ) := \infty$.

Assume that $W(i,j)a_k^\ell \overset  {\GG_2} = 1$.

Suppose that the word $W(i,j)a_k^\ell$ has property (E). Consider all possible factorizations for  $W(i,j)a_k^\ell$ of the form
\begin{gather}\label{eeqq2}
 W(i,j)a_k^\ell \equiv U_1 U_2 ,
\end{gather}
where $|U_1|, |U_2| \ge 1$. Let us show  that either of the words $U_1, U_2$ can be represented in a form
$W[i', j', k',\ell']$ so that  $W[i', j', k',\ell'] \prec W[i, j, k,\ell]$.

First suppose $|U_1|  \le |W(i,j)| =j$. If $i +|U_1| \le |W|$, we have
$$
U_1 \overset \star = W[i, |U_1|, 1,0] ,  \quad   U_2 \overset \star  =  W[i +|U_1| , j-|U_1|,k,\ell] .
$$
On the other hand, if $i +|U_1| = |W|+1$,  we have
$$
U_1 \overset \star  = W[i, |U_1|, 1,0] ,  \quad     U_2\overset \star   =  W[|W| , 0,k,\ell] .
$$

Now suppose  $|U_1|  > |W(i,j)| =j$. Let $\ell_1, \ell_2$ be integers such that $\ell_1, \ell_2, \ell$ have the same sign,
$ \ell = \ell_1+\ell_2$, and $j + |\ell| = |U_1| +\ell_2$. Then we have
$$
U_1 \overset \star  = W[i, |U_1|, k,\ell_1] , \quad   U_2 \overset \star   =  W[ 1,0 , k, \ell_2] .
$$

Note that the induction parameter $(j', |\ell'|)$ for indicated representations of $U_1, U_2$ is smaller than
the parameter $(j, |\ell|)$  for
the original parameterized word $W[i, j, k,\ell]$.    It follows from Lemma~\ref{lema2} applied to the word
$W(i,j)a_k^\ell$ that there is a factorization
$$
W(i,j)a_k^\ell \equiv U'_1 U'_2
$$
such that $|U'_1|, |U'_2| \ge 1$ and
$\mu_2(W(i,j)a_k^\ell) = \mu_2(U'_1) +  \mu_2(U'_2)$.
Hence, taking the minimum
\begin{gather}\label{minUU}
\min (\mu_2(U_1) +  \mu_2(U_2))
\end{gather}
over all factorizations \eqref{eeqq2}, we obtain the number $\mu_2(W(i,j)a_k^\ell)$.
\medskip

Assume that the word $W(i,j)a_k^\ell$ has no property (E), i.e.,   $W(i,j)a_k^\ell \equiv bUb^{-1}$ or $W(i,j)a_k^\ell \equiv b$ or $W(i,j)a_k^\ell \equiv bUb$, where $b \in \A^{\pm 1}$ and $U$ is a word.
\smallskip

First suppose that $W(i,j)a_k^\ell \equiv bUb^{-1}$. Note that the word $U$ can be represented in a form
$W[i', j', k',\ell']$ so that  $W[i', j', k',\ell'] \prec W[i, j, k,\ell]$. Indeed, if $|\ell | = 0$ then
$$
U \overset \star = W[i+1, j-2, k, 0] .
$$
On the other hand, if  $|\ell | > 0$ then
$$
U \overset \star = W[i+1, j-1, k,\ell_1] ,
$$
where $|\ell_1| = |\ell|-1$ and  $\ell_1 \ell \ge 0$.
Hence, the number  $\mu_2(U)$ is available by induction hypothesis. Since $\mu_2(W[i, j, k,\ell])  =  \mu_2(U)$, we obtain the required number $\mu_2(W[i, j, k,\ell])$.
\smallskip

In case  $W(i,j)a_k^\ell \equiv b$, the number  $\mu_2(W[i, j, k,\ell])$ is either one or
$\infty$ depending on the presentation \eqref{pr1}.
\smallskip

Finally, consider the case when  $W(i,j)a_k^\ell \equiv bUb$. Denote $b = a_{k_1}^\delta$, where $a_{k_1} \in \A$ and
$\delta = \pm 1$.

If $j=0$, i.e., $ W(i,j)a_k^\ell \equiv a_k^\ell$, then $k = k_1$ and
the number  $\mu_2(W[i, j, k,\ell])$  can be easily computed in time $O(\log |W|)$ as $|\ell | \le |W|$ and we can use binary representation for $\ell$.

Suppose  $j>0$. If $\ell  =0$ then $Ub^2  \overset \star = W[i+1, j-1, k_1,\delta]$.
If $|\ell | >0$ then $k=k_1$, $\ell $ and $\delta$ have the same sign, and  $Ub^2 \overset \star = W[i+1, j-1, k,\ell + \delta]$.

In either subcase $j=0$ or $j>0$, we obtain
\begin{align*}
 & Ub^2 \overset \star =  W[i', j', k',\ell'] ,  \quad  W[i', j', k',\ell'] \prec W[i, j, k,\ell] , \\
 &  \mu_2(W[i, j, k,\ell])  = \mu_2(W[i', j', k',\ell']) .
\end{align*}

This completes our inductive procedure of computation of numbers $\mu_2(W[i, j, k,\ell])$ for all $W[i, j, k,\ell] \in \mathcal S_2(W)$.

\smallskip

Since the length of every word $ W(i,j)a_k^\ell$ is at most $|W|$, it follows that our  computation of the number
$\mu_2(W[i, j, k,\ell])$ can be done in time $O(|W|\log|W| )$ including additions of binary representations of numbers $\mu_2(U_1), \mu_2(U_2)$ to compute the minimum \eqref{minUU}.
Since the cardinality  $| \mathcal S_2(W) | $ of the set $\mathcal S_2(W)$
is bounded by $O(|W|^3)$, we conclude that computation of numbers $\mu_2(W[i, j, k,\ell])$ for all words
$W[i, j, k,\ell] \in \mathcal S_2(W)$  can be done in deterministic time  $O(|W|^4\log|W|)$. This means that both the \BWPP\ and the \PWPP\ for presentation \eqref{pr1} can be solved in time $O(|W|^4\log|W|)$.

The proof of Theorem~\ref{thm1} is complete. \end{proof}

\begin{C1}   Let  $W$ be a word over $\A^{\pm 1}$ and $n \ge 0$ be an integer. Then the decision problems that inquire  whether the width $ h(W)$ or the spelling length $h_1(W)$  of $W$ is equal to $n$ belong to $\textsf{L}^3$ and $\textsf{P}$.  Specifically, the problems can be solved in  deterministic space $O((\log|W|)^3)$ or in deterministic time $O( |W|^4\log|W| )$.
\end{C1}

\begin{proof}  Observe that the decision problem asking whether the width (resp. the spelling length)  of a word $W$  is $n$ is equivalent to the \PWPP\  whose input is  $(W, n)$ for presentation
\eqref{pr1} in which $E_i = \mathbb N$   (resp. $E_i = \{ 1\}$) for every $i =1, \dots, m$. Therefore, a reference to  proven Theorem~\ref{thm1} shows that the  problem asking whether the  width (resp. the spelling length)  of $W$ is $n$  belongs to both  ${\textsf L}^3$ and ${\textsf P}$ and yields the required space and time bounds for this problem.
\end{proof}

\section{Calculus of Brackets for Group Presentation \eqref{pr3b}}

As in Theorem~\ref{thm2},  consider a group presentation
\begin{equation*}\tag{1.4}
\GG_3 = \langle \, a_1  , a_2  , \dots,   a_m \, \| \, a_2 a_1^{n_1} a_2^{-1} = a_1^{n_2}  \rangle ,
\end{equation*}
where $n_1, n_2$ are nonzero integers.
Suppose $W$ is a nonempty word  over $\A^{\pm 1}$ such that  $W \overset {\GG_3 } = 1$, $n \ge 0$ is an integer and   $W$ is a product of at most $n$ conjugates of the words $(a_2 a_1^{n_1} a_2^{-1} a_1^{-n_2})^{\pm 1}$. Then,
as follows from arguments of the proof  of Lemma~\ref{vk}, there exists
a disk diagram $\D$ over presentation  \eqref{pr3b} such that $\ph(\p |_0 \D) \equiv W$  and   $| \D(2) | \le n$. Without loss of generality, we may assume that $\D$ is reduced.  Consider the word
$\rho_{a_1}(W)$ over  $\A^{\pm 1}  \setminus \{ a_1, a_1^{-1} \}$ which is obtained from $W$ by erasing all occurrences of $a_1^{\pm 1}$ letters.

An oriented edge $e$ of $\D$ is called an {\em $a_i$-edge} if $\ph(e) = a_i^{\pm 1}$.
We also consider a tree $\rho_{a_1}(\D)$  obtained from $\D$ by  contraction of  all $a_1$-edges of $\D$  into points and subsequent identification of all pairs of edges $e, f$ such that  $e_- = f_-$ and $e_+ = f_+$. If an edge $e'$ of $\rho_{a_1}(\D)$  is obtained from an edge
$e$ of $\D$ this way,  we set $\ph(e') := \ph(e)$.  It is easy to check that this definition is correct.  This labeling function turns the tree $\rho_{a_1}(\D)$ into a disk diagram over presentation   $\langle \,  a_2 , \dots , a_m   \, \| \, \varnothing  \, \rangle $.

Two faces $\Pi_1, \Pi_2$ in a disk diagram $\D$ over  \eqref{pr3b} are termed {\em related}, denoted  $\Pi_1 \leftrightarrow  \Pi_2$, if there is an  $a_2$-edge $e$ such that $e \in \p \Pi_1$ and  $e^{- 1} \in \p \Pi_2$.  Consider the minimal equivalence relation $\sim_2$ on the set of faces of $\D$ generated by the relation $\leftrightarrow$. An {\em $a_i$-band}, where $i >1$,  is a minimal subcomplex $\Gamma$ of  $\D$ ($\Gamma$ is not necessarily simply connected) that contains an $a_i$-edge $f$, and, if there is a face $\Pi$ in $\D$ with $f \in (\p \Pi)^{\pm 1}$, then $\Gamma$ must contain all faces of the equivalence class
$[\Pi]_{\sim_2}$ of $\Pi$. Hence, an $a_i$-band  $\Gamma$  is either a subcomplex   consisting of a single
nonoriented edge, denoted $\{ f, f^{- 1} \}$, where $f$ is an $a_i$-edge, $i >1$,  and   $f, f^{- 1} \in \p \D$, or $\Gamma$  consists of all faces of an equivalence class $[\Pi]_{\sim_2}$ when $i=2$. Clearly,  the latter case is possible only if $i =2$. In this latter case, if $\Gamma$ contains faces but $\Gamma$ has no face in $[\Pi]_{\sim_2}$ whose boundary contains an $a_2$-edge $f$ with $f^{-1} \in \p \D$,  $\Gamma$ is called a  {\em closed} $a_2$-band.

\begin{lem}\label{b1}   Suppose that $\D$ is a reduced disk diagram over presentation  \er{pr3b}. Then there are no closed $a_2$-bands in $\D$ and every $a_i$-band $\Gamma$ of $\D$, $i >1$, is a disk subdiagram of $\D$ such that
$$
\p |_{(f_1)_-} \Gamma = f_1 s_1 f_2 s_2 ,
$$
where $f_1, f_2$ are edges of $\p \D$ with $\ph(f_1) = \ph(f_2)^{-1} = a_i$,
$s_1, s_2$ are simple paths with $\ph(s_1) \equiv a_1^{k n_1}$, $\ph(s_2) \equiv a_1^{-k n_2}$ for some integer $k \ge 0$, $\Gamma $ contains $k$ faces, and $\p \Gamma$ is a simple closed path when $k >0$.
\end{lem}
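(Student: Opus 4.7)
For $i > 2$, the letter $a_i$ does not appear in the relator $a_2 a_1^{n_1} a_2^{-1} a_1^{-n_2}$, so no face of $\D$ has an $a_i$-edge on its boundary. Hence every $a_i$-edge $f$ of $\D$ has both $f$ and $f^{-1}$ lying on $\p \D$, the band is $\Gamma = \{f, f^{-1}\}$, and the conclusion holds with $k = 0$, $f_1 = f$, $f_2 = f^{-1}$, and $s_1, s_2$ empty.

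For $i = 2$, the key preliminary observation is that in the reduced diagram $\D$, if two faces $\Pi', \Pi''$ share an $a_2$-edge, then they must carry the relator with the same orientation. This is verified by writing down the cyclic permutations of $R = a_2 a_1^{n_1} a_2^{-1} a_1^{-n_2}$ and of $R^{-1}$ that start at the shared $a_2$-letter and checking that $\ph(\p|_v \Pi') \equiv \ph(\p|_v \Pi'')^{-1}$ at a common vertex $v$ holds precisely in the opposite-orientation case, contradicting reducedness. Since each face contains exactly two $a_2$-edges in its boundary, the dual graph of $\Gamma$ (faces as vertices, shared $a_2$-edges as edges) has all vertex degrees at most two; being connected, it is either a path $\Pi_1, \ldots, \Pi_k$ or a cycle.

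To exclude the cyclic case, I argue by induction on $|\D(2)|$. If $\Gamma$ were a closed $a_2$-band of $k \ge 1$ faces, all with the same orientation by the preceding step, then stacking them along their shared $a_2$-edges embeds $\Gamma$ in the plane as an annular subcomplex of $\D$ whose two boundary loops are labeled by the nonzero powers $a_1^{\e kn_1}$ and $a_1^{-\e kn_2}$ for some $\e \in \{\pm 1\}$. The hole of this annulus is a disk subdiagram $\D_0$ of $\D$ satisfying $\ph(\p \D_0) \equiv a_1^\ell$, with $\ell \ne 0$ because $k, n_1, n_2 \ne 0$. The subdiagram $\D_0$ is reduced and has strictly fewer faces than $\D$, so the inductive hypothesis applies: every $a_2$-band of $\D_0$ is open, hence its endpoint $a_2$-edges must lie on $\p \D_0$. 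But $\p \D_0$ consists entirely of $a_1$-edges, so $\D_0$ contains no $a_2$-edges at all, hence no faces (each face has $a_2$-edges on its boundary). Thus $\D_0$ is a tree; traversing its boundary visits each edge once in each direction, giving a word trivial in the free group and forcing $\ell = 0$, a contradiction.

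Thus $\Gamma$ is an open chain $\Pi_1, \ldots, \Pi_k$, and the two unshared end $a_2$-edges $f_1, f_2$ of $\Pi_1, \Pi_k$ lie on $\p \D$ (if the other side of such an edge were a face, that face would belong to $\Gamma$). The planar stack-of-rectangles picture, enabled by the uniform orientation of all $\Pi_i$, yields $\p|_{(f_1)_-} \Gamma = f_1 s_1 f_2 s_2$ with $\ph(s_1) \equiv a_1^{kn_1}$ and $\ph(s_2) \equiv a_1^{-kn_2}$ (up to interchange, depending on the orientation of $f_1$). Simplicity of $\p \Gamma$ when $k > 0$ follows by a variant of the same induction: any self-intersection of $\p \Gamma$ would cut out a proper disk subdiagram of $\D$ whose boundary is a nontrivial power of $a_1$, which the tree argument of the previous paragraph already excludes. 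The principal difficulty is the exclusion of closed $a_2$-bands, which requires combining reducedness, the one-relator Baumslag--Solitar structure, planarity, and induction on $|\D(2)|$.
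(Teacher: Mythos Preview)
Your argument is essentially correct and takes a genuinely different route from the paper. The paper disposes of the closed-band case (and the simplicity of $\partial\Gamma$) by invoking Magnus's Freiheitssatz: a disk subdiagram with boundary label $a_1^{\ell}$, $\ell\neq 0$, would give $a_1^{\ell}\overset{\GG_3}{=}1$, contradicting the Freiheitssatz since $a_2$ occurs in the relator. You instead run an elementary induction on $|\D(2)|$, showing directly that such a subdiagram must be faceless (hence a tree, forcing $\ell=0$). Your approach is more self-contained and avoids the external appeal to one-relator theory; the paper's is shorter once the Freiheitssatz is taken for granted.

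There is one small gap in your simplicity argument. You assert that a self-intersection of $\partial\Gamma=f_1s_1f_2s_2$ always cuts out a subdiagram whose boundary is a nontrivial power of $a_1$. That is true when the repeated vertex lies twice on the same side $s_j$, but if a vertex of $s_1$ coincides with a vertex of $s_2$, the sub-loop picked out contains exactly one of $f_1,f_2$ and its label is (conjugate to) $a_1^{\ell'}a_2^{\pm 1}$, not a pure power of $a_1$. The paper handles this second case separately via the abelianization of $\GG_3$ (the $a_2$-exponent sum of any trivial word is zero). Your inductive method also disposes of it: by the inductive hypothesis the subdiagram has no closed $a_2$-bands, and since its boundary carries a single $a_2$-edge, no open band can fit either, so the subdiagram is a tree and its boundary label is trivial in the free group---impossible for $a_1^{\ell'}a_2^{\pm 1}$. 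You should add this case explicitly.
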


\begin{proof}  Since  $\D$ is reduced, it follows that if two faces $\Pi_1, \Pi_2$ are related by the relation  $\sim$, then $\Pi_1$ is not a mirror copy of $\Pi_2$, i.e.,  $\ph (\Pi_1 ) \not\equiv  \ph (\Pi_2 )^{-1}$.

Assume that there is a closed $a_2$-band  $\Gamma_0$ in $\D$.  Then it follows from the above remark that there is a disk subdiagram  $\D_0$ of $\D$, surrounded by $\Gamma_0$,  such that $\ph(\D_0) \equiv a^{k_0 n_i}_1$, where $i =1$ or $i=2$ and $k_0 = | \Gamma_0(2) | >0$. This, however, is a contradiction to the \MF , see \cite{LS}, \cite{MKS}. Recall that the \MF\   for a one-relator group presentation
$$
\GG_0 =  \langle \, \A \, \| \,  R=1  \, \rangle
$$
claims that every letter $a \in \A^{\pm 1}$, that appears in a cyclically reduced word $R$ over $\A^{\pm 1}$, will also appear in $W^{\pm 1}$ whenever $W$ is reduced and $W \overset {\GG_0} = 1$.

Hence, no $a_2$-band $\Gamma$ in $\D$ is closed and,  therefore, $\p \Gamma = f_1 s_1 f_2 s_2$ as described in Lemma's statement.    If $k >0$ and $\p \Gamma$ is not a simple closed path, then a proper subpath of $\p \Gamma$ bounds a disk subdiagram    $\D_1$ such that $\ph(\D_1)$ is a proper subword of $\ph(  \p \Gamma ) \equiv a_2 a_1^{k n_1}  a_2^{-1}  a_1^{-k n_2}$, where $k >0$.  Then it follows from Lemma~\ref{vk} that either $a_1^{\ell}  \overset {\GG_3} = 1 $, where $\ell \ne 0$, or
$a_1^{\ell'} a_2^\e  \overset {\GG_3} = 1 $, where $\e = \pm 1$. The first equality contradicts \MF\ and the second one is impossible in the abelianization of $\GG_3$.
\end{proof}

The factorization  $\p |_{(f_1)_-} \Gamma = f_1 s_1 f_2 s_2$ of Lemma~\ref{b1} will be called the {\em standard boundary} of an $a_i$-band $\Gamma$, where $i >1$. Note that $|s_1| =|s_2| =0$ is the case when  $k =0$, i.e.,  $\Gamma$ contains no faces and   $\p  \Gamma = f_1 f_2$ with $f_2^{-1} = f_1$.

An alternative way of construction  of the tree  $\rho_{a_1}(\D)$ from $\D$  can now be described as  contraction of the paths $s_1, s_2$ of the standard boundary of every  $a_2$-band $\Gamma$ of $\D$ into points, identification of the edges $f_1, f_2^{-1}$ of $(\p \Gamma)^{\pm 1}$, and contraction into points of all $a_1$-edges $e$ with $e \in \p\D$.

Let $s$ be a path in a disk diagram $\D_0$ over presentation \er{pr3b}. We say that $s$ is a {\em special arc} of $\D_0$ if either $|s| = 0$ or $|s| > 0$, $s$ is a reduced simple path, and $s$ consists entirely of $a_1$-edges. In particular, if $s$ is a special arc then  every subpath of $s, s^{-1}$ is also a special arc, and $s_- \ne s_+$ whenever $|s| > 0$.

\begin{lem}\label{arc}  Suppose that $\D_0$ is a reduced disk diagram  over presentation \er{pr3b} and $s, t$ are
special arcs in $\D_0$ such that $s_+ = t_-$.  Then there are factorizations $s = s_1 s_2$ and $t = t_1 t_2$ such that $s_2 = t_1^{-1}$ and $s_1 t_2$ is a special arc, see Fig.~6.1.
\end{lem}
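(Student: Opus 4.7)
The plan is to define the factorization by a maximal-cancellation principle and then verify the required properties separately. Let $t_1$ be the longest initial segment of $t$ whose inverse $t_1^{-1}$ is a terminal segment of $s$; then define $s_2 := t_1^{-1}$ and split $s = s_1 s_2$, $t = t_1 t_2$. Reducedness of $s_1 t_2$ at the junction is immediate from maximality: if the last edge of $s_1$ were the inverse of the first edge of $t_2$, appending that edge to $t_1$ would give a longer initial segment of $t$ with the required property. Reducedness inside $s_1$ and $t_2$ is inherited from $s$ and $t$, as is the property that every edge is an $a_1$-edge.

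The main difficulty is proving that $s_1 t_2$ is simple. I will argue by contradiction, assuming some vertex $u$ lies in both $s_1 \setminus \{(s_1)_+\}$ and $t_2 \setminus \{(t_2)_-\}$. Write $s_1 = \alpha \beta$ with $(\alpha)_+ = u$ and $t_2 = \gamma \delta$ with $(\gamma)_+ = u$; then $\beta^{-1}$ and $\gamma$ both run from $(s_1)_+$ to $u$. The key first move is to choose $u$ that minimizes $|\beta| + |\gamma|$ among all such bad vertices, since any common interior vertex of $\beta^{-1}$ and $\gamma$ is itself a bad vertex with a strictly smaller sum. Under this extremal choice, $\beta^{-1}$ and $\gamma$ share only their two endpoints, so $\beta^{-1} \gamma^{-1}$ is a simple closed path bounding a disk subdiagram $\D_2 \sbs \D_0$ whose boundary consists solely of $a_1$-edges.

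The heart of the argument is to show $\D_2$ contains no faces, and this is where I expect to use the one-relator structure of \eqref{pr3b} via Lemma~\ref{b1}. If a face $\Pi$ belonged to $\D_2$, the $a_2$-band $\Gamma$ of $\D_0$ through $\Pi$ could neither be entirely contained in $\D_2$ (since by Lemma~\ref{b1} its two extremal $a_2$-edges lie on $\p \D_0$, and any edge in $\D_2 \cap \p \D_0$ must lie on $\p \D_2$, which carries no $a_2$-edges) nor straddle $\p \D_2$ (since consecutive faces of a band share an $a_2$-edge, so such a crossing would place an $a_2$-edge on $\p \D_2$). Hence $\D_2$ has no faces; being a simply connected planar $2$-complex, it is a tree, and uniqueness of paths in a tree forces $\beta^{-1} = \gamma$. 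Combined with the ``only endpoints in common'' property, this forces $|\beta| = |\gamma| \le 1$. The length-$0$ case gives $u = (s_1)_+$, contradicting the hypothesis on $u$; the length-$1$ case makes the last edge of $s_1$ the inverse of the first edge of $t_2$, violating the reducedness established above. Hence no bad $u$ exists, and $s_1 t_2$ is a special arc.
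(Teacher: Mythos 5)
Your proof is correct, and it takes a genuinely different route from the paper's. The paper proceeds by induction on $|t|$: writing $t = re$, it applies the inductive hypothesis to the pair $(s,r)$, then handles the failure of $s_1' r_2 e$ to be a special arc in two sub-cases (not reduced, or reduced but not simple), using in the second sub-case exactly the observation that a disk subdiagram of $\D_0$ whose boundary consists of $a_1$-edges contains no faces (so is a tree, which, bounded by a simple closed path, must be a single edge). You instead pin down the factorization in one step, as the maximal cancellation between $s$ and $t$: you define $t_1$ to be the longest prefix of $t$ whose inverse is a suffix of $s$, get reducedness at the junction immediately from maximality, and then prove simplicity by an extremal argument (choosing a ``bad'' common vertex $u$ minimizing $|\beta|+|\gamma|$ so that $\beta^{-1}\gamma^{-1}$ becomes a simple closed path) that feeds into the \emph{same} ``no faces'' observation from Lemma~\ref{b1}, which the paper uses but states without proof and which you take the trouble to justify via the $a_2$-band argument. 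Both proofs thus pivot on the same key fact, but you reach it directly rather than recursively; your approach makes the structure of the final factorization explicit at the outset, while the paper's induction absorbs the bookkeeping into the recursive step. One small point worth making explicit in a final write-up: after minimizing $|\beta|+|\gamma|$, the conclusion that $\beta^{-1}$ and $\gamma$ ``share only endpoints'' should be noted to also rule out shared (unoriented) edges except in the degenerate case $|\beta|=|\gamma|=1$, so that in the non-degenerate case $\beta^{-1}\gamma^{-1}$ really is an embedded simple closed curve before you invoke the tree argument; your subsequent dichotomy $|\beta|\le 1$ then cleanly covers the degenerate case via the reducedness you established.
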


\begin{center}
\begin{tikzpicture}[scale=.57]
\draw [-latex](-3,-3) --(-1.4,-3);
\draw (-3,-3) --(3,-3);
\draw (1,-3) --(1,-1);
\draw  [-latex](1,-3) --(1,-1.7);
\draw  [-latex](1,-3) --(2,-3);
\node at (-0.25,-1.95) {$s_2 = t_1^{-1}$};
\draw  (-3,-3) [fill = black] circle (.06);
\draw  (1,-3) [fill = black] circle (.06);
\draw  (3,-3) [fill = black] circle (.06);
\draw  (1,-1) [fill = black] circle (.06);
\node at (-1.4,-3.6) {$s_1$};
\node at (1.9,-3.6) {$t_2$};
\node at (0.2,-4.8) {Fig.~6.1};
\end{tikzpicture}
\end{center}

\begin{proof}  First we observe that if $\D_1$ is a disk subdiagram of $\D_0$  such that every edge of $\p \D_1$ is an $a_1$-edge, then it follows from Lemma~\ref{b1}  that $\D_1$ contains no faces, i.e., $ | \D_1(2) | = 0$, and so $\D_1$ is a tree.

We now prove this Lemma by induction on the length $| t| \ge 0$.
If $| t| = 0$ then our claim is true with  $s_1 = s$ and $t_2 = t$. Assume that  $| t| >  0$ and $t = re$, where $e$ is an edge. Since $r$ is also a special arc with $|r| < |t|$,  the induction hypothesis applies to  the pair $s, r$ and yields factorizations $s = s'_1 s'_2$ and $r= r_1 r_2$ such that $s'_2 = (r_1)^{-1}$ and $s'_1 r_2$ is a special arc.
 If $s'_1 r_2 e$ is also  a special arc, then the factorizations $s = s'_1 s'_2$ and  $t = t_1 t_2$, where $t_1 = r_1$ and  $t_2 = r_2 e$, have the desired property and the induction step is complete.

Hence, we may assume that  $s'_1 r_2 e$ is not a special arc. Note that if $|r_2| > 0$, then the path $s'_1 r_2 e$ is reduced because  $s'_1 r_2 $ and $r_2 e$
 are special arcs.  Assume that  $s'_1 r_2 e$ is not reduced. Then $|r_2| = 0$ and the last edge of $s_1'$ is $e^{-1}$. Denote
 $s'_1 =s'_{11} e^{-1}$. Then, letting   $s_1 := s'_{11}$,
 $s_2 := t^{-1}$ and $t = t_1 t_2$, where $t_1 := t$ and  $| t_2 | = 0$, we have $s_2 = t^{-1}$ and $s_1 t_2$ is a desired special arc. Thus, we may assume that the path  $s'_1 r_2 e$ is reduced but not simple. Note that the path  $s'_1 r_2 e$
 consists entirely of $a_1$-edges. Since
 $s'_1 r_2$ and $ r_2e$ are simple, it follows that $e_+$ belongs to $s_1'$ and defines a factorization $s'_1 = s_{11} s_{12}$, where $| s_{12}| >0$. Hence, the path $s_{12} r_2 e$ is a  simple closed path  which bounds a disk subdiagram  $\D_1$ whose boundary $\p \D_1$ consists of $a_1$-edges.  By the observation made above,
 $\D_1$ is a tree, which, in view of the fact that $\p \D_1$ is a  simple closed path, implies that
 $\p \D_1 = e^{-1} e$. Therefore, $| r_2 | = 0$ and  $ s_{12} =  e^{-1}$. This means that the path $s'_1 r_2 e$ is not reduced, contrary to our assumption. This contradiction completes the proof.
\end{proof}

Let $U$ be a word over $\A^{\pm 1}$.
If $a \in \A$ is a letter, then the {\em $a$-length } $|U|_a$ of  $U$ is the number of occurrences of $a$ and $a^{- 1}$ in $U$. We also define  the {\em complementary  $a$-length } by  $|U|_{\bar a} := |U| - |U|_{a}$.

From now on we assume, unless stated otherwise,  that $W$ is a nonempty word over $\A^{\pm 1}$ such that $W \overset{\GG_3} = 1$ and $\D$ is a reduced disk diagram over presentation \er{pr3b}
such that $\ph( \p |_0 \D) \equiv W$.

The definitions of the path $P_W$, of the map $\al : P_W \to \D$ and those of related notions, given in Sect.~2, are retained  and are analogous to those used in Sects.~4--5.

\begin{lem}\label{b2} Suppose that  $|W|_{\bar a_1} >2$. Then there exist
vertices $v_1, v_2 \in P_W$  such that $v_1 < v_2$ and if $P_W(\ff, v_1, v_2) = p_1 p_2 p_3$, then the following hold true.
There is a special arc $r$ in $\D$ such that $r_- = \al(v_1)$, $r_+ = \al(v_2)$,  $\ph(r) \equiv a_1^{\ell}$ for some $\ell$,  and
 \begin{equation*}
    \min(|\ph(p_2)|_{\bar a_1},  |\ph(p_1)|_{\bar a_1}+|\ph(p_3)|_{\bar a_1}) \ge \tfrac 13 |\ph(\p |_0 \D)|_{\bar a_1} =  \tfrac 13  |W|_{\bar a_1}  .
 \end{equation*}
In addition, if   $| \vec \D(1) |_{a_1} :=  |W|_{a_1} + (|n_1| + |n_2|)|\D(2)| $ denotes the number of $a_1$-edges of $\D$, then    $|r|  \le  \tfrac 12  | \vec \D(1) |_{a_1}$.
\end{lem}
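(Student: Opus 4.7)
The plan is to reduce the statement to Lemma~\ref{lem1} by passing to the tree $\rho_{a_1}(\D)$, after first establishing that the subcomplex $\D^{a_1}$ of $\D$ consisting of all $a_1$-edges and their endpoints is a forest. Indeed, if $\D^{a_1}$ contained a simple closed path $c$, then $c$ would bound a disk subdiagram $\D_1$ of $\D$ whose boundary consists entirely of $a_1$-edges; by the opening observation in the proof of Lemma~\ref{arc} (which applies Lemma~\ref{b1} and \MF\ to rule out $a_2$-bands and hence all faces in $\D_1$), the subdiagram $\D_1$ is a tree, but the boundary of a nontrivial tree traverses each edge twice and is not a simple closed path, forcing $\D_1$ to be a single vertex and contradicting $|c|>0$. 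Consequently, between any two vertices of $\D$ in the same $a_1$-connected component there is a unique simple $a_1$-path.

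Next, I would set $W' := \rho_{a_1}(W)$, so $|W'| = |W|_{\bar a_1} > 2$, and let $\al' : P_{W'} \to \rho_{a_1}(\D)$ be the analogue of $\al$ for the pair $(W', \rho_{a_1}(\D))$. Each vertex $v'$ of $P_{W'}$ corresponds to a maximal consecutive range $I(v') \subseteq \{0,1,\ldots,|W|\}$ of vertices of $P_W$ whose $\al$-images lie in the single $a_1$-cluster that contracts to $\al'(v')$. Applying Lemma~\ref{lem1} to the tree $\rho_{a_1}(\D)$ and the word $W'$ yields vertices $v_1' < v_2'$ of $P_{W'}$ such that $\al'(v_1') = \al'(v_2')$ and, writing $P_{W'}(\ff,v_1',v_2') = p_1' p_2' p_3'$, one has $\min(|p_2'|,\,|p_1'|+|p_3'|) \ge \tfrac{1}{3}|W'|$.

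I would then lift by choosing any $v_1 \in I(v_1')$ and $v_2 \in I(v_2')$, so that $v_1 < v_2$ and $\al(v_1), \al(v_2)$ lie in the same $a_1$-cluster of $\D^{a_1}$. By the forest property, there is a unique simple $a_1$-path $r$ from $\al(v_1)$ to $\al(v_2)$ in $\D^{a_1}$; such an $r$ is a special arc, and being a reduced word over $\{a_1^{\pm 1}\}$ its label satisfies $\ph(r) \equiv a_1^\ell$ for some $\ell \in \mathbb Z$. For the factorization $P_W(\ff,v_1,v_2) = p_1 p_2 p_3$ the identities $|\ph(p_i)|_{\bar a_1} = |p_i'|$ hold because $v_1, v_2$ sit inside the slots between the $a_1$-runs of $W$ counted by $v_1', v_2'$; hence the $1/3$ inequality on the $p_i'$ transports to the required inequality on the $|\ph(p_i)|_{\bar a_1}$. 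Finally, since $r$ is a simple path in the forest $\D^{a_1}$ it traverses each non-oriented $a_1$-edge of $\D$ at most once, so
\[
|r| \le |\D(1)|_{a_1} = \tfrac{1}{2}|\vec\D(1)|_{a_1}.
\]
The main subtle point is the forest property of $\D^{a_1}$, which is a direct repackaging of the argument already used in Lemma~\ref{arc}; the remainder is bookkeeping through the contraction $\rho_{a_1}$.
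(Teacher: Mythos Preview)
Your argument is correct and follows essentially the same route as the paper: reduce to Lemma~\ref{lem1} via the tree $\rho_{a_1}(\D)$, lift the resulting vertices back to $P_W$, and connect their $\al$-images by a simple $a_1$-path. The only cosmetic difference is that the paper obtains $r$ by observing that $\al(v_1),\al(v_2)$ lie on the boundary of a single $a_i$-band $\Gamma$ ($i>1$) and taking $r$ along $\partial\Gamma$, whereas you invoke the forest property of the $a_1$-subcomplex directly; by uniqueness of simple paths in a forest these produce the same $r$, so the two arguments are equivalent.
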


\begin{proof} Consider the tree $\rho_{a_1}(\D)$ constructed from $\D$ as above by collapsing $a_1$-edges $e$ into points and subsequent identification of edges $f, g$ with $f_- = g_-$, $f_+ = g_+$. Then
 $\rho_{a_1}(\D)$ is a disk diagram over \eqref{pr3b} with no faces and there is a surjective  continuous cellular map
 $$
 \beta_1 : \D \to \rho_{a_1}(\D)
 $$
 which preserves labels of edges $e$ with $\ph(e) \ne a_1^{\pm 1}$ and sends $a_1$-edges into vertices.
 It is also clear that if
 $$
 \al_{1} : P_{\rho_{a_1}(W)} \to \rho_{a_1}(\D)
 $$
 is the corresponding to the pair  $(\rho_{a_1}(W), \rho_{a_1}(\D))$ map, then there is a cellular continuous surjective map
 $$
 \beta : P_W \to    P_{\rho_{a_1}(W)}
 $$
which preserves labels of edges $e$ with $\ph(e) \ne a_1^{\pm 1}$, sends
$a_1$-edges into vertices, and makes the following diagram commutative.

$$
\renewcommand{\arraystretch}{1.5}
\begin{array}{ccccc}
P_W  &\xrightarrow{\hskip0.8cm {\al} \hskip0.6cm} & \D \\
\Big\downarrow\rlap{$\beta$} &&\Big\downarrow\rlap{$\beta_1$}&\\
 P_{\rho_{a_1}(W)}   &\xrightarrow{\hskip0.8cm \al_1 \hskip0.6cm}&\rho_{a_1}(W)\\
\end{array}
$$

By Lemma~\ref{lem1} applied to $\rho_{a_1}(\D)$, there are vertices
$u_1, u_2 \in    P_{\rho_{a_1}(W)}$  such that  $\al_1(u_1) = \al_1(u_2)$ and if $P_{\rho_{a_1}(W)}(\ff, u_1, u_2)  = s_1 s_2 s_3$, then
\begin{equation}\label{in7}
\min(|s_2|,  |s_1|+|s_3|) \ge \tfrac 13 | \p \rho_{a_1}(\D)| =  \tfrac 13  |W|_{\bar a_1} \ .
\end{equation}
It follows from definitions and Lemma~\ref{b1} that there are some vertices $v_1, v_2 \in P_W$ such that $\beta(v_j) = u_j$, $j=1,2$,  the vertices  $\al(v_1), \al(v_2)$ belong to $\p\Gamma$, where $\Gamma$ is an $a_i$-band in $\D$, $i >1$, and the vertices  $\al(v_1), \al(v_2)$  can be connected along $\p \Gamma$  by a simple reduced  path $r$ such that $\ph(r) \equiv a_1^{\ell}$ for some integer $\ell$, where $\ell = 0$ if $i >2$. Clearly, $r$ is a special arc of $\D$.
 Denote
 $P_{W}(\ff, v_1, v_2)  = p_1 p_2 p_3$. Then it follows from definitions that $| \ph(p_i) |_{\bar a_1} = |s_i|$, $i=1,2,3$, hence the inequality \eqref{in7} yields that
 \begin{equation*}
\min(  | \ph(p_2) |_{\bar a_1},   | \ph(p_1) |_{\bar a_1} +  | \ph(p_3) |_{\bar a_1} )   \ge \tfrac 13 | \p \rho_{a_1}(\D)| =  \tfrac 13  |W|_{\bar a_1}  ,
\end{equation*}
as required.

Furthermore,  since $r$ is simple, reduced  and $r_- \ne r_+$ unless $|r| =0$,  the path $r$ contains every $a_1$-edge
$e$ of $\D$ at most once and if $r$ contains $e$ then $r$ contains no $e^{-1}$.
Since the total number $| \vec \D(1) |_{a_1}$ of  $a_1$-edges of $\D$  is equal to
$$
| \vec \D(1) |_{a_1} = |\ph(\p \D)|_{a_1} + (|n_1| + |n_2|)|\D(2)| ,
$$
we obtain  the desired inequality
$$
|r| = |\ph(r)|_{a_1}   \le \tfrac 12  | \vec \D(1) |_{a_1} =  \tfrac 12 |W|_{a_1} + \tfrac 12(|n_1| + |n_2|)|\D(2)| .
$$
\end{proof}

A quadruple
 $$
 b = (b(1), b(2), b(3), b(4))  ,
 $$
of  integers  $b(1), b(2), b(3), b(4)$ is called a   {\em bracket }  for the pair $(W, \D)$
if  $b(1), b(2)$ satisfy the inequalities $0 \le b(1)\le b(2) \le |W|$, and, in the notation
$$
P_W(\ff, b(1), b(2)) = p_1p_2p_3 ,
$$
the following holds true. There exists a special arc $r(b)$ in $\D$ such that $r(b)_- = \al(b(1))$,  $r(b)_+ = \al(b(2))$, and $\ph(r(b)) \overset 0 = a_1^{b(3)}$.
Furthermore,  if $\D_b$ is the disk subdiagram of $\D$  defined by $\p |_{b(1)} \D_b =   \al(p_2) r(b)^{-1}$
(such $\D_b$ is well defined as $r(b)$ is a special arc in $\D$),  then $|\D_b(2)| = b(4)$, see Fig.~6.2.
\vskip 2.8mm

\begin{center}
\begin{tikzpicture}[scale=.72]
\draw (-19,-2)  arc (0:360:2);
\draw(-23,-2) -- (-19,-2);
\draw [-latex](-23,-2) --(-21,-2);
\draw [-latex](-21,0) --(-20.95,0);
\draw [-latex](-22.5,-3.3) --(-22.6,-3.2);
\draw [-latex] (-19.5,-3.3) -- (-19.6,-3.4);
\draw  (-23,-2) [fill = black] circle (.05);
\draw  (-19,-2) [fill = black] circle (.05);
\draw  (-21,-4) [fill = black] circle (.05);
\node at (-21,-4.8) {Fig.~6.2};
\node at (-21,-1.4) {$\Delta_b$};
\node at (-21,-2.5) {$r(b)$};
\node at (-21.,-.5) {$\alpha(p_2)$};
\node at (-23.4,-3.3) {$\alpha(p_1)$};
\node at (-18.6,-3.4) {$\alpha(p_3)$};
\node at (-21,-3.5) {$\alpha(0)$};
\node at (-24,-2) {$\alpha(b(1))$};
\node at (-18,-2) {$\alpha(b(2))$};
\end{tikzpicture}
\end{center}

This disk  subdiagram  $\D_b$  of $\D$, defined by $\p |_{b(1)} \D_b =   \al(p_2) r(b)^{-1}$,  is associated with the bracket $b$. The boundary subpath $\al(p_2)$ of $\D_b$ is denoted $a(b)$ and called the {\em arc} of the bracket $b$.
The path  $r(b)$ is  termed the {\em complementary arc} of $b$.

For example, $b_v = (v, v, 0, 0)$ is a bracket  for every vertex $v$ of $P_W$, called a {\em  starting} bracket at $v = b(1)$. Note that $a(b) = \al(v)$,   $r(b) = \al(v)$, and  $\D_b= \{ \al(v)  \}$.

The {\em final}  bracket for   $(W, \D)$ is
$c = (0, |W|, 0, | \D(2) | )$. Observe that  $a(c) =  \p |_{0} \D$,    $r(c) =  \{ \al (0) \}$ and  $ \D_c = \D$.
\medskip

Let $B$ be a finite set of brackets  for the pair   $(W, \D)$, perhaps, $B$ is empty.   We say that $B$ is a {\em bracket system}   for $(W, \D)$ if,  for every pair $b, c \in B$ of distinct brackets,  either $b(2) \le c(1)$  or $c(2) \le b(1)$.
\medskip

Now we describe three  kinds of  elementary operations over a \BS\   $B$ for the pair   $(W, \D)$: additions, extensions,  and mergers.
\medskip

\textit{Additions}.

Suppose $b$ is a starting bracket, $b \not\in B$, and $B \cup \{ b\}$
is a \BS  .
Then we may add $b$ to $B$ thus making an {\em addition} operation over $B$.
\medskip

\textit{Extensions}.

Suppose $B$ is a \BS ,
$b \in B$ is a bracket  and $e_1 a(b) e_2$ is a subpath of the boundary path $\p |_{0} \D$, where $a(b)$ is the arc of $b$ and $e_1, e_2$ are edges one of which could be missing.

Assume  that $\ph(e_1)= a_1^\e$, where $\e = \pm 1$. Since $e_1$ and $r(b)$ are special arcs of $\D$ and $e_1 r(b)$ is a path, Lemma~\ref{arc} applies and yields that either   $e_1 r(b)$ is a special arc or
$r(b) = e^{-1}_1 r_1$, where $r_1$ is a subpath of $r(b)$ and $r_1$ is a special arc. In the first case,  define $r(b') := e_1 r(b)$.   In the second case, we set $r(b') := r_1$. Note that, in either case,  $r(b')$ is a special arc  and  $\ph( r(b')  ) \overset 0  = a_1^{b(3)+\e}$.   In either case, we
consider a  bracket $b'$ such that
$$
b'(1) = b(1)-1 , \ b'(2) = b(2) ,  \ b'(3) = b(3)+\e ,  b'(4) = b(4) .
$$
In either case, we have  that $a(b') = e_1 a(b)$, $r(b')$ is defined as above,  and
$\D_{b'} $ is the disk subdiagram whose boundary is $\p \D_{b'} = a(b') r(b')^{-1}$.
We say that $b'$ is obtained from $b$ by an extension of type 1 (on the left).
If $(B \setminus \{ b \}) \cup \{ b' \}$ is a \BS , then
replacement of $b \in B$ with $b'$ in $B$ is called an {\em extension } operation over $B$ of type 1.

Similarly, assume  that $\ph(e_2)= a_1^\e$, where $\e = \pm 1$.
 Since  $r(b)$  is a simple path, it follows as above from Lemma~\ref{arc} that either  $r(b)e_2 $ is a special arc  or
$r(b) = r_2 e^{-1}_2$, where $r_2$ is a subpath of $r(b)$ and $r_2$ is a special arc.
In the first case, define $r(b') := r(b)e_2$.  In the second case, we set $r(b') := r_2$. Note that, in either case,   $r(b')$ is a special arc and  $\ph( r(b')  ) \overset 0  = a_1^{b(3)+\e}$.      In either case, we
consider a  bracket $b'$ such that
$$b'(1) = b(1) , \ b'(2) = b(2)+1 , \ b'(3) = b(3)+\e, \ b'(4) = b(4) .
$$
In either case, we have  that $a(b') = a(b)e_2 $, $r(b')$ is defined as above,  and
$\D_{b'} $ is the disk subdiagram whose boundary is $\p \D_{b'} = a(b') r(b')^{-1}$.
We say that $b'$ is obtained from $b$ by an extension of type 1 (on the right).
If $(B \setminus \{ b \}) \cup \{ b' \}$ is a \BS , then
replacement of $b \in B$ with $b'$ in $B$ is called an {\em extension } operation over $B$ of type 1.

Now suppose that  $\ph(e_1) = \ph(e_2)^{-1}=a_2^{\e}$,   $\e = \pm 1$,   and there is an $a_2$-bond $\Gamma$ whose standard boundary is $\p |_{(e_i)_-}   \Gamma = e_i  s_i  e_{3-i}  s_{3-i}$, where $i =1$ if $\e =1$ and $i =2$ if $\e =-1$. Recall that the standard boundary of an $a_2$-bond starts with
an edge $f$ with $\ph(f) = a_2$.

First we assume that  $|b(3)|\ne 0$, i.e., $\ph(s_i) \overset 0 \neq 1$.
By Lemma~\ref{b1}, the paths $s_1, s_2$ are special arcs.  Moreover, $\Gamma$ consists of
$|b(3)|/|n_{i}|$ faces. Recall that $\D$ is  reduced.  It follows from Lemma~\ref{arc} applied to special arcs $r(b)$ and $s_i^{-1}$ that $r(b) = s_i$. Consider a bracket $b'$ such that $b'(1) = b(1)-1$, $b'(2) = b(2)+1$,  $b'(3) = (|n_1^{-1} n_2|)^{\e} b(3)$, and $b'(4) = b(4) + |b(3)|/|n_{i}|$.  Note that  $a(b') = e_1 a(b)e_2$ and   $r(b) = s_{i+1}^{-1}$.
We say that $b'$ is obtained from $b$ by an extension of type 2.
If $(B \setminus \{ b \}) \cup \{ b' \}$ is a \BS , then
replacement of $b \in B$ with $b'$ in $B$ is called an {\em extension } operation over $B$ of type 2.

Assume that $e_1 = e_2^{-1}$ and    $\ph(e_1) \ne a_1^{\pm 1}$.
Then we may consider a  bracket $b'$ such that $b'(1) = b(1)-1$,
$b'(2) = b(2)+1$,  $b'(3) = b(3)$, and $b'(4) = b(4)$.  Since
the path $a(b)$ is closed, it follows from Lemma~\ref{vk} that  $\ph(a(b))   \overset {\GG_3} = 1$  and so  $b'(3) = b(3)=0$ by \MF . Note that  $a(b') = e_1 a(b)e_2$ and  $| r(b')| =0$.
We say that $b'$ is obtained from $b$ by an extension of type 3.
If $(B \setminus \{ b \}) \cup \{ b' \}$ is a \BS , then
replacement of $b \in B$ with $b'$ in $B$ is called an {\em extension } operation over $B$ of type 3.
\medskip

\textit{Mergers}.

Suppose that $b_1, b_2$ are distinct brackets in $B$ such that  $b_1(2) = b_2(1)$. Consider the disk diagram $\D_{b_i}$, associated with the bracket $b_i$,  $i=1,2$,  and let $\p |_{b_i(1)} \D_{b_i} =   a(b_i) r(b_i)^{-1}$ be the boundary of $\D_{b_i}$, where $a(b_i), r(b_i)$ are the arcs of $b_i$, $i=1,2$.

Since $r(b_1), r(b_2)$  are special arcs with $r(b_1)_- =  r(b_2)_+$,  it follows from Lemma~\ref{arc}  that
there are factorizations $r(b_1) = r_1 r_0$,  $r(b_2) = r_0^{-1} r_2$ such that the path $r(b') := r_1 r_2$ is a special arc of $\D$. Note that $r(b')_- = \al(b_1(1))$,  $r(b')_+ = \al(b_2(2))$,  and the disk subdiagram $\D_{0}$ of $\D$ defined by $\p \D_{0} = a(b_1)a(b_2) r(b')^{-1}$  contains $b_1(4) + b_2(4)$ faces.     Therefore, we may consider a bracket $b'$ such that $b'(1) = b_1(1)$, $b'(2) = b_2(2)$, $b'(3) =  b_1(3) + b_2(3)$, and  $b'(4) =  b_1(4) + b_2(4)$. Note that $a(b') = a(b_1)a(b_2)$,  $r(b') = r_1 r_2$,  and $\D_{b'} = \D_0$.
We say that $b'$ is obtained from $b_1, b_2$ by a merger operation.
If $(B \setminus \{ b_1, b_2 \}) \cup \{ b' \}$ is a \BS , then taking both $b_1$, $b_2$ out of $B$ and putting  $b'$ in $B$ is a {\em merger } operation over $B$.
\medskip

We will say that additions, extensions,  and mergers, as defined above, are {\em  \EO s} over brackets and \BS  s for $(W, \D)$.

Assume that one \BS\   $B_\ell$ is obtained from another \BS\    $B_0$
by a finite sequence $\Omega$ of \EO s  and $B_0,  B_1, \dots, B_\ell$ is the corresponding to $\Omega$ sequence of \BS  s. As before,  such a sequence $B_0,  B_1,  \dots, B_\ell$ of \BS  s is called {\em operational}.   We  say that the sequence     $B_0,  B_1,  \dots, B_\ell$ has {\em size bounded by}
$(k_1, k_2)$  if $\ell \le k_1$ and, for every $i$,  the number of brackets in $B_i$ is at most $k_2$.
Whenever it is not ambiguous, we also say that $\Omega$ has size bounded by $(k_1, k_2)$ if so does the corresponding to $\Omega$ sequence  $B_0,  B_1, \dots, B_\ell$ of \BS s.
\medskip

We now study properties of brackets and \BS s.

\begin{lem}\label{ca} If $b$ is a bracket for the pair $(W, \D)$, then  $| b(3) |  \le   \tfrac 12  | \vec \D(1) |_{a_1}$.
\end{lem}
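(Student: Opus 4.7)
The plan is to read off the bound directly from the definition of a bracket via its complementary arc $r(b)$. By the definition of bracket for the pair $(W,\D)$, we have a special arc $r(b)$ in $\D$ with $\ph(r(b)) \overset{0}{=} a_1^{b(3)}$, and the definition of a special arc says that $r(b)$ is either of length $0$ or is a reduced simple path consisting entirely of $a_1$-edges. So the entire argument is a two-line reduction to the properties (reduced + simple) of $r(b)$, parallel to the final paragraph of the proof of Lemma~\ref{b2}.

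First I would observe that, because every edge of $r(b)$ carries a label in $\{a_1, a_1^{-1}\}$ and $r(b)$ is reduced (contains no subpath $ee^{-1}$), the word $\ph(r(b))$ is already a reduced word in the single letter $a_1$. Therefore the free-group equality $\ph(r(b)) \overset{0}{=} a_1^{b(3)}$ is in fact a letter-by-letter equality $\ph(r(b)) \equiv a_1^{b(3)}$, which yields $|r(b)| = |b(3)|$. (If $|r(b)|=0$ then $b(3)=0$ and the inequality is trivial.)

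Next I would use simplicity of $r(b)$ to bound $|r(b)|$ by the number of unoriented $a_1$-edges of $\D$. Since the vertices along $r(b) = e_1 \cdots e_k$ are pairwise distinct, no oriented edge $e$ can occur twice in $r(b)$, and moreover $e$ and $e^{-1}$ cannot both occur, for otherwise the common endpoint would appear twice in the vertex list of $r(b)$. Hence $r(b)$ uses every unoriented $a_1$-edge of $\D$ at most once, giving
\[
|r(b)| \;\le\; \#\{\text{unoriented } a_1\text{-edges of } \D\} \;=\; \tfrac{1}{2}\,|\vec\D(1)|_{a_1}.
\]
Combining the two displays gives $|b(3)| = |r(b)| \le \tfrac{1}{2}\,|\vec\D(1)|_{a_1}$, as required.

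There is essentially no main obstacle here; the statement is really a bookkeeping consequence of the definitions together with Lemma~\ref{b1} (used implicitly, since \MF\ is what forces the reduced $a_1$-word $\ph(r(b))$ to have length equal to its exponent). The only mildly subtle point is justifying that a simple path cannot traverse both an edge and its inverse, but this is immediate from the distinctness of the vertex sequence $(e_1)_-, \dots, (e_k)_-, (e_k)_+$.
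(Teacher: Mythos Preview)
Your proof is correct and follows essentially the same route as the paper: both argue that the complementary arc $r(b)$ is a special arc, hence reduced and simple and consisting of $a_1$-edges, so it traverses each unoriented $a_1$-edge at most once, giving $|b(3)| \le |r(b)| \le \tfrac12|\vec\D(1)|_{a_1}$. Your aside invoking Lemma~\ref{b1} and the Freiheitssatz is unnecessary here---the equality $|r(b)|=|b(3)|$ follows purely from reducedness of $r(b)$ as a path labeled by powers of $a_1$---but this does not affect the argument.
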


\begin{proof} By the definition of a bracket,  the complementary arc $r(b)$ of $b$   is a special arc in $\D$, hence,
either $|r(b)| = 0$ or, otherwise, $|r(b)| > 0$ and $r(b)$ is
a simple, reduced path consisting entirely of $a_1$-edges. Since the total number of $a_1$-edges in $\D$ is  $| \vec \D(1) |_{a_1}$, it follows   that $| b(3) |  \le | r(b) | \le \tfrac 12   | \vec \D(1) |_{a_1}$.
\end{proof}

\begin{lem}\label{NUP}  Suppose that $b$, $c$ are two brackets for the pair $(W, \D)$ and $b(1) = c(1)$, $b(2) = c(2)$. Then $b =c$.
\end{lem}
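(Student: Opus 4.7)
The plan is to show first that the complementary arcs $r(b)$ and $r(c)$ coincide as paths in $\D$, then to deduce $b(3)=c(3)$ from their common label and $b(4)=c(4)$ from the common subdiagram that they bound together with $\al(p_2)$.

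First I would invoke Lemma~\ref{arc} applied to the two special arcs $r(b)$ and $r(c)^{-1}$. These are indeed special arcs (by the definition of a bracket), and they can be concatenated, since $r(b)_+=\al(b(2))=\al(c(2))=(r(c)^{-1})_-$. Lemma~\ref{arc} yields factorizations $r(b)=s_1 s_2$ and $r(c)^{-1}=t_1 t_2$ with $s_2=t_1^{-1}$ and such that $s_1 t_2$ is again a special arc. Now observe that $s_1 t_2$ is a closed path at $\al(b(1))$, because $(s_1)_- = r(b)_- = \al(b(1)) = \al(c(1)) = r(c)_+ = (t_2)_+$. By the definition of a special arc, a closed special arc must have length $0$; hence $|s_1|=|t_2|=0$, which forces $s_2=r(b)$, $t_1=r(c)^{-1}$, and then $s_2=t_1^{-1}$ gives $r(b)=r(c)$ as paths in $\D$.

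Once $r(b)=r(c)$ is established, the labels agree, so $\ph(r(b))\equiv\ph(r(c))$. Both paths are reduced and consist entirely of $a_1$-edges, so their labels are reduced words in $a_1^{\pm 1}$; the defining congruences $\ph(r(b))\overset{0}{=}a_1^{b(3)}$ and $\ph(r(c))\overset{0}{=}a_1^{c(3)}$ then force $\ph(r(b))\equiv a_1^{b(3)}$ and $\ph(r(c))\equiv a_1^{c(3)}$, whence $b(3)=c(3)$.

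Finally, since $b(1)=c(1)$ and $b(2)=c(2)$, the factorizations $P_W(\ff,b(1),b(2))=p_1p_2p_3$ and $P_W(\ff,c(1),c(2))=p_1p_2p_3$ coincide, so $\al(p_2)$ is the same path for both brackets. Combined with $r(b)=r(c)$, this means that the boundary paths $\p|_{b(1)}\D_b=\al(p_2)r(b)^{-1}$ and $\p|_{c(1)}\D_c=\al(p_2)r(c)^{-1}$ are identical; consequently $\D_b=\D_c$ as disk subdiagrams of $\D$, and in particular $b(4)=|\D_b(2)|=|\D_c(2)|=c(4)$. The only nontrivial step is the first one, where the closed-path argument via Lemma~\ref{arc} pins down the uniqueness of the complementary arc; everything else is an immediate bookkeeping consequence.
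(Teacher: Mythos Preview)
Your proof is correct and follows essentially the same approach as the paper: apply Lemma~\ref{arc} to the special arcs $r(b)$ and $r(c)^{-1}$ to conclude $r(b)=r(c)$, then read off $b(3)=c(3)$ and $\D_b=\D_c$, hence $b(4)=c(4)$. You have simply spelled out in detail the step the paper compresses into ``it follows from Lemma~\ref{arc} \ldots\ that $r(b)=r(c)$,'' namely the observation that the resulting special arc $s_1t_2$ is closed and must therefore have length~$0$.
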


\begin{proof}  Consider the complementary arcs $r(b)$, $r(c)$ of brackets $b$, $c$, resp. Since
$r(b)_- = r(c)_-$, $r(b)_+ = r(c)_+$, and $r(b)$, $r(c)^{-1}$ are special arcs of $\D$, it follows from Lemma~\ref{arc} applied to special arcs $r(b)$, $r(c)^{-1}$ that
$r(b) = r(c)$. This means that $b(3) = c(3)$, $\D_b = \D_c$ and $b(4) = c(4)$.  Therefore, $b =c$, as desired.
\end{proof}

\begin{lem}\label{lem3b} There exists a sequence of \EO s  that converts the empty
\BS\   for $(W, \D)$ into the final \BS\    and has size  bounded by $(4|W|, |W|)$.
\end{lem}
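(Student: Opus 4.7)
My plan is to model the proof on that of Lemma~\ref{lem3}, taking into account that the calculus of brackets of Sect.~6 has only three kinds of \EO s (additions, extensions, mergers) and no turn operations, and that the reduced disk diagram $\D$ decomposes canonically into $a_2$-bands and trees of $a_1$-edges by Lemma~\ref{b1}. The sequence will be produced by traversing the boundary path $\p|_0 \D$ edge by edge, maintaining a \BS\ that encodes a partial parsing of the boundary.

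For the count of operations, I will use the potential function $\eta(B) := \sum_{b \in B}(b(2)-b(1))$. Each extension of type 1 increases $\eta$ by $1$ and each extension of type 2 or 3 increases it by $2$; since $\eta$ starts at $0$ for the empty \BS\ and ends at $|W|$ for the final \BS, at most $|W|$ extensions are needed. If $A$ additions and $M$ mergers are performed, then $A = M + 1$, because additions raise the bracket count by $1$, mergers lower it by $1$, and extensions preserve it, while we travel from $0$ to $1$ brackets. Taking $A \le |W|+1$, the total is at most $(|W|+1) + |W| + |W| = 3|W|+1 \le 4|W|$, matching the first component of the bound.

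For the bound $|W|$ on the \BS\ size, adding all $|W|+1$ starting brackets up front would yield a \BS\ of size $|W|+1$, one too many. Instead, I interleave: first add the $|W|$ starting brackets at $0, 1, \dots, |W|-1$, reaching \BS\ size $|W|$; then extend the rightmost of these across the edge $e_{|W|}$ (which, depending on its label, is handled by a type~1 extension along an $a_1$-edge, a type~2 extension across an $a_2$-band, or a type~3 extension pairing $e_{|W|}$ with its inverse) and perform mergers. Before the \BS\ size would ever rise to $|W|+1$, a merger is executed to bring it back to $|W|-1$, after which the $(|W|+1)$-st starting bracket (at position $|W|$) can be added. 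This way the \BS\ never exceeds $|W|$ brackets while the operation count stays within $4|W|$.

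The main obstacle is to verify that at every step the chosen extension is applicable. This is precisely what Lemma~\ref{b1} supplies: each $a_2$-edge on $\p\D$ lies on an $a_2$-band with standard boundary $f_1 s_1 f_2 s_2$, so it can be swept by a type~2 extension together with the corresponding $a_1$-segments; each $a_1$-edge of $\p\D$ that is not part of such a band can be absorbed by a type~1 extension; and any remaining non-$a_1$-edge on $\p\D$ must cancel with its inverse along $\p\D$, supplying a type~3 extension. With applicability secured, the counting above yields a sequence of \EO s of size at most $(4|W|, |W|)$, as required.
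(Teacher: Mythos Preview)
Your overall strategy—add starting brackets, then convert to the final \BS\ by extensions and mergers, counting extensions via the potential $\eta(B)=\sum_{b\in B}(b(2)-b(1))$—is exactly the paper's approach, and your bound $A+M+(\text{extensions})\le (|W|+1)+|W|+|W|=3|W|+1\le 4|W|$ on the number of operations is correct and matches the paper verbatim.

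The gap is in your argument for the second component $|W|$. You claim that after adding starting brackets at $0,\dots,|W|-1$ one can ``extend the rightmost of these across the edge $e_{|W|}$'' by an extension of type 1, 2, or 3. But the starting bracket $(|W|-1,|W|-1,0,0)$ has $b(3)=0$, so a type~2 extension is forbidden by definition; a type~3 extension requires $\alpha(e_{|W|-1})=\alpha(e_{|W|})^{-1}$ as edges of $\D$; and a type~1 extension requires $\ph(e_{|W|})\in\{a_1^{\pm1}\}$. None of these need hold. For a concrete failure, take $n_1=n_2=1$ and $W\equiv a_1^{-1}a_2a_1a_2^{-1}$: the last edge is an $a_2$-edge, the penultimate edge is an $a_1$-edge, so no extension of any type applies to $(3,3,0,0)$ on the right. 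Your interleaving therefore does not establish the bound $|W|$.

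It is worth noting that the paper's own proof of this lemma also produces an intermediate \BS\ of size $|W|+1$ (it adds all $|W|+1$ starting brackets up front), so the stated bound $|W|$ in the second component appears to be a harmless slip for $|W|+1$; this does not affect anything downstream, since Lemma~\ref{lem3b} is only a warm-up and the bounds actually used later (Lemmas~\ref{lem4b}, \ref{4bb}) are proved independently. If you simply drop the interleaving paragraph and accept $|W|+1$ in the second slot, your write-up becomes a correct proof identical in substance to the paper's.
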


\begin{proof} For every $k$ with $0 \le k \le |W|$, consider a starting bracket $(k,k,0,0)$ for
 $(W, \D)$. Making $|W|+1$ additions, we get a \BS\
\begin{equation*}
 B_{W} = \{ (k,k,0,0) \mid  0 \le k \le |W| \}
\end{equation*}
of $|W|+1$ starting brackets. Now, looking at $\D$, we can easily find  a sequence of extensions and mergers that  converts  $B_{W}$ into the final \BS\  $B_{\ell}$.   To estimate the total number  of
\EO s, we note that the number of additions is $|W|+1$.  The number of extensions is at most $|W|$ because every extension applied to a \BS\ $B$ increases the number
$$
\eta(B) := \sum_{b \in B} (b(2)-b(1) )
$$
by 1 or 2 and $\eta( B_{W}) = 0$, while $\eta( B_{\ell}) = |W|$. The  number  of mergers is $ |W|$ because the number of brackets $|B|$ in a \BS\  $B$ decreases by 1 if $B$ is obtained by a merger and
$|  B_{W}  | =  |W|+1$, $|B_{\ell} | =  1$.  Hence,
$  \ell \le 3|W|+1 \le 4|W|$, as required.
\end{proof}

\begin{lem}\label{lem4b}  Suppose there is  a sequence $\Omega$  of \EO s  that converts the empty
\BS\  $E$  for $(W, \D)$ into the final \BS\  $F$  and has size  bounded by $(k_1, k_2)$. Then there is also a sequence  of \EO s  that  changes $E$ into $F$ and has size bounded by $(7|W|, k_2)$.
\end{lem}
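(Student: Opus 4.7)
My plan is to mirror the proof of Lemma~\ref{lem4}, taking advantage of the fact that the calculus of brackets for~\eqref{pr3b} contains no turn operations.

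First I would reduce the sequence $\Omega$ by eliminating wasted additions: whenever a starting bracket $c=(k,k,0,0)$ is added and the first subsequent \EO\ applied to $c$ is a merger with another bracket $b$, the merger produces $b$ itself (a starting bracket contributes nothing to the coordinates; cf.\ Lemma~\ref{NUP}). Hence both the addition of $c$ and the merger may be removed from $\Omega$ without affecting any later operation, yielding an operational sequence with the same final \BS\ and with no larger intermediate \BS. Iterating, I obtain a reduced sequence $\Omega'$ in which the first \EO\ applied to every added starting bracket is an extension.

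The key claim is that in $\Omega'$ each starting bracket $c=(k,k,0,0)$ is added at most twice. Suppose for contradiction that three copies $c_1,c_2,c_3$ are added; order them so that the extensions $\hat c_1,\hat c_2,\hat c_3$ produced from them appear in this order in $\Omega'$. The crucial geometric fact is that under any sequence of extensions and mergers the integer interval $[b(1),b(2)]$ of a bracket can only grow, so the property ``$k$ lies strictly inside the interval'' is preserved once it holds. If $\hat c_1$ arises from a type~2 or type~3 extension, then $\hat c_1=(k-1,k+1,\ldots)$, so every descendant of $\hat c_1$ strictly contains $k$ in its interior and prevents $c_2$ from ever being added to a future \BS. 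Hence $\hat c_1$ is a type~1 extension and, by the left-right symmetry, I may take $\hat c_1=(k-1,k,\pm 1,0)$. Because any descendant of $\hat c_1$ coexisting with $c_2$ must have right endpoint exactly $k$, the same reasoning forces $\hat c_2$ to be a type~1 extension on the opposite side, so $\hat c_2=(k,k+1,\pm 1,0)$. But then when $c_3$ is to be extended into $\hat c_3$, the left and right type~1 extensions conflict with the descendants of $\hat c_1$ resp.\ $\hat c_2$, while type~2 and type~3 extensions conflict with both; no extension is available, contradicting the conclusion of the reduction.

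It follows that the number of additions in $\Omega'$ is at most $2(|W|+1)=2|W|+2$. Every extension strictly increases $\eta(B):=\sum_{b\in B}(b(2)-b(1))$, and $\eta$ grows from $0$ to $|W|$, so the number of extensions is at most $|W|$. Because additions and mergers are the only \EO s that change $|B|$, and $|B|$ goes from $0$ to $1$, the number of mergers equals the number of additions minus one, hence at most $2|W|+1$. Summing, the length of $\Omega'$ is at most $5|W|+3\le 7|W|$ (the relevant regime is $|W|\ge 2$, as $W\overset{\GG_3}{=}1$ forces $|W|\ne 1$). Since the reduction never enlarges any intermediate \BS, the size of $\Omega'$ is bounded by $(7|W|,k_2)$, as required. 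The main obstacle is the geometric argument for the claim---verifying across every later \BS\ that the descendants of $\hat c_1$ and $\hat c_2$ really continue to block all four types of extension applicable to a third copy of $c$; once this is pinned down the counting is mechanical.
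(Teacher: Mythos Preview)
Your proposal is correct and follows essentially the same approach as the paper's proof. The paper makes one small simplification you omit: it observes up front that an extension of type~2 is never applicable to a starting bracket (since the definition of a type~2 extension requires $b(3)\ne 0$), so the case analysis for $\hat c_1$ reduces to types~1 and~3 only; your treatment of type~2 alongside type~3 is harmless but unnecessary.
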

\begin{proof} Assume that the sequence $\Omega$ has an addition operation which introduces a starting bracket $c = (k,k,0,0)$ with $0 \le k \le |W|$. Since the final \BS\ contains no starting bracket, $c$ must disappear and an \EO\ is applied to $c$. If a merger is applied to $c$ and another bracket $b$ and the merger yields $\wht c$, then $\wht c = b$. This means that the addition of $c$ and the merger could be skipped without affecting the sequence otherwise. Note that the size of the new sequence $\Omega'$ is  bounded by $(k_1-2, k_2)$. Therefore, we may assume that no merger is applied to a starting bracket in $\Omega$. We also observe  that, by the definition, an extension of type 2 is not applicable to a starting bracket (because of the condition $b(3) \ne 0$).

Thus, by an obvious induction on $k_1$,   we may assume that, for every starting bracket $c$ which is added by $\Omega$, an extension of type 1 or 3 is applied to $c$.

Now we will show that, for every starting bracket $c$
 which is added by $\Omega$, there are at most 2 operations of addition of $c$ in $\Omega$. Arguing on the contrary, assume that  $c_1, c_2, c_3$ are brackets equal to $c$ whose additions are used in $\Omega$. By the above remark,
for every $i=1,2,3$, an extension of type 1 or 3 is applied to $c_i$, resulting in a bracket $\wht c_i$.

Let  $c_1, c_2, c_3$  be listed in the order in which the brackets $\wht c_1, \wht c_2, \wht c_3$
are created by $\Omega$. Note that if $\wht c_1$ is obtained from $c_1$ by an extension of type 3, then
$$
\wht c_1(1) = c_1(1)-1, \quad  \wht c_1(2) = c_1(2)+1 .
$$
This means that brackets
$\wht c_2$, $\wht c_3$ could not be created by extensions after  $\wht c_1$ appears, as  $b(2) \le c(1) $
or $b(1) \ge c(2) $ for distinct brackets $b, c \in B$ of any \BS\ $B$. This observation proves that $\wht c_1$ is obtained from $c_1$ by an extension of type 1.  Similarly to the forgoing argument, we can see that if
$\wht c_1$ is obtained by an  extension of type 1 on the left/right, then $\wht c_2$ must be obtained by an  extension on the right/left, resp., and that $\wht c_3$ cannot be obtained by any  extension. This contradiction proves that it is not possible to have in $\Omega$ more than two additions of any starting bracket $c$.
Thus, the number of additions in $\Omega$  is at most $ 2|W|+2$. As in the proof of Lemma~\ref{lem3b}, the number of extensions is $\le |W|$ and the number of mergers is at most $2|W|+1$. Hence, the total number of \EO s is at most $ 5 |W|+3 \le 7|W|$ as desired.
\end{proof}

\begin{lem}\label{lem5b}  Let there be a  sequence  $\Omega$  of \EO s    that transforms  the empty
\BS\   for $(W, \D)$ into the final \BS\  and has size  bounded by $(k_1, k_2)$ and let $c$ be a starting bracket for $(W, \D)$. Then there is also a  sequence   of \EO s    that  converts  the
\BS\  $\{ c \}$   into the final \BS\  and has size  bounded by $(k_1+1, k_2+1)$.
\end{lem}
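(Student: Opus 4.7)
The plan is to mirror the argument of Lemma~\ref{lem5} almost verbatim, exploiting the structural parallel between the two bracket calculi. Write $B_0, B_1, \ldots, B_\ell$ for the operational sequence of \BS s associated to $\Omega$, with $B_0 = \varnothing$ and $B_\ell$ final. A key observation, which is actually cleaner here than in Section~4, is that the merger rule of Section~6 applies to any two distinct brackets $b_1, b_2$ with $b_1(2) = b_2(1)$ without a side condition on the third coordinate; together with Lemma~\ref{NUP} (uniqueness of a bracket by its first two coordinates) this gives the flexibility to absorb an extra starting bracket $c$ painlessly.

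The boundary cases $c(1) \in \{0, |W|\}$ are handled first. If $\Omega$ contains no addition of $c$, then the sequence $B'_i := B_i \cup \{c\}$ is operational, begins at $\{c\}$, and terminates at $\{c, b_F\}$; one additional merger between $c$ and the final bracket $b_F$ then produces $\{b_F\}$, with Lemmas~\ref{arc} and~\ref{NUP} guaranteeing that the merger output coincides with $b_F$ (the complementary arc of $c$ has length zero, and the first two coordinates match). If instead $\Omega$ contains an addition of $c$ at some step $i^*+1$, one keeps $c$ from the outset by setting $B'_j := B_j \cup \{c\}$ for $j \leq i^*$ and $B'_j := B_{j+1}$ for $j > i^*$, splicing around the now-redundant addition.

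For the main case $0 < c(1) < |W|$, choose $i^*$ minimal such that $B_{i^*} \cup \{c\}$ is not a \BS ; this $i^*$ exists and is at least $1$ because $\{c\}$ is a \BS\ while $B_\ell \cup \{c\}$ is not (the final bracket straddles $c(1)$). Let $b \in B_{i^*}$ be the newly produced bracket with $b(1) < c(1) < b(2)$, and examine the \EO\ that created $b$ from parent brackets $d_1$ (and possibly $d_2$) in $B_{i^*-1}$. A uniform case analysis over the four \EO\ types --- extensions of types~1, 2, 3 and mergers --- combined with the hypothesis that $B_{i^*-1} \cup \{c\}$ is still a \BS\ forces $c(1)$ to equal one of the endpoints $d_j(1), d_j(2)$ of a parent bracket. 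Then merge $c$ with that parent bracket $d$: since $r(c)$ has length $0$, Lemma~\ref{arc} produces trivial factorizations in the merger rule, yielding a bracket $b'$ with $b'(1) = d(1)$ and $b'(2) = d(2)$, and Lemma~\ref{NUP} gives $b' = d$. Inserting this merger between $B_{i^*-1} \cup \{c\}$ and $B_{i^*-1}$ yields the operational sequence
\[
B_0 \cup \{c\},\ \ldots,\ B_{i^*-1} \cup \{c\},\ B_{i^*-1},\ B_{i^*},\ \ldots,\ B_\ell,
\]
of length $\ell + 1 \leq k_1 + 1$ with every system of size at most $k_2 + 1$. The only technical point to verify carefully is the type~2 extension case, where $b$ arises by collapsing an $a_2$-band of $d_1$ and both endpoints of $d_1$ move outward simultaneously; but the resulting constraint $d_1(1) - 1 < c(1) < d_1(2) + 1$ together with the \BS\ condition for $B_{i^*-1} \cup \{c\}$ still pins $c(1)$ to $\{d_1(1), d_1(2)\}$, so no new obstacle appears.
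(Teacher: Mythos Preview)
Your proof is correct and follows essentially the same approach as the paper's own proof, which explicitly says it is ``analogous to the proof of Lemma~\ref{lem5}'' and reproduces the same $i^*$ argument. Your additional remarks---that the Section~6 merger carries no side condition on the third coordinate, the explicit check for extensions of type~2, and the appeal to Lemmas~\ref{arc} and~\ref{NUP} to see that merging $c$ into a parent $d$ returns $d$ exactly---are all sound elaborations of steps the paper leaves implicit.
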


\begin{proof} The proof is analogous to the proof of Lemma~\ref{lem5}.  The arguments in the cases when $c(1) = 0$ or $c(1) = |W|$ are retained verbatim.

Assume that $0 < c(1) < |W|$. As before, let $B_0,  B_1,  \dots, B_\ell$ be the corresponding to $\Omega$ operational sequence of \BS s, where $B_0$ is empty and $B_\ell$ is final. Let $B_{i^*}$ be the first \BS\ of the sequence such that $B_{i^*} \cup \{ c \}$ is not a \BS . The existence of  $B_{i^*}$ follows from the facts that $B_0 \cup \{ c \}$ is a \BS\ and $B_\ell \cup \{ c \}$ is not.  Since
$B_0 \cup \{ c \}$ is a \BS , it follows that ${i^*} \ge 1$ and $B_{{i^*}-1} \cup \{ c \}$ is a \BS .  Since $B_{{i^*}-1} \cup \{ c \}$ is a  \BS\ and  $B_{{i^*}} \cup \{ c \}$ is not, there is a bracket $b \in B_{{i^*}}$ such that
$b(1) < c(1) < b(2)$ and $b$ is obtained from a bracket  $d_1 \in B_{{i^*}-1}$ by an extension or
  $b$ is obtained from    brackets $d_1, d_2 \in B_{{i^*}-1}$ by a merger. In either case, it follows from
  definitions of \EO s that $d_j(k) = c(1)$ for some $j,k \in \{ 1,2\}$. Hence, we can use a merger applied to  $d_j$ and $c$ which would result in $d_j$, i.e., in elimination of $c$ from $B_{{i^*}-1} \cup \{ c \}$ and in getting thereby $B_{{i^*}-1}$ from $B_{{i^*}-1} \cup \{ c \}$.  Now we can see that the original sequence of  \EO s, together with the merger $  B_{{i^*}-1} \cup \{ c \} \to  B_{{i^*}-1}$ can be used to produce the following operational sequence of \BS s
$$
B_{0} \cup \{ c \}, \dots,    B_{{i^*}-1} \cup \{ c \},  B_{{i^*}-1}, \dots,
  B_\ell .
$$
Clearly, the size of this new sequence is bounded by  $(k_1+1, k_2+1)$, as desired.
\end{proof}

\begin{lem}\label{2bb}  Let  $P_W  = p_1 p_2$, let the path $\al(p_2)$ be a  special arc of $\D$  and let there exist a sequence  $\Omega$   of \EO  s  that transforms the empty \BS\    for $( W, \D)$ into the final \BS\    and has size bounded by $ (\ell_1, \ell_2)$. Then there is also a sequence  $\Omega_{p_1}$   of \EO  s such that $\Omega_{p_1}$  transforms the empty \BS\    for $( W, \D)$ into the \BS\   $\{ (0, |p_1|, -k,  |\D(2)|) \}$, where $\ph(p_2) \overset 0 = a_1^k$,  and $\Omega_{p_1}$ has size  bounded by $(\ell_1, \ell_2)$. In addition,  for every bracket  $b \in B$, where $B$ is an intermediate  \BS\ of the corresponding to $\Omega_{p_1}$ operational sequence of \BS s,  it is true that  $b(2) \le |p_1|$.
\end{lem}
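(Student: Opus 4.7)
The plan is to obtain $\Omega_{p_1}$ from $\Omega$ by a uniform, bracket-by-bracket transformation that absorbs the tail $\al(p_2)$ of the boundary into the complementary arc. For each bracket $b$ occurring in the operational sequence $B_0, \dots, B_\ell$ corresponding to $\Omega$, define its image $\wht b$ according to three cases based on the position of $[b(1), b(2)]$ relative to $|p_1|$. In the case $b(2) \le |p_1|$, set $\wht b := b$. In the case $b(1) \le |p_1| < b(2)$, set $\wht b := (b(1), |p_1|, b(3) - k', b(4))$, where $k'$ is the integer with $\ph(\al(P_W |_{[|p_1|, b(2)]})) \overset 0 = a_1^{k'}$; the complementary arc of $\wht b$ is obtained by concatenating $r(b)$ with the reverse of the special subarc $\al(P_W |_{[|p_1|, b(2)]})$ of $\al(p_2)$ and reducing via Lemma~\ref{arc}, and a short boundary computation shows that the associated disk subdiagram coincides with $\D_b$, so $\wht b$ is a legitimate bracket. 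In the case $b(1) > |p_1|$, the arc $a(b)$ is a subpath of the special arc $\al(p_2)$; by applying Lemma~\ref{arc} to $a(b)$ and $r(b)^{-1}$ together with the absence of $a_1$-only cycles in the reduced diagram $\D$, we conclude $r(b) = a(b)$ and $b(4) = 0$, so such brackets are trivial and are excluded from the image.

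Next, I would translate each \EO\ of $\Omega$ into either a corresponding \EO\ for $\wht B$ or no operation at all. Extensions of types 2 and 3 require the bordering edges to carry labels $a_2^{\pm 1}$ or to be mutually inverse non-$a_1$ letters; since the edges of $\al(p_2)$ all carry $a_1^{\pm 1}$ labels, these extensions cannot cross the boundary $|p_1|$ and therefore act wholly within the case $b(2) \le |p_1|$, transferring to $\wht \Omega$ verbatim. Additions of starting brackets at $v \le |p_1|$ transfer directly; those at $v > |p_1|$ are skipped. An extension of type 1 acting on the right of a Case A bracket, or one transitioning a Case C bracket into Case A, yields $\wht b' = \wht b$ by an arithmetic cancellation of the change in $k'$ with the change in $b(3)$, so no operation is needed; an extension of type 1 on the left of a Case A bracket transfers to an extension of type 1 on the left in $\wht \Omega$; extensions purely within Case C transfer unchanged. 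For mergers, the pairs Case C $+$ Case C and Case C $+$ Case A translate to mergers in $\wht \Omega$, using the additivity $k'_{b'} = k'_{b_1} + $ (exponent of $a(b_2)$); the pair Case A $+$ Case B yields $\wht b' = \wht b_1$ by direct computation and so becomes no operation; the pair Case B $+$ Case B is fully excluded.

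The final bracket $b_F = (0, |W|, 0, |\D(2)|)$ lies in Case A, and the rule produces $\wht b_F = (0, |p_1|, -k, |\D(2)|)$, matching the desired output. Each operation of $\Omega$ yields at most one operation of $\wht \Omega$, and $|\wht B_i| \le |B_i|$, so the size bound $(\ell_1, \ell_2)$ is preserved. The boundary condition $b(2) \le |p_1|$ for intermediate brackets of $\wht \Omega$ is immediate from the definition. The main obstacle is the single transition in which a Case B bracket $b$ with $b(1) = |p_1| + 1$ is extended on the left into a Case A bracket $b'$ whose image is the starting bracket $(|p_1|, |p_1|, 0, 0)$: this must be introduced into $\wht B$ by an addition, and the potential clash with a pre-existing $(|p_1|, |p_1|, 0, 0)$ in $\wht B_{i-1}$ (which, by Lemma~\ref{NUP} and the uniqueness of reduced $a_1$-paths, must itself come from a Case C starting bracket or from a trivial Case A bracket with $b(1) = |p_1|$) is resolved either by identifying the two copies and performing no operation, or, after a preprocessing step analogous to Lemma~\ref{lem4b} that eliminates redundant additions, by ensuring that the conflict cannot arise.
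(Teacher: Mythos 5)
Your proof follows the same route as the paper's: partition the brackets of each $B_i$ by the position of $[b(1),b(2)]$ relative to $|p_1|$, transform each bracket accordingly (with $b(2)\le|p_1|$ unchanged, $b(1)\le|p_1|<b(2)$ truncated to $(b(1),|p_1|,b(3)-k',b(4))$ via Lemma~\ref{arc}, and $b(1)>|p_1|$ dropped), and verify that the image systems form an operational sequence of size bounded by $(\ell_1,\ell_2)$ ending at $\{(0,|p_1|,-k,|\D(2)|)\}$. The subtlety you flag at the end (an extension of type~1 on the left taking a bracket with $b(1)=|p_1|+1$ into one whose image is the starting bracket $(|p_1|,|p_1|,0,0)$) is a genuine point that the paper's ``Moreover'' paragraph states somewhat loosely, but it is harmlessly handled by your first suggested remedy: the image step is either a single addition or a no-op if $(|p_1|,|p_1|,0,0)$ is already present, and since the lemma only requires operationality and the size bound --- not preservation of operation types --- no preprocessing of $\Omega$ is needed.
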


\begin{proof}  Let $B_0, B_1,  \dots, B_\ell$ be
the corresponding to  $\Omega$  operational  sequence of \BS  s, where $B_0$ is empty and  $B_\ell = \{ (0, |W|, 0, |\D(2)| ) \}$ is the final \BS\   for  $( W, \D)$.

For every \BS\   $B_i$, we will construct an associated \BS\   $B'_i$ as follows. Let $b \in B_i$ be  a bracket.
If $b(1) > |p_1|$, then we disregard $b$.

Suppose  that $b(1) \le |p_1|$ and $b(2) > |p_1|$.  Let $p_b$ be a subpath of $p_2$  such that
$(p_b)_- = (p_2)_-$, $(p_b)_+ = b(2)$ and let  $\ph(p_b) \overset 0 = a_1^{k_p}$, see Fig.~6.3.  Since $r(b)$ and $\al(p_b)$ are special arcs in $\D$,  it follows from   Lemma~\ref{arc} that the reduced path $r_p$,  obtained   from  $r(b) \al(p_b)^{-1}$   by canceling  pairs of edges of the form $e e^{-1}$, is a special arc such that
$$
(r_p)_- = \al(b(1)) ,   \quad   (r_p)_+ = \al((p_b)_+) = \al(|p_1|) .
$$
Hence,  we can define a bracket
$$
b' := (b(1), |p_1|, b(3)-k_p, b(4))
$$
whose complementary arc $r(b')$ is $r_p$, $r(b') := r_p$.
For every such $b \in B_i$, we include  $b'$  in $B'_i$.
\vskip3mm

\begin{center}
\begin{tikzpicture}[scale=.72]
\draw (-19,-2)  arc (0:360:2);
\draw(-23,-2) -- (-19,-2);
\draw [-latex](-23,-2) --(-21,-2);
\draw [-latex](-22.5,-3.3) --(-22.6,-3.2);
\draw [-latex] (-22.5,-.7) -- (-22.4,-.6);
\draw [-latex] (-19.5,-3.3) -- (-19.6,-3.4);
\draw [-latex] (-19.7,-.5) -- (-19.6,-.6);
\draw  (-21,0) [fill = black] circle (.05);
\draw  (-23,-2) [fill = black] circle (.05);
\draw  (-19,-2) [fill = black] circle (.05);
\draw  (-21,-4) [fill = black] circle (.05);
\node at (-21,-4.8) {Fig.~6.3};
\node at (-21,-2.5) {$r(b)$};
\node at (-21.,-.5) {$\alpha(|p_1|)$};
\node at (-23.4,-3.3) {$\alpha(p_1)$};
\node at (-23.2,-.5) {$\alpha(p_1)$};
\node at (-19,-.3) {$\alpha(p_b)$};
\node at (-21,-3.5) {$\alpha(0)$};
\node at (-24,-2) {$\alpha(b(1))$};
\node at (-18,-2) {$\alpha(b(2))$};
\end{tikzpicture}
\end{center}

If $b \in B_i$ satisfies $b(2) \le |p_1|$, then  we add $b$ to $B'_i$.

Going over all brackets $b \in B_i$ as described above, we obtain a new  \BS\   $B'_i$ associated with  $B_i$. Observe that  $B'_{i+1}$ is either identical to $B'_i$ or
$B'_{i+1}$ is obtained from $B'_i$ by a single \EO\  which is identical to the \EO\   that is used to get $B_{i+1}$ from $B_i$.

Moreover, if  $B'_{i+1} \ne  B'_i$ and $B_{i+1}$ is obtained from $B_i$ by application of an \EO\ $\sigma$ to a bracket $b_1 \in B_i$ or to  brackets $b_1, b_2 \in B_i$  (in the case when $\sigma$ is a merger) and this application results in $c$, written $c= \sigma(b_1)$  or $c= \sigma(b_1, b_2)$,   then $B'_{i+1}$ is obtained from $B'_i$ by application of  $\sigma'$, where $\sigma'$ has the same type as $\sigma$,
 to the bracket $b'_1 \in B'_i$ or to the  brackets $b'_1, b'_2 \in B'_i$ (in the case when $\sigma$ is a merger) and $c' =  \sigma'(b'_1) $ or $c'= \sigma'(b'_1, b'_2)$.

Indeed, this claim is immediate for additions, extensions of type 1 and mergers. On the other hand, it follows from the definitions of extensions of type 2 and 3 that an extension of type 2 or 3 can be applied only to a bracket $b_1$ with $b_1(2) \le |p_1|$
and results in a bracket $c = \sigma(b_1)$ with $c(2) \le |p_1|$.  Hence, in this case,
$b'_1 = b_1$,   $c' = c$, and our claim holds true again.

It follows from foregoing observations that a  new sequence  $B'_0, B'_1, \dots, B'_\ell$, when repetitions are disregarded, is operational,  has  size  bounded by  $(\ell_1, \ell_2)$, $B'_0$   is empty  and  $B'_\ell$ consists of the single bracket $(0, |p_1|, -k,  |\D(2)|)$,  as desired. The last  inequality  $b(2) \le |p_1|$ in lemma's statement is immediate from the definitions.
\end{proof}

\begin{lem}\label{3bb}  Let $P_W = p_1 p_2 p_3$ be a factorization of the path  $P_W$ such that $\al(p_2)$ is a special arc of $\D$ and let  there exist a sequence of \EO  s of size  bounded by  $(\ell_1, \ell_2)$  that transforms the empty \BS\    for $( W, \D)$ into the final \BS . Then there is also a sequence of \EO  s    that transforms a \BS\    consisting of the single bracket $c_0 := (|p_1|, |p_1p_2|, k_2, 0)$, where $\ph(p_2) \overset 0 = a_1^{k_2}$, into the final \BS\   for $( W, \D)$ and has size  bounded by  $(\ell_1+1, \ell_2+1)$.
\end{lem}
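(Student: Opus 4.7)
The plan is to adapt the proof of Lemma~\ref{lem5b} to the (generally non-starting) bracket $c_0 = (|p_1|, |p_1p_2|, k_2, 0)$. First dispose of degenerate cases: if $|p_2| = 0$, then $k_2 = 0$ and $c_0$ is a starting bracket, so Lemma~\ref{lem5b} applies; if $c_0$ coincides with the final bracket $F := (0, |W|, 0, |\D(2)|)$, the empty sequence suffices. Henceforth assume $|p_2| \ge 1$ and $c_0 \ne F$, and let $B_0 = \emptyset, B_1, \ldots, B_\ell = \{F\}$ be the operational sequence of \BS s corresponding to $\Omega$.

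Following Lemma~\ref{lem5b}, let $i^* \ge 1$ be the smallest index with $B_{i^*} \cup \{c_0\}$ failing to be a \BS; such $i^*$ exists because $B_0 \cup \{c_0\} = \{c_0\}$ is a \BS\ while $B_\ell \cup \{c_0\}$ is not (the arc of $F$ strictly contains that of $c_0$; boundary cases with $c_0(1) = 0$ or $c_0(2) = |W|$ are treated separately as in the corresponding boundary cases of the proof of Lemma~\ref{lem5}). For $0 \le i < i^*$, set $B'_i := B_i \cup \{c_0\}$; these form an operational subsequence starting from $\{c_0\}$ and using the same EOs as $\Omega$. At step $i^*$, the EO $\sigma$ of $\Omega$ creates a bracket $b \in B_{i^*}$ conflicting with $c_0$. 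A case analysis analogous to Lemma~\ref{lem5b} rules out $\sigma$ being a merger (which would force $c_0(1) \ge c_0(2)$), and shows that if $\sigma$ is an extension of some $d \in B_{i^*-1}$, then $d(2) = c_0(1)$ or $d(1) = c_0(2)$.

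In the extension sub-case, we replace step $i^*$ by a merger of $d$ and $c_0$ applied to $B'_{i^*-1}$, obtaining the \BS\ $\tilde B := (B_{i^*-1} \setminus \{d\}) \cup \{e\}$, where $e$ is the merged bracket with $e(4) = d(4)$ (since $c_0(4) = 0$) and $a(e) = a(d)\al(p_2)$ or its symmetric variant. To reach the final \BS\ from $\tilde B$, we translate the tail of $\Omega$ (steps $i^*+1$ through $\ell$) onto the modified sequence with $e$ playing the role of $b$; extensions in $\Omega$ that would further extend $b$-descendants across $\al(p_2)$ are redundant in the modified sequence (since $e$ already covers $\al(p_2)$) and are simply skipped, while all other EOs translate directly. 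In the remaining sub-case, when $\sigma$ is an addition of a starting bracket $(v, v, 0, 0)$ with $c_0(1) < v < c_0(2)$, a preprocessing step identifies those operations of $\Omega$ confined to the interior of $\al(p_2)$ (acting on brackets with positions inside $[c_0(1), c_0(2)]$ and zero face count, using only $a_1$-edges) and consolidates them into a single merger with $c_0$ at the moment an interior bracket first interacts with an exterior one.

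The size bounds $(\ell_1 + 1, \ell_2 + 1)$ follow from this construction: at most one extra merger is inserted (possibly compensated by skipped redundant extensions), and the \BS s $B'_i$ for $i < i^*$ contain at most one extra bracket, namely $c_0$. The main obstacle is the formalization of the EO-translation after step $i^*$: establishing a correspondence between brackets in the tail of $\Omega$ (derived from $b$) and those in the modified sequence (derived from $e$), and verifying that each translated EO remains valid. The addition sub-case requires analogous care to identify the ``interior'' operations and justify their consolidation. The correctness of these modifications ultimately rests on the cancellation arithmetic of special arcs (Lemma~\ref{arc}) and the structure of $a_2$-bands (Lemma~\ref{b1}).
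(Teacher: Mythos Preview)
Your approach has a genuine gap in the handling of the tail after the merger and of the ``addition sub-case''. Consider the following concrete obstruction to your translation scheme: suppose at your step $i^*$ the operation $\sigma$ is a type-1 extension of $d$ (with $d(2)=c_0(1)=|p_1|$) producing $b$ with $b(2)=|p_1|+1$, and you merge $d$ with $c_0$ to obtain $e$ with $e(2)=|p_1p_2|$. In the \emph{original} sequence $\Omega$, step $i^*+1$ may well be an addition of a starting bracket at some vertex $v$ with $|p_1|+1<v<|p_1p_2|$; this is legal in $\Omega$ (no conflict with $b$), but in your modified sequence it conflicts with $e$. Your proposed ``preprocessing/consolidation'' of interior operations is stated only for the case where an addition causes the \emph{first} conflict, not for interior operations occurring \emph{after} you have already merged; and there may be many such interior brackets, each eventually merged with descendants of $b$, so a single extra merger does not suffice to absorb them. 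Similarly, you assert that redundant extensions ``are simply skipped, while all other EOs translate directly'', but you do not verify that the translated operations satisfy their preconditions (for instance, that the entry $b(3)$ needed for a later type-2 extension agrees with the corresponding entry of the modified bracket).

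The paper takes a different route that avoids this bookkeeping entirely. Rather than waiting for the first conflict, it defines $i^*$ as the first index at which some bracket of $B_{i^*}$ has an arc \emph{containing} $\al(p_2)$, and for every $i<i^*$ it replaces $B_i$ by a truncated system $B'_i$: brackets lying inside $(|p_1|,|p_1p_2|)$ are discarded, brackets overlapping a prefix (resp.\ suffix) of $\al(p_2)$ are shortened so as to end at $|p_1|$ (resp.\ begin at $|p_1p_2|$), with the new complementary arcs supplied by Lemma~\ref{arc}; the bracket $c_0$ is then added to each $B'_i$. This truncation preserves operationality (up to repetitions) because extensions of type~2 and~3 cannot cross the $a_1$-only boundary of $\al(p_2)$, and at step $i^*$ one or two mergers with $c_0$ produce a bracket with the same first two entries as the covering bracket $d\in B_{i^*}$---whence by Lemma~\ref{NUP} it \emph{equals} $d$, and the tail of $\Omega$ continues verbatim. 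The key ideas you are missing are precisely this truncation of overlapping brackets via Lemma~\ref{arc} and the use of Lemma~\ref{NUP} to reconnect cleanly at $i^*$.
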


\begin{proof}  Consider a sequence $\Omega$ of \EO  s that transforms the empty \BS\    for $( W, \D)$ into the final \BS\    and has size  bounded by  $(\ell_1, \ell_2)$. Let
$
 B_0, B_1, \dots, B_\ell
$
be the corresponding  to $\Omega$   sequence of \BS  s, where $B_0$ is empty and  $B_\ell = \{ (0, |W|, 0, |\D(2)| ) \} $ is the final \BS\   for  $( W, \D)$.

Define $i^*$ to be the minimal index so that $B_{i^*}$ contains a bracket $b$ such that  $b(1) \le |p_1|$ and $b(2) \ge |p_1p_2|$, i.e.,  the arc $a(b)$ of $b$  contains the path $\al(p_2)$. Since  $B_\ell$ has this property and $B_0$ does not, such an $i^*$ exists and $0< i^* \le \ell$.

First suppose that $|p_2| =0$. Then $c_0 = (|p_1|, |p_1|, 0, 0)$ and we can define the following operational sequence of \BS s $B'_i := B_i \cup \{ c_0 \}$ for $i=0, \dots, i^*-1$. By the minimality of $i^*$, there is a unique bracket $b^* \in B_{i^*}$  such that $b^*(1) = |p_1|$  or  $b^*(2) = |p_1|$.

Suppose  $b^*(1) =b^*(2) =|p_1|$. Then $ B_{i^*}$ is obtained from $B_{i^*-1}$ by addition of $c_0$,  $ B_{i^*}= B_{i^*-1} \cup \{ c_0 \}$, and we can consider the following operational sequence
of \BS s
\begin{equation*}
    B'_0 = \{ c_0 \}, B'_1,    \dots,   B'_{i^*-2},   B'_{i^*-1}= B_{i^*},  B_{i^*+1}, \dots,  B_{\ell}
\end{equation*}
that transforms $\{ c_0 \}$ into the final \BS\ and has size bounded by $(\ell_1-1, \ell_2+1)$.

Suppose  $b^*(1) < b^*(2)$. Then a merger of $b^*$ and $c_0$ yields $b^*$, hence this merger turns $B'_{i^*} = B_{i^*}\cup \{ c_0\}$ into $B_{i^*}$. Now we can continue the original sequence $\Omega$ of \EO s to get  $B_{\ell}$. Thus the following
\begin{equation*}
    B'_0 = \{ c_0 \}, B'_1,    \dots,   B'_{i^*}, B_{i^*},  B_{i^*+1}, \dots,  B_{\ell}
\end{equation*}
is an  operational sequence
of \BS s that  transforms $\{ c_0 \}$ into the final \BS\ and has size bounded by $(\ell_1+1, \ell_2+1)$.

Now assume that $|p_2| >0$. For every \BS\   $B_i$, where $i < i^*$,  we construct an associated \BS\   $B'_i$ in the following manner.

Let $b \in B_i$ be a bracket.
If $b(1) \ge |p_1|$  and  $b(2) \le  |p_1p_2|$ and  $b(1) - |p_1| + |p_1p_2|- b(2) >0  $, i.e.,  the arc $a(b)$ of $b$ is a proper subpath of the path $\al(p_2)$, then  we disregard $b$.

Suppose that $b(1) \le |p_1|$ and  $ |p_1|  < b(2) \le |p_1p_2|$, i.e.,  the arc $a(b)$ overlaps with a prefix subpath of the path $\al(p_2)$ of positive length. Let $p_{21}$ denote a subpath of $p_2$ given by
$(p_{21})_- = (p_2)_-$ and $(p_{21})_+ = b(2)$, see Fig.~6.4.
Then it follows from Lemma~\ref{arc} applied to  special arcs  $r(b)$, $\al(p_{21})^{-1}$ of $\D$ that  the reduced
path, obtained from  $r(b) \al(p_{21})^{-1}$ by canceling pairs of edges of the form $ee^{-1}$, is a  special arcs  of $\D$. Therefore, we may consider a bracket
$$
b' = (b(1), |p_1|, b(3)-k_{21}, b(4)) ,
$$
where  $\ph(p_{21}) \overset 0 = a_1^{k_{21}}$.   For every such a bracket $b \in B_i$, we include  $b'$  into  $B'_i$.

\begin{center}
\begin{tikzpicture}[scale=.72]
\draw (-19,-2)  arc (0:360:2);
\draw(-23,-2) -- (-19,-2);
\draw [-latex](-23,-2) --(-21,-2);
\draw [-latex](-22.5,-3.3) --(-22.6,-3.2);
\draw [-latex] (-22.5,-.7) -- (-22.4,-.6);
\draw [-latex] (-19.18,-2.8) -- (-19.28,-3.04);
\draw [-latex] (-19.7,-.5) -- (-19.6,-.6);
\draw [-latex] (-20,-3.74) -- (-20.2,-3.83);
\draw  (-21,0) [fill = black] circle (.05);
\draw  (-23,-2) [fill = black] circle (.05);
\draw  (-19,-2) [fill = black] circle (.05);
\draw  (-21,-4) [fill = black] circle (.05);
\draw  (-19.6,-3.43) [fill = black] circle (.05);
\node at (-21,-4.8) {Fig.~6.4};
\node at (-21,-1.4) {$\Delta_b$};
\node at (-21,-2.6) {$r(b)$};
\node at (-21.,-.5) {$\alpha(|p_1|)$};
\node at (-18.5,-3.6) {$\alpha(|p_1p_2|)$};
\node at (-23.4,-3.3) {$\alpha(p_1)$};
\node at (-23.2,-.5) {$\alpha(p_1)$};
\node at (-19.7,-4.2) {$\alpha(p_3)$};
\node at (-18.8,-.5) {$\alpha(p_{21})$};
\node at (-18.3,-2.9) {$\alpha(p_{22})$};
\node at (-21,-3.5) {$\alpha(0)$};
\node at (-24,-2) {$\alpha(b(1))$};
\node at (-18,-2) {$\alpha(b(2))$};
\end{tikzpicture}
\end{center}

Suppose $ |p_1|  \le b(1) < |p_1p_2|$ and  $ |p_1p_2| \le  b(2)$, i.e., the arc $a(b)$ overlaps with a suffix of the path $\al(p_2)$  of positive length. Let $p_{22}$ denote a subpath of $p_2$ given by  $(p_{22})_- = b(1)$ and $(p_{22})_+ = (p_2)_+$, see Fig.~6.5. Then  it follows from Lemma~\ref{arc} applied to  special arcs
$\al(p_{22})^{-1}$, $r(b)$ of $\D$ that the reduced
path, obtained from  $\al(p_{22})^{-1}r(b) $ by canceling pairs of edges of the form $ee^{-1}$, is a  special arcs  of $\D$. Hence, we may consider a bracket
$$
b' = (|p_1p_2|, b(2), b(3)-k_{22}, b(4)) ,
$$
where $\ph(p_{22})  \overset 0   = a_1^{k_{22}}$.  For every such a bracket $b \in B_i$, we include  $b'$  into  $B'_i$.

\begin{center}
\begin{tikzpicture}[scale=.72]
\draw (-19,-2)  arc (0:360:2);
\draw(-23,-2) -- (-19,-2);
\draw [-latex](-23,-2) --(-21,-2);
\draw [-latex] (-22.5,-.7) -- (-22.4,-.6);
\draw [-latex] (-19.4,-3.2) -- (-19.5,-3.32);
\draw [-latex] (-19.7,-.5) -- (-19.6,-.6);
\draw [-latex] (-22.79,-2.9) -- (-22.84,-2.8) ;
\draw [-latex] (-21.6,-3.9) -- (-21.8,-3.83);
\draw  (-22.44,-3.4) [fill = black] circle (.05);
\draw  (-21,0) [fill = black] circle (.05);
\draw  (-23,-2) [fill = black] circle (.05);
\draw  (-19,-2) [fill = black] circle (.05);
\draw  (-21,-4) [fill = black] circle (.05);
\node at (-20.6,-4.8) {Fig.~6.5};
\node at (-21,-1.4) {$\Delta_b$};
\node at (-21,-2.6) {$r(b)$};
\node at (-21.,-.5) {$\alpha(|p_1p_2|)$};
\node at (-23.64,-2.9) {$\alpha(p_{21})$};
\node at (-23.3,-.5) {$\alpha(p_{22})$};
\node at (-22.3,-4.2) {$\alpha(p_1)$};
\node at (-18.8,-.5) {$\alpha(p_{3})$};
\node at (-18.55,-3.4) {$\alpha(p_{3})$};
\node at (-21,-3.5) {$\alpha(0)$};
\node at (-24,-2) {$\alpha(b(1))$};
\node at (-18,-2) {$\alpha(b(2))$};
\node at (-23.3,-3.57) {$\alpha(|p_1|)$};
\end{tikzpicture}
\end{center}

If $b \in B_i$ satisfies either $b(2) \le |p_1|$ or $b(1) \ge |p_1 p_2|$, then  we just add $b$ to $B'_i$  without  alterations.

We also put the bracket $c_0 =(|p_1|, |p_1p_2|, k_2, 0)$ in every
$B'_i$, $i =0,\dots, i^*-1$.

Going over all brackets $b \in B_i$, as described above, and adding $c_0$,  we obtain a new  \BS\   $B'_i$ associated with  $B_i$.

Observe that for every $i < i^* -1$ the following holds true.
Either $B'_{i+1}$ is identical to $B'_{i}$ or  $B'_{i+1}$ is obtained from $B'_{i}$ by a single \EO\
which is identical to the \EO\   that is used to get $B_{i+1}$ from $B_i$. Moreover, if $B'_{i+1} \ne  B'_i$ and $B_{i+1}$ is obtained
from $B_i$ by application of an \EO\ $\sigma$ to a bracket $b_1 \in B_i$ or to brackets $b_1, b_2 \in B_i$ (in  the case when $\sigma$ is a merger) and this application of  $\sigma$   results in $c$, written $c=\sigma(b_1)$ or $c=\sigma(b_1, b_2)$,  then $B'_{i+1} $ is  obtained from $B'_i$ by application of  $\sigma'$, where $\sigma'$ has the same type as $\sigma$,  to the bracket $b'_1 \in B'_i$ or to the brackets $b'_1, b'_2 \in B'_i$ (in the  case when $\sigma$ is a merger) and $c'=\sigma'(b'_1)$ or $c'=\sigma'(b'_1, b'_2)$, resp. Indeed, this claim is immediate for additions, extensions of type 1 and mergers. On the other hand, it follows from the definitions of extensions of types 2--3 and from the definition of $i^*$ that if $\sigma$ is an extension of type 2 or 3 then  $\sigma$ can only be applied to a bracket $b_1$ such that $b_1(2) \le |p_1|$ or $ |p_1p_2| \le b_1(1)$ and this application results in a bracket
$c = \sigma(b_1)$ with $c(2) \le |p_1|$ or   $c(1) \le |p_1p_2|$.
Hence, in this case,  $b'_1 = b_1$, $c' = c$ and our claim holds true.

Since the \BS\    $B_{i^*}$ contains a bracket $d$ such that $\al(p_2)$ is a subpath of  $a(d)$ and $i^*$ is the minimal index with this property, it follows from the definition of \EO  s and from the facts that
$\al(p_2)$ consists of $a_1$-edges and $|p_2| >0$ that either $d$ is obtained from a bracket
$b_1 \in B_{i^*-1}$ by an extension of type 1 or  $d$ is obtained from  brackets $b_2, b_3 \in B_{i^*-1}$ by a merger.

First suppose that $d$ is obtained from a bracket $b_1 \in B_{i^*-1}$ by an extension of type 1. In this case, we pick the image  $b'_1 \in B'_{i^*-1}$ of $b_1$ and use a merger operation over $b'_1, c_0 \in B'_{i^*-1}$ to get a bracket $c_3$. Let $B'_{i^*}$ denote the new \BS\     obtained from  $B'_{i^*-1}$ by the merger of  $b'_1, c_0 \in B'_{i^*-1}$, hence,
$$
B'_{i^*} :=
(B'_{i^*-1} \setminus \{ c_0, b'_1 \}) \cup \{ c_3 \} .
$$

Since $d$ is obtained from $b_1$ by an extension of type 1, it follows that, for every  $b \in  B_{i^*-1}$, $b \ne b_1$, we have either $b(1) \ge d(2) \ge |p_1p_2|$   or   $b(2) \le d(1) \le |p_1|$. Therefore, every bracket $b \in B_{i^*-1}$, $b \ne b_1$,
has an image $b' \in   B'_{i^*-1}$ and $b' = b$. This, together with equalities
$c_3(1) = d(1)$, $c_3(2) = d(2)$ and Lemma~\ref{NUP}, implies that $c_3 = d$ and $B'_{i^*} =  B_{i^*}$. Thus we can consider the following sequence of \BS  s
\begin{gather}\label{newC}
 B'_0,  \dots, B'_{i^*-1},  B'_{i^*} = B_{i^*},  B_{i^*+1},   \dots, B_\ell
\end{gather}
 that starts at $B'_0 = \{ c_0 \}$ and ends in the final \BS\    $B_\ell$ for $(W, \D)$.  It is clear that the size of this sequence is  bounded by  $(\ell_1, \ell_2+1)$. Hence, after deletion of    possible repetitions in  \er{newC2}, we obtain a desired operational sequence.

Now assume that $d$ is obtained from  brackets $b_2, b_3 \in B_{i^*-1}$ by a merger and let
$b_2(2) = b_3(1)$.  To define  $B'_{i^*-\tfrac 12} $, we apply a merger operation to $b'_2$ and $c_0$,
which results in a bracket $c_2$. To define  $B'_{i^*} $, we apply a merger operation to $c_2$  and $b'_3$ which results in a bracket $c_3$.

As above, we observe that since $d$ is obtained from $b_2, b_3$ by a merger, it follows that, for every bracket $b \in B_{i^*-1}$, $b \not\in \{ b_1, b_2 \}$, we have either $b(1) \ge d(2) \ge |p_1p_2|$   or   $b(2) \le d(1) \le |p_1|$. Therefore, every bracket $b \in B_{i^*-1}$,  $b \not\in \{ b_1, b_2 \}$,
has an image $b' \in   B'_{i^*-1}$ and $b' = b$. This, together with equalities
$c_3(1) = d(1)$, $c_3(2) = d(2)$ and Lemma~\ref{NUP},  implies that $c_3 = d$ and   $B'_{i^*} =  B_{i^*}$. Thus we can consider the following sequence of \BS  s
 \begin{gather}\label{newC2}
 B'_0,  \dots, B'_{i^*-1}, B'_{i^*-\tfrac 12},  B'_{i^*} =B_{i^*},  B_{i^*+1},   \dots, B_\ell
\end{gather}
 that starts at $B'_0 = \{ c_0 \}$ and ends in the final \BS\   $B_\ell$.
 Clearly, the  size of the sequence \eqref{newC2} is  bounded by  $(\ell_1+1, \ell_2+1)$. Hence,  after deletion of  possible repetitions in  \er{newC2}, we obtain a desired operational sequence.
\end{proof}

\begin{lem}\label{4bb}  There exists a sequence of \EO s  that converts the empty
\BS\   for $(W, \D)$ into the final \BS\    and has size bounded by
\begin{equation} \label{esma}
  ( 7|W| , \  C(\log |W|_{\bar a_1}  +1)  ) ,
\end{equation}
where $C = (\log \tfrac 65)^{-1}$  and $\log |W|_{\bar a_1} := 0$ if $|W|_{\bar a_1}=0$.
\end{lem}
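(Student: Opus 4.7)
The plan is to prove Lemma \ref{4bb} by induction on $|W|_{\bar a_1}$, following the overall architecture of the proof of Lemma \ref{lem6} but substituting the tools tailored to presentation \eqref{pr3b}. The base case, when $|W|_{\bar a_1} \le 2$, is handled by direct inspection: \MF\ and Lemma \ref{b1} force $\D$ to have essentially constant-complexity structure, and a constant-size sequence of \EO s clearly achieves the bound \eqref{esma}.

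For the inductive step with $|W|_{\bar a_1} > 2$, I would first invoke Lemma \ref{b2} to extract vertices $v_1, v_2 \in P_W$ with $v_1 < v_2$ and a special arc $r$ joining $\al(v_1)$ to $\al(v_2)$ with $\ph(r) \equiv a_1^\ell$, so that, writing $P_W(\ff, v_1, v_2) = p_1 p_2 p_3$, both $|\ph(p_2)|_{\bar a_1}$ and $|\ph(p_1)|_{\bar a_1} + |\ph(p_3)|_{\bar a_1}$ lie between $\tfrac{1}{3}|W|_{\bar a_1}$ and $\tfrac{2}{3}|W|_{\bar a_1}$. The chord $r$ splits $\D$ into two reduced disk subdiagrams $\D_2, \D_1$ with boundaries $\al(p_2) r^{-1}$ and $\al(p_1) r \al(p_3)$ respectively, and I would define auxiliary pairs by setting $\wtl W_2 \equiv \ph(p_2)\, a_1^{-\ell}$ and $\wtl W_1 \equiv \ph(p_1)\, a_1^{\ell}\, \ph(p_3)$, so that $|\wtl W_i|_{\bar a_1} \le \tfrac{5}{6}|W|_{\bar a_1}$ for $i = 1, 2$.

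Next I would apply the induction hypothesis to each pair $(\wtl W_i, \D_i)$ to obtain sequences of \EO s, of size bounded by $(7|\wtl W_i|,\, C(\log|\wtl W_i|_{\bar a_1}+1))$, that turn the empty \BS\ into the final one. To $(\wtl W_2, \D_2)$ I would apply Lemma \ref{2bb} with the factorization $P_{\wtl W_2} = p_1^{(2)} p_2^{(2)}$ where $\al_2(p_2^{(2)}) = r^{-1}$: this yields a sequence ending at the single bracket $(0, |\ph(p_2)|, \ell, |\D_2(2)|)$, with all intermediate brackets satisfying $b(2) \le |\ph(p_2)|$. To $(\wtl W_1, \D_1)$ I would apply Lemma \ref{3bb} with $P_{\wtl W_1} = p_1^{(1)} p_2^{(1)} p_3^{(1)}$ where $\al_1(p_2^{(1)}) = r$: this yields a sequence starting from the seed $\{(|p_1|, |p_1|+|\ell|, \ell, 0)\}$ and ending in the final \BS\ of $(\wtl W_1, \D_1)$. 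Both sequences are then embedded into sequences of \EO s for $(W, \D)$: the first via the straightforward index shift $b(j) \mapsto b(j) + |p_1|$ for $j = 1, 2$; the second via introducing a partial order $\succeq'$ on bracket-step pairs, exactly as in Lemma \ref{lem6}, that distinguishes seed-descendants from independent brackets and dictates whether the embedded bracket $\wht b$ receives a $+|\D_2(2)|$ adjustment in $\wht b(4)$ and how its endpoints $\wht b(1), \wht b(2)$ are shifted to skip the synthetic $a_1^{\pm\ell}$ segment. Concatenating the two embedded sequences yields a sequence for $(W, \D)$ transforming the empty \BS\ into the final one whose second-component size is bounded by $\max\{C(\log|\wtl W_2|_{\bar a_1}+1),\, C(\log|\wtl W_1|_{\bar a_1}+1)+1\} \le C(\log|W|_{\bar a_1}+1)$, using $|\wtl W_i|_{\bar a_1} \le \tfrac{5}{6}|W|_{\bar a_1}$ and $C = (\log\tfrac{6}{5})^{-1}$. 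A final invocation of Lemma \ref{lem4b} brings the first component of the size down to $\le 7|W|$.

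The main obstacle is verifying that the \EO\ embedding in the $(\wtl W_1, \D_1)$ step is type-preserving and produces legitimate \BS s at every stage. This requires a careful case analysis over additions, extensions of types 1, 2, 3, and mergers, with repeated appeals to Lemma \ref{arc} to ensure that special arcs in $\D_i$ remain special arcs in $\D$ after the embedding, and to Lemma \ref{NUP} to identify the unique embedded seed-descendant bracket with $b_2 := (v_1, v_2, \ell, |\D_2(2)|)$. The bookkeeping for the interaction between the $b(4)$ adjustment (which depends on whether a bracket descends from the seed under $\succeq'$) and the index shifts on $b(1), b(2)$ (which depend on which of the three segments $p_1^{(1)}, p_2^{(1)}, p_3^{(1)}$ the bracket's arc occupies) is exactly the delicate step that was executed in detail in the proof of Lemma \ref{lem6}, and no essential new difficulty is expected here beyond adapting those arguments to quadruples and to the presence of the complementary arc $r$.
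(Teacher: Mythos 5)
Your proposal follows essentially the same architecture as the paper's proof: induction on $|W|_{\bar a_1}$, base case by inspection, invoking Lemma~\ref{b2} to find the chord $r$ and the splitting vertices $v_1, v_2$, defining $W_1 \equiv \ph(p_1) a_1^{\ell}\ph(p_3)$ and $W_2 \equiv \ph(p_2) a_1^{-\ell}$, applying Lemma~\ref{2bb} to $(W_2,\D_2)$ to produce a sequence landing on the bracket $(0,|p_2|,k_r,|\D_2(2)|)$, applying Lemma~\ref{3bb} to $(W_1,\D_1)$ from the seed $(|p_1|,|p_1|+|r|,k_r,0)$, embedding both via index shifts governed by a partial order $\succeq$ on bracket-step pairs, concatenating, and finishing with Lemma~\ref{lem4b} and the numerical inequality $C(\log(\tfrac56|W|_{\bar a_1})+1)+1 \le C(\log|W|_{\bar a_1}+1)$. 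The verification burden you flag at the end (type-preservation of the embedding, appeals to Lemmas~\ref{arc} and~\ref{NUP}) is precisely the bookkeeping carried out in the paper's proof, so your plan is correct and matches the paper's route.
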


\begin{proof}
The arguments of this proof are similar to those of the  proof of Lemma~\ref{lem6}.
Now we proceed by induction on $|W|_{\bar a_1} \ge 0$.  To establish the basis for induction, we first check our claim in the case when $|W|_{\bar a_1} \le 2$.

Let  $|W|_{\bar a_1} \le 2$. Then, looking at the tree
 $\rho_{a_1}(\D)$, we see that $|W|_{\bar a_1} = 0$ or  $|W|_{\bar a_1} = 2$.
If $|W|_{\bar a_1} = 0$, then it follows from Lemma~\ref{b1} applied to $\D$  that $\D$ is a tree consisting of $a_1$-edges and we can easily convert
the empty \BS\   for $(W, \D)$ into the final \BS\   by using a single addition and $|W|$ extensions of type 1. The size of the corresponding  sequence is bounded by  $  ( |W| +1 , \ 1   ) $ and  the bound \er{esma} holds as $C \ge 1$.

If now $|W|_{\bar a_1} = 2$, then it follows from Lemma~\ref{b1} applied to $\D$ that either $\D$ is a tree consisting of $a_1$-edges and two (oriented) $a_i$-edges, $i >1$, that form an  $a_i$-band $\Gamma_0$ with no faces  or  $\D$ contains a single $a_2$-band $\Gamma$ that has faces and all edges of $\D$, that are not in  $\Gamma$, are $a_1$-edges that form a forest whose trees are attached to $\Gamma$. In either case, we can convert the empty \BS\   for $(W, \D)$ into the final \BS\   by using a single addition and   $ \le |W|$ extensions of type 1 and 3 if  $\D$ is a tree or of type 1 and  2  if  $\D$ is not a tree.
The size of the corresponding  sequence of  \EO s is bounded by  $  ( |W| +1 , \ 1   ) $ and                   the bound \er{esma} holds as $C \ge 1$. The base step is complete.

Making the induction step, assume $|W|_{\bar a_1} > 2$.
By Lemma~\ref{b2} applied to $(W, \D)$, we obtain vertices $v_1, v_2 \in P_W$ such that if $P_W(\ff, v_1, v_2)  = p_1 p_2 p_3$ then there is a special arc $r$ in $\D$ such that $r_- = \al(v_1)$, $r_+ = \al(v_2)$ and
\begin{align}\label{inq10b}
   \min(|\ph(p_2)|_{\bar a_1},  |\ph(p_1)|_{\bar a_1}+|\ph(p_3)|_{\bar a_1})  \ge  \tfrac 13  |W|_{\bar a_1}  ,
\end{align}

Let $\p |_0 \D = q_1 q_2 q_3$, where
$q_i = \al(p_i) $, $i =1,2,3$. Consider disk subdiagrams $\D_1, \D_2$ of $\D$ given by $\p |_{v_1} \D_2 = q_2 r^{-1}$ and by  $\p |_{0} \D_1 = q_1 r q_3$, see Fig.~6.6. Denote
$$
W_2 \equiv \ph(q_2r^{-1}) ,  \quad W_1 \equiv \ph(q_1 r q_3)
$$
and let $P_{W_i} = P_{W_i}(W_i, \D_i)$, $i=1,2$,  denote the corresponding paths such that  $\al_1(P_{W_1}) = q_1 r q_3$ and $\al_2(P_{W_2}) = q_2r^{-1}$. We also denote  $\ph(r ) \overset 0 = a_1^{k_r}$.

\begin{center}
\begin{tikzpicture}[scale=.72]
\draw (-19,-2)  arc (0:360:2);
\draw(-23,-2) -- (-19,-2);
\draw [-latex](-23,-2) --(-21,-2);
\draw [-latex] (-19.4,-3.2) -- (-19.5,-3.32);
\draw [-latex] (-21.1,0) -- (-20.9,0) ;
\draw [-latex]  (-22.44,-3.4) -- (-22.53,-3.3);
\draw  (-23,-2) [fill = black] circle (.05);
\draw  (-19,-2) [fill = black] circle (.05);
\draw  (-21,-4) [fill = black] circle (.05);
\node at (-21,-5.4) {Fig.~6.6};
\node at (-21,-1.3) {$\Delta_2$};
\node at (-21,-2.5) {$r$};
\node at (-21.,-.64) {$\alpha(p_2)=q_2$};
\node at (-21,-3.2) {$\Delta_1$};
\node at (-18,-3.33) {$\alpha(p_{3})=q_3$};
\node at (-21,-4.5) {$\alpha(0)$};
\node at (-24,-2) {$\alpha(v_1)$};
\node at (-18,-2) {$\alpha(v_2)$};
\node at (-23.8,-3.33) {$\alpha(p_1)=q_1$};
\end{tikzpicture}
\end{center}

Since $|W_1|_{\bar a_1}, |W_2|_{\bar a_1} < |W|_{\bar a_1}$  by \eqref{inq10b}, it follows from the induction hypothesis that there is a sequence $\wht \Omega_i$  of \EO  s  for $(W_i, \D_i)$ that transforms the empty \BS\  into the final system and
has size bounded by
\begin{gather}\label{esmib}
( 7|W_i| , \  C(\log |W_i|_{\bar a_1} +1 )  )    ,
\end{gather}
where $i=1,2$.

Furthermore, it follows from  the bound  \eqref{esmib} with $i=2$ and from  Lemma~\ref{2bb}  applied to $\D_2$ whose boundary label has factorization
$\ph(\p|_{0} \D_2 ) \equiv  W_2 \equiv  W_{21} W_{22}$, where  $W_{21} \equiv  \ph(p_2)$ and $W_{21} \equiv  \ph(r)^{-1}$,  that there exists a sequence $\wht \Omega_2$
of \EO  s  for the pair $(W_2, \D_2)$ that transforms the empty \BS\ $B_{2, 0}$  into the \BS\
$B_{2, \ell_2} = \{ (0,  |p_2|,  k_r,  | \D_2(2) |  ) \}$  and has size bounded by
\begin{align}\label{esm2b}
 \left(7|W_2 |  , C(\log |W_2|_{\bar a_1} +1 )   \right)  .
\end{align}
Let   $B_{2,0}, B_{2,1}, \dots, B_{2, \ell_2}$ denote the associated with $\wht \Omega_2$ sequence of \BS s.

It follows from   the bound  \eqref{esmib} with $i=1$ and from     Lemma~\ref{3bb} applied to $\D_1$ whose boundary label has  factorization  $\ph(\p|_{0}  \D_1 ) \equiv  W_1 \equiv W_{11} W_{12} W_{13}$, where  $W_{11} \equiv  \ph(p_1)$, $W_{12} \equiv  \ph(r)$, $W_{13} \equiv  \ph(p_3)$,   that there exists a sequence  $\Omega_1$
of \EO  s  for $(W_1, \D_1)$ that transforms the  \BS\    $B_{1, 0} = \{ c_r \}$, where
\begin{equation*}
c_r := ( |p_1|,  |p_1| +|r|,  k_r,  0 ) ,
\end{equation*}
into the final \BS\   $B_{1, \ell_1}$    and has size bounded by
\begin{align}\label{esm3b}
 \left( 7 |W_1 | +1 , C(\log |W_1|_{\bar a_1}  +1 )    \right)    .
\end{align}
Let   $B_{1,0}, B_{1,1}, \dots, B_{1, \ell_1}$ be the associated with $\Omega_1$ sequence of \BS s.

Now we will show that these two sequences $\Omega_2$, $\Omega_1$ of \EO  s, the first one for $(W_2, \D_2)$, and the second one for  $(W_1, \D_1)$, could be modified and combined into a single
sequence of \EO s  for  $(W, \D)$ that  converts  the empty \BS\  into the final \BS\   and has size with a desired bound.

Note that every bracket $b = (b(1),b(2), b(3), b(4))$,  $b \in \cup_j B_{2,j}$,  for   $(W_2, \D_2)$, by Lemma~\ref{2bb},  has the property  that $b(2) \le |p_2 |$ and $b$   naturally gives rise to the bracket
$$
\wht b := (b(1)+|p_1|, b(2)+|p_1|, b(3), b(4))
$$
for  $(W, \D)$. Let $\wht B_{2,j}$ denote the \BS\   for  $(W, \D)$    obtained from $B_{2,j}$ by replacing every bracket $b \in B_{2,j}$ with $\wht b$. Then it is easy to verify that
$\wht B_{2,0}, \dots, \wht B_{2, \ell_2}$ is an operational sequence of \BS s for $(W, \D)$ that changes the empty \BS\ into
$$
 \wht B_{2, \ell_2} = \{  (|p_1|, |p_1|+ |p_2|, k_r,  |\D_2(2)|) \}
$$
and has size bounded by \er{esm2b}.

Consider a relation $\succ_1$ on the set of all pairs $(b,i)$,
where $b \in   B_{1,i}$, $i=0, \dots, \ell_1$, defined so that $(c, i+1) \succ_1 (b,i)$ if and only if
$c \in   B_{1,i+1} $ is obtained from brackets $b, b' \in   B_{1,i}$ by an \EO\ $\sigma$, here
$b'$ is missing if  $\sigma$ is an extension.

Define a relation $\succeq$ on the set of all pairs $(b,i)$,
where $b \in   B_{1,i}$, $i=0, \dots, \ell_1$,  that is the reflexive and transitive closure of the relation $\succ_1$.  Clearly, $\succeq$ is a partial order on the set of such  pairs $(b,i)$   and if $(b_2, i_2) \succeq (b_1, i_1)$ then $i_2 \ge i_1$ and
$$
b_2(1) \le b_1(1) \le b_1(2) \le b_2(2) .
$$

Now we observe that every bracket $d = (d(1), d(2), d(3), d(4))$, $d \in  B_{1,i}$, for   $(W_1, \D_1)$  naturally gives rise to a bracket $\wht d = (\wht d(1), \wht d(2), \wht d(3),  \wht d(4))$ for  $(W, \D)$ in the following manner.

If $(d, i)$ is not comparable with $(c_r, 0)$  by the relation $\succeq$ and  $d(1) \le |p_1|$,
 then
 $$
 \wht d := d .
 $$

If $(d, i) $ is not comparable with $(c_r,0)$  and    $|p_1| + |r| \le d(2)$, then
$$
\wht d :=  (d(1)+ |p_2|-|r|,  d(2)+ |p_2|-|r|, d(3), d(4) ) .
$$

If $(d, i) \succeq (c_r,0)$, then
$$
\wht d :=(d(1),  d(2)+ |p_2|-|r|,  d(3), d(4)+ |\D_2(2)| ) .
$$

Note that the above three cases cover all possible situations.

As before, let $\wht B_{1,i} := \{ \wht d \mid d \in B_{1,i} \}$. Then  it is easy to verify that    $\wht B_{1,0}, \dots, \wht B_{1, \ell_1}$ is an operational  sequence of \BS s for    $(W, \D)$ that changes the  \BS\
$$
\wht B_{1,0} = \wht B_{2,\ell_2} = \{ (|p_1|,|p_1| + |p_2|, k_r,  |\D_2(2)| ) \}
$$
into the final \BS\  $\wht B_{1, \ell_1} = \{  (0, |p_1|+ |p_2|+ |p_3|, 0,  |\D_1(2)|+ |\D_2(2)|) \}$.

Thus, with the indicated changes, we can now combine the foregoing modified sequences of \BS  s for $(W_2, \D_2)$ and for $(W_1, \D_1)$ into a single operational   sequence
\begin{equation*}
  \wht B_{2,0}, \dots, \wht B_{2, \ell_2} = \wht B_{1,0}, \dots, \wht B_{1, \ell_1}
\end{equation*}
of \BS s for $(W, \D)$
that transforms the empty \BS\       into the \BS\
 $\{  (|p_1|,|p_1| + |p_2|, k_r,  |\D_2(2)| ) \}$ and then continues to turn the latter
 into the final \BS . It follows from definitions and bounds \er{esm2b}--\er{esm3b} that the size of    thus constructed sequence is bounded by
 \begin{gather*}
 ( 7|W_1|+7|W_2|+1,\    \max( C(\log |W_1|_{\bar a_1}+ 1)+1, \ C( \log |W_2|_{\bar a_1}+1 ) )  ) \
 \end{gather*}
 Therefore, in view of Lemma~\ref{lem4b},    it remains to show that
 \begin{gather*}
  \max( C (\log |W_1|_{\bar a_1} +1)+ 1, C (\log |W_2|_{\bar a_1} +1 ) ) \le
 C (\log |W|_{\bar a_1}+1)  .
\end{gather*}
In view of the inequality \eqref{inq10b},
$$
\max( C(\log |W_1|_{\bar a_1} +1)+ 1, C(\log |W_2|_{\bar a_1}+1 ) )  \le  C (\log (\tfrac 56|W|_{\bar a_1})+1) +1 ,
$$
and   $C (\log (\tfrac 56|W|_{\bar a_1})+1) +1  \le  C (\log |W|_{\bar a_1}+1)  $ for $C = (\log \tfrac 65)^{-1}$.
\end{proof}

Let $W$ be an arbitrary nonempty word over the alphabet $\A^{\pm 1}$, not necessarily representing the trivial element of the group given by presentation \er{pr3b}.
As before, let $P_W$ be a simple labeled path with $\ph( P_W ) \equiv W$ and let vertices of $P_W$ be identified along $P_W$ with integers $0,1, \dots, |W|$ so that $(P_W)_- = 0, \dots$, $(P_W)_+ = |W|$.

A quadruple
 $$
 b = (b(1), b(2), b(3), b(4))
 $$
of  integers  $b(1), b(2), b(3), b(4)$ is called a {\em pseudobracket}  for the word $W$
if  $b(1), b(2)$ satisfy the inequalities $0 \le b(1)\le b(2) \le |W|$ and  $b(4) \ge 0$.

Let $p$ denote  the subpath
 of $P_W$ with $p_- = b(1)$, $p_+ = b(2)$, perhaps, $p_- = p_+$ and $|p| = 0$. The  subpath $p$ of $P_W$ is denoted $a(b)$ and called the {\em arc} of the pseudobracket $b$.

For example, $b_v = (v, v, 0, 0)$ is a pseudobracket for every vertex $v \in P_W$ and such $b_v$ is called a {\em  starting} pseudobracket. Note that $a(b) =  v= b(1)$. A {\em final}  pseudobracket for $W$ is
 $c = (0, |W|, 0,  k)$, where $k \ge 0$ is an integer. Note that  $a(c) =  P_W$.
\smallskip

Observe that if $b$ is a bracket for the pair $(W, \D)$, then $b$ is also
a \pbb\ for the word $W$.

Let $B$ be a finite set of \pbb s for  $W$, perhaps, $B$ is empty.
We say that $B$ is a {\em pseudobracket system}  if,  for every pair $b, c \in B$ of distinct \pbb s,
either $b(2) \le c(1)$  or $c(2) \le b(1)$. As before, $B$ is called  a {\em final } \PBS\ if $B$ contains a single
\pbb\ which is final.
Clearly, every \BS\ for $(W, \D)$ is also a \PBS\ for the word $W$.
\medskip

Now we describe three kinds of \EO s over \pbb s and over \PBS s: additions, extensions, and mergers,  which are analogous to those definitions for brackets and \BS s, except there are no any diagrams and faces involved.

Let $B$  be a \PBS\ for a word $W$.
\medskip

\textit{Additions}.

Suppose $b$ is a starting \pbb , $b \not\in B$, and $B \cup \{ b\}$
is a \PBS  .
Then we can add $b$ to $B$ thus making an {\em addition} operation over $B$.
\medskip

\textit{Extensions}.

Suppose $B$ is a \PBS ,
$b \in B$ is a \pbb\   and $e_1 a(b) e_2$ is a subpath of $P_W$, where
$a(b)$ is the arc of $b$ and $e_1, e_2$ are edges one of which could be missing.

Assume  that $\ph(e_1)= a_1^{\e_1}$, where $\e_1 = \pm 1$. Then we
consider a \pbb\  $b'$ such that
$$
b'(1) = b(1)-1 , \ b'(2) = b(2) ,  \ b'(3) = b(3)+\e_1, \ b'(4) = b(4) .
$$
Note that $a(b') = e_1 a(b)$. We say that $b'$ is obtained from $b$ by an extension of type 1 (on the left).
If $(B \setminus \{ b \}) \cup \{ b' \}$ is a \PBS ,
then replacement of $b \in B$ with $b'$ in $B$ is called an {\em extension } operation over $B$ of type~1.

Similarly, assume  that $\ph(e_2)= a_1^{\e_2}$, where $\e_2 = \pm 1$.
Then we consider a  \pbb\  $b'$ such that
$$
b'(1) = b(1) , \ b'(2) = b(2)+1 ,  \ b'(3) = b(3)+\e_2 , \ b'(4) = b(4) .
$$
Note that $a(b') = a(b)e_2 $.
We say that $b'$ is obtained from $b$ by an extension of type 1 (on the right).
If $(B \setminus \{ b \}) \cup \{ b' \}$ is a \PBS , then
replacement of $b \in B$ with $b'$ in $B$ is called an {\em extension } operation over $B$ of type~1.

Now suppose that both edges $e_1, e_2$ do exist and  $\ph(e_1) = \ph(e_2)^{-1}=a_2^{\e}$,   $\e = \pm 1$. We also
assume that $b(3)\ne 0$ and that  $b(3)$ is a multiple of $n_1$ if $\e =1$ or  $b(3)$ is a multiple of $n_2$ if $\e =-1$.
Consider a \pbb\  $b'$ such that
$$
b'(1) = b(1)-1 , \ b'(2) = b(2)+1 ,  \ b'(3) = (|n_1^{-1} n_2|)^{\e} b(3) ,  \  b'(4) = b(4) + |b(3)|/|n_{i(\e)}| ,
$$
where $i(\e) =1$ if $\e =1$ and $i(\e) =2$ if $\e =-1$.  Note that $a(b') = e_1 a(b)e_2$.
We say that $b'$ is obtained from $b$ by an extension of type 2.
If $(B \setminus \{ b \}) \cup \{ b' \}$ is a \PBS , then
replacement of $b \in B$ with $b'$ in $B$ is called an {\em extension } operation over $B$ of type~2.

Assume that both $e_1, e_2$ do exist,  $\ph(e_1) = \ph(e_2)^{-1}$,    $\ph(e_1) \ne a_1^{\pm 1}$, and $b(3)=0$.
Consider a  \pbb\  $b'$ such that
$$
b'(1) = b(1)-1 , \  b'(2) = b(2)+1 , \  b'(3) = b(3) , \ b'(4) = b(4) .
$$
Note that  $a(b') = e_1 a(b)e_2$. We say that $b'$ is obtained from $b$ by an extension of type 3.
If $(B \setminus \{ b \}) \cup \{ b' \}$ is a \PBS , then
replacement of $b \in B$ with $b'$ in $B$ is called an {\em extension } operation over $B$ of type~3.
\medskip

\textit{Mergers}.

Suppose that $b_1, b_2$ are distinct \pbb s in $B$ such that  $b_1(2) = b_2(1)$. Consider a \pbb\  $b'$ such that
$$
b'(1) = b_1(1) , \ b'(2) = b_2(2) ,  \ b'(3) =  b_1(3) + b_2(3) , \  b'(4) =  b_1(4) + b_2(4).
$$
Note that $a(b') = a(b_1)a(b_2)$. We say that $b'$ is obtained from $b_1, b_2$ by a merger operation.
Taking both $b_1$, $b_2$ out of $B$ and putting  $b'$ in $B$ is a {\em merger } operation over $B$.
\medskip

We will say that additions, extensions,  and mergers, as defined above, are {\em  \EO s} over \pbb s and \PBS  s for $W$.

Assume that one \PBS\   $B_\ell$ is obtained from another \PBS\    $B_0$ for $W$
by a finite sequence $\Omega$ of \EO s  and $B_0,  B_1, \dots, B_\ell$ is the corresponding to $\Omega$ sequence of \PBS  s.
As above, we say that the sequence     $B_0,  B_1, \dots, B_\ell$ is {\em operational} and that $B_0,  B_1,  \dots, B_\ell$  has {\em size bounded by}
$ (k_1, k_2)$  if $\ell \le k_1$ and, for every $i$,  the number of  \pbb s in $B_i$ is at most $k_2$.
Whenever it is not ambiguous, we also say that $\Omega$ has size bounded by $(k_1, k_2)$ if so does the corresponding to $\Omega$ sequence  $B_0,  B_1,  \dots, B_\ell$ of \PBS s.

\begin{lem}\label{7bb} Suppose that the empty  \PBS\    $B_0$ for $W$ can be transformed by a finite sequence $\Omega$ of \EO  s  into a  final \PBS\ $B_\ell = \{  (0, |W|, 0, k) \}$. Then $W \overset  {\GG_3} = 1$  and there is a reduced disk diagram $\D$ over presentation \er{pr3b} such that $\ph(\p|_0 \D) \equiv W$ and $| \D(2) | \le k$.
\end{lem}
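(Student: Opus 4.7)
The plan is to prove this by induction on the index $i$ of the operational sequence $B_0, B_1, \ldots, B_\ell$ corresponding to $\Omega$, maintaining the following inductive invariant for every pseudobracket $b = (b(1),b(2),b(3),b(4)) \in B_i$: there exists a reduced disk diagram $\D_b$ over \eqref{pr3b} with boundary decomposition $\p \D_b = p_b q_b^{-1}$ such that $\ph(p_b) \equiv \ph(a(b))$, $q_b$ is a special arc of $\D_b$ with $\ph(q_b) \overset{0}{=} a_1^{b(3)}$, and $|\D_b(2)| \le b(4)$. Specializing this invariant to the final pseudobracket $(0, |W|, 0, k) \in B_\ell$, the condition $b(3) = 0$ together with $q_b$ being a reduced simple special arc forces $|q_b| = 0$, so $\p \D_b = p_b$ with $\ph(p_b) \equiv W$ and $|\D_b(2)| \le k$, and van Kampen's lemma gives $W \overset{\GG_3}{=} 1$. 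This is parallel in spirit to Lemma~\ref{lem7} for presentation \eqref{pr1}, but with the extra complication of tracking the special-arc structure of $q_b$.

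The base case $i=0$ is vacuous, and for additions the starting pseudobracket $(v,v,0,0)$ receives the single-vertex diagram. For an extension of type 1 (say on the left), I attach a loose $a_1$-edge $f$ to the endpoint of $p_b$, producing $p_{b'} = f p_b$; to obtain the new complementary arc I consider the path $f q_b$ and apply Lemma~\ref{arc} to the special arcs $\{f\}$ and $q_b$, which either yields that $f q_b$ is itself a special arc or produces a cancellation that trims $q_b$ to a shorter special arc. In either subcase $\ph(q_{b'}) \overset{0}{=} a_1^{b(3)+\e_1} = a_1^{b'(3)}$, and no face is added. An extension of type 3 has $b(3) = 0$, hence $|q_b| = 0$ and $\p \D_b$ is a closed path $p_b$; I simply attach a loose edge $f$ with $\ph(f) = \ph(e_1)$ at the basepoint to obtain $\p \D_{b'} = f p_b f^{-1}$, with a length-zero complementary arc.

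The more delicate cases are extensions of type 2 and mergers. For an extension of type 2, the conditions $b(3) \ne 0$ and $b(3)$ a multiple of $n_{i(\e)}$ let me glue to $\D_b$, along the special arc $q_b$, a fresh $a_2$-band $\Gamma$ consisting of $|b(3)|/|n_{i(\e)}|$ faces whose standard boundary (in the sense of Lemma~\ref{b1}) is $f_1 s_1 f_2 s_2$ with $\ph(f_1) = a_2^\e$ and $\ph(s_i) \overset{0}{=} a_1^{b(3)}$, $\ph(s_{3-i}) \overset{0}{=} a_1^{-(n_2/n_1)^\e b(3)}$. Identifying $s_i^{-1}$ with $q_b$ yields a diagram with boundary $e_1 p_b e_2 (q_{b'})^{-1}$, where $q_{b'}$ is the opposite side of $\Gamma$ and $\ph(q_{b'}) \overset{0}{=} a_1^{b'(3)}$. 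For mergers, I form $\D_{b_1} \cup \D_{b_2}$ by identifying the shared boundary vertex $\al(b_1(2))=\al(b_2(1))$; the concatenated path $q_{b_1} q_{b_2}$ joins the extremities of $b'$, and Lemma~\ref{arc} applied to these two special arcs produces factorizations $q_{b_1} = s_1 s_2$, $q_{b_2} = s_2^{-1} t_2$ with $q_{b'} := s_1 t_2$ a special arc satisfying $\ph(q_{b'}) \overset{0}{=} a_1^{b_1(3)+b_2(3)}$.

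The main obstacle will be ensuring reducedness throughout. After each gluing step a reducible pair may appear at the interface — between the newly glued $a_2$-band and $\D_b$ in a type-2 extension, or between faces of $\D_{b_1}$ and $\D_{b_2}$ sharing an $a_2$-edge along the merged region. I handle this uniformly by reducing after each elementary operation: standard van Kampen reduction preserves the boundary label and removes two faces, so the inequality $|\D_b(2)| \le b(4)$ is preserved. The fact that reductions cannot destroy the special-arc structure of $q_b$ — i.e., that the complementary arc on the reduced diagram still has label $\overset{0}{=} a_1^{b(3)}$ and remains simple — follows because any would-be obstruction would produce either a closed $a_2$-band (ruled out by Lemma~\ref{b1}) or a subdiagram whose boundary lies in the $a_1$-subgroup (ruled out by Magnus's Freiheitssatz applied to the one-relator group $\GG_3$, exactly as in the proof of Lemma~\ref{b1}).
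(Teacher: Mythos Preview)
Your inductive invariant is essentially the right one, but there is a genuine gap in how you handle extensions of type~1 and mergers. You always attach a fresh loose edge $f$ and then invoke Lemma~\ref{arc} to straighten $f q_b$ into a special arc, anticipating that cancellation may occur. It will not: Lemma~\ref{arc} cancels only when the last edge of $s$ is \emph{literally} the inverse of the first edge of $t$ (i.e., $s_2 = t_1^{-1}$ as edges in the diagram), not merely when their labels are inverse. Since $f$ is a brand-new edge, $f^{-1}$ cannot appear in $q_b$, so Lemma~\ref{arc} always returns $q_{b'} = f q_b$ with no shortening. The same phenomenon occurs for mergers: after gluing $\D_{b_1}$ and $\D_{b_2}$ at a single vertex, the arcs $q_{b_1}$ and $q_{b_2}$ share no edges, so Lemma~\ref{arc} gives $q_{b'} = q_{b_1} q_{b_2}$ uncancelled. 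Consequently your invariant degrades to $\ph(q_b) \overset{0}{=} a_1^{b(3)}$ rather than $\ph(q_b) \equiv a_1^{b(3)}$, and at the final pseudobracket $(0,|W|,0,k)$ you may well have $|q_b| > 0$ with a mixed-sign label that is merely freely trivial. Then $\ph(\p \D_b) \equiv W \cdot \ph(q_b)^{-1} \not\equiv W$. (Concretely: take $W \equiv a_1^{-1} a_1$ and the sequence addition, extension type~1 left, extension type~1 left; your construction yields a two-edge tree with $|q_b| = 2$.) This also breaks your type-2 extension, since gluing an $a_2$-band along $q_b$ requires $\ph(q_b) \equiv a_1^{b(3)}$ letter-by-letter to match the band's side.

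The paper's proof avoids this by an explicit sign dichotomy rather than appealing to Lemma~\ref{arc}: for a type-1 extension with $\e_1 b(3) \ge 0$ it attaches a new edge, but with $\e_1 b(3) < 0$ it does \emph{not} attach anything and instead absorbs the first edge $g$ of the existing $r$ into $s$ (setting $s_{\text{new}} = g^{-1} s$, $r_{\text{new}} = r'$ where $r = g r'$), so that $\ph(r) \equiv a_1^{c(3)}$ with the strict $\equiv$ is preserved. Mergers are handled analogously by either concatenating or overlapping the two arcs $r_1, r_2$ depending on the signs of $b_1(3), b_2(3)$. Incidentally, your concern about maintaining reducedness throughout is unnecessary and only complicates matters: the paper's Claim~(D1) builds possibly non-reduced diagrams with $|\D_c(2)| = c(4)$ and simply reduces once at the very end.
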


\begin{proof} Let $B_0, B_1, \dots, B_\ell$ be the corresponding to $\Omega$  sequence of \PBS  s, where $B_0$ is empty and $B_\ell$ is final. Consider the following claim.

\begin{enumerate}
\item[(D1)]  If $c$ is a \pbb\  of $B_i$, $i =1,\dots,\ell$,    then
 $\ph(a(c))  \overset  {\GG_3} = a_1^{c(3)}$,  where $a(c)$ is the arc of $c$, and there is a disk diagram $\D_c$ over presentation \eqref{pr3b}  such that  $ \p|_0 \D_c = s r^{-1}$, where $\ph(s) \equiv  \ph(a(c))$,
  $r$ is simple, $ \ph( r ) \equiv   a_1^{c(3)}$ and  $| \D_c(2) | = c(4)$.
\end{enumerate}

{\em Proof of Claim (D1).}  By induction on $i \ge 1$, we will prove that Claim (D1) holds for every \pbb\ $c \in B_i$, $i =1,\dots,\ell$.

The base step of this induction is obvious since $B_1$ consists of a starting \pbb\  $c$ for which we have a disk diagram  $\D_c$ consisting of a single vertex.

To make the induction step from $i$ to $i+1$, $i \ge 1$,
we consider the cases corresponding to the type of the \EO\ that is used to get $B_{i+1}$ from $B_i$.

Suppose that $B_{i+1}$ is obtained from $B_{i}$ by an \EO\ $\sigma$ and $c \in B_{i+1}$ is the \pbb\
obtained from $b_1, b_2 \in B_{i}$ by application of $\sigma$, denoted   $c =\sigma(b_1, b_2)$. Here one of $b_1, b_2$ or both, depending on  type of  $\sigma$, could be missing.
By the induction hypothesis,  Claim (D1) holds for every \pbb\ of $B_{i+1}$
different from $c$ and it suffices to show that  Claim (D1) holds for $c$.

If $c \in B_{i+1}$ is obtained  by an addition, then
Claim (D1) holds for $c$ because it holds for any starting \pbb .

Let $c \in B_{i+1}$ be obtained from a \pbb\
$b \in  B_{i}$ by an extension of  type 1 on the left and $a(c) = e_1 a(b)$, where $\ph(e_1) = a_1^{\e_1}$, $\e_1 = \pm 1$, and $a(c), a(b)$ are the arcs of $c, b$, resp.
By the induction hypothesis applied to $b$,  there is a disk diagram $\D_b$
over presentation \eqref{pr3b}  such that  $ \p|_0 \D_b = s r^{-1}$, where $\ph(s) \equiv  \ph(a(b))$,
$r$ is simple, $ \ph( r )  \equiv  a_1^{b(3)}$,  and  $| \D_b(2) | = b(4)$.
First we assume that the integers $\e_1, b(3)$ satisfy  $\e_1 b(3) \ge 0$. Then we consider a new
``loose" edge $f$ such that  $\ph(f) = a_1^{\e_1}$.
We attach the vertex $f_+$ to the vertex $s_-$ of $\D_b$ and obtain thereby a disk diagram
$\D'_b$ such that $\p \D'_b = fs r^{-1} f^{-1}$, see Fig.~6.7(a).
Since $\ph(a(c)) \equiv \ph(fs) \equiv a_1^{\e_1}  \ph(a(b))$, $fr$ is simple and
$$
 \ph(fr) \equiv  a_1^{\e_1+b(3)} , \ \   | \D'_b(2) | = | \D_b(2) | = b(4) ,
$$
it follows that we can use $\D'_b$ as a required disk diagram $\D_c$ for $c$.

\begin{center}
\begin{tikzpicture}[scale=.72]
\draw (-26.1,-2)  arc (0:180:2);
\draw(-31.1,-2) -- (-26.1,-2);
\draw [-latex](-31.1,-2) --(-28,-2);
\draw [-latex](-31.1,-2) --(-30.5,-2);
\draw [-latex](-28.1,0) --(-28.05,0);
\draw  (-31.1,-2) [fill = black] circle (.04);
\draw  (-30.1,-2) [fill = black] circle (.04);
\draw  (-26.1,-2) [fill = black] circle (.04);
\draw (-19,-2)  arc (0:180:2);
\draw(-23,-2) -- (-19,-2);
\draw [-latex](-23,-2) --(-22.4,-2);
\draw [-latex](-22,-2) --(-20.4,-2);
\draw [-latex](-21,0) --(-20.95,0);
\draw  (-22,-2) [fill = black] circle (.04);
\draw  (-23,-2) [fill = black] circle (.04);
\draw  (-19,-2) [fill = black] circle (.04);
\node at (-21,-1) {$\Delta_b$};
\node at (-22.4,-2.7) {$g$};
\node at (-20.5,-2.7) {$r'$};
\node at (-21.,.6) {$s$};
\node at (-23.,0) {$\Delta'_b$};
\node at (-28,-1) {$\Delta_b$};
\node at (-30.6,-2.7) {$f$};
\node at (-28,-2.7) {$r$};
\node at (-28.1,.6) {$s$};
\node at (-30.5,0) {$\Delta'_b$};
\node at (-28,-3.8) {Fig.~6.7(a)};
\node at (-21,-3.8) {Fig.~6.7(b)};
\node at (-19,-3) {$r = gr'$};
\end{tikzpicture}
\end{center}
\vskip2mm

Now suppose that  the integers $\e_1, b(3)$ satisfy  $\e_1 b(3) < 0$, i.e., $b(3) \ne 0$ and $\e_1$, $b(3)$ have different signs. Then we can write  $r = gr'$, where $g$ is an edge of $r$ and $\ph(g) = a_1^{-\e_1}$. Let $\D'_b$
denote the diagram  $\D'_b$ whose boundary has factorization $\p |_0 \D'_b = (g^{-1} s) (r')^{-1}$, see Fig.~6.7(b).
Note that    $\ph(a(c)) \equiv \ph(g^{-1} s)$, $r'$ is simple,  and
$$
    \ph(r') \equiv  a_1^{\e_1+b(3)} , \ \   | \D'_b(2) | = | \D_b(2) | = b(4) .
$$
Hence, it follows that we can use $\D'_b$ as a desired  diagram $\D_c$ for $c$.
The case of an    extension of  type 1 on the right is similar.
\medskip

 Let $c \in B_{i+1}$ be obtained from a \pbb\
 $b \in  B_{i}$ by an extension of  type 2  and assume that $a(c) = e_1 a(b) e_2$, where $\ph(e_1) =\ph(e_2)^{-1} = a_2$ (the subcase when $\ph(e_1) =\ph(e_2)^{-1} = a_2^{-1}$ is similar). Let $a(c), a(b)$ denote the arcs of \pbb s $c, b$, resp.  According to the definition of an extension of  type  2,  $b(3)$ is a multiple of $n_1$, say, $b(3) = n_1 \ell$ and $\ell \ne 0$.
By the induction hypothesis, there is a disk diagram $\D_b$ over presentation
\eqref{pr3b}  such that  $ \p|_0 \D_b = s r^{-1}$, where $\ph(s) \equiv  \ph(a(b))$, $r$ is simple,  $\ph( r )  \equiv   a_1^{b(3)}$,  and  $| \D_b(2) | = b(4)$.
Since $b(3) = n_1 \ell$, there is a disk diagram $\Gamma$ over \eqref{pr3b} such that
$$
\p \Gamma = f_1 s_1 f_2 s_2 ,   \ \ph(f_1) = a_2 ,  \ \ph(f_2) = a_2^{-1} ,
\ \ph( s_1 )  \equiv  a_1^{b(3)} , \  \ph( s_2^{-1} ) \equiv  a_1^{b(3) n_2/n_1} ,
$$
and $\Gamma$ consists of $| \ell | >0$ faces. Note that  $\Gamma$ itself is an $a_2$-band with $|\ell|$ faces.  Attaching  $\Gamma$ to $\D_b$ by identification of the paths $s_1$ and $r$ that have identical labels, we obtain a disk diagram $\D'_b$ such that $ \p|_0 \D'_b = f_1 s  f_2 s_2$ and $|\D'_b(2) | =  |\D_b(2) | + |\ell |$, see Fig.~6.8. Then  $\ph( a(c) ) \equiv \ph(   f_1 s  f_2 )$, $s_2^{-1}$ is simple, $\ph(s_2^{-1}) \equiv  a_1^{b(3) n_2/n_1}$  and we see that $\D'_b$ satisfies all desired conditions for  $\D_{c}$.

\begin{center}
\begin{tikzpicture}[scale=.72]
\draw (-19,-2)  arc (0:180:2);
\draw(-23,-2) node (v1) {}  -- (-19,-2);
\draw [-latex](-22.,-2) --(-21,-2);
\draw [-latex](-21,0) --(-20.95,0);
\draw [-latex](-21,-3.4) --(-21.1,-3.4);
\draw [-latex](-23,-2.7) --(-23,-2.6);
\draw [-latex](-19,-2.6) --(-19,-2.75);
\node at (-21,-1.4) {$r=s_1$};
\node at (-21,-.7) {$\Delta_b$};
\node at (-21.,.5) {$s$};
\node at (-23.,0) {$\Delta'_b$};
\node at (-21,-4.6) {Fig.~6.8};
\draw  (v1) rectangle (-19,-3.4) node (v2) {};
\node at (-21,-3.9) {$s_2$};
\node at (-21,-2.5) {$s_1$};
\node at (-23.8,-2.7) {$f_1$};
\node at (-18.4,-2.7) {$f_2$};
\node at (-20,-2.6) {$\Gamma$};
\end{tikzpicture}
\end{center}
\vskip 2mm

Now let $c \in B_{i+1}$ be obtained from a \pbb\
 $b \in  B_{i}$ by an extension of  type 3  and let $a(c) = e_1 a(b) e_2$, where $\ph(e_1) = \ph(e_2)^{-1}$,
 $\ph(e_1) = a_j^\e$,  $\e = \pm 1$ and $j >1$. By the definition of an extension of type 3, we have $b(3) = 0$. Hence, by the induction hypothesis,  there is a \ddd\ $\D_b$ over
\eqref{pr3b}  such that  $ \p|_0 \D_b = s r^{-1}$, where $\ph(s) \equiv  \ph(a(b))$, $|r|=0$  and  $| \D_b(2) | = b(4)$.
Consider a new ``loose" edge $f$ such that $\ph(f) = a_j^{\e}$.
We attach the vertex $f_+$ to the vertex $s_-=s_+$ of $\D_b$ and obtain thereby a disk diagram
$\D'_b$ such that $\p \D'_b = fs f^{-1}r'$, where $|r'| = 0$, see Fig.~6.9. Since $\ph(a(c)) \equiv \ph(fsf^{-1})$ and   $ | \D'_b(2) | = | \D_b(2) | = b(4)$, it follows
that we can use $\D'_b$ as a desired diagram $\D_c$ for $c$.

\begin{center}
\begin{tikzpicture}[scale=.69]
\draw  (0,0) circle (1);
\draw (0,-2) --(0,-1);
\draw [-latex](0,-2) --(0,-1.4);
\draw [-latex](-.1,1) --(.1,1);
\draw  (0,-2) [fill = black] circle (.05);
\draw  (0,-1) [fill = black] circle (.05);
\node at (0,-2.7) {Fig.~6.9};
\node at (0,-.4) {$\Delta_b$};
\node at (-.4,-1.5) {$f$};
\node at (0,.5) {$s$};
\node at (1.2,-1.3) {$\Delta'_b$};
\end{tikzpicture}
\end{center}

Let $c \in B_{i+1}$ be obtained from  \pbb s
 $b_1, b_2 \in  B_{i}$ by a merger  and $a(c) = a(b_1) a(b_2)$, where  $a(c), a(b_1)$, $ a(b_2)$ are the arcs of the \pbb s $c, b_1, b_2$, resp.

By the induction hypothesis, there are disk diagrams $\D_{b_1}$, $\D_{b_2}$  over presentation \eqref{pr3b}  such that
$$
\p|_0 \D_{b_j} = s_{j} r_j^{-1} , \  \ph(s_j) \equiv  \ph(a(b_j)) , \ \ph( r_j )  \equiv   a_1^{b_j(3)} , \ | \D_{b_j}(2) | = b_j(4) ,
$$
and $r_j$ is a simple path for  $j=1,2$.
Assume that the numbers $ b_1(3), b_2(3)$ satisfy the condition $b_1(3) b_2(3) \ge 0$. We
attach $\D_{b_1}$ to $\D_{b_2}$   by identification of the vertices $(s_1)_+$ and $(s_2)_-$ and obtain thereby a disk diagram $\D'_b$ such that $ \p \D'_b = s_1 s_2  (r_1 r_2)^{-1}$ and $|\D'_b(2) | =  |\D_{b_1}(2) | + |\D_{b_2}(2) |$, see Fig.~6.10(a).  Note that  $\ph( a(c) ) \equiv \ph(   s_1 s_2 )$, $r_1 r_2$ is a simple path and
 $\ph(r_1 r_2 ) \equiv  a_1^{b_1(3) +b_2(3)}$. Hence, $\D'_b$ is a desired disk diagram   $\D_{c}$ for $c$.

\begin{center}
\begin{tikzpicture}[scale=.72]
\draw (-20,-2)  arc (0:180:2);
\draw(-24,-2) node (v1) {}  -- (-20,-2);
\draw [-latex](-23,-2) --(-22,-2);
\draw [-latex](-22,0) --(-21.95,0);
\draw  (-24,-2) [fill = black] circle (.05);
\draw  (-20,-2) [fill = black] circle (.05);
\node at (-22,-1) {$\Delta_{b_2}$};
\node at (-22,0.5) {$s_2$};
\node at (-22,-2.7) {$r_2$};
\draw (-24,-2)  arc (0:180:2);
\draw(-28,-2) node (v1) {}  -- (-24,-2);
\draw [-latex](-27,-2) --(-26,-2);
\draw [-latex](-26,0) --(-25.95,0);
\draw  (-28,-2) [fill = black] circle (.05);
\draw  (-24,-2) [fill = black] circle (.05);
\node at (-26,-1) {$\Delta_{b_1}$};
\node at (-26,0.5) {$s_1$};
\node at (-24,0) {$\Delta'_b$};
\node at (-26,-2.7) {$r_1$};
\node at (-24,-4.8) {Fig.~6.10(a)};
\draw (-12,-2)  arc (0:180:2);
\draw (-15,-2)  arc (180:360:1.5);
\draw(-16,-2) node (v1) {}  -- (-12,-2);
\draw [-latex](-15,-2) --(-13.5,-2);
\draw [-latex](-15.6,-2) --(-15.5,-2);
\draw [-latex](-14,0) --(-13.95,0);
\draw [-latex](-13.4,-3.5) --(-13.5,-3.5);
\draw  (-16,-2) [fill = black] circle (.05);
\draw  (-12,-2) [fill = black] circle (.05);
\draw  (-15,-2) [fill = black] circle (.05);
\node at (-14,-0.7) {$\Delta_{b_1}$};
\node at (-14,0.5) {$s_1$};
\node at (-12,0) {$\Delta'_b$};
\node at (-13.5,-1.4) {$r_{12}= r_2^{-1}$};
\node at (-15.5,-2.6) {$r_{11}$};
\node at (-13.4,-4) {$s_2$};
\node at (-13.6,-2.6) {$\Delta_{b_2}$};
\node at (-15,-4.8) {Fig.~6.10(b)};
\end{tikzpicture}
\end{center}

Now suppose that $ b_1(3), b_2(3)$ satisfy the condition $b_1(3) b_2(3) < 0$.
For definiteness, assume that $ |b_1(3)| \ge | b_2(3)|$ (the case $ |b_1(3)| \le | b_2(3)|$ is analogous).
Denote $r_1 = r_{11}r_{12}$, where $| r_{12}| = | r_{2}| = |b_2(3)|$. Then we attach  $\D_{b_2}$  to
$\D_{b_1}$ by identification of the paths $r_{12}$ and $r_{2}^{-1}$  that have identical labels equal to
$a_1^{-b_2(3)}$, see Fig.~6.10(b). Doing this produces a \ddd\ $\D'_b$  such that
$\p \D'_b = s_1 s_2 r_{11}^{-1}$ and $|\D'_b(2) | =  |\D_{b_1}(2) | + |\D_{b_2}(2) |$.  Note that
$\ph( a(c) ) \equiv \ph(   s_1 s_2 )$,  $r_{11}$ is a simple path and
$\ph(r_{11} ) \equiv  a_1^{b_1(3) +b_2(3)}$. Hence $\D'_b$ is a desired disk diagram   $\D_{c}$ for $c$.
The induction step is complete and Claim (D1) is proven. \qed

\medskip
To finish the proof of Lemma~\ref{7bb}, we note that it follows from Claim (D1) applied to the \pbb\  $b_F = (0, |W|, 0, k)$ of the final system
$B_\ell = \{  b_F  \}$ that there is a disk diagram $\D_{b_F}$ such that  $ \p|_0 \D_{b_F} = s r^{-1}$, where $\ph(s) \equiv  W$,  $| r | =0$ and  $| \D_{{b_F}}(2) | \le k$.  It remains to pick a reduced disk diagram $\D$ with these properties of $ \D_{{b_F}}$.
\end{proof}

\begin{lem}\label{8bb} Suppose $W$ is a nonempty word over $\A^{\pm 1}$ and $n \ge 0$ is an integer. Then $W$ is a product of at most $n$ conjugates of the words  $(a_2 a_1^{n_1} a_2^{-1}  a_1^{-n_2})^{\pm 1}$ if and only if there is a sequence $\Omega$ of \EO  s
such that $\Omega$ transforms the empty \PBS\    for $W$ into  a final \PBS\    $\{ (0,|W|,0,n') \}$,
where $n' \le n$, $\Omega$ has size bounded by
$$
(7|W|,  C(\log|W|_{\bar a_1} +1)) ,
$$
where $C= (\log \tfrac 65)^{-1}$, and,  if $b$ is a \pbb\ of an intermediate \PBS\
of the  corresponding to $\Omega$ sequence of \PBS s,  then
\begin{equation}\label{d1d}
    |b(3)| \le \tfrac 12 ( |W|_{a_1} + (|n_1| + |n_2|)n) .
\end{equation}
\end{lem}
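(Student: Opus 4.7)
The strategy is to mirror the proof of Lemma~\ref{lem8}, namely to pass between bracket systems and pseudobracket systems via Lemmas~\ref{4bb} and~\ref{7bb} and to control $|b(3)|$ by means of Lemma~\ref{ca}.

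For the forward implication, I would begin by assuming that $W$ is a product of at most $n$ conjugates of $(a_2 a_1^{n_1} a_2^{-1} a_1^{-n_2})^{\pm 1}$. By Lemma~\ref{vk} there is a disk diagram over~\er{pr3b} with boundary label $W$ and at most $n$ faces, and passing to a reduced representative produces a reduced diagram $\D$ with $\ph(\p|_0 \D) \equiv W$ and $|\D(2)| \le n$. Lemma~\ref{4bb} applied to $(W,\D)$ then yields a sequence $\Omega$ of \EO s over bracket systems, of size bounded by $(7|W|, C(\log|W|_{\bar a_1}+1))$, which transforms the empty bracket system into the final bracket system $\{(0, |W|, 0, |\D(2)|)\}$. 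Since every bracket for $(W,\D)$ is a pseudobracket for $W$ and every \BS\ for $(W,\D)$ is a \PBS\ for $W$, reinterpreting $\Omega$ as a sequence of \EO s over \PBS s  yields the required sequence with $n' := |\D(2)| \le n$. The bound on $|b(3)|$ in the statement follows directly from Lemma~\ref{ca} applied to each intermediate bracket, combined with the identity $|\vec\D(1)|_{a_1} = |W|_{a_1} + (|n_1|+|n_2|)|\D(2)|$ and the inequality $|\D(2)| \le n$.

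The reverse implication is essentially an appeal to Lemma~\ref{7bb}: if a sequence $\Omega$ of \EO s over \PBS s transforms the empty system into a final system $\{(0, |W|, 0, n')\}$ with $n' \le n$, then Lemma~\ref{7bb} produces a reduced disk diagram $\D$ over~\er{pr3b} with $\ph(\p|_0 \D) \equiv W$ and $|\D(2)| \le n'$, which by definition exhibits $W$ as a product of at most $n' \le n$ conjugates of $(a_2 a_1^{n_1} a_2^{-1} a_1^{-n_2})^{\pm 1}$. The auxiliary bound on $|b(3)|$ for intermediate pseudobrackets is not needed here, since Lemma~\ref{7bb} imposes no hypothesis of this kind.

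The one point that requires verification is that the reinterpretation of an operational sequence of \BS s for $(W,\D)$ as an operational sequence of \PBS s for $W$ is legal, i.e.\ that each \EO\ over brackets is an \EO\ over the corresponding pseudobrackets. Additions, mergers, and extensions of type~1 translate verbatim. For extensions of type~2 the structural hypothesis for brackets (existence of an $a_2$-band whose standard boundary matches the complementary arc $r(b)$) forces, via Lemma~\ref{b1}, that $b(3)\neq 0$ and that $b(3)$ is divisible by $n_1$ or $n_2$ according to the sign $\e$, which is exactly the numerical side condition required by the pseudobracket version, and the updates of $b(3)$ and $b(4)$ are given by the same formulas. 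For extensions of type~3 the bracket version forces $b(3)=0$ via \MF , again matching the pseudobracket version. No genuine obstacle arises; the substantive content has already been carried out in Lemmas~\ref{4bb}, \ref{7bb}, \ref{ca}, and~\ref{b1}.
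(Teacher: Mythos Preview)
Your proof is correct and follows essentially the same approach as the paper: the forward implication uses Lemma~\ref{vk} to obtain a reduced diagram, Lemma~\ref{4bb} for the sequence of \EO s, and Lemma~\ref{ca} for the bound on $|b(3)|$, while the reverse implication is a direct appeal to Lemma~\ref{7bb}. Your explicit verification that each type of bracket \EO\ is a valid pseudobracket \EO\ is more detailed than the paper's treatment, which simply asserts the reinterpretation is automatic.
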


\begin{proof}   Assume that  $W$ is a product of at most $n$ conjugates of the words
$$
(a_2 a_1^{n_1} a_2^{-1}  a_1^{-n_2})^{\pm 1} .
$$
It follows from  Lemma~\ref{vk} that there is a reduced disk diagram $\D$ over presentation  \eqref{pr3b} such that $\ph(\p|_0 \D) \equiv W$ and $|\D(2)| \le n$. Applying Lemma~\ref{4bb} to the pair $(W, \D)$, we obtain a sequence
of \EO  s over \BS  s for $(W, \D)$ that converts the empty \BS\    for $(W, \D)$ into the final \BS\ $\{ (0, |W|, 0, |\D(2)| ) \}$ and has  size bounded by  $(7|W|,  C(\log|W|_{\bar a_1} +1))$.
Since every bracket $b$  and every \BS\ $B$ for $(W, \D)$  could be considered as a \pbb\ and a \PBS\ for $W$, resp., we automatically have a desired sequence of \PBS s. The inequality \er{d1d} follows from the definition of a bracket for
 $( W, \D)$ and the inequalities
\begin{equation*}
 |b(3) | \le \tfrac 12   | \vec \D(1) |_{a_1} =  \tfrac 12  (|W|_{a_1} + (|n_1| + |n_2|) |\D(2)| ) , \quad |\D(2)| \le n .
\end{equation*}

Conversely, the existence of a  sequence of \EO  s over \PBS  s for $W$, as specified in  Lemma~\ref{8bb}, implies, by virtue of Lemma~\ref{7bb}, that $W \overset{\GG_3} = 1$ and that there exists a reduced disk diagram $\D$ such that   $\ph(\p|_0 \D) \equiv W$ and $|\D(2)| =n' \le n$. Hence,  $W$ is a product of $n' \le n$ conjugates of words $(a_2 a_1^{n_1} a_2^{-1}  a_1^{-n_2})^{\pm 1}$, as required.
\end{proof}

\section{Proof of Theorem~\ref{thm2}}

\begin{T2}  Let  the group $\GG_3$ be defined by a presentation of the form
\begin{equation*}\tag{1.4}
    \GG_3 :=  \langle \  a_1, \dots, a_m  \ \|  \   a_2 a_1^{n_1} a_2^{-1} = a_1^{ n_2}  \,  \rangle ,
\end{equation*}
where $n_1, n_2$ are some nonzero integers. Then both  the bounded  and  precise  word  problems for  \eqref{pr3b}  are in $\textsf{L}^3$ and in $\textsf{P}$. Specifically, the problems can be solved in  deterministic space
$O((\max(\log|W|,\log n) (\log|W|)^2)$ or in deterministic time $O(|W|^4)$.
\end{T2}

\begin{proof} As in the proof of Theorem~\ref{thm1},  we start with $\textsf{L}^3$ part of Theorem~\ref{thm2}.
First we discuss a nondeterministic algorithm  which  solves the bounded word problem for presentation  \eqref{pr3b} and which is now based on Lemma~\ref{8bb}.

Given an input $(W, 1^n)$, where $W$ is a nonempty word (not necessarily reduced) over the alphabet $\A^{\pm 1}$ and $n \ge 0$ is an integer, written in unary notation as $1^n$, we begin with the empty \PBS\   and
nondeterministically   apply a sequence of \EO  s  of size $\le( 7|W|,  C(\log|W|_{\bar a_1} +1) )$, where $C = (\log \tfrac 65)^{-1}$.  If such a  sequence of \EO  s  results in a final \PBS , consisting of a single \pbb\   $\{ (0,|W|,0,n') \}$, where $n' \le n$, then our algorithm accepts and, in view of Lemma~\ref{8bb}, we may conclude that  $W$ is a product of $n' \le n$ conjugates of the words $(a_2 a_1^{n_1} a_2^{-1}  a_1^{-n_2})^{\pm 1}$.
 It follows from definitions and  Lemma~\ref{8bb}  that the  number of \EO  s needed for this algorithm to accept is at most $7|W|$.
 Hence, it follows from the definition of \EO s over \PBS s for $W$ that
 the time needed to run this nondeterministic algorithm is bounded by $O(|W|)$.
  To estimate  the space requirements of this algorithm, we note that if $b$ is a \pbb\ for $W$, then $b(1), b(2)$ are  integers in the range from 0 to $|W|$, hence, when written in binary notation, will take at most $C'(\log|W| +1)$ space, where $C'$ is a constant. Since  $b(3), b(4)$ are also  integers  that satisfy inequalities
\begin{align}\label{ineqn1}
 & |b(3)|  \le \tfrac 12 ( |W|_{a_1} + (|n_1| + |n_2|)n) \le  |W|(|n_1| + |n_2|)(n+1) ,  \\ \label{ineqn2}
 & 0  \le b(4) \le n ,
\end{align}
 and $|n_1| + |n_2|$ is a constant,  it follows  that the total space required to run this algorithm is at most
\begin{align*}
& ( 2C'(\log|W|+1)   +C' \log ( |W|(|n_1| + |n_2|)(n+1) ) ) \cdot C(\log|W|_{\bar a_1} +1) + \\
& + ( C' \log ( n+1) ) \cdot C(\log|W|_{\bar a_1} +1) =     O( (\max (\log |W| , \log n ) \log |W| ) .
\end{align*}

As in the proof of Theorem~\ref{thm1}, it now follows from  Savitch's theorem \cite{Sav}, see also \cite{AB}, \cite{PCC}, that there is a deterministic  algorithm that solves the bounded word  problem for presentation  \eqref{pr3b} in space $ O( \max (\log |W|, \log n)(\log |W|)^2)$.

To solve the precise word problem for presentation  \eqref{pr3b},  assume that we are given a pair $(W, 1^n)$ and we wish to find out if  $W$ is a product of $n$ conjugates of the words $(a_2 a_1^{n_1} a_2^{-1}  a_1^{-n_2})^{\pm 1}$ and $n$ is minimal with this property.  Using the foregoing deterministic algorithm, we  check whether  the bounded word problem is solvable for the two pairs  $(W, 1^{n-1})$ and $(W, 1^n)$. It is clear that
the precise word problem for the pair $(W, 1^n)$ has a positive solution if and only if  the bounded word problem has a negative solution for the pair $(W, 1^{n-1})$ and   has a positive solution for the  pair $(W, 1^n)$. Since these facts can be verified in space $ O( \max (\log |W|, \log n)(\log |W|)^2)$, we obtain a solution for the
precise word problem in deterministic space $ O( \max (\log |W|, \log n)(\log |W|)^2)$, as desired.
\medskip

Now we describe an algorithm that solves the \PWPP\ for presentation \eqref{pr3b} in polynomial time.

Recall that if $a_i \in \A$ and $U$ is a word over $\A^{\pm 1}$, then $|U|_{a_i}$ denotes the number of all occurrences of
letters $a_i$, $a_i^{-1}$ in $U$ and $|U|_{\bar a_i} := |U| - |U|_{a_i}$.

\begin{lem}\label{lema1a}  Let $\D$ be a reduced \ddd\ over presentation \er{pr3b}, $W \equiv \ph(\p |_0 \D)$ and
$|W|_{\bar a_1} >0$. Then at least one of the following claims $\mathrm{(a)}$--$\mathrm{(b)}$ holds true.

$\mathrm{(a)}$ There is a vertex $v$ of the path $P_W$ such that  if $P_W(\ff, v)  = p_1p_2$
then $|\ph(p_1)|_{\bar a_1} >0$, $|\ph(p_2)|_{\bar a_1} >0$ and there is a special arc $s$ in $\D$ such that
$s_- = \al(0)$ and $s_+ = \al(v)$.

$\mathrm{(b)}$  Suppose $P_W(\ff, v_1, v_2)  = p_1p_2p_3$ is a factorization of $P_W$ such that  $|\ph(p_1)|_{\bar a_1} =0$, $|\ph(p_2)|_{\bar a_1} = 0$, and if $q_i = \al(p_i)$, $i=1,2,3$, $e_1, e_2$ are the first, last, resp., edges of $q_2$, then neither of
$e_1, e_2$ is an $a_1$-edge. Then there exists an $a_k$-band $\Gamma$ in $\D$ whose standard boundary is
$\p \Gamma = e_i r_i e_{3-i} r_{3-i}$, where  $i=1$ if $\ph(e_1) \in \A$ and $i=2$ if $\ph(e_1)^{-1} \in \A$.
In particular, the subdiagram  $\D_1$ of $\D$ defined by $\p \D_1 = q_1 r_2^{-1} q_3$, see Fig.~7.1,  contains no faces and  if
$\D_2$ is the subdiagram of $\D$ defined by $\p \D_2 = q_2 r_1^{-1}$, see Fig.~7.1, then $| \D(2) | = | \Gamma(2) | + | \D_2(2) |$ and $\ph(q_2) \overset  {\GG_3} = a_1^\ell$, where $\ell = 0$ if $k>2$, $\ell = n_1 \ell_0$ for some integer $\ell_0$ if
 $\ph(e_1) = a_2$,  and $\ell = n_2 \ell'_0$ for some integer $\ell'_0$ if
 $\ph(e_1) = a_2^{-1}$.
\end{lem}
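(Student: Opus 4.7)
The strategy is a case analysis on where the $a_k$-band containing the first non-$a_1$ edge of $\p|_0\D$ closes back onto the boundary.

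First I set up the canonical factorization $P_W=p_1p_2p_3$ in which $p_1$ and $p_3$ are the maximal $a_1$-prefix and $a_1$-suffix of $P_W$. Since $|W|_{\bar a_1}>0$, the middle $p_2$ is nonempty; let $e_1,e_2$ be its first and last edges and let $v_1=(e_1)_-$, $v_2=(e_2)_+$, so $|\ph(p_1)|_{\bar a_1}=|\ph(p_3)|_{\bar a_1}=0$ and neither $e_1$ nor $e_2$ is an $a_1$-edge. By Lemma \ref{b1}, the edge $\al(e_1)$ lies in a uniquely determined $a_k$-band $\Gamma$ of $\D$ (with $k>1$) whose simple standard boundary is $\p\Gamma=f_1s_1f_2s_2$, where $f_1,f_2$ are $a_k$-edges on $\p\D$ and $s_1,s_2$ are $a_1$-paths. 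After relabeling we may assume $\al(e_1)^{-1}=f_1$; there is then a unique edge $e^*$ of $p_2$ with $\al(e^*)=f_2^{-1}$. A short preliminary remark rules out $|W|_{\bar a_1}=1$, since the band $\Gamma$ forces a second non-$a_1$ edge onto $\p\D$.

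In Case A, $e^*=e_2$, and conclusion (b) is essentially immediate. The two complementary subdiagrams of $\D\setminus\Gamma$ are a disk $\D_1$ bounded by $q_1r_2^{-1}q_3$ and a disk $\D_2$ bounded by $q_2r_1^{-1}$, for the appropriate labeling $\{r_1,r_2\}=\{s_1,s_2\}$. The boundary of $\D_1$ consists entirely of $a_1$-edges, since $q_1,q_3$ are $a_1$-paths by construction and $r_2$ is an $a_1$-path by Lemma \ref{b1}; by the observation already used at the start of the proof of Lemma \ref{arc}, $\D_1$ is then a tree, so $|\D_1(2)|=0$ and $|\D(2)|=|\Gamma(2)|+|\D_2(2)|$. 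Applying Lemma \ref{vk} to $\D_2$ yields $\ph(q_2)\overset{\GG_3}{=}\ph(r_1)=a_1^\ell$, and the divisibility of $\ell$ by $n_1$ or $n_2$ according to the sign of $\ph(e_1)$ is read off directly from the defining relator of \eqref{pr3b} together with the fact that $\Gamma$ is an $a_k$-band.

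In Case B, $e^*$ lies strictly between $e_1$ and $e_2$ on $p_2$. Let $v^*$ be the endpoint of $e^*$ in $P_W$ with $\al(v^*)=(s_1)_+$, and write $P_W(\ff,v^*)=p_1'p_2'$. Since $e_1$ precedes $v^*$ and $e_2$ follows it, both $|\ph(p_1')|_{\bar a_1}\ge 1$ and $|\ph(p_2')|_{\bar a_1}\ge 1$. To build the required special arc from $\al(0)$ to $\al(v^*)$, I concatenate $q_1$ with $s_1$, obtaining an $a_1$-path ending at $\al(v^*)$, and then reduce by iteratively deleting $ee^{-1}$ subpaths. The principal obstacle is to verify that the resulting reduced $a_1$-path is in fact a special arc, i.e., that reduction alone forces simplicity. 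This is handled by the argument already used in the proof of Lemma \ref{arc}: if the reduced path failed to be simple, it would contain a reduced closed $a_1$-subpath of positive length bounding a disk subdiagram $\D_0$ of $\D$ whose entire boundary is an $a_1$-path; applying Lemma \ref{b1} inside $\D_0$ shows that $\D_0$ contains no faces, hence is a tree, which cannot contain a reduced closed path of positive length. This contradiction supplies the special arc and completes conclusion (a).
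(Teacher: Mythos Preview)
Your proof is correct and follows the same route as the paper's: both isolate the maximal $a_1$-prefix and suffix, take the $a_k$-band through the first non-$a_1$ boundary edge $e_1$, and split on whether the band's other end coincides with the last non-$a_1$ edge (giving (b)) or lands strictly earlier (giving (a) via a special arc from $\al(0)$ along $q_1$ and then along one side of the band). One small orientation slip to fix: since $\al(e_1)$ is itself an edge of $\p\D$, Lemma~\ref{b1}'s standard boundary has $\al(e_1)\in\{f_1,f_2\}$, not $\al(e_1)^{-1}=f_1$; the side of $\Gamma$ you want (starting at $(q_1)_+=(\al(e_1))_-$) is what the paper calls $r_2^{-1}$, and the paper packages the concatenation via Lemma~\ref{arc} (first reduce $q_1$ to a special arc $r$, then combine with $r_2^{-1}$) rather than reducing in one shot as you do---but this difference is purely cosmetic.
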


\begin{center}
\begin{tikzpicture}[scale=.53]
\draw  plot[smooth, tension=.5] coordinates {(-2,-3.)(-3.93,-1.5) (-3.9,1.5) (-2,2.4) (-.1,1.5) (-0.07,-1.5)(-2,-3.)};
\draw(-4.1,.5) -- (0.1,.5);
\draw(-4.1,-.7) -- (0.1,-.7);
\draw  [-latex]  (-4.1,.5) -- (-2.,.5);
\draw [-latex]  (0.1,-.7)  --(-2.1,-.7) ;
\draw  [-latex]  (-3.5,-2) -- (-3.6,-1.9);
\draw  [-latex]  (-.5,-2) -- (-.6,-2.08);
\draw [-latex]  (-4.12,-.1)  --(-4.12,-.0) ;
\draw [-latex]  (.12,-.1)  --(.12,-.19) ;
\node at (-2,2.87) {$q_2$};
\draw [-latex](-2.1,2.4) --(-1.9,2.4);
\node at (-2,-.1) {$\Gamma$};
\node at (-4.6,-0.07) {$e_1$};
\node at (.6,-0.1) {$e_2$};
\node at (-2.,0.9) {$r_1$};
\node at (-2.,-1.2) {$r_2$};
\node at (-2,-2) {$ \Delta_{1}$};
\node at (-2,1.6) {$ \Delta_{2}$};
\draw  (-4.1,-.7) [fill = black] circle (.05);
\draw  (.1,-.7) [fill = black] circle (.05);
\node at (-2,-5.) {Fig.~7.1};
\node at (-2,-3.5) {$\alpha(0)$};

\draw  (-2,-3) [fill = black] circle (.05);
\node at (-3.9,-2.2) {$q_1$};
\node at (-.1,-2.2) {$q_3$};
\end{tikzpicture}
\end{center}

\begin{proof} Since $|W|_{\bar a_1} >0$, it follows that there is a unique factorization $P_W  = p_1p_2p_3$ such that
$|\ph(p_1)|_{\bar a_1} =0$, $|\ph(p_2)|_{\bar a_1} = 0$, and the first and the last edges of $p_2$ are  not
 $a_1$-edges. Denote $q_i = \al(p_i)$, $i=1,2,3$, and let $e_1, f$ be the first, last, resp., edges of $q_2$.

Let $\ph(e_1)= a_k^\delta$, $\delta =\pm 1$. Since $k >1$, there is an $a_k$-band $\Gamma$ whose standard boundary is
$\p \Gamma = e_i r_i e_{3-i} r_{3-i}$, where  $i=1$ if $\ph(e_1) = a_k$ and $i=2$ if $\ph(e_1)= a_k^{-1}$.

First assume that $e_2 \ne f$. By Lemma~\ref{b1}, $r_2$ is a special arc in $\D$. Since
$|\ph(q_1)|_{\bar a_1} =0$, there is a special arc  $r$ in $\D$ such that $r_- = \al(0)$,  $r_+ = (e_1)_-$. Applying
Lemma~\ref{arc} to special arcs $r, r_2^{-1}$, we obtain a special arc $s$ such that $s_- = \al(0)$,  $s_+ = (r_2)_-$.
Now we can see from $e_2 \ne f$ that claim (a) of Lemma~\ref{lema1a} holds for $v = (p_2)_+$.
\smallskip

Suppose $e_2 = f$.  In view of the definition of an $a_k$-band and Lemma~\ref{b1}, we conclude that
claim (b) of Lemma~\ref{lema1a} holds true.
\end{proof}

If $U  \overset  {\GG_3} = a_1^\ell$ for some integer $\ell$, we let $\mu_3(U)$ denote the integer such that the \PWPP\  for presentation \eqref{pr3b}  holds for the pair  $(Ua_1^{-\ell}, \mu_3(U))$. If $U \overset  {\GG_3} \ne a_1^\ell$ for any $\ell$, we set  $\mu_3(U) := \infty$.

\begin{lem}\label{lema2a}  Let $U$ be a word such that  $U  \overset  {\GG_3} = a_1^\ell$ for some integer $\ell$ and
$| U |_{\bar a_1} >0$. Then at least one of the following claims $\mathrm{(a)}$--$\mathrm{(b)}$ holds true.

$\mathrm{(a)}$ There is a factorization  $U \equiv U_1 U_2$ such that $| U_1 |_{\bar a_1}, | U_2 |_{\bar a_1}  >0$,
$U_1  \overset  {\GG_3} = a_1^{\ell_1}$, $U_2 \overset  {\GG_3} = a_1^{\ell_2}$ for some integers $\ell_1, \ell_2$,
$\ell  = \ell_1 + \ell_2$, and $\mu_3(U) = \mu_3(U_1)+ \mu_3(U_2)$.

$\mathrm{(b)}$ There is a factorization
 $
 U \equiv U_1  a_k^{\delta} U_2 a_k^{-\delta} U_3 ,
 $
where $| U_1 |_{\bar a_1} = | U_2 |_{\bar a_1} = 0$, $k \ge 2$, $\delta = \pm 1$, such that $U_2 \overset  {\GG_3} = a_1^{\ell_2}$ for some integer $\ell_2$ and the following is true.  If $k >2$, then $\ell_2 = 0$ and $U_1 U_3 \overset  {0} = a_1^{\ell}$.
If $k =2$ and $\delta = 1$, then $\ell_2/n_1$ is an integer and $U_1 a_1^{\ell_2n_2/n_1}  U_3 \overset  {0} = a_1^{\ell}$.
 If $k =2$ and $\delta = -1$, then $\ell_2/n_2$ is an integer and $U_1 a_1^{\ell_2n_1/n_2}  U_3 \overset  {0} = a_1^{\ell}$.

 Then, in case $k >2$, we have $\mu_3(U) = \mu_3(U_2)$;  in case $k =2$ and $\delta = 1$, we have $\mu_3(U) = \mu_3(U_2) + |\ell_2/n_1|$; in case $k =2$ and $\delta = -1$, we have $\mu_3(U) = \mu_3(U_2) + |\ell_2/n_2|$.
 \end{lem}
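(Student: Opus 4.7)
The plan is to follow the template of the proof of Lemma~\ref{lema2} but with Lemma~\ref{lema1a} in place of Lemma~\ref{lema1}. I would take a minimal reduced disk diagram $\D$ over presentation~\eqref{pr3b} with $\ph(\p|_0\D)\equiv W:=Ua_1^{-\ell}$, so that $|\D(2)|=\mu_3(U)$; since $|W|_{\bar a_1}=|U|_{\bar a_1}>0$, Lemma~\ref{lema1a} applies to $\D$ and leaves two structural possibilities, which I claim produce the two alternatives of the present lemma.

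In case~(a) of Lemma~\ref{lema1a}, there is a vertex $v$ of $P_W$ and a special arc $s$ in $\D$ from $\al(0)$ to $\al(v)$ satisfying $|\ph(p_1)|_{\bar a_1},|\ph(p_2)|_{\bar a_1}>0$, where $P_W(\ff,v)=p_1p_2$. Because the terminal $|\ell|$ edges of $P_W$ contribute nothing to $|\cdot|_{\bar a_1}$, these constraints force $1\le v<|U|$, inducing a factorization $U\equiv U_1U_2$ with $|U_i|_{\bar a_1}>0$. Setting $\ph(s)\overset 0 = a_1^{\ell_1}$ and $\ell_2:=\ell-\ell_1$, the arc $s$ cuts $\D$ into reduced subdiagrams $\D_1,\D_2$ whose cyclic boundary labels are $U_1a_1^{-\ell_1}$ and $U_2a_1^{-\ell_2}$; in particular $U_i\overset{\GG_3}{=}a_1^{\ell_i}$ and $\ell_1+\ell_2=\ell$. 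Minimality of $\D$ yields $\mu_3(U)=|\D_1(2)|+|\D_2(2)|\ge\mu_3(U_1)+\mu_3(U_2)$, and gluing minimal reduced diagrams for the two half-words along a common $a_1^{\ell_1}$-arc (and reducing if needed) supplies the reverse inequality, giving claim~(a).

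In case~(b) of Lemma~\ref{lema1a}, the factorization $P_W=p_1p_2p_3$ has $|\ph(p_1)|_{\bar a_1}=|\ph(p_3)|_{\bar a_1}=0$, so $\ph(p_1)$ and the non-$a_1^{-\ell}$ portion of $\ph(p_3)$ identify the initial and final $a_1$-blocks $U_1$ and $U_3$ of $U$; the first and last edges $e_1,e_2$ of $q_2:=\al(p_2)$ are the external edges of a single $a_k$-band $\Gamma$ with $\ph(e_1)=a_k^\delta$, $\ph(e_2)=a_k^{-\delta}$, and the outer subdiagram $\D_1$ is a tree. Writing $U_2$ for the label of the middle of $q_2$, we obtain the factorization $U\equiv U_1 a_k^\delta U_2 a_k^{-\delta} U_3$ demanded by claim~(b). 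Lemma~\ref{b1} pins down the standard boundary of $\Gamma$ and computes $|\Gamma(2)|$ as $0$, $|\ell_2/n_1|$, or $|\ell_2/n_2|$ in the three sub-cases, where $\ell_2$ is the integer for which $U_2\overset{\GG_3}{=}a_1^{\ell_2}$; this also forces the required divisibility on $\ell_2$, and the free-group identity $U_1 a_1^{\ell_2 n_2/n_1} U_3\overset 0 = a_1^\ell$ (with its analogues) follows because $\p\D_1$, being a tree, has freely trivial label.

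The main obstacle is the $\mu_3$-equality $\mu_3(U)=\mu_3(U_2)+|\Gamma(2)|$. The inequality ``$\le$'' comes from combining a minimal reduced diagram for $U_2 a_1^{-\ell_2}$ with a fresh copy of $\Gamma$, which converts $a_k^\delta a_1^{\ell_2} a_k^{-\delta}$ into $a_1^{\ell_{r_1}}$ where $\ell_{r_1}$ is the inner-side label of $\Gamma$, and then joining to the trivial diagram for the pure $a_1$-word $U_1 a_1^{\ell_{r_1}} U_3 a_1^{-\ell}$. The reverse ``$\ge$'' uses $\mu_3(U)=|\D(2)|=|\Gamma(2)|+|\D_2(2)|$ and reduces to $|\D_2(2)|\ge\mu_3(U_2)$, which I would establish by invoking conjugation-invariance of $\mu_3$ applied to the boundary label of $\D_2$ (a Baumslag--Solitar conjugate of $U_2 a_1^{-\ell_2}$ through $\Gamma$), together with the reducedness of $\D_2$ as a subdiagram of the minimal $\D$. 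Verifying this last step with careful bookkeeping of the two sides of $\Gamma$ supplied by Lemma~\ref{b1} will be the most delicate part of the argument.
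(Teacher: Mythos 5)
Your proposal is correct and follows the same route as the paper: take a minimal (hence reduced) disk diagram $\D$ with $\ph(\p|_0\D)\equiv Ua_1^{-\ell}$, invoke Lemma~\ref{lema1a}, and split into the two cases, cutting along the special arc in case~(a) and extracting the $a_k$-band $\Gamma$, the tree $\D_1$, and the subdiagram $\D_2$ in case~(b). Your elaboration of case~(b) (which the paper dispatches in one sentence) is sound, though the appeal to ``conjugation-invariance'' is an overcomplication — since $\p\D_2=q_2r_1^{-1}$ and $\ph(r_1)\overset{0}{=}a_1^{\ell_2}$ is forced by Lemma~\ref{arc}/Lemma~\ref{b1} and van Kampen's lemma, the boundary label of $\D_2$ is literally $U_2a_1^{-\ell_2}$, and $|\D_2(2)|\ge\mu_3(U_2)$ is immediate from the definition of $\mu_3$.
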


\begin{proof} Consider a \ddd\ over \er{pr3b} such that $\ph(\p |_0 \D) \equiv U a_1^{-\ell}$ and $|  \D(2)| = \mu_3(U)$.
Clearly, $\D$ is reduced and  Lemma~\ref{lema1a} can be applied to $\D$.

Assume that Lemma~\ref{lema1a}(a) holds for $\D$.
Then there is a special arc $s$ in $\D$ such that $s_- = \al(0)$ and $s_+ = (q_1)_+$, $\p |_0 \D = q_1 q_2$, and
$|\ph(q_1)|_{\bar a_1},  |\ph(q_2)|_{\bar a_1} >0$. Cutting $\D$ along the  simple path $s$, we obtain two diagrams $\D_1, \D_2$ such that $\p \D_1 = q_1 s^{-1}$, $\p \D_2 = q_2 s$. Since $ |\D(2) | = \mu_3(U)$ and $\ph(s) \overset 0 = a_1^{\ell_0}$ for some $\ell_0$, it follows that if $U_1 := \ph(q_1)$, $U_2a_1^{-\ell} := \ph(q_2)$, $U \equiv U_1 U_2$, then
$ |\D_i(2) |= \mu_3(U_i)$, $i=1,2$. Since $|\D(2) | = |\D_1(2) |+|\D_2(2) | $, we obtain
$$
\mu_3(U) = |\D(2) | = |\D_1(2) |+|\D_2(2) | = \mu_3(U_1)+\mu_3(U_2) ,
$$
as required.

Assuming that Lemma~\ref{lema1a}(b) holds for $\D$, we derive claim (b) from the definitions and  Lemma~\ref{lema1a}(b).
\end{proof}

Let $W$ be a  word over $\A^{\pm 1}$ and let  $| W |_{\bar a_1} >0$.  As in the proof of Theorem~\ref{thm1},
let $W(i,j)$, where $1 \le i \le |W|$ and $0 \le j \le |W|$, denote the subword of $W$ that starts
with the $i$th letter of $W$ and has length $j$. If $W(i,j) \overset  {\GG_3}  = a_1^{\ell_{ij}}$ for some integer $\ell_{ij}$,
we set $\lambda(W(i,j)) := \ell_{ij}$. Otherwise, it is convenient to define $\lambda(W(i,j)) := \infty$.

We now compute the numbers   $\lambda(W(i,j))$, $\mu_3(W(i,j))$ for all $i, j$ by the method of dynamic programming
in which
the parameter is $| W(i,j) |_{\bar a_1} \ge 0$.
In other words, we compute the numbers  $\lambda(W(i,j))$, $\mu_3(W(i,j))$
by induction on parameter $| W(i,j) |_{\bar a_1} \ge 0$.

To initialize, or to make the base step, we note that if $| W(i,j) |_{\bar a_1} = 0$ then
$$
\lambda(W(i,j)) = \ell_{ij} ,  \quad  \mu_3(W(i,j)) = 0
$$
where  $W(i,j)\overset  {0} = a_1^{\ell_{ij}}$.
\smallskip

To make the induction step,
assume that the numbers  $\lambda(W(i',j'))$, $\mu_3(W(i',j'))$  are already computed for all $W(i',j')$ such that
$| W(i',j') |_{\bar a_1} <  | W(i,j) |_{\bar a_1}$, where $W(i,j)$ is fixed and  $| W(i,j) |_{\bar a_1} >0$.

Applying Lemma~\ref{lema2a} to the word $U \equiv  W(i,j)$, we consider all factorizations of the form
$$
U  \equiv U_1 U_2 ,
$$
where $| U_1 |_{\bar a_1}, | U_2 |_{\bar a_1}  >0$. Since $U_1 = W(i, j_1)$, where $j_1 = |U_1|$ and $U_2 = W(i+ j_1, j-j_1)$,
it follows from  $| U_1 |_{\bar a_1}, | U_2 |_{\bar a_1} < | U |_{\bar a_1}$   that the numbers
$\lambda(U_1)$, $\mu_3(U_1)$,  $\lambda(U_2)$, $\mu_3(U_2)$ are available by the induction hypothesis. Hence, we can compute the minimum
\begin{gather}\label{minM}
M_a = \min (  \mu_3(U_1) +  \mu_3(U_2) )
\end{gather}
over all such factorizations $U \equiv U_1 U_2$. If $M_a < \infty$ and $M_a =  \mu_3(U'_1) +  \mu_3(U'_2)$ for some factorization
$U \equiv U'_1 U'_2$, then we set $L_a := \lambda(U'_1) + \lambda(U'_2)$.

We also consider a factorization for $U = W(i,j)$ of the form
\begin{gather}\label{eeqq4}
 U \equiv U_1  a_k^{\delta} U_2 a_k^{-\delta} U_3 ,
\end{gather}
where $| U_1 |_{\bar a_1} = | U_2 |_{\bar a_1} = 0$, and $k \ge 2$, $\delta = \pm 1$.

If such a factorization \er{eeqq4} is impossible, we set  $M_b = L_b = \infty$.

Assume that a factorization  \er{eeqq4} exists (its uniqueness is obvious).  Since
$$
U_2 = W(i+|U_1|+1, |U_2|) ,  \quad | U_2 |_{\bar a_1} < | U |_{\bar a_1} ,
$$
it follows that the numbers  $\lambda(U_2)$, $\mu_3(U_2)$ are available.  Denote $U_1 U_3 \overset  {0} = a_1^{\ell}$.

If $k >2$ and $\lambda(U_2) =0$, we set
$$
L_b := \ell , \quad  M_b := \mu_3(U_2) .
$$

If $k >2$ and $\lambda(U_2) \ne 0$, we
set  $L_b = M_b := \infty$.

If $k =2$ and $\delta = 1$,  we check whether $\lambda(U_2)$ is divisible by $n_1$. If so, we set
$$
L_b := \ell + \lambda(U_2)n_2/n_1 , \quad M_b := \mu_3(U_2) + |\lambda(U_2)/n_1| .
$$

If $k =2$, $\delta = 1$,  and  $\lambda(U_2)$ is not divisible by $n_1$,  we set $L_b = M_b := \infty$.

If $k =2$ and $\delta = -1$,  we check whether $\lambda(U_2)$ is divisible by $n_2$. If so, we set
$$
L_b := \ell + \lambda(U_2)n_1/n_2, \quad M_b := \mu_3(U_2) + |\lambda(U_2)/n_2| .
$$

If $k =2$, $\delta = -1$,  and  $\lambda(U_2)$ is not divisible by $n_2$,  we set $L_b = M_b := \infty$.
\smallskip

It follows from Lemma~\ref{lema2a} applied to the word $U = W(i,j)$ that if $U \overset  {\GG_3} = a_1^{\ell'}$ for some integer $\ell'$, then $\lambda(U) = \min( L_a, L_b)$ and  $\mu_3(U) = \min( M_a, M_b)$. On the other hand, it is immediate
that if $\lambda(U) = \infty$, then $L_a = L_b = \infty$ and $M_a = M_b = \infty$. Therefore, in all cases, we obtain  that
$$
\lambda(U) = \min( L_a, L_b), \quad \mu_3(U) = \min( M_a, M_b) .
$$
This completes  our computation of the numbers $\lambda(U), \mu_3(U)$ for $U = W(i,j)$.
\smallskip

It follows by the above inductive argument that, for every word $W(i,j)$ with finite $\lambda(W(i,j) )$, we have
\begin{align}\label{bnnd}
\max (| \lambda(W(i,j) ) |,  \mu_3(W(i,j) ) ) \! \le \! | W(i,j) | \max(|n_1|, |n_2|)^{| W(i,j) |_{\bar a_1} } \! \le 2^{O(|W|)} .
\end{align}
Hence, using binary representation for numbers $\lambda(W(i',j') ), \mu_3(W(i',j') )$, we can run the foregoing computation of
numbers $\lambda(U), \mu_3(U)$ for $U = W(i,j)$  in time $O(| W(i,j)|^2)$, including division of
$\lambda (U_2)$ by $n_1$ and by $n_2$ to decide whether $\lambda (U_2)/ n_1$, $\lambda (U_2)/ n_2$ are integers and including additions to compute the minimum \eqref{minM}.

Since the number of subwords $W(i,j)$ of $W$ is bounded by $O(| W|^2)$, it follows that running time of the described algorithm that computes the numbers $\lambda(W ), \mu_3(W)$ is $O(| W|^4)$.  This implies that both the \BWPP\ and the \PWPP\ for presentation \eqref{pr3b} can be solved in deterministic time $O(|W|^4)$.

Theorem~\ref{thm2} is proved.
\end{proof}

\section{Minimizing Diagrams over \eqref{pr1} and Proofs of Theorem~\ref{thm3} and Corollary~\ref{cor2}}

\begin{T3}
Both the diagram problem  and the minimal diagram problem
for group presentation \eqref{pr1} can be solved in deterministic space  $O((\log|W|)^3)$ or in deterministic time $O(|W|^4\log|W|)$.

Furthermore,  let $W$ be a word such that $W \overset {\GG_2} =1$ and let
$$
\tau(\D) = (\tau_1(\D), \ldots, \tau_{s_\tau}(\D))
$$
be a tuple of integers, where the absolute value $| \tau_i(\D) |$ of each $\tau_i(\D)$  represents the number of certain vertices or faces in a disk  diagram $\D$ over \eqref{pr1} such that $\ph(\p \D) \equiv W$. Then, in  deterministic  space $O( (\log |W|)^3 )$, one can algorithmically construct such a minimal diagram $\D$ which is also smallest relative to  the tuple $\tau(\D)$ (the tuples are ordered lexicographically).
\end{T3}

\begin{proof} We start by proving the space part of Theorem~\ref{thm3}.

Let $W$ be a nonempty word over $\A^{\pm 1}$ and let $\Omega$ be a finite sequence of \EO s that transforms the empty \PBS\ for $W$ into a final  \PBS . By Lemma~\ref{lem7}, $W  \overset  {\GG_2} = 1$ and there is a diagram $\D$ over \eqref{pr1} such that $\ph(\p |_{0} \D) \equiv W$.  We now describe an explicit construction of such a diagram $\D = \D(\Omega)$ with property (A) based on the sequence $\Omega$.  Note that this construction is based on the proof of  Lemma~\ref{lem7} and essentially follows that proof.  The question on how to construct the sequence $\Omega$ will be addressed later.

Let $B_0, B_1, \dots,  B_\ell$ be the  sequence of \PBS s associated with  $\Omega$, where $B_0$ is empty and $ B_\ell$  is final. We will construct $\D(\Omega)$ by induction on $i$. Let $B_i$ be obtained from $B_{i-1}$, $i\ge 1$, by an \EO\ $\sigma$ and let $b' \in  B_i$ be the \pbb\ obtained from $b, c \in B_{i-1}$ by $\sigma$. Here we assume that one of
$b, c$ (or both) could be missing.

If $\sigma$ is an addition, then we do not do anything.

Assume that $\sigma$ is an extension of type 1 or type 2 on the left and $a(b') = e_1 a(b)$, where $e_1$ is an edge of $P_W$ and $\ph(e_1) = a_{b(3)}^{\e_1}$. Here we use the notation of the definition of an extension of type 1 or type 2 on the left. Then we construct $\D(\Omega)$ so that the edge $\al(e_1) = e$ belongs to the boundary $\p \D(\Omega)$ of $\D(\Omega)$ and $e^{-1}$ belongs to $\p \Pi$, where $\Pi$ is a face of $\D(\Omega)$ such that $\ph ( \p \Pi) \equiv a_{b(3)}^{-\e_1b(4)}$. Note that the label $\ph(\p \Pi)$ is uniquely  determined by either of the \pbb s $b, b'$.

The case of  an extension of type 1 or type 2 on the right is analogous.

Suppose that $\sigma$ is an extension of type 3 and $a(b') = e_1 a(b)e_2$, where $e_1, e_2$ are edges of $P_W$
with $\ph(e_1) = \ph(e_2)^{-1}$. Here we use the notation of the definition of an extension of type 3. Then we construct $\D(\Omega)$ so that  $\al(e_1) =  \al(e_2)^{-1}$ and both  $\al(e_1), \al(e_2)$ belong to the boundary $\p \D(\Omega)$ of $\D(\Omega)$.

If $\sigma$ is a turn or a merger, then we do not do anything to the diagram  $\D(\Omega)$ under construction.

Performing these steps during the process of \EO s  $\Omega$, we will construct, i.e., effectively describe,  a required diagram $\D(\Omega)$ such that $| \D(\Omega)(2) | =b_F(6)$, where $b_F$ is the final \pbb\ of $B_\ell$,  and   $\ph(\p |_{0} \D(\Omega)) \equiv W$. Note that this particular diagram $\D(\Omega)$, depending on the sequence $\Omega$,
may not be minimal or even reduced, however, as in the proof of  Lemma~\ref{lem7},
our construction guarantees that  $\D(\Omega)$ does have the property (A), i.e., the property that if $e$ is an edge of the boundary path $\p \Pi$ of a face $\Pi$ of $\D(\Omega)$, then $e^{-1} \in \p \D(\Omega)$.

We also observe that, in the proof of Lemma~\ref{lem6}, for a given pair $(W, \D_W)$, where $\D_W$ is a diagram over \eqref{pr1} such that   $\ph(\p |_{0} \D_W) \equiv W$, we constructed by induction an operational sequence $\Omega = \Omega(\D_W)$ of \EO s that convert the empty \BS\ into the final \BS\ and has size bounded by
\begin{equation}\label{estt0}
   (11|W|, C(\log |W| +1))) .
\end{equation}

Recall that every \BS\ is also a \PBS .
Looking at details of the construction of the sequence $\Omega(\D_W)$, we can see that if we reconstruct a diagram $\D(\Omega(\D_W))$, by utilizing our above algorithm based upon the sequence  $\Omega(\D_W)$, then the resulting diagram will be identical to the original diagram $\D_W$, hence, in this notation, we can write
\begin{equation}\label{dwd}
\D(\Omega(\D_W)) = \D_W .
\end{equation}

Recall that Savitch's conversion \cite{Sav}, see also \cite{AB}, \cite{PCC}, of nondeterministic computations in space $S$ and time $T$ into deterministic  computations in space $O(S\log T)$ simulates possible nondeterministic computations by using  tuples of configurations, also called instantaneous descriptions, of cardinality $O(\log T)$. Since a configuration takes space $S$
and the number of configurations that are kept in memory at every moment is at most  $O(\log T)$, the space bound  $O(S\log T)$  becomes evident. Furthermore, utilizing the same space   $O(S\log T)$, one can use  Savitch's  algorithm
to compute an actual sequence of configurations that transform an initial configuration into a final configuration.  To do this, we consider every configuration $\psi$ together with a number $k$, $0 \le k \le T$, thus replacing every configuration  $\psi$ by a pair $(\psi, k)$, where $k$ plays the role of counter. We apply Savitch's algorithm to  such pairs $(\psi, k)$, so that $(\psi_2, k+1)$ is obtained from $(\psi_1, k)$  by a single operation whenever $\psi_2$ is obtained from $\psi_1$ by a single command.  If $\psi$ is a final configuration, then we also assume that $(\psi, k+1)$ can be obtained from $(\psi, k)$, $0 \le k \le T-1$,  by a single operation.

Now we apply Savitch's  algorithm to two pairs $(\psi_0, 0)$,  $(\psi_F, T)$, where $\psi_0$ is an initial configuration and  $\psi_F$ is a final configuration. If
 Savitch's  algorithm  accepts these two pairs $(\psi_0, 0)$,  $(\psi_F, T)$, i.e., the algorithm turns the pair $(\psi_0, 0)$ into  $(\psi_F, T)$,  then there will be a corresponding sequence
 \begin{equation}\label{eqom1}
    (\psi_0, 0), (\psi_1, 1), \dots, (\psi_i, i), \dots, (\psi_F, T)
\end{equation}
of pairs such that   $(\psi_i, i)$ is obtained from $(\psi_{i-1}, i-1)$, $i \ge 1$, by a single operation.
Observe that we can retrieve  any term, say, the $k$th term of the sequence \er{eqom1}, by rerunning
Savitch's  algorithm and memorizing a current pair $(\psi, k)$ in a separate place in memory. Note that the current
value of $(\psi, k)$ may change many times during computations but in the end it will be the desired pair $(\psi_k, k)$. Clearly, the space needed to run this modified Savitch's  algorithm  is still $O(S\log T)$ and, consecutively
outputting these pairs $(\psi_0, 0), (\psi_1, 1), \dots$, we will construct the  required sequence  \er{eqom1} of configurations.
\medskip

Coming back to our specific  situation, we recall that a \PBS\ plays the role of a configuration, the empty \PBS\ is an initial configuration and a final \PBS\ is a final configuration. Therefore, it follows from Lemmas~\ref{lem6}--\ref{lem7},  and the foregoing equality  \er{dwd} that picking the empty
\PBS\  $B_0$ and a final \PBS\  $\{ b_F \}$, where $b_F = (0, |W|, 0,0,0, n')$, we can use Savitch's algorithm
to verify in deterministic space $O((\log |W|)^3)$ whether $W  \overset  {\GG_2} = 1$ and whether there is a \ddd\
$\D$ over \eqref{pr1} such that  $\ph(\p \D) \equiv W$ and   $| \D(2) | = n'$.
In addition, if the algorithm accepts, then the algorithm also constructs an operational sequence
$B_0, B_1, \dots, B_\ell = \{ b_F \}$ of \PBS s of size bounded by \eqref{estt0} which, as was discussed above, can be used to construct a \ddd\ $\D_1$ with  property (A) such that $\ph(\p \D_1) \equiv W$ and   $| \D_1(2) | = n'$. It is clear
that the entire construction of $\D_1$ can be done in deterministic space $O((\log |W|)^3)$, as desired.
\medskip

To construct a minimal disk diagram for $W$, we consecutively choose $n' =0,1,2, \dots$ and run  Savitch's  algorithm to determine whether the algorithm can turn the empty \PBS\ into a final \PBS\  $\{ b_F \}$, where $b_F(6) = n'$. If $n_0$ is the minimal such  $n'$, then we run  Savitch's  algorithm  again to construct an operational sequence
of \PBS s and a corresponding \ddd\ $\D_0$ such that  $\ph(\p \D_0) \equiv W$ and   $| \D_0(2) | = n_0$. Similarly to the above arguments, it follows from Lemmas~\ref{lem6}--\ref{lem7}, and from the equality  $\D(\Omega(\D_W)) = \D_W$, see \eqref{dwd}, that  $\D_0$ is a minimal diagram and the algorithm uses  space $O((\log |W|)^3)$. Thus, the minimal
diagram problem can be solved for  presentation \eqref{pr1} in  deterministic space $O((\log |W|)^3)$.
\medskip

Now we will discuss the additional statement of Theorem~\ref{thm3}.
For a diagram $\D$ over presentation \er{pr1}  such that    $\ph(\p |_{0} \D) \equiv W$ and $\D$ has  property (A),   we consider a parameter
\begin{equation}\label{tau}
    \tau(\D) = ( \tau_1(\D), \dots, \tau_{s_\tau}(\D)) ,
\end{equation}
where $ \tau_i(\D)$ are integers that  satisfy  $0 \le | \tau_i(\D)| \le C_\tau |W|$,  $C_\tau >0$ is a fixed constant, and that represent numbers of  certain vertices, edges, faces of $\D$ (we may also consider some functions of the numbers of  certain vertices, edges, faces of $\D$ that are computable in space $O((\log |W|)^3)$ and  whose absolute values do not exceed $C_\tau |W|$).

For example, if we  set $\tau_1(\D) := |\D(2)|$ then we would be constructing a minimal
 diagram  which is also smallest with respect to  $\tau_2(\D)$, then smallest with respect to  $\tau_3(\D)$ and so on.  Let us set  $\tau_2(\D) := |\D(2)_{a_1^{\pm n_1}}|$, where  $|\D(2)_{a_1^{\pm n_1}}|$ is the number of faces $\Pi_2$ in $\D$ such that  $\ph( \p \Pi_2) \equiv   a_1^{\pm n_1}$,  $\tau_3(\D) := |\D(2)_{a_2^{ n_2}}|$, where  $|\D(2)_{a_2^{ n_2}}|$ is the number of faces $\Pi_3$ in $\D$ such that  $\ph( \p \Pi_3) \equiv   a_2^{ n_2}$.

We may also consider functions such as $\tau_4(\D) :=  \sum_{ k_{4,1} \le  |\p \Pi | \le  k_{4,2} }  |\p \Pi |$, where the summation takes place  over all faces $\Pi$ in $\D$ such that  $ k_{4,1} \le  |\p \Pi | \le  k_{4,2}$, where $k_{4,1}, k_{4,2}$ are fixed integers with $0 \le k_{4,1} \le  k_{4,2} \le |W|$. Note that the latter bound follows from Lemma~\ref{vk2}(a).

If $v$ is a vertex of $\D$, let $\deg v$ denote the {\em degree} of $v$, i.e., the number of oriented edges $e$ of $\D$ such that  $e_+ = v$. Also, if $v$ is a vertex of $\D$, let $\deg^F v$ denote the {\em face  degree} of $v$, i.e.,
the number of faces  $\Pi$ in $\D$ such that $v \in \p \Pi$. Note that it follows from Lemma~\ref{vk2}(a) that  every face $\Pi$ of a \ddd\ $\D$ with  property (A) can contribute at most 1 to this sum.

We may also consider entries in $\tau(\D) $ that count
the number of vertices in $\D$ of certain degree. Note that doing this would be especially interesting and meaningful
when the presentation \eqref{pr1} contains no defining relations, hence, diagrams over \eqref{pr1}  are diagrams over the free
group $F(\A) = \langle \A \ \| \ \varnothing    \rangle $ with no relations. For instance, we may set
$\tau_5(\D) := |\D(0)_{\ge 3}|$, where $ |\D(0)_{\ge 3}|$ is the number of vertices in $\D$ of degree at least 3, and set $\tau_6(\D) := |\D(0)^F_{[k_{6,1}, k_{6,2} ]}|$, where $|\D(0)^F_{[k_{6,1}, k_{6,2} ]}|$ is the number of vertices $v$ in $\D$ such that    $ k_{6,1}\le \deg^F v \le  k_{6,2}$, where $k_{6,1}, k_{6,2}$  are fixed integers with $0 \le k_{6,1} \le k_{6,2} \le |W|$.

Our goal is to make some modification of the foregoing algorithm that would enable us to construct a \ddd\ $\D$  over \eqref{pr1} such that  $\ph(\p |_{0} \D_W) \equiv W$, $\D$ has  property (A), and $\D$ is smallest relative to the parameter $\tau(\D)$. Recall that the tuples $\tau(\D)$ are ordered in the standard lexicographical way.  Note that if $\tau_1(\D) = - |\D(2)|$ then $\D$ would be a  diagram for $W$ with property (A) which has the maximal number of faces.

Our basic strategy remains unchanged, although we will need more complicated bookkeeping (to be introduced below). We start by picking a value for the parameter $\wtl \tau = (\wtl \tau_1, \dots, \wtl \tau_{s_\tau} )$, where $\wtl \tau_i$ are fixed integers with  $|\wtl \tau_i| \le C_\tau |W|$.  As before, using Lemmas~\ref{lem6},~\ref{lem7},
and the identity $\D(\Omega(\D_W)) = \D_W$, see \er{dwd}, we can utilize
Savitch's algorithm  which verifies, in deterministic space $O( (\log |W|)^3 )$, whether the empty \PBS\ $B_0$ can be turned by \EO s into a final \PBS\ $B = \{ b_F \}$ which also changes the $\tau$-parameter from  $(0,0, \dots, 0)$ to $\wtl \tau$. If this is possible, then the algorithm also computes an operational sequence $B_0, B_1, \dots, B_\ell = B$ of \PBS s and, based on this sequence, constructs a \ddd\ $\D$ such that $\ph(\p \D) \equiv W$, $\D$ has property (A) and $\tau(\D) = \wtl \tau$. To make sure that $\D$ is smallest relative to the parameter $\tau(\D)$, we can use Savitch's algorithm  for every $\wtl \tau' <  \wtl \tau$ to verify that the empty \PBS\ $B_0$ may not be turned by \EO s into a final \PBS\ $B = \{ b_F \}$ so that  the $\tau$-parameter concurrently changes from  $(0,0, \dots, 0)$ to $\wtl \tau'$. This concludes the description of our algorithm
for construction of a  diagram  $\D$ over \eqref{pr1} such that  $\ph(\p |_{0} \D_W) \equiv W$, $\D$ has  property (A) and $\D$ is smallest relative to the parameter $\tau(\D)$.
\medskip

Let us describe necessary modifications in bookkeeping. First, we add six more integer entries to each \pbb\ $b$. Hence, we now have
\begin{equation*}
   b= (b(1), \dots,  b(6), b(7), \dots,   b(12) ) ,
\end{equation*}
where $b(1), \dots, b(6)$ are defined exactly as before, $b(7), \dots, b(12)$ are integers such that
\begin{align}\label{cond1}
b(7)+b(8)  = b(5) , \quad   0 \le b(7), b(8)\le b(5) \quad & \mbox{if}
\quad  b(5) \ge 0 ,  \\ \label{cond2}
b(7)+b(8)  = b(5) , \quad  b(5) \le b(7),  b(8) \le 0 \quad &  \mbox{ if} \quad   b(5) \le 0 .
\end{align}
The numbers $b(9), \dots, b(12)$ satisfy the inequalities
$$
0 \le  b(9), \dots, b(12) \le |W|
$$
and   the equalities $b(7)= b(8) = b(11)= b(12) =0$ whenever $b$ has type PB1.
The purpose of these new entries, to be specified below, is to keep track of the information on degrees and face degrees of the vertices of the diagram  $\D(\Omega)$ over \eqref{pr1} being constructed by means of an operational sequence $\Omega$ of \EO s.
\medskip

We now inductively describe the changes over the entries  $b(7), \dots, b(12)$ and over the parameter
$\tau(\D) = ( \tau_1(\D), \dots, \tau_{s_\tau}(\D))$ that are  done in the process of performance of an operational  sequence $\Omega$ of \EO s over \PBS s that change the empty \PBS\ into a final \PBS\  for $W$, where $W  \overset  {\GG_2} = 1$ and $|W| >0$.

To make the inductive definitions below easier to understand, we will first make informal remarks about meaning of the new entries
$b(7), \dots,   b(12)$.

Suppose that $b$ is a \pbb\ of type  PB1. Then $b(7)= b(8) =0$ and $b(11)=b(12)=0$. The entry $b(9)$ represents the current (or intermediate) degree $\deg v$ of the vertex $v = \al(b(1)) = \al(b(2))$ which is subject to change in process of construction of the diagram $\D(\Omega)$.  The entry  $b(10)$ represents the current  face degree $\deg^F v$ of the vertex $v = \al(b(1)) = \al(b(2))$ which is also subject to change. For example, if a \pbb\ $b'$ is obtained from \pbb s $b, c$ of type PB1 by a merger, then $b'(9) :=b(9)+c(9)$ and  $b'(10) :=b(10)+c(10)$. Note that  $b'(9), b'(10)$ are still intermediate degrees of the vertex $\al(b'(1)) = \al(b'(2))$, i.e., $b'(9), b'(10)$ are not actual degrees of $\al(b'(1))$,  because there could be more \EO s such as extension of type 3, mergers, and turns that could further increase $b'(9), b'(10)$.
\medskip

Assume that $b$ is a \pbb\ of type PB2. Then the entry $b(7)$ represents the exponent in the power $a_{b(3)}^{b(7)} \equiv \ph(u_1^{-1})$,  where $u_1$ is the arc of $\p \Pi$, where    $\Pi$ is a face   such that $(u_1)_- = \al(v)$, $(u_1)_+ = \al(b(1))$, see Fig.~8.1, and $v$ is the vertex of $P_W$ at which a turn operation was performed to get a \pbb\ $\bar b$ of type PB2 which, after a number of extensions of type 1 and mergers with \pbb s of type PB1, becomes $b$.

\begin{center}
\usetikzlibrary{arrows}
\begin{tikzpicture}[scale=.7]
\draw (7.75,-2.622) arc (20:155:4);
\draw  (1.1,-1.24) [fill = black] circle (.06);
\draw  (4,0) [fill = black] circle (.06);
\draw  (6.8,-1.14) [fill = black] circle (.06);
\draw  [-latex](2.7,-0.21) -- (2.4,-0.33);
\draw  [-latex](5.7,-0.39) -- (5.5,-0.3);
\node at (4,-2.) {$\Pi$};
\node at (4,-3.1) {Fig.~8.1};
\node at (2.4,0.2) {$u_1$};
\node at (5.65,0.2) {$u_2$};
\node at (4,0.44) {$\alpha(v)$};
\node at (0.12,-1.1) {$\alpha(b(1))$};
\node at (7.82,-1.) {$\alpha(b(2))$};
\node at (4,-1.) {$u = u_2 u_1$};
\node at (-1,0.2) {$\varphi(u_1^{-1}) \equiv a_{b(3)}^{b(7)}$};
\node at (8.4,0.2) {$\varphi(u_2^{-1}) \equiv a_{b(3)}^{b(8)}$};
\end{tikzpicture}
\end{center}

\noindent
Similarly, the entry $b(8)$ represents the exponent in the power $a_{b(3)}^{b(8)} \equiv \ph(u_2^{-1})$, where $u_2$ is the arc of $\p \Pi$ such that $(u_2)_- = \al(b(2))$, $(u_1)_+ = \al(v)$, and $v$ is the vertex of $P_W$ defined as above,  see Fig.~8.1. Since $\ph(u^{-1}) \equiv  a_{b(3)}^{b(5)}$, where $u = u_2u_1$  is the arc of $\p \Pi$ defined by
$u_- = \al(b(2))$, $u_+ = \al(b(1))$, see Fig.~8.1, it follows from the definitions that the conditions \er{cond1}--\er{cond2} hold true.

As in the above case when $b$ has type PB2, the entry $b(9)$ represents the current (or intermediate) degree $\deg v_1$ of the vertex $v_1 = \al(b(1))$ which is subject to change in the process of construction of $\D(\Omega)$.  As above, the entry  $b(10)$ represents the current face degree $\deg^F v_1$ of $v_1 = \al(b(1))$ which is also subject to change. The entry  $b(11)$ is the intermediate  degree $\deg v_2$ of the vertex $v_2 = \al(b(2))$ and  $b(12)$ represents the current face degree $\deg^F v_2$ of the vertex $v_2 = \al(b(2))$.
\medskip

Our description for meaning of the new  entries $b(7), \dots, b(12)$ in thus augmented \pbb\ $b$ is complete and we now inductively describe the changes over these new entries $b(7), \dots, b(12)$ and over the parameter $\tau(\D)$ in process of performance of a sequence $\Omega$  of \EO s over  \PBS s  that change the empty \PBS\ into a final \PBS\  for a word $W$ such that $W  \overset  {\GG_2} = 1$ and $|W| >0$.

As above, assume that $\Omega$ is a sequence  of \EO s that change the empty \PBS\ into a final \PBS\  for $W$.
Let $B_0,  B_1,  \dots, B_\ell$ be the corresponding to $\Omega$ sequence  of \PBS s, where
$B_0$ is empty and $B_\ell$ is final.

If $b \in B_{i+1}$, $i \ge 0$, is a starting \pbb , then we set
$$
b(7)= b(8)= b(9)= b(10)= b(11)= b(12)=0 .
$$
As was mentioned above, the entries  $b(1), \dots, b(6)$ in every \pbb\ $b \in \cup_{j=0}^\ell B_{j}$ are defined exactly as before.

Suppose that  a \pbb\ $b' \in B_{i+1}$, $i \ge 1$, is obtained from $b \in B_{i}$ by an extension of type 1 on the left and $a(b') = e_1 a(b)$, where $a(b'), a(b)$ are the arcs of   $b', b$, resp., and $e_1$ is an edge of $P_W$, $\ph(e_1) = a_{b(3)}^{\e_1}$, $\e_1 = \pm 1$. Recall that both  \pbb s  $b', b$ have type PB2.

First we assume that $b(5) \ne 0$, i.e., one of $b(7), b(8)$ is different from 0. Then we set
\begin{align*}\notag
  & b'(7)  :=  b(7) +  \e_1 , \quad  b'(8) := b(8) , \quad  b'(9) := 2 , \\
   &  b'(10)  := 1 , \quad  b'(11) := b(11) , \quad  b'(12) := b(12) .
\end{align*}
Note that it follows from the definitions that $|b'(7)| = |b(7)| +1$.
We also update those entries in the sequence $\tau(\D)$ that are affected by the fact that
$v_1 = \al(b(1))$ is now known to be a vertex  of the diagram $\D(\Omega)$ (which is still under construction)
such that $\deg v_1 = b(9)$ and $\deg^F v_1 = b(10)$.

For example, if  $\tau_5(\D) = |\D(0)_{\ge 3}|$, where $ |\D(0)_{\ge 3}|$ is the number of vertices in $\D$ of degree at least 3, and $b(9) \ge 3$, then $\tau_5(\D)$ is increased by 1. If
$\tau_6(\D) = |\D(0)^F_{[k_{6,1}, k_{6,2} ]}|$, where $|\D(0)^F_{[k_{6,1}, k_{6,2} ]}|$ is the number of vertices $u$ in $\D$ such that    $ k_{6,1}\le \deg^F u \le  k_{6,2}$, where $k_{6,1}, k_{6,2}$  are fixed integers with $0 \le k_{6,1} \le k_{6,2} \le |W|$, and if $\deg^F \al(b(1)) = b(10)$ satisfies $k_{6,1} \le   b(10) \le k_{6,2}$, then we increase $\tau_6(\D)$  by 1.

Suppose that $b(5) = 0$, i.e.,  $b(7)= b(8) =0$. Then we set
\begin{align*}\notag
  & b'(7)  :=    \e_1 , \quad  b'(8) := 0 , \quad  b'(9) := 2 , \\
   &  b'(10)  := 1 , \quad  b'(11) := b(9) , \quad  b'(12) := b(10) .
\end{align*}
These formulas, see also \er{eqq5}--\er{eqq6} and other formulas below, reflect the convention that, for a \pbb\ $b$ of type PB2, the information about edge and face degrees of the vertex $\al(b(2))$ is stored in components $b(9), b(10)$, resp., whenever $\al(b(1)) = \al(b(2))$, i.e., $b(5)=0$. However, when $\al(b(1)) \ne \al(b(2))$, i.e., $b(5) \ne 0$, this  information for the vertex $\al(b(2))$  is separately kept in components $b(11), b(12)$, whereas this information for the vertex  $\al(b(1))$  is stored in components $b(9), b(10)$. In the current case $b(5) = 0$, no change to  $\tau(\D)$ is done.

The case of an extension of type 1 on the left is complete.
\medskip

Assume that a \pbb\ $b' \in B_{i+1}$, $i \ge 1$, is obtained from $b \in B_{i}$ by an extension of type 1 on the right.
This case is similar to its ``left" analogue studied above but not quite symmetric, because of the way we keep information about degrees of vertices and we will write down all necessary formulas. As above,  both \pbb s $b', b$ have type PB2.

Let $a(b') = a(b)e_2$, where $a(b'), a(b)$ are the arcs of $b', b$, resp., and $e_2$ is an edge of $P_W$, $\ph(e_2) = a_{b(3)}^{\e_2}$, $\e_2 = \pm 1$.
If $b(5) \ne 0$, then we define
\begin{align}\label{eqq5}
  & b'(7)  :=  b(7) , \quad  b'(8) := b(8)  + \e_2 , \quad  b'(9) :=  b(9) , \\ \label{eqq6}
   &  b'(10)  := b(10) , \quad  b'(11) := 2 , \quad  b'(12) := 1 .
\end{align}
Note that it follows from the definitions that $|b'(8)| = |b(8)| +1$.
We also update those entries in the sequence $\tau(\D)$ that are affected by the fact that
$v_2 = \al(b(2))$ is a vertex  of $\D(\Omega)$ (which is still under construction) such that $\deg v_2 = b(11)$ and
$\deg^F v_2 = b(12)$.

For example, if  $\tau_5(\D) = |\D(0)_{\ge 3}|$ and $b(11) \ge 3$, then $\tau_5(\D)$ is increased by 1.
If $\tau_6(\D) = |\D(0)^F_{[k_{6,1}, k_{6,2} ]}|$ and if $\deg^F \al(b(2)) = b(12)$ satisfies the inequalities $k_{6,1} \le   b(12) \le k_{6,2}$, then we increase $\tau_6(\D)$  by 1.

If $b(5) = 0$, then, to define $b'$, we again use formulas \er{eqq5}--\er{eqq6} but
we do not make any changes to  $\tau(\D)$.

The case of  an extension of type 1 on the right is complete.
\medskip

Suppose that a \pbb\ $b' \in B_{i+1}$, $i \ge 1$, is obtained from $b \in B_{i}$ by an extension of type 2 on the left. Denote $a(b') = e_1 a(b)$, where $a(b'), a(b)$ are the arcs of $b', b$, resp., and $e_1$ is an edge of $P_W$, $\ph(e_1) = a_{b(3)}^{\e_1}$, $\e_1 = \pm 1$. Recall that $b$ has type PB2 and $b'$ has type PB1.

First we assume that $b(5) \ne 0$, i.e., one of $b(7), b(8)$ is different from 0. Then we set
\begin{align*}
  & b'(7)  :=  0 , \quad  b'(8) := 0 , \quad  b'(9) :=  b(11)  , \\
   &  b'(10)  :=  b(12) , \quad  b'(11) := 0 , \quad  b'(12) := 0 .
\end{align*}
We also update  the sequence $\tau(\D)$ according to the information  that
$v_1 = \al(b(1))$ is a vertex  of $\D(\Omega)$ (which is still under construction) such that $\deg v_1 = b(9)$,
$\deg^F v_1 = b(10)$, and that $\D(\Omega)$ contains a face $\Pi_b$ such that  $\ph(\p \Pi_b) \equiv  a_{b(3)}^{-\e_1 b(4)}$.

For example, if  $\tau_5(\D) = |\D(0)_{\ge 3}|$ and $b(9) \ge 3$, then $\tau_5(\D)$ is increased by 1. If
$\tau_6(\D) = |\D(0)^F_{[k_{6,1}, k_{6,2} ]}|$ and if $\deg^F \al(b(1)) = b(10)$ satisfies $k_{6,1} \le   b(10) \le k_{6,2}$, then we increase $\tau_6(\D)$  by 1.   If  $\tau_2(\D) = |\D(2)_{a_1^{\pm n_1}}|$, where  $|\D(2)_{a_1^{\pm n_1}}|$ is the number of faces $\Pi_2$ in $\D$ such that  $\ph( \p \Pi_2) \equiv   a_1^{\pm n_1}$, and $b(3) = 1$, $b(4) = n_1$, then
we increase $\tau_2(\D)$  by 1.  If  $\tau_3(\D) = |\D(2)_{a_2^{ n_2}}|$, where  $|\D(2)_{a_2^{ n_2}}|$ is the number of faces $\Pi_3$ in $\D$ such that  $\ph( \p \Pi_3) \equiv   a_2^{ n_2}$, and $b(3) = 2$, $-\e_1 b(4) =   n_2$, then
we increase $\tau_3(\D)$  by 1.  If $\tau_4(\D) =  \sum_{ k_{4,1} \le  |\p \Pi | \le  k_{4,2} }  |\p \Pi |$, where the summation takes place  over all faces $\Pi$ in $\D$ such that  $ k_{4,1} \le  |\p \Pi | \le  k_{4,2}$, where $k_{4,1}, k_{4,2}$ are fixed integers with $0 \le k_{4,1} \le  k_{4,2} \le |W|$, and  $k_{4,1} \le  b(4) \le  k_{4,2}$,
then we increase $\tau_4(\D)$  by 1.

Now assume that $b(5) = 0$,  i.e., $b(7)= b(8)=0$, and so $b(4) =1$.  Then
 we set
\begin{align*}
  & b'(7)  :=  0 , \quad  b'(8) := 0 , \quad  b'(9) := b(9)  , \\
  &  b'(10)  :=   b(10) , \quad  b'(11) := 0 , \quad  b'(12) := 0 .
\end{align*}
We also update  the tuple  $\tau(\D)$ according to the information  that  $\D(\Omega)$ contains a face $\Pi_b$ such that  $\ph(\p \Pi_b) \equiv  a_{b(3)}^{-\e_1}$.

The case of  an extension of type 2 on the left is complete.
\medskip

Suppose that a \pbb\ $b' \in B_{i+1}$, $i \ge 1$, is obtained from $b \in B_{i}$ by an extension of type 2 on the right.
This case is similar to its ``left" analogue discussed above but is not quite symmetric and we will write down all necessary formulas.

Denote $a(b') =  a(b)e_2$, where $a(b'), a(b)$ are the arcs of $b', b$, resp., and $e_2$ is an edge of $P_W$, $\ph(e_2) = a_{b(3)}^{\e_2}$, $\e_2 = \pm 1$. Recall that $b$ has type PB2 and $b'$ has type PB1.

First we assume that $b(5) \ne 0$.  Then we set
\begin{align}\label{eqq1}
  & b'(7)  :=  0 , \quad  b'(8) := 0 , \quad  b'(9) :=  b(9)  , \\ \label{eqq2}
   &  b'(10)  :=  b(10) , \quad  b'(11) := 0 , \quad  b'(12) := 0 .
\end{align}
We also update  the sequence $\tau(\D)$ according to the information  that
$v_2 = \al(b(2))$ is a vertex  of $\D(\Omega)$ (which is still under construction) such that $\deg v_2 = b(11)$,
$\deg^F v_2 = b(12)$, and that $\D(\Omega)$ contains a face $\Pi_b$ such that $\ph(\p \Pi_b) \equiv  a_{b(3)}^{-\e_2 b(4)}$.

For example, if  $\tau_5(\D) = |\D(0)_{\ge 3}|$ and $b(11) \ge 3$, then $\tau_5(\D)$ is increased by 1. If
$\tau_6(\D) = |\D(0)^F_{[k_{6,1}, k_{6,2} ]}|$ and if $\deg^F \al(b(2)) = b(12)$ satisfies $k_{6,1} \le   b(12) \le k_{6,2}$, then we increase $\tau_6(\D)$  by 1.   If  $\tau_2(\D) = |\D(2)_{a_1^{\pm n_1}}|$, as above, and $b(3) = 1$, $b(4) = n_1$, then
we increase $\tau_2(\D)$  by 1.  If  $\tau_3(\D) = |\D(2)_{a_2^{ n_2}}|$, as above, and $b(3) = 2$, $-\e_1 b(4) =   n_2$, then we increase $\tau_3(\D)$  by 1.  If $\tau_4(\D) :=  \sum_{ k_{4,1} \le  |\p \Pi | \le  k_{4,2} }  |\p \Pi |$, as above,  and  $k_{4,1} \le  b(4) \le  k_{4,2}$, then we increase $\tau_4(\D)$  by 1.

Now we assume that $b(5) = 0$,   i.e., $b(7)= b(8)=0$, and so $b(4) =1$.  Then, to define $b$, we again use formulas \er{eqq1}--\er{eqq2}.  We also update  the tuple  $\tau(\D)$ according to the information  that  $\D(\Omega)$ contains a face $\Pi_b$ with $\ph(\p \Pi_b) \equiv  a_{b(3)}^{-\e_2}$.

The case of  an extension of type 2 on the right is complete.
\medskip

Suppose that a \pbb\ $b' \in B_{i+1}$, $i \ge 1$, results from $b \in B_{i}$ by an extension of type 3.
Denote $a(b') =  e_1 a(b)e_2$, where $a(b'), a(b)$ are the arcs of $b', b$, resp., and $e_1, e_2$ are edges of $P_W$, $\ph(e_1) = \ph(e_2)^{-1} = a_{j}^{\e}$, $\e = \pm 1$. Recall that both $b$ and $b'$ have  type PB1.  Then we set
\begin{align*}
  & b'(7)  :=  0 , \quad  b'(8) := 0 , \quad  b'(9) :=  1  , \\
   &  b'(10)  :=  0 , \quad  b'(11) := 0 , \quad  b'(12) := 0 .
\end{align*}
We also update  the tuple $\tau(\D)$ according to the information  that
$v= \al(b(1)) = \al(b(2))$ is a vertex  of $\D(\Omega)$ (which is still under construction) such that $\deg v = b(9)+1$ and
$\deg^F v = b(10)$.

For example, if  $\tau_5(\D) = |\D(0)_{\ge 3}|$ and $b(9) \ge 3$, then $\tau_5(\D)$ is increased by 1. If
$\tau_6(\D) = |\D(0)^F_{[k_{6,1}, k_{6,2} ]}|$ and if $\deg^F v = b(10)$ satisfies $k_{6,1} \le   b(10) \le k_{6,2}$, then we increase $\tau_6(\D)$  by 1.

The case of  an extension of type 3 is complete.
\medskip

Assume that  a \pbb\ $b' \in B_{i+1}$, $i \ge 1$, is obtained from $b \in B_{i}$ by a turn operation.
 Recall that $b$ has  type PB1 and $b'$ has  type PB2.  Then we set
\begin{align*}
  & b'(7)  :=  0 , \quad  b'(8) := 0 , \quad  b'(9) := b(9)+2  , \\
   &  b'(10)  :=   b(10)+1 , \quad  b'(11) := 0 , \quad  b'(12) := 0 .
\end{align*}

No change over $\tau(\D)$ is necessary under  a turn operation.
The case of  a turn operation is complete.
\medskip

Suppose that  a \pbb\ $b' \in B_{i+1}$, $i \ge 1$, is obtained from $b, c \in B_{i}$ by a merger operation.
Without loss of generality, we assume that the \pbb s $b, c$ which satisfy the condition $b(2)=c(1)$, i.e., $b$ is on the left of $c$. Recall that one of $b, c$ must have type PB1 and the other one has  type  PB1 or PB2. Consider three cases corresponding to the types of the  \pbb s $b, c$.

First assume that both $b, c$ have type  PB1.  Then we set
\begin{align*}
  & b'(7)  :=  0 , \quad  b'(8) := 0 , \quad  b'(9) := b(9) + c(9)  , \\
   &  b'(10)  := b(10) + c(10)  , \quad  b'(11) := 0 , \quad  b'(12) := 0 .
\end{align*}
No change over $\tau(\D)$ is made.

Assume that $b$ has type PB1 and $c$ has  type  PB2.
Then, keeping in mind that $b(2)=c(1)$, we set
\begin{align*}
  & b'(7)  :=  c(7) , \quad  b'(8) := c(8) , \quad  b'(9) := b(9) + c(9)  , \\
   &  b'(10)  := b(10) + c(10)  , \quad  b'(11) := c(11)  , \quad  b'(12) :=c(12)  .
\end{align*}
As above, no change over $\tau(\D)$ is necessary.

Assume that $b$ has type PB2 and $c$ has  type  PB1. If $b(5) \ne 0$, then we set
\begin{align*}
  & b'(7)  :=  b(7) , \quad  b'(8) := b(8) , \quad  b'(9) := b(9)  , \\
  &  b'(10)  := b(10)  , \quad  b'(11) := b(11) + c(9)  , \quad  b'(12) := b(12)+ c(10) .
\end{align*}

On the other hand, if $b(5) = 0$, i.e.,   $b(7) = b(8) = 0$,   then we set
\begin{align*}
  & b'(7)  :=  0 , \quad  b'(8) := 0 , \quad  b'(9) := b(9)+ c(9)   , \\
   &  b'(10)  := b(10)+ c(10)   , \quad  b'(11) := 0 , \quad  b'(12) := 0 .
\end{align*}
As before, no change over $\tau(\D)$ is made under  a merger operation.

The case of  a merger operation is complete.
\medskip

Our inductive definitions of extended \pbb s and modifications of $\tau(\D)$ are complete. To summarize, we conclude that
the described changes to the extended  \pbb s and to the tuple $\tau(\D)$
will guarantee that, if our nondeterministic  algorithm, which is based on Lemma~\ref{lem8} and which follows a sequence $\Omega$
of \EO s as above, accepts a pair of configurations $\psi_0$, $\psi_F$, where $\psi_0$ is the empty \PBS\ and  $\psi_F$ is a final \PBS , then the final tuple $\tau(\D)$, i.e., the tuple associated with $\psi_F$, will be equal to the tuple $\tau(\D(\Omega))$ which represents the tuple of actual parameters $\tau_1(\D(\Omega)), \dots, \tau_{s}(\D(\Omega))$  of the diagram $\D(\Omega)$.

Consider augmented configurations $\bar \psi = (B, \tau(\D))$, corresponding to a sequence $\Omega$ of \EO s as above, where $B$ is a system of extended \pbb s and $\tau(\D)$ is the tuple associated with $B$. Recall that, by Lemma~\ref{lem8}, we may assume that  $\Omega$ has size bounded by
$(11 |W|, C (\log |W| +1) )$. As in the proof of Theorem~\ref{thm1}, we note that the entries
$b(1), \dots,  |b(5)|,  b(6)$ are nonnegative and bounded by $\max(|W|, m)$. It follows from the definitions and Lemma~\ref{vk2}(a) that for the new entries $b(7), \dots,  b(12)$, we have that
$|b(7)|,  |b(8)|  \le  |b(5)| \le  |W|$ and
\begin{align*}
  0 \le    b(9),  b(10),  b(11), b(12)  \le  2 |W| .
\end{align*}
By the definition, every entry $\tau_i(\D)$  in  $\tau(\D)$ satisfies $| \tau_i (\D) | \le C_\tau |W|$, $i = 1, \dots, s_\tau$. Since  $s_\tau$, $C_\tau$ are constants, we conclude that the space needed to store an augmented configuration $\bar \psi = (B, \tau(\D))$ is  $O((\log |W|)^2)$.
Therefore, utilizing Savitch's algorithm as before, which now applies to augmented configurations of the form $\bar \psi = (B, \tau(\D))$, we will be able to find out, in deterministic space $O((\log |W|)^3)$, whether the algorithm accepts a pair $(B_0, \tau^0(\D))$,  $(B_F, \tau^F(\D))$,  where
$B_0$ is empty, $\tau^0(\D)$ consists of all zeros, $B_F$ is a final \PBS , and $\tau^F(\D)$ is a final tuple corresponding to $B_F$. Since the space needed to store a final configuration $(B_F, \tau^F(\D))$ is $O(\log |W|)$, we will be able to compute, in deterministic space $O((\log |W|)^3)$,  a lexicographically smallest tuple $\wht \tau^F(\D))$ relative to the property that the pair of augmented  configurations
$(B_0, \tau^0(\D))$,  $(\wht B_F, \wht \tau^F(\D))$, where $\wht B_F$ is some final \PBS , is accepted by Savitch's algorithm. Now we can do the same counter trick as in the beginning of this proof, see \er{eqom1}, to compute, in deterministic space $O((\log |W|)^3)$,  a sequence $\Omega^*$ of \EO s and the corresponding to   $\Omega^*$ sequence of augmented  configurations which turns
$(B_0, \tau^0(\D))$ into $(\wht B_F, \wht \tau^F(\D))$ and which is constructed by Savitch's algorithm. Finally, as  in the beginning of the proof of Theorem~\ref{thm3}, using the sequence
$\Omega^*$, we can construct a desired diagram $\D^* = \D(\Omega^*)$  so that $\tau(\D^*) = \wht \tau^F(\D)$ and this construction can be done in deterministic space $O((\log |W|)^3)$. This completes the proof of the space part of Theorem~\ref{thm3}.
\medskip

To prove the time part  of Theorem~\ref{thm3}, we review the proof of $\mathsf P$ part of Theorem~\ref{thm1}. We observe that our arguments enable us, concurrently with computation of the  number $\mu_2(W[i,j,k,l])$ for every parameterized word $W[i,j,k,l] \in \mathcal S_2(W)$ such that $W(i,j)a_k^\ell \overset {\GG_2} = 1$,  to
inductively construct a minimal diagram  $\D[i,j,k,l]$ over \er{pr1}  such that
$\ph( \p |_0  \D  [i,j,k,l]) \equiv  W(i,j)a_k^\ell$.  Details of this construction are straightforward in each of the subcases considered in the proof of $\mathsf P$ part of Theorem~\ref{thm1}. Note that
the time needed to run this extended algorithm is still $O( |W|^4 \log |W| )$.

Theorem~\ref{thm3} is proven.
\end{proof}

\begin{C2} There is a deterministic algorithm that, for given word $W$ over the alphabet $\A^{\pm 1}$ such that
$W \overset{\FF(\A)}{=} 1$, where  $\FF(\A) = \langle \A \  \|  \  \varnothing \rangle$ is the free group over $\A$, constructs a pattern of cancellations of letters in $W$ that result in the empty word and the  algorithm operates in space $O( (\log |W|)^3 )$.

Furthermore, let $\D$ be a disk diagram over $\FF(\A) $ that corresponds to a pattern of cancellations of letters in $W$, i.e., $\ph(\p \D) \equiv W$, and let
$$
\tau(\D) = (\tau_1(\D), \ldots, \tau_{s_\tau}(\D))
$$
be a tuple of integers, where the absolute value $| \tau_i(\D) |$ of each  $\tau_i(\D)$  represents the number  of vertices in $\D$ of certain degree. Then, also in  deterministic  space $O( (\log |W|)^3 )$, one can algorithmically construct such a diagram $\D$ which is smallest relative to the tuple $\tau(\D)$.
\end{C2}

\begin{proof}   Corollary~\ref{cor2} is immediate from  Theorem~\ref{thm3} and does not need a separate proof. Nevertheless,
it is worth mentioning  that, in the case of presentation
$\FF(\A) = \langle \A \  \|  \  \varnothing \rangle$ of  the free group $\FF(\A)$ over $\A$, our definitions of brackets, \pbb s,  \EO s and subsequent arguments become significantly simpler.
Since there are no relations, we do not define brackets of type B2, nor we define \pbb s of type PB2. In particular, for every bracket or  \pbb\ $b$,  the entries $b(3), b(4), b(5)$ are always zeroes and could be disregarded. In the extended version of a  \pbb\ $b$, defined for minimization of $\D$ relative to $\tau(\D)$,  the entries $b(7), b(8)$, $b(10), b(12)$ are also always zeroes and could be disregarded.  Furthermore, there is no need to consider extensions of type 1, 2 and turn operations. Hence, in this case, we only need \EO s which are additions, extensions of type 3 and mergers over brackets of type B1, \pbb s  of type PB1, and over their systems.
\end{proof}

\section{Construction of Minimal Diagrams over \eqref{pr3b} and Proof of Theorem \ref{thm4} }

\begin{T4}  Suppose that $W$ is a word  over the alphabet
$\A^{\pm 1}$ such that the bounded word problem for presentation \eqref{pr3b}  holds for the pair $(W, n)$.
Then a minimal diagram $\D$ over \eqref{pr3b} such that  $\ph( \p  \D) \equiv W$ can be algorithmically  constructed in deterministic space $O( \max(\log |W|, \log n)(\log |W|)^2)$  or in deterministic time $O( |W|^4)$.

In addition, if $|n_1 | = |n_2 |$ in \eqref{pr3b}, then  the minimal diagram problem for  presentation \eqref{pr3b}  can be solved in deterministic space $O( (\log |W|)^3 )$   or in deterministic time $O( |W|^3\log |W|)$.
\end{T4}

\begin{proof} First we  prove the space part of Theorem~\ref{thm4}.

Let $W$ be a nonempty word over $\A^{\pm 1}$ such that $W \overset{\GG_3}{=} 1$, where $\GG_3$ is defined by  presentation \eqref{pr3b}, and there is a \ddd\  $\D$ over \eqref{pr3b} such that $\ph(\p \D) \equiv W$ and $|\D(2)| \le n$, i.e., the bounded word problem has a positive solution for the pair $(W, n)$.

It follows from Lemma~\ref{8bb} that there is a finite sequence $\Omega$
of \EO s such that  $\Omega$ converts the empty \PBS\ for $W$ into a final one and  $\Omega$ has other properties stated in Lemma~\ref{8bb}. As in the proof of  Theorem~\ref{thm2},  Lemma~\ref{8bb} gives us a
nondeterministic algorithm which runs in time $O(|W|)$ and space $O((\max(\log|W|,\log n) \log|W|)$ and which accepts a word $W$   over $\A^{\pm 1}$  \ifff\  the bounded word problem has a positive solution for the pair $(W, n)$.  Note that here the big-$O$ constants can be written down explicitly (see the proof of Theorem~\ref{thm2}).

Furthermore,  as in the proof of  Theorem~\ref{thm2},  using Savitch's theorem \cite{Sav}, see also \cite{AB}, \cite{PCC}, we obtain a deterministic algorithm which runs in space
\begin{equation}\label{sp}
O((\max(\log|W|,\log n) (\log|W|)^2) ,
 \end{equation}
and which computes a minimal integer $n(W)$, $0 \le n(W) \le n$, such that there is a \ddd\ $\D$ over \eqref{pr3b} so that $\ph(\p \D) \equiv W$ and $|\D(2)| = n(W)$. To do this, we can check by  Savitch's algorithm  whether the empty \PBS\ $B_0$ can be transformed  by \EO s
into a final \PBS\ $\{ b_F \}$, where $b_F(4) = n'$ for  $n' = 0,1,2,\dots, n$.

Without loss of generality, we may assume that $n(W) >0$ because if  $n(W)=0$ then  Corollary~\ref{cor2} yields the desired result.

Having found this number  $n(W) \ge 1$ in deterministic space \eqref{sp}, we will run
Savitch's algorithm again for the pair $B_0$ and $\{ b^*_F \}$, where $b^*_F = (0, |W|, 0, n(W))$, and use the counter trick, as in the proof of Theorem~\ref{thm3}, to compute  an instance of a sequence $\Omega$ of \EO s
and the corresponding to  $\Omega$ sequence $B_0, B_1, \dots, B_\ell = \{ b^*_F \}$   of \PBS s.
After computing these sequences   $\Omega$  and $B_0, B_1, \dots, B_\ell = \{ b^*_F \}$, our algorithm halts. Denote this modification of Savitch's algorithm by ${\mathfrak{A}}_{n}$.

Denote
$$
\Omega = (\omega_1, \dots, \omega_\ell),
$$
where $\omega_1, \dots, \omega_\ell$ are \EO s and, as above, let
$B_0,  B_1,  \dots, B_\ell$ be the corresponding to  $\Omega$  sequence of \PBS s so that $B_j$ is obtained by application of $\omega_j$ to $B_{j-1}$, so  $B_j = \omega_j(B_{j-1})$.
We also let
$$
(\chi_1, \dots, \chi_{\ell_2} )
$$
denote the subsequence of
$\Omega$ that consists of all extensions of type 2. Also, let $c_i \in B_{j_{i}-1}$ denote the \pbb\ to which the \EO\  $\chi_i$ applies and let $d_i\in B_{j_{i}}$ denote the \pbb\ obtained from $c_i$ by application of $\chi_i$, so $d_i = \chi_i(c_i)$.

According to the proof of Lemma~\ref{7bb}, the sequence $\Omega$ or, equivalently, the  sequence $B_0,  B_1,  \dots, B_\ell$,  defines a \ddd\ $\D(\Omega)$ which can be inductively constructed as in the proof of Claim (D1). Furthermore, according to the proof of  Claim (D1),   all faces of $\D(\Omega)$ are contained
in $\ell_2$  $a_2$-bands $\Gamma_1, \dots, \Gamma_{\ell_2}$ which are in bijective correspondence with
\EO s $\chi_1, \dots, \chi_{\ell_2}$ so that $\Gamma_i$ corresponds to $\chi_i$, $i =1, \dots, \ell_2$.
Denote
\begin{equation}\label{bgi}
\p \Gamma_i = f_i t_i g_i u_i ,
 \end{equation}
where $f_i, g_i$ are edges of $\p \D(\Omega)$, $\ph(f_i) = \ph(g_i)^{-1} \in \{  a_2^{\pm 1} \}$,  and $t_i, u_i$ are simple paths whose labels are powers of $a_1$. Hence,
\begin{equation*}
\sum_{i=1}^{\ell_2} | \Gamma_i(2) |  = | \D(\Omega)(2)  | = n(W) .
 \end{equation*}

It follows from the definitions that if $a(c_i)$, $a(d_i)$ are the arcs of the \pbb s $c_i, d_i$,
resp., and  $e_{1,i}  a(c_i) e_{2,i}$ is a subpath of the path $P_W$, where $e_{1,i}, e_{2,i}$ are edges of $P_W$,  then,  renaming  $f_i \leftrightarrows g_i$,  $t_i \leftrightarrows u_i$ if necessary,  we have the following equalities
\begin{gather*}
\al(e_{1,i}) = f_i, \ \ \al(e_{2,i}) = g_i, \ \ \ph(f_{i}) = \ph(g_i)^{-1} = a_2^{\e_i} , \ \ \e_i = \pm 1 , \\
\ph(t_{i}) \equiv a_1^{c_i(3)}, \quad \ph(u_{i}) \equiv a_1^{-d_i(3)} ,  \quad \al(a(c_i)) = t_i,
\quad \al(a(d_i)) = u_i^{-1}
\end{gather*}
for every $i = 1, \dots, \ell_2$, see Fig.~9.1.
\vskip 2mm

\begin{center}
\begin{tikzpicture}[scale=.57]
\draw [-latex](-4,-3) --(-4,-1.3);
\draw [-latex](4,0) --(4,-1.7);
\draw [-latex](4,-3) --(0,-3);
\draw [-latex](-4,0) --(-0,0);
\draw  (-4,0) rectangle (4,-3);
\node at (-5.9,-1.5) {$f_i = \alpha(e_{1,i}) $};
\node at (5.8,-1.5) {$ g_i= \alpha(e_{2,i})$};
\node at (0,.7) {$t_i = \alpha(a(c_i))$};
\node at (0,-2.3) {$u_i = \alpha(a(d_i))^{-1}$};
\node at (0,-1) {$\Gamma_i$};
\node at (0,-4) {Fig.~9.1};
\end{tikzpicture}
\end{center}

Note that each of these $a_2$-bands  $\Gamma_1, \dots, \Gamma_{\ell_2}$  can be constructed,
as a part of the sequence $\Omega$, in deterministic space \er{sp} by running the algorithm $\mathfrak{A}_{n}$. For instance, if we wish to retrieve information about the diagram  $\Gamma_i$, we would be looking for the $i$th
extension of type 2 in the sequence  $\Omega$, denoted above by
$\chi_i$. We also remark that the parameters $(c_i, \e_i)$, associated with the \EO\ $\chi_i$,
contain all the information about the diagram   $\Gamma_i$.

Observe that we are not able to keep all these pairs $(c_i, \e_i)$, $i =1, \dots, \ell_2$,
in our intermediate computations aimed to  construct $\D(\Omega)$ in polylogarithmic space because doing this would take polynomial space. However, we can reuse space, so we keep one pair $(c_i, \e_i)$, or a few pairs, in memory at any given time and, when we need a different pair  $(c_j, \e_j)$, we erase $(c_i, \e_i)$ and compute the new pair $(c_j, \e_j)$ by running the algorithm $\mathfrak{A}_{n}$ as discussed above.

We can also output all these pairs $(c_i, \e_i)$, $i =1, \dots, \ell_2$, as a part of our description of the
\ddd\  $\D(\Omega)$ still under construction.

Thus, in deterministic space \er{sp}, we have obtained the information about $a_2$-bands $\Gamma_1,
\dots, \Gamma_{\ell_2}$ of  $\D(\Omega)$ that contain all of the faces of  $\D(\Omega)$ and it remains
to describe, working in space \er{sp}, how the edges of $(\p \D(\Omega))^{\pm 1}$ and those of
$(\p \Gamma_1)^{\pm 1},
\dots, (\p \Gamma_{\ell_2})^{\pm 1}$ are attached to each other.
\medskip

Observe that by taking the subdiagrams $\Gamma_1, \dots, \Gamma_{\ell_2}$ out of  $\D(\Omega)$, we will produce  $\ell_2 +1$ connected components $\D_1, \dots, \D_{\ell_2+1}$  which are  \ddd s with no faces, i.e., $\D_1, \dots, \D_{\ell_2+1}$  are  \ddd s  over the free group $F(\A) = \langle \A \ \| \ \varnothing    \rangle $. Note that the boundary of each \ddd\ $\D_i$ has a natural factorization
\begin{equation}\label{Dbi}
\p \D_i = q_{1,i}  r_{1,i} \ldots  q_{k_i,i}  r_{k_i,i} ,
 \end{equation}
where every   $q_{j,i} $   is a subpath  of the cyclic path $\p \D(\Omega)$, perhaps,  $|q_{j,i}|=0$,
and  every   $r_{j,i} $   is one of the paths $t_1^{-1}, u_1^{-1}, \dots, t_{\ell_2}^{-1}, u_{\ell_2}^{-1}$,
 $|r_{j,i} | > 0$, see Fig.~9.2.

\begin{center}
\begin{tikzpicture}[scale=.79]
\draw  (0,0) circle (3);
\draw  plot[smooth, tension=.7] coordinates {(-2.65,-1.4) (-2,0) (-2.8,1.1)     };
\draw  plot[smooth, tension=.7] coordinates {(-2.15,-2.1) (-1,0)   (-2.3,1.9)};
\draw  plot[smooth, tension=.7] coordinates {(-1.1,2.8)(0,1.8) (1.1,2.8)};
\draw  plot[smooth, tension=.7] coordinates {(-1.8,2.4)(0,.9) (1.8,2.4)};
\draw  plot[smooth, tension=.7] coordinates {(2.65,-1.4) (2,0) (2.8,1.1)     };
\draw  plot[smooth, tension=.7] coordinates {(2.15,-2.1) (1,0)   (2.3,1.9)};
\draw [-latex](.1,-3) --(-.1,-3);
\draw [-latex](-.1,3) --(.1,3);
\draw [-latex](-.1,.9) --(.1,.9);
\draw [-latex](.06,1.8) --(-.06,1.8);
\draw [-latex](-3.,-.10) --(-3,.1);
\draw [-latex](-2,.10) --(-2,-.1);
\draw [-latex](2.03,2.2) --(2.13,2.1);
\draw [-latex](-1,-.1) --(-1,.1);
\draw [-latex](1,.1) --(1,-.1);
\draw [-latex](2,-.1) --(2,.1);
\draw [-latex](3,.1) --(3,-.1);
\draw [-latex](-2.12,2.1) --(-2.05,2.19);
\node at (3.6,0) {$q_{1,2}$};
\node at (2.5,0) {$\Delta_2$};
\node at (1.55,0) {$r_{1,2}$};
\node at (.54,0) {$r_{2,4}$};
\node at (-3.6,0) {$q_{1,3}$};
\node at (-2.5,0) {$\Delta_3$};
\node at (-1.55,0) {$r_{1,3}$};
\node at (-.54,0) {$r_{3,4}$};
\node at (-2.46,2.4) {$q_{1,4}$};
\node at (2.46,2.4) {$q_{2,4}$};
\node at (0,3.34) {$q_{1,1}$};
\node at (0,2.4) {$\Delta_1$};
\node at (0,1.5) {$r_{1,1}$};
\node at (0,.5) {$r_{1,4}$};
\node at (0,-2.6) {$q_{3,4}$};
\node at (-3.6,0) {$q_{1,3}$};
\node at (-2.5,0) {$\Delta_3$};
\node at (-1.55,0) {$r_{1,3}$};
\node at (0,-1.2) {$\Delta_4$};
\node at (0,-3.7) {Fig.~9.2};
\node at (-.9,2) {$\Gamma_2$};
\node at (-1.8,-0.9) {$\Gamma_1$};
\node at (2,0.9) {$\Gamma_3$};
\node at (-2.5,-2.7) {$\Delta$};
\end{tikzpicture}
\end{center}

It follows from the definitions and Lemma~\ref{8bb} that
 \begin{equation}\label{deli}
\sum_{i=1}^{\ell_2}  | \p \D_i | <  | \p \D(\Omega) |  +
| \D(\Omega)(2) |( |n_1| + |n_2|) = O(\max ( |W|, n(W)  )) ,
 \end{equation}
 as $| \D(\Omega)(2) | = n(W)$ and  $| \p \D(\Omega)| = |W|$.
 \medskip

Suppose that a  vertex $v \in P_W$ is given, $0 \le v \le |P_W| = |W|$.  Then there is a unique \ddd\
$\D_{i_v}$ among $\D_1, \dots, \D_{\ell_2+1}$ such that $\al(v) \in \p \D_{i_v}$.
We now describe an algorithm $\mathfrak{B}_{v}$ that, for an arbitrary edge $e$ of the boundary
$\p  \D_{i_v}$,  computes in deterministic space \er{sp} the label $\ph(e)$ of $e$ and the unique location
of $e$ in one of the paths $\p |_0 \D(\Omega)$, $t_1^{-1}, u_1^{-1}, \dots, t_{\ell_2}^{-1}, u_{\ell_2}^{-1}$. To do this, we will go around the boundary path $\p |_{\al(v)} \D_{i_v}$, starting at the vertex
$\al(v)$. Initializing the parameters $v^* , d^*$, we set
$$
v^* := v, \qquad  d^* := 1 .
$$

If $e$ is the $k$th edge of $\p |_{\al(v)} \D_{i_v}$, $1 \le k \le |\p \D_{i_v}|$, then we  also say that
$e$ is the edge of $\p |_{\al(v)} \D_{i_v}$ {\em with number} $k$.

Let $e_c$ denote the edge of $P_W$ such that $(e_c)_- = v^*$ if $v^* < |W|$ and $(e_c)_- = 0$ if $v^* = |W|$. We now consider three possible Cases 1--3.
 \medskip

Case 1.  \ Assume that $\al(e_c) = f_i$ for some $i = 1, \dots, \ell_2$, see the definition \er{bgi} of $\p \Gamma_i$.
Note that such an equality $\al(e_c) = f_i$ can be verified in space \er{sp} by checking, one by one,
all $a_2$-bands $\Gamma_1, \dots, \Gamma_{\ell_2}$.

If $v^* = v$, then the first $|t_i|$ edges of the boundary path
$$
\p |_{\al(v)} \D_{i_v} =  t_i^{-1} \ldots
$$
are the edges of the path $t_i^{-1}$, see Fig.~9.3(a).

In the general case when $v^*$ is arbitrary, we obtain that the edges of the boundary path
$\p |_{\al(v)} \D_{i_v}$ with numbers $d^*, \dots, d^*+ |t_i|-1$ are consecutive edges of the path $t_i^{-1}$ starting from
the first one.

Let $v' \in P_W$ denote the vertex such that $\al(v') = (t_i)_-$, see Fig.~9.3(a). Also, denote
$d' := d^* +  |t_i|$.
 \medskip

Case 2.  \ Assume that $\al(e_c) = g_i$ for some $i = 1, \dots, \ell_2$, see  \er{bgi}.
As in Case~1, we can verify this equality in space \er{sp}.

If $v^* = v$, then the first $|u_i|$ edges of
the  boundary  path
$$
\p |_{\al(v)} \D_{i_v} =  u_i^{-1} \ldots
$$
are the edges of the path $u_i^{-1}$, see Fig.~9.3(b).

In the general case when $v^*$ is arbitrary, we obtain that the edges of the  boundary  path
$\p |_{\al(v)} \D_{i_v}$ with numbers $d^*, \dots, d^*+ |u_i|-1$ are consecutive  edges of the path $u_i^{-1}$ starting from the first one.

Let $v' \in P_W$ denote the vertex such that $\al(v') = (u_i)_-$, see Fig.~9.3(b). Also, denote
$d' := d^* +  |u_i|$.

\begin{center}
\begin{tikzpicture}[scale=.7]
\draw  plot[smooth, tension=.5] coordinates {(-4,-3) (-4,2) (-2,2.8) (0,2) (0,-3)};
\draw(-4.1,.5) -- (0.1,.5);
\draw(-4.1,-.7) -- (0.1,-.7);
\draw  [-latex]  (-4.1,.5) -- (-2.,.5);
\draw [-latex]  (0.1,-.7)  --(-2.1,-.7) ;
\draw [-latex]  (-4.12,-.1)  --(-4.12,-.0) ;
\draw [-latex]  (.12,-.1)  --(.12,-.19) ;
\node at (-2,2.8) {};
\draw [-latex](-2.1,2.8) --(-2.,2.8);
\node at (-2,-.1) {$\Gamma_i$};
\node at (-4.6,-0.07) {$f_i$};
\node at (.6,-0.1) {$g_i$};
\node at (-2.1,0.9) {$u_i$};
\node at (-2.1,-1.2) {$t_i$};
\node at (-2,-2) {$ \Delta_{i_v}$};
\node at (-2,3.25) {$\partial \Delta$};
\draw  (-4.1,-.7) [fill = black] circle (.05);
\draw  (.1,-.7) [fill = black] circle (.05);
\node at (-4.9,-0.7) {$\alpha(v^*)$};
\node at (0.85,-.7) {$\alpha(v')$};
\node at (-2,-3.3) {Fig.~9.3(a)};
\draw  plot[smooth, tension=.5] coordinates {(5,-3) (5,2) (7,2.8) (9,2) (9,-3)};
\draw(4.9,.5) -- (9.1,.5);
\draw(4.9,-.7) -- (9.1,-.7);
\draw  [-latex]  (4.9,.5) -- (7.,.5);
\draw [-latex]  (9.1,-.7)  --(6.9,-.7) ;
\draw [-latex]  (4.88,-.1)  --(4.88,-.0) ;
\draw [-latex]  (9.12,-.1)  --(9.12,-.19) ;
\draw [-latex](6.9,2.8) --(7,2.8);
\node at (7,-.1) {$\Gamma_i$};
\node at (4.4,-0.07) {$f_i$};
\node at (9.6,-0.1) {$g_i$};
\node at (6.9,0.9) {$u_i$};
\node at (6.9,-1.2) {$t_i$};
\node at (7,1.7) {$\Delta_{i_v}$};
\node at (7,3.25) {$\partial \Delta$};
\draw  (4.88,.5) [fill = black] circle (.05);
\draw  (9.12,.5) [fill = black] circle (.05);
\node at (4.1,0.5) {$\alpha(v')$};
\node at (9.85,.5) {$\alpha(v^*)$};
\node at (7,-3.3) {Fig.~9.3(b)};
\end{tikzpicture}
\end{center}

Case 3. \ Suppose that $\al(e_c)$ is not one  of the edges $f_i$, $g_i$ of   $a_2$-bands $\Gamma_1,
\dots, \Gamma_{\ell_2}$. As above, we can verify this claim in space \er{sp}.

If $v^* = v$, then the first  edge of
the  boundary   path
$$
\p |_{\al(v)} \D_{i_v} =  \al(e_c) \ldots
$$
is $\al(e_c)$.

In the general case when $v^*$ is arbitrary, we have that the edge of the  boundary  path
$\p |_{\al(v)} \D_{i_v}$ with number $d^*$ is $\al(e_c)$, see Fig.~9.4.

Denote  $v':= (e_c)_-$ and let $d' := d^* + 1$, see Fig.~9.4.

\begin{center}
\begin{tikzpicture}[scale=.64]
\draw  plot[smooth, tension=.5] coordinates {(-4,-1.6) (-4,2) (-2,2.8) (0,2) (0,-1.6)};
\draw [-latex]  (-4.1,-.1)  --(-4.1,-.0) ;
\draw [-latex](-2.1,2.8) --(-2.,2.8);
\node at (-4.6,-0.07) {$e_c$};
\node at (-2.8,0) {$\Delta_{i_v}$};
\node at (-2,3.25) {$\partial \Delta$};
\draw  (-4.06,-.7) [fill = black] circle (.05);
\draw  (-4.1,.5) [fill = black] circle (.05);
\node at (-4.9,-0.7) {$\alpha(v^*)$};
\node at (-4.9,.5) {$\alpha(v')$};
\node at (-2,-2.2) {Fig.~9.4};
\draw  plot[smooth, tension=.7] coordinates {(-2.7,2.72)   (-1.8,1.4) (0.1,1.2)  };
\draw  plot[smooth, tension=.7] coordinates {(-3.5,2.45)  (-2.2,.8) (0.1,.5) };
\node at (-2.5,1.6) {$\Gamma_j$};
\end{tikzpicture}
\end{center}

The foregoing mutually exclusive Cases~1--3 describe a cycle, including the first one when $v^* = v$,
of the  algorithm $\mathfrak{B}_{v}$ which is finished by reassignment $v^* := v'$ and  $d^*  := d'$.

We now repeat the above cycle with new values of parameters  $v^*, d^*$, whose storage along with storage of the original vertex $v$ requires additional $$
O(\max(\log n, \log |W|))
$$ space, as follows from the definitions and inequality \er{deli}.
We keep on repeating this cycle until we obtain at certain point that the vertex $v^*$ is again $v$ which means that all the edges of the path $\p |_{\al(v)} \D_{i_v}$  have been considered,  we are back at $v$ and we stop.  Note that $ d^* = |\p |_{\al(v)} \D_{i_v}|$ when we stop.
\medskip

We remark that, when running the algorithm $\mathfrak{B}_{v}$ we can stop at once and abort computations whenever we encounter the value of parameter $v^*$ less than $v$, because, in this  case,  the \ddd\ $\D_{i_v}$ which contains the vertex $v$ is identical to  $\D_{i_{v^*}}$ and the information about the edges of $\D_{i_{v^*}}$ can be obtained by running the  algorithm $\mathfrak{B}_{v^*}$ instead. Thus, we now have that either the  algorithm $\mathfrak{B}_{v}$  aborts at some point or $\mathfrak{B}_{v}$ performs several cycles and stops when $v = v^*$ and $d^* >1$, in which case we say that the  algorithm $\mathfrak{B}_{v}$ {\em accepts} the diagram $\D_{i_v}$.

By running the  algorithms $\mathfrak{B}_{v}$ consecutively  for  $v=0, 1, \dots$, with possible stops as described above, we will get the information about the edges of all \ddd s   $\D_1, \dots, \D_{\ell_2+1}$ so that for every $j = 1, \dots, \ell_2+1$, there will be a unique vertex $v(j) \in P_W$ such that
$\D_j = \D_{i_{v(j)}}$  and the  algorithm $\mathfrak{B}_{v(j)}$  accepts $\D_j$.
Recall that, in the proof of Theorem~\ref{thm1}, see also Corollary~\ref{cor2}, we constructed a deterministic algorithm  $\mathfrak{C}$ such that, when given a word $U$ so that $U = 1$ in the free group $F(\A)$, the algorithm  $\mathfrak{C}$ outputs a diagram $\D_U$ over $F(\A) = \langle \A \ \| \ \varnothing    \rangle $ such that $\ph(\p |_{0} \D_U) \equiv U$ and the algorithm  $\mathfrak{C}$ operates in space $O((\log |U|)^3)$. Next, we observe that our ability  to deterministically compute, in  space \er{sp},  the label $\ph(e)$ of every edge $e$ of the boundary path $\p |_{\al(v)} \D_{i_v}$, as well as the location of $e$ in one of the paths $\p |_0 \D(\Omega)$, $t_1^{-1}, u_1^{-1}, \dots, t_{\ell_2}^{-1}, u_{\ell_2}^{-1}$, combined together with the algorithm  $\mathfrak{C}$,
means that we can construct a \ddd\ $\wtl \D_{i_v}$ over  $F(\A)$ such that $\ph (\wtl \D_{i_v} ) \equiv \ph (\D_{i_v} )$ in deterministic space
$$
O( |\p \D_{i_v}|^3 ) = O( (\max(\log |W|, \log n(W))^3)) ,
$$
see \er{deli}. Replacing the \ddd s   $\D_1, \dots, \D_{\ell_2+1}$ in  $\D(\Omega)$ with their substitutes
$\wtl \D_1, \dots, \wtl \D_{\ell_2+1}$, we will obtain a  disk diagram $\wtl \D(\Omega)$ over
\eqref{pr3b} such that
$$
\ph (\p \wtl \D(\Omega)) \equiv \ph (\p  \D(\Omega)) ,  \qquad  | \wtl \D(\Omega)(2)| = | \D(\Omega)(2)| = n(W) , $$
i.e., $\wtl \D(\Omega)$ is as good as $\D(\Omega)$.

Since the \ddd s   $ \wtl \D_1, \dots,  \wtl \D_{\ell_2+1}$ along with $a_2$-bands $\Gamma_1, \dots, \Gamma_{\ell_2}$ constitute the entire diagram $ \wtl \D(\Omega)$, our construction of $\wtl \D(\Omega)$, performed in space \er{sp}, is now complete.
\smallskip

It remains to prove the additional claim of Theorem~\ref{thm4}.  We start with the following.

\begin{lem}\label{91}   Suppose $\D_0$ is a reduced \ddd\ over \eqref{pr3b}, where $|n_1| = |n_2|$.
Then the number $| \D_0(2)|$ of faces in $\D_0$ satisfies  $| \D_0(2)| \le \tfrac {1}{4|n_1|} |\p \D_0|^2$.
\end{lem}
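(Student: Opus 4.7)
The plan is to prove the inequality by induction on $N := |\D_0(2)|$, the base case $N = 0$ being trivial. For the inductive step, the key is to locate an ``innermost'' $a_2$-band $\Gamma$ of $\D_0$ one of whose sides contains no faces, cut $\Gamma$ together with that side off, and bound the remainder via the induction hypothesis applied to the strictly smaller reduced disk diagram that remains.

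First I would show that such an innermost band exists. By Lemma~\ref{b1}, every $a_2$-band $\Gamma$ of $\D_0$ has a simple boundary $\p\Gamma = f_1 s_1 f_2 s_2$ with $f_1, f_2 \in \p\D_0$, so $\Gamma$ splits $\D_0$ into two disk subdiagrams $\D_0', \D_0''$ meeting $\Gamma$ along $s_1$ and $s_2$ respectively. Pick $\Gamma$ minimizing the number of faces of the smaller side; if that minimum were positive, that side would contain some other band $\Gamma'$, one of whose two sides would lie strictly inside the chosen side of $\Gamma$ and hence contain strictly fewer faces, contradicting the choice. So we may assume $|\D_0'(2)| = 0$, i.e.\ $\D_0'$ is a tree. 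Set $k := |\Gamma(2)|$. The edges $f_1, f_2$ split $\p\D_0$ into two arcs $q_1, q_2$ with $q_1$ bordering $\D_0'$, so $|\p\D_0| = 2 + |q_1| + |q_2|$ and $\p\D_0'$ traverses $s_1$ and $q_1$. Since $\D_0'$ is a tree, $\ph(q_1) \overset 0 = \ph(s_1)^{\mp 1}$, and by Lemma~\ref{b1}, $\ph(s_1) \equiv a_1^{\pm|n_1|k}$, whence $|q_1| \ge |n_1| k$.

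Now let $\D_0''$ be the subdiagram obtained by removing $\Gamma$ and $\D_0'$ from $\D_0$; it is reduced (being a subdiagram of a reduced diagram), has $N - k$ faces, and its boundary consists of $s_2$ together with $q_2$. Using $|n_1| = |n_2|$ and $|q_1| \ge |n_1|k$,
$$|\p\D_0''| = |n_2| k + |q_2| = |n_1| k + (|\p\D_0| - 2 - |q_1|) \le |\p\D_0| - 2.$$
By the induction hypothesis applied to $\D_0''$, $N - k \le \tfrac{1}{4|n_1|}(|\p\D_0| - 2)^2$, so
$$N \le k + \tfrac{1}{4|n_1|}(|\p\D_0|-2)^2 \le \tfrac{1}{4|n_1|}|\p\D_0|^2$$
provided $4|n_1|k \le |\p\D_0|^2 - (|\p\D_0|-2)^2 = 4|\p\D_0| - 4$, i.e.\ $|n_1|k \le |\p\D_0| - 1$. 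But $|\p\D_0| \ge 2 + |q_1| \ge 2 + |n_1|k$, so in fact $|n_1|k \le |\p\D_0| - 2$, closing the induction.

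The only nonroutine step is the innermost-band existence argument; the rest is elementary algebra. The hypothesis $|n_1| = |n_2|$ enters crucially only in the boundary-length estimate for $\D_0''$: without it, the term $|n_2|k - |q_1|$ could be as large as $(|n_2| - |n_1|)k$, which would destroy the $-2$ gain needed to close the induction and would render this approach ineffective in the general Baumslag--Solitar case.
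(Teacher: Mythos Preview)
Your proof is correct. The innermost-band argument works: if $N>0$, any face lies in an essential $a_2$-band, and since the sides $s_1,s_2$ of a band consist only of $a_1$-edges, any essential $a_2$-band through a face of $\D_0'$ must have both ends on $q_1$, forcing one of its two sides to lie strictly inside $\D_0'$; this gives the contradiction with minimality. The length estimate $|q_1|\ge |n_1|k$ follows from $\ph(\p\D_0')\overset0=1$ together with $\ph(s_1)\equiv a_1^{\pm |n_1|k}$, and the algebra closing the induction is clean.

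Your route is genuinely different from the paper's. The paper does not induct: it subdivides each face of $\D_0$ into $|n_1|$ unit ``squares'' by introducing auxiliary generators $b_1,\dots,b_{|n_1|-1}$ and passing to a new presentation in which every relator has length~$4$. In the resulting diagram $\D_{0,b}$ one has two transverse families of bands ($a_1$-bands and $a_2$-bands), and the paper proves a separate lemma that any $a_1$-band and $a_2$-band cross at most once. Since each face is a unique crossing and each family has at most $\tfrac12|\p\D_0|$ bands, one gets $|\D_{0,b}(2)|\le\tfrac14|\p\D_0|^2$, whence $|\D_0(2)|\le\tfrac1{4|n_1|}|\p\D_0|^2$. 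Your argument is more elementary---it needs only Lemma~\ref{b1} and avoids the auxiliary presentation and the crossing lemma entirely---while the paper's approach gives a cleaner global picture (faces as intersection points of two band systems) that makes the quadratic bound geometrically transparent.
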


\begin{proof} Denote $n_0 := |n_1|$ and set $n_2 = \kappa n_1$, where $\kappa = \pm 1$.
Consider the presentation
\begin{equation}\label{pr3bac}
\GG_4 = \langle \, \A, b_1 , \dots,   b_{n_0-1}   \, \| \, a_2 a_1 b_1^{-1} = a_1^{\kappa},
   b_1 a_1 b_2^{-1} = a_1^{\kappa}, \ldots,   b_{n_0-1} a_1 a_2^{-1} = a_1^{\kappa}
  \rangle
\end{equation}
that has $n_0$ relations which are obtained by splitting the relation $ a_2 a_1^{n_1} a_2^{-1} = a_1^{\kappa n_1}$ of  \eqref{pr3b}  into $n_0$ ``square" relations, see Fig.~9.5  where the case $n_0 = 3$ is depicted.

\begin{center}
\begin{tikzpicture}[scale=.7]
\draw  (-2,2) rectangle (4,0);
\draw (0,0) -- (0,2);
\draw (2,0) -- (2,2);
\draw [-latex](-1.1,2) --(-.9,2);
\draw [-latex](2.9,2) --(3.1,2);
\draw [-latex](.9,2) --(1.1,2);
\draw [-latex](-1.1,0) --(-.9,0);
\draw [-latex](2.9,0) --(3.1,0);
\draw [-latex](.9,0) --(1.1,0);
\draw [-latex](-2,0.9) --(-2,1.1);
\draw [-latex](0,0.9) --(0,1.1);
\draw [-latex](2,0.9) --(2,1.1);
\draw [-latex](4,0.9) --(4,1.1);
\draw [-latex](2.9,0) --(3.1,0);
\draw [-latex](.9,0) --(1.1,0);
\node at (-1,2.4) {$a_1$};
\node at (1,2.4) {$a_1$};
\node at (3,2.4) {$a_1$};
\node at (-2.6,1) {$a_2$};
\node at (-.6,1) {$b_1$};
\node at (1.4,1) {$b_2$};
\node at (4.6,1) {$a_2$};
\node at (-1,-.6) {$a_1^{\kappa}$};
\node at (1,-.6) {$a_1^{\kappa}$};
\node at (3,-.6) {$a_1^{\kappa}$};
\node at (1,-1.5) {Fig.~9.5};
\end{tikzpicture}
\end{center}

Note that there is a natural isomorphism $\psi : \GG_4 \to \GG_3$ defined by the map
$\psi(a) = a$ for $a \in \A$, and $\psi(b_j) = a_1^{-\kappa j} a_2 a_1^j$ for $j =1, \dots, n_0-1$.

If $\D_0$ is a reduced \ddd\ over \eqref{pr3b}, then we can split faces of $\D_0$ into ``square" faces over \eqref{pr3bac}, as in Fig.~9.5, and obtain a reduced \ddd\  $\D_{0, b}$ over \eqref{pr3bac} such that
\begin{equation}\label{d0b}
\ph(\p \D_{0, b}  ) \equiv \ph(\p \D_0) , \ \  \  | \D_{0, b}(2)| = n_0 | \D_{0}(2)| .
\end{equation}

Let $\D$ be an arbitrary reduced \ddd\ over presentation \eqref{pr3bac}.

We  modify the definition of an $a_i$-band given in Sect.~6 for diagrams over presentation \eqref{pr3b}
so that this definition would be suitable for diagrams over \eqref{pr3bac}.

Two faces $\Pi_1, \Pi_2$ in a disk diagram $\D$ over  \eqref{pr3bac} are called {\em $j$-related}, where $j =1,2$, denoted  $\Pi_1 \leftrightarrow_j   \Pi_2$,  if there is an edge $e$ such that $e \in \p \Pi_1$,  $e^{- 1} \in \p \Pi_2$, and
$\ph(e) = a_1^{\pm 1}$ if $j =1$ or   $\ph(e) \in \{ a_2^{\pm 1}, b_1^{\pm 1}, \dots, b_{n_0-1}^{\pm 1} \}$ if $j =2$.
As before, we consider a minimal equivalence relation, denoted $\sim_j$, on the set of faces of $\D$ generated by the relation $\leftrightarrow_j$.

An {\em $a_i$-band}, where $i \ge 1$ is now arbitrary,  is a minimal subcomplex
$\Gamma$ of  $\D$  that contains an edge $e$ such that  $\ph(e)=a_i^{\pm 1}$ if $i \ne 2$ or
$\ph(e) \in \{ a_2^{\pm 1}, b_1^{\pm 1}, \dots, b_{n_0-1}^{\pm 1} \}$ if $i =2$ and $\Gamma$ has the following property.
If there is a face $\Pi$ in $\D$ such that $e \in (\p \Pi)^{\pm 1}$, then $\Gamma$ must contain all faces of the equivalence class
$[\Pi]_{\sim_i}$ of $\Pi$. As before, this definition implies that  an $a_i$-band  $\Gamma$  is either a subcomplex   consisting of a single nonoriented edge $\{ f, f^{- 1} \}$, where   $\ph(f)=a_i^{\pm 1}$ if $i \ne 2$ or
$\ph(f) \in \{ a_2^{\pm 1}, b_1^{\pm 1}, \dots, b_{n_0-1}^{\pm 1} \}$ if $i =2$  and   $f, f^{- 1} \in \p \D$,
or $\Gamma$ consists of all faces of an equivalence class $[\Pi]_{\sim_i}$, here $i =1,2$.

If an $a_i$-band $\Gamma$ contains faces, then $\Gamma$ is called {\em essential}.

If an $a_i$-band $\Gamma$ is essential but $\Gamma$ contains no face whose boundary contains an edge $f$ such that $f^{-1} \in \p \D$ and $\ph(f)=a_1^{\pm 1}$ if $i =1$ or
$\ph(f) \in \{ a_2^{\pm 1}, b_1^{\pm 1}, \dots, b_{n_0-1}^{\pm 1} \}$ if $i =2$, then $\Gamma$ is called a  {\em closed} $a_i$-band. It follows from the definitions that if
$\Gamma$ is an essential $a_i$-band, then $i=1,2$.

If  $\Gamma$ is an essential $a_i$-band and $\Pi_1, \dots, \Pi_k$ are all faces of $\Gamma$, we consider a simple arc or a simple curve $c(\Gamma)$ in the interior of $\Gamma$ such that the intersection $c(\Gamma) \cap \Pi_j$ for every $j = 1, \dots, k$, is a properly embedded arc in $\Pi_j$ whose boundary points belong to the interior of different edges of
$\p \Pi_j$ whose $\ph$-labels are in  $ \{ a_1^{\pm 1} \}$ if $i =1$ or in $ \{ a_2^{\pm 1}, b_1^{\pm 1}, \dots, b_{n_0-1}^{\pm 1} \}$ if $i =2$. This arc or curve $c(\Gamma)$ will be called a {\em connecting line} of $\Gamma$.

Note that if  $\Gamma$ contains a face $\Pi$ whose boundary has an edge $f$ such that $f^{-1} \in \p \D$ and   $\ph(f)=a_1^{\pm 1}$ if $i =1$ or
$\ph(f) \in \{ a_2^{\pm 1}, b_1^{\pm 1}, \dots, b_{n_0-1}^{\pm 1} \}$ if $i =2$,   then a connecting line $c(\Gamma)$ of  $\Gamma$   connects two points
$\p c(\Gamma)$ on the boundary $\p \D$. On the other hand, if  $\Gamma$ contains no face $\Pi$ as above, then    $c(\Gamma)$ is a closed simple curve, in which case $\Gamma$ is called a {\em closed} $a_i$-band.

If $\Pi$ is a face in $\D$ over \eqref{pr3bac}, then there are exactly two bands, $a_1$-band  and $a_2$-band, denoted $\Gamma_{\Pi, 1}$, $\Gamma_{\Pi, 2}$, resp., whose connecting lines $c(\Gamma_{\Pi, 1})$, $c(\Gamma_{\Pi, 2})$ pass through $\Pi$. Without loss of generality, we may assume that the intersection
$$
c(\Gamma_{\Pi, 1}) \cap c(\Gamma_{\Pi, 2}) \cap \Pi
$$
consists of a single (piercing) point. The following lemma has similarities with Lemma~\ref{b1}.

\begin{lem}\label{c1}  Suppose that $\D$ is a reduced disk diagram over presentation  \er{pr3bac}. Then there are no closed $a_i$-bands in $\D$  and every $a_i$-band $\Gamma$ of $\D$ is a disk subdiagram of $\D$ such that
$$
\p |_{(f_1)_-} \Gamma = f_1 s_1 f_2 s_2 ,
$$
where $f_1, f_2$ are edges of $\p \D$ such that
\begin{align*}
\ph(f_1), \ph(f_2) & \in \{ a_i^{\pm 1} \} \ \ \mbox{if} \ i \ne 2 , \\
\ph(f_1), \ph(f_2) & \in \{ a_2^{\pm 1}, b_1^{\pm 1}, \dots, b_{n_0-1}^{\pm 1} \} \ \  \mbox{if} \  i =2,
\end{align*}
$s_1, s_2$ are simple paths such that $|s_1| =  |s_2| = |\Gamma(2)|$, and $\p \Gamma$ is a reduced simple closed path when $|\Gamma(2)|>0$. In addition, if
$\Gamma_1$ and $\Gamma_2$ are essential $a_1$-band and $a_2$-band, resp., in $\D$,  then their connecting lines $c(\Gamma_{1}), c(\Gamma_{2})$ intersect in at most one point.
\end{lem}

\begin{proof}  Since $\D$ is reduced, it follows that if $\Pi_1 \leftrightarrow_j   \Pi_2$, $j=1,2$, then $\Pi_1$ is not a mirror copy of $\Pi_2$, i.e.,  $\ph (\Pi_1 ) \not\equiv  \ph (\Pi_2 )^{-1}$.
This remark, together with the definition of an $a_i$-band and defining relations of presentation \er{pr3bac}, implies that if $\Pi_1 \leftrightarrow_j  \Pi_2$ then the faces $\Pi_1, \Pi_2$ share exactly one nonoriented edge.
This, in particular, implies
$|s_1| =  |s_2| = |\Gamma(2)|$ if $\Gamma$ is an $a_i$-band such that $\p \Gamma = f_1 s_1 f_2 s_2$ as in the statement of Lemma~\ref{c1}.

Assume, on the contrary, that there is an essential $a_i$-band  $\Gamma_0$ in $\D$ such that either
$\Gamma_0$ is closed or  $\p \Gamma_0 = f_1 s_1 f_2 s_2$ as above, and one of the paths $s_1, s_2$ is not simple.
Then there is a disk subdiagram $\D_2$ of $\D$ bounded by  edges of $\p \Gamma_0$ such that $\ph( \p  \D_2)$ is a  nonempty reduced word over the alphabet
$\{ a_1^{\pm 1} \}$ if $i=2$ or  over the alphabet $\{ a_2^{\pm 1}, b_1^{\pm 1}, \dots, b_{n_0-1}^{\pm 1} \}$ if $i =1$.
Pick such diagrams $\Gamma_0$ and $\D_2$ so that  $|\D_2(2)|$ is minimal. Since $\D_2$ contains faces and  $\ph(\p \D_2)$ contains no letters either from $\{ a_1^{\pm 1} \}$ if $i=1$ or from $\{ a_2^{\pm 1}, b_1^{\pm 1}, \dots, b_{n_0-1}^{\pm 1} \}$ if $i=2$, it follows that every $a_{3-i}$-band in $\D_2$ is closed and bounds a similar to $\D_2$ diagram $\D'_2$ such that  $|\D'_2(2)|< |\D_2(2)|$. This contradiction proves that an $a_i$-band $\Gamma_0$ with the above properties does not exist.
\medskip

To prove the space part of the additional claim, suppose, on the contrary, that there are essential $a_1$- and $a_2$-bands $\Gamma_1$ and  $\Gamma_2$, resp., in $\D$ such that the intersection  $c(\Gamma_{1})\cap c(\Gamma_{2})$ of their connecting lines $c(\Gamma_{1}), c(\Gamma_{2})$ contains at least two points.  We  pick two such consecutive along
$c(\Gamma_{1}), c(\Gamma_{2})$  points  and let $\Pi_1, \Pi_2$ be the faces that contain these two points.   Then there exists a disk subdiagram $\D_1$ in $\D$ such that $\p \D_1 = u_1u_2$, where $u_1^{-1}$ is a subpath of
$\p \Gamma_{1}$, $|u_1| \ge 0$ and $\ph(u_1)$ is a reduced or empty word over
$\{ a_2^{\pm 1}, b_1^{\pm 1}, \dots, b_{n_0-1}^{\pm 1} \}$, while $u_2^{-1}$ is a subpath of
$\p \Gamma_{2}$, $|u_2| \ge 0$ and $\ph(u_2)$ is a reduced or empty  word over $\{ a_1^{\pm 1} \}$, see Fig.~9.6.
The equality $|u_1| + |u_2| =0$ implies that the faces $\Pi_1, \Pi_2$ form a reducible pair.
This contradiction to $\D$ being reduced  proves that  $|u_1|+ |u_2| >0$, whence $|\D_1(2)|>0$.

\begin{center}
\usetikzlibrary{arrows}
\begin{tikzpicture}[scale=.6]
\draw  plot[smooth, tension=.88] coordinates { (-5,-2)(-4,2) (-2,4) (0,2) (1,-2)};
\draw  plot[smooth, tension=.5] coordinates { (-4,-2)(-3,2) (-2,3) (-1,2) (0,-2)};
\draw  plot[smooth, tension=.5] coordinates { (-4,4)(-3,0) (-2,-1) (-1,0) (0,4)};
\draw  plot[smooth, tension=.88] coordinates { (-5,4)(-4,0) (-2,-2) (0,0) (1,4)};
\draw  (-3.3,1) [fill = black] circle (.05);
\draw  (-.7,1) [fill = black] circle (.05);
\draw  [-latex](-2.06,3) -- (-1.94,3);
\draw  [-latex] (-1.94,-1) -- (-2.06,-1);
\node at (-2,2.4) {$u_1$};
\node at (-2,-0.4) {$u_2$};
\node at (-3.8,1) {$\Pi_1$};
\node at (-0.1,1) {$\Pi_2$};
\node at (-2.8,3) {$\Gamma_1$};
\node at (-1,-1) {$\Gamma_2$};
\node at (-2,1) {$\Delta_1$};
\node at (-2,-3) {Fig.~9.6};
\end{tikzpicture}
\end{center}

We now pick a  disk subdiagram $\D'_1$ in $\D$ such that
$|\D'_1(2)|$ is minimal,
$$
\p \D'_1 = u'_1 u'_2 , \quad  |u'_1|+ |u'_2| >0 ,
$$
$\ph(u_1')$ is a reduced or empty word over the alphabet
$\{ a_2^{\pm 1}, b_1^{\pm 1}, \dots, b_{n_0-1}^{\pm 1} \}$,  and $\ph(u_2)$ is a reduced or empty  word over $\{ a_1^{\pm 1} \}$. Consider an $a_1$-band $\Gamma_3$ in  $\D'_1$
if  $|u'_2| >0$ or consider an $a_2$-band $\Gamma_4$ in  $\D'_1$ if $|u'_2| =0$ and $|u'_1| >0$ such that a connecting line of $\Gamma_j$, $j=3,4$, connects points on $u'_2$ if $j =3$ or on $u'_1$ if $j =4$, see  Fig.~9.7.  It is easy to check that, taking $\Gamma_3$, or $\Gamma_4$,  out of $\D'_1$,
we obtain two disk subdiagrams  $\D'_{1,1}, \D'_{1,2}$ one of which has
the above properties of  $\D'_1$ and a fewer number of faces, see Fig.~9.7. This contradiction to the minimality of $\D'_1$ completes the proof of Lemma~\ref{c1}.
\end{proof}

\begin{center}
\begin{tikzpicture}[scale=.44]
\draw  plot[smooth, tension=.45] coordinates {(-7,0)(-6,2) (-4,4) (0,4) (2,2) (3,0)};
\draw  plot[smooth, tension=.45] coordinates {(-7,0)(-6,-2) (-4,-4) (0,-4) (2,-2) (3,0)};
\draw  (-7,0) [fill = black] circle (.05);
\draw  (3,0) [fill = black] circle (.05);
\draw  [-latex](-2.06,4.18) -- (-1.94,4.18);
\draw  [-latex](-1.94,-4.18) -- (-2.06,-4.18);
\node at (-2,.7) {$\Gamma_3$};
\node at (-6.9,2.8) {$\Delta'_1$};
\node at (-4,2) {$\Delta'_{1,1}$};
\node at (-2,-2) {$\Delta'_{1,2}$};
\node at (-2,3.5) {$u'_1$};
\node at (-2,-3.5) {$u'_2$};
\node at (-2,-5.5) {Fig.~9.7};
\draw  plot[smooth, tension=.45] coordinates {(-5,-3.2) (-4,-2) (-2,0) (0,-2) (1,-3.2)};
\draw  plot[smooth, tension=.45] coordinates {(-6,-2) (-4.8,-.8) (-2,1.8) (0.8,-.8) (2,-2)};
\end{tikzpicture}
\end{center}

Coming back to the diagrams $\D_{0}$ and $\D_{0,b}$, see \er{d0b}, we can see from Lemma~\ref{c1} that the number of
$a_1$-bands in $\D_{0,b}$ is at most $\tfrac 12 |\p  \D_{0,b} |_{a_1} \le  \tfrac 12 |\p  \D_{0} |$. Similarly, the number  of
$a_2$-bands in $\D_{0,b}$ is at most $\tfrac 12 |\p  \D_{0,b} | = \tfrac 12 |\p  \D_{0} |$.
It follows from the definitions and Lemma~\ref{c1} that the number $|\D_{0,b}(2) |$ is equal to the number of intersections of
connecting lines of $a_1$-bands and $a_2$-bands in $\D_{0,b}$. Hence, referring to  Lemma~\ref{c1} again, we obtain that
$|\D_{0,b}(2)| \le (\tfrac 12 |\p \D_{0}| )^2 $. In view of \er{d0b},  we finally have
$$
|\D_{0}(2)| = \tfrac{1}{n_0} |\D_{0,b}(2)| \le \tfrac {1}{4n_0} |\p \D_{0}|^2 ,
$$
as desired.  The proof of Lemma~\ref{91} is complete.
\end{proof}

Suppose $\D_0$ is a reduced diagram over presentation \er{pr3b}, where $|n_1| = |n_2|$. It follows from Lemma~\ref{c1} that
$|\D_{0}(2)| \le  \tfrac {1}{4n_0} |\p \D_{0}|^2$. This inequality means that the space bound \er{sp} becomes $O((\log |W|)^3)$ as $n(W) \le  \tfrac {1}{4n_0} |\p \D_0 |^2 =  \tfrac {1}{4n_0} |W|^2$.
The space part of Theorem~\ref{thm4} is proved.
\medskip

To prove the time part  of Theorem~\ref{thm4}, we review the proof of $\mathsf P$ part of Theorem~\ref{thm2}. We observe that our arguments enable us, concurrently with computation of  numbers $\lambda(W(i,j))$, $\mu_3(W(i,j))$ for every parameterized subword $W(i,j)$ of $W$ such that $\lambda(W(i,j)) = \ell < \infty$,  to
inductively construct a minimal diagram  $\D(i,j)$ over \er{pr3b}  such that
$\ph( \p |_0  \D (i,j)) \equiv  W(i,j)a_1^{-\ell}$.  Details of this construction are straightforward in each of the subcases considered in the proof of $\mathsf P$ part of Theorem~\ref{thm2}. Note that
this modified algorithm  can still be run in time $O( |W|^4)$ for the general case.

If, in addition, $|n_1| = |n_2|$ then the inequality \eqref{bnnd} can be improved to
\begin{equation*}
| \lambda(W(i,j) ) |  \le | W(i,j) |,  \quad   \mu_3(W(i,j) ) \le  | W(i,j) |^2 .
\end{equation*}
Hence, using, as before, binary representation for numbers $\lambda(W(i',j') )$, $\mu_3(W(i',j') )$,
we can run inductive computation of numbers $\lambda(U)$, $\mu_3(U)$ for $U = W(i,j)$ and construction of
a diagram $\D(i,j)$  whenever it exists for given $i,j$,  in time
$$
O(| W(i,j)|\log | W(i,j)|) .
$$
This improves the bound for running time of our modified  algorithm,  that computes the numbers $\lambda(W ), \mu_3(W)$ and constructs a minimal  diagram  $\D(1, |W|)$ for $W$,  from  $O( |W|^4)$ to  $O( |W|^3\log |W|)$, as desired.

Theorem~\ref{thm4} is proved.
\end{proof}

\section{Polygonal Curves in the Plane and Proofs of Theorems~\ref{thm5}, \ref{thm6} and Corollary~\ref{cor3}}

Let $\TT$  denote a tessellation of the plane $\mathbb  R^2$ into unit squares whose vertices are points with integer coordinates. Let $c$ be a finite closed path in $\TT$  so that edges of $c$ are edges of $\TT$. Recall that
we have  the following two types of elementary operations over  $c$. If $e$ is an oriented edge of $c$, $e^{-1}$ is the edge with an opposite to $e$ orientation, and $ee^{-1}$ is a subpath of $c$ so that  $c = c_1 ee^{-1} c_2$, where $c_1, c_2$ are subpaths of $c$, then the operation $c \to c_1 c_2$ over $c$ is called an {\em \EH\ of type 1}.  Suppose that $c =c_1 u c_2$, where $c_1, u, c_2$ are subpaths of $c$, and a boundary path $\p s$ of a unit square $s$ of  $\TT$ is $\p s = uv$, where $u, v$ are subpaths of $\p s$ and either of $u, v$  could be of zero length, i.e., either of $u, v$  could be a single vertex of $\p s$. Then the operation $c \to c_1 v^{-1} c_2$ over $c$ is called an {\em \EH\ of type 2}.

\begin{T5} Let $c$ be a finite closed path in a tessellation $\TT$  of the plane $\mathbb  R^2$
into unit squares so that edges of $c$ are edges of $\TT$. Then a minimal number $m_2(c)$
such that there is a finite sequence of \EHs\ of type 1--2, which turns $c$ into a single point and which contains
$m_2(c)$ \EHs\ of type 2, can be computed in deterministic space $O((\log |c|)^3)$ or in deterministic time $O( |c|^3\log |c| )$, where $|c|$ is the length of $c$.

Furthermore, such a sequence of  \EHs\ of type 1--2,
which turns $c$ into a single point and which contains  $m_2(c)$ \EHs\ of type 2,  can also be computed in  deterministic space $O((\log |c|)^3)$   or in deterministic time $O( |c|^3\log |c| )$.
\end{T5}

\begin{proof} First  we assign $\ph$-labels to edges of the tessellation $\TT$. If an edge $e$ goes from a point of $\mathbb  R^2$ with coordinates  $(i,j)$  to  a point with coordinates  $(i+1,j)$, then we set  $\ph(e):= a_1$ and  $\ph(e^{-1}):= a_1^{-1}$.
If an edge $f$ goes from a point of $\mathbb  R^2$ with coordinates    $(i,j)$ to a point with coordinates  $(i,j+1)$, then we set  $\ph(f):= a_2$ and  $\ph(f^{-1}):= a_2^{-1}$.

Let $c$ be a finite closed path in $\TT$ whose edges are those of $\TT$. Without loss of generality, we may assume that $c$ starts at the origin, hence $c_-= c_+$ has coordinates $(0,0)$  (otherwise, we could apply a logspace reduction to achieve this property). Denote $c = \wtl f_1 \dots \wtl f_{|c|}$, where
 $\wtl f_1, \dots, \wtl f_{|c|}$ are edges of $c$, and let
$$
 \ph(c) := \ph(\wtl f_1) \dots \ph( \wtl f_{|c|}) ,
$$
where $ \ph(c)$ is a word over the alphabet $\A^{\pm 1} = \{ a_1^{\pm 1}, a_2^{\pm 1} \}$. Since $c$ is closed, it follows that $\ph(c) \overset{\GG_5}{=} 1$, where
\begin{equation}\label{pp1}
    \GG_5 :=  \langle \  a_1,  a_2  \ \|  \   a_2 a_1 a_2^{-1} a_1^{ -1} = 1  \,  \rangle .
\end{equation}

\begin{lem}\label{eh} Suppose that a closed path $c$ in $\TT$ can be turned into  a point by a finite sequence
$\Xi$ of \EHs\  and $m_2(\Xi)$ is the number of \EHs\ of type 2 in this sequence. Then there is a \ddd\ $\D$ such that $\ph(\p \D) \equiv \ph(c)$ and $|\D(2) | = m_2(\Xi )$.

Conversely, suppose there is a \ddd\ $\D$ with $\ph(\p \D) \equiv \ph(c)$. Then there is a finite sequence $\Xi$
of \EHs\  such that  $\Xi$  turns $c$ into a point and  $m_2(\Xi)= |\D(2) |$.
\end{lem}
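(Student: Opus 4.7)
The plan is to set up a step-by-step dictionary between \EHs\ of types~1--2 of closed paths in $\TT$ and the two operations by which van Kampen diagrams over presentation~\eqref{pp1} are assembled: namely, free cancellation of $e e^{-1}$ subpaths (corresponding to type~1 \EHs) and attachment or deletion of a single face whose boundary label is a cyclic permutation of the defining relator $a_2 a_1 a_2^{-1} a_1^{-1}$ or of its inverse (corresponding to type~2 \EHs). The key geometric fact driving the correspondence is that for every unit square $s$ of $\TT$ and every factorization $\p s = u v$ of its cyclic boundary into two subpaths, the word $\ph(u) \ph(v)$ is a cyclic permutation of $a_2 a_1 a_2^{-1} a_1^{-1}$ or of its inverse; consequently, a type~2 \EH\ $c_1 u c_2 \to c_1 v^{-1} c_2$ acts on the boundary label $\ph(c)$ in exactly the same way that attachment of a single face with boundary label $\ph(u)\ph(v)$ acts on the boundary label of a disk diagram over~\eqref{pp1}.

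For the forward implication I will induct on $|\Xi|$, reversing $\Xi$ one \EH\ at a time starting from its terminal stage, at which $c$ has been contracted to a point and the companion diagram is a single vertex with no faces. A reversed type~1 \EH\ $c_1 c_2 \to c_1 e e^{-1} c_2$ is realized by attaching a ``spur'' --- a pair of new inversely labeled edges sharing a free endpoint --- to the current boundary, which preserves planarity, simple connectivity, and the face count. A reversed type~2 \EH\ $c_1 v^{-1} c_2 \to c_1 u c_2$ with $\p s = u v$ is realized by attaching a new square face $\Pi$ along the subpath of the current boundary labeled $\ph(v)^{-1}$, with $\ph(\p \Pi)$ being the appropriate cyclic permutation of $\ph(u)\ph(v)=\ph(\p s)$ or of its inverse; this preserves planarity and simple connectivity and increases the face count by exactly one. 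Iterating all the way back to the start of $\Xi$ yields a disk diagram $\D$ with $\ph(\p \D) \equiv \ph(c)$ and $|\D(2)| = m_2(\Xi)$.

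For the converse I induct on $|\D(2)|$. When $|\D(2)| = 0$, the diagram $\D$ is a tree, and so its boundary label $\ph(\p \D) \equiv \ph(c)$ freely reduces to the empty word; since at any vertex of $\TT$ there is a unique incident edge with a given $\ph$-label, each free cancellation $a_i a_i^{-1}$ between consecutive letters of $\ph(c)$ reflects a genuine backtrack $\wtl f \wtl f^{-1}$ in $c$, and hence $c$ collapses to a point using type~1 \EHs\ alone. When $|\D(2)| > 0$, I pick a face $\Pi$ of $\D$ such that $\p \Pi$ contains an edge $e_0$ with $e_0^{-1} \in \p \D$ --- such a face exists because $\D$ is a finite planar simply connected complex containing at least one face --- and delete $\Pi$, obtaining $\D''$ with $|\D''(2)| = |\D(2)| - 1$ whose boundary label is obtained from $\ph(\p \D)$ by replacing the subword $\ph(e_0)^{-1}$ at the matching position with the inverse of the remainder of $\p \Pi$. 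Taking the edge of $c$ at the matching position as $u$, and the unit square of $\TT$ adjacent to it on the side dictated by which cyclic permutation of $a_2 a_1 a_2^{-1} a_1^{-1}$ labels $\p \Pi$, we perform a single type~2 \EH\ converting $c$ into $c''$ with $\ph(c'') \equiv \ph(\p \D'')$; the inductive hypothesis applied to $(\D'', c'')$ then supplies a sequence $\Xi''$ with $m_2(\Xi'') = |\D''(2)|$, and prepending the single type~2 \EH\ gives the required $\Xi$.

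The main technical obstacle is ensuring that the inductive deletion of $\Pi$ in the converse direction produces a bona fide disk diagram, since a priori $\p \Pi$ need not be a simple closed path and a naive removal of $\Pi$ might disconnect $\D$ or create a spurious hole. The remedy is either to preprocess $\D$ by invoking an analogue of Lemma~\ref{vk2}(b) adapted to presentation~\eqref{pp1}, using the $a_i$-band argument of Lemma~\ref{b1} to show that in a reduced diagram over~\eqref{pp1} every face has a simple boundary and at least one edge on $\p \D$, so that the induction step proceeds verbatim; or, alternatively, to choose $\Pi$ outermost in the sense of being a face of a minimal disk subdiagram of $\D$ whose boundary shares an edge with $\p \D$, for which a short planarity argument guarantees that the deletion leaves a (possibly smaller) disk diagram without changing the face count on the nose.
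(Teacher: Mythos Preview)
Your proposal is correct and follows essentially the same strategy as the paper: both directions rest on the dictionary between type~1 \EHs\ and edge-foldings, and between type~2 \EHs\ and single-face attachments over presentation~\eqref{pp1}. Your forward direction runs the construction in reverse (growing the diagram outward from a vertex by undoing $\Xi$) whereas the paper runs it forward (filling in the region bounded by a simple closed curve $q_c$ with $\ph(q_c)\equiv\ph(c)$), but these are equivalent. For the converse the paper says only ``straightforward induction on $|\D(2)|$''; your argument is that induction spelled out, and your observation that the $\ph$-labeling of $\TT$ forces every free cancellation in $\ph(c)$ to correspond to an actual backtrack in $c$ is exactly what makes the base case go through.
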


\begin{proof} Given a finite sequence $\Xi$ of \EHs\ $\xi_1, \xi_2,\dots$, we can construct a \ddd\ $\D$ over \eqref{pp1} such that $\ph(\p \D) \equiv \ph(c)$ and $m_2(\Xi)= |\D(2) |$.
Indeed, starting with a simple closed path $q_c$ in the
plane $\mathbb R^2$ (without any tessellation) such that  $\ph(q_c) \equiv \ph(c)$, we can simulate  \EHs\  in the sequence  $\Xi$ so that an \EH\ of type 1 is simulated by folding a suitable pair of edges of the path $q_c$, see Fig.~10.1(a), and an \EH\ of type 2 is simulated by attachment of a face $\Pi$ over \eqref{pp1} to the bounded region of $\mathbb R^2$ whose boundary is $q_c$, see Fig.~10.1(b).

\begin{center}
\usetikzlibrary{arrows}
\begin{tikzpicture}[scale=.68]

\draw [->>]  (1.9,0) -- (2.99,0);
\node at (2.5,-2) {Fig.~10.1(a)};
\draw  (0,0) ellipse (1.5 and 1.2);
\draw  (0,1.2)[fill = black]circle (0.05);
\draw  (-0.62,1.091)[fill = black]circle (0.05);
\draw  (0.62,1.091)[fill = black]circle (0.05);
\draw [-latex]  (-.4,1.16) -- (-.25,1.19);
\draw [-latex]  (.26,1.19) -- (.37,1.18);
\draw [-latex]  (.1,-1.2) -- (-.1,-1.2);
\node at (9,0.5) {};
\node at (-0.4,1.6) {$e_1$};
\node at (0.4,1.6) {$e_2$};
\node at (0,-1.6) {$q_c$};
\draw  (5,0) ellipse (1.5 and 1.2);
\draw  (5,1.2)[fill = black]circle (0.05);
\draw  (5,1.2) -- (5,1.9);
\draw [-latex]  (5,1.4) -- (5,1.65);
\draw  (5,1.9)[fill = black]circle (0.05);
\node at (5,-1.6) {$q_{c'}$};
\draw [-latex]  (5.1,-1.2) -- (4.9,-1.2);
\node at (6.17,1.6) {$e_1 = e_2^{-1}$};
\draw [->>]  (11.9,0) -- (12.99,0);
\node at (12.5,-2) {Fig.~10.1(b)};
\draw  (10,0) ellipse (1.5 and 1.2);

\draw [-latex]  (9.9,1.2) -- (10.1,1.2);
\draw  (9.38,1.091) node (v1) {}[fill = black]circle (0.05);
\draw  (10.62,1.091) node (v2) {}[fill = black]circle (0.05);

\draw [-latex]  (10.1,-1.2) -- (9.9,-1.2);

\node at (10,1.6) {$u_1$};
\node at (10,-1.6) {$q_c$};
\draw  (15,0) ellipse (1.5 and 1.2);
\draw [-latex]  (14.9,1.2) -- (15.1,1.2);
\draw  (14.38,1.091) node (u1) {}[fill = black]circle (0.05);
\draw  (15.62,1.091) node (u2) {}[fill = black]circle (0.05);
\node at (15,1.6) {$u_1$};

\draw  plot[smooth, tension=1.1] coordinates {(u1) (15,0.3) (u2)};
\draw [-latex]  (14.9,0.32) -- (15.07,0.3);
\node at (15,-1.6) {$q_{c'}$};
\draw [-latex]  (15.1,-1.2) -- (14.9,-1.2);

\node at (15,0.75) {$\Pi$};
\node at (15,-0.15) {$u_2$};
\node at (12.9,1.4) {$\partial \Pi = u_2 u_1^{-1}$};
\end{tikzpicture}
\end{center}

\noindent
As a result, we will fill out the bounded region of $\mathbb R^2$ whose boundary path is $q_c$ with faces and obtain
a required \ddd\ $\D$ over \eqref{pp1}.

The converse can be established by a straightforward induction on $|\D(2)|$.
\end{proof}

It follows from Lemma~\ref{eh} and Theorem~\ref{thm4} that a minimal number $m_2(c)$ of \EHs\ of type 2 in a sequence of \EHs\ that turns the path $c$ into a point can be computed in deterministic space
$O((  \log  |\ph(c)| )^3) = O((\log |c|)^3)$  or in deterministic time $O(|c|^3\log |c|)$.
\medskip

It remains to show that a desired sequence of \EHs\ for $c$ can be computed in deterministic space $O((\log |c|)^3)$ or in deterministic time $O(|c|^3\log |c|)$. We remark that Lemma~\ref{eh} and its proof reveal a close connection between sequences of \EHs\ that turn $c$ into a point and \ddd s $\D$ over \eqref{pp1} such that $\ph(\p |_0 \D) \equiv \ph(c)$.
According to Theorem~\ref{thm4}, we can construct a \ddd\  $\D_c$ such that $\ph(\p |_0 \D_c) \equiv \ph(c)$ and  $|\D_c(2) | = m_2(c)$ in  space $O((\log |c|)^3)$   or in time $O(|c|^3\log |c|)$.  Since
$|\D_c(2) | \le  |\p \D_c |^2/4  = |c|^2/4 $ by Lemma~\ref{91}, it follows that we can use this diagram
$\D_c$ together with Lemma~\ref{eh}
to construct a desired sequence of \EHs\ in  deterministic time $O(|c|^2)$. Therefore, a desired sequence of \EHs\
can be constructed in  deterministic  time  $O(|c|^3\log |c|)$.
\medskip

It is also tempting to use this diagram $\D_c$ together
with Lemma~\ref{eh} to construct, in   space $O((\log |c|)^3)$,  a desired sequence of \EHs\
which turns $c$ into a point.
However, this approach does not quite work for the space part of the proof  because inductive arguments of
the proof of  Lemma~\ref{eh} use intermediate paths whose storage would not be possible in $O((\log |c|)^3)$ space.
For this reason, we have to turn back to  calculus of \pbb s.
\smallskip

Denote $W := \ph(c)$. As in Sects.~2, 6, let $P_W$ be a path such that $\ph(P_W) \equiv W$ and vertices of $P_W$ are integers $0,1, \dots, |W|=|c|$.  Denote $P_W =  f_1 \dots  f_{|c|}$, where $ f_1, \dots, f_{|c|}$ are edges of $P_W$, and let
$$
\gamma : P_W \to c
$$
be a cellular map such that $\gamma( f_i) = \wtl f_i$ for  $i=1, \dots, |W|$. Recall that
$c = \wtl  f_1 \dots  \wtl f_{|c|}$. Note that $\gamma(i) =  (\wtl f_i)_+$, where  $i=1, \dots, |W|$,  and $\gamma(0) =  (\wtl f_1)_- = (\wtl f_{|c|})_+$.

Suppose $B$ is a \PBS\ for $W$ and let $B = \{ b_1, \dots, b_k \}$, where $b_i(2) \le b_j(1)$ if $i <j$. Recall that
the arc $a(b)$ of a \pbb\ $b \in B$ is a subpath of $P_W$ such that $a(b)_- = b(1)$ and $a(b)_+ = b(2)$.

We now define a path $\Gamma(B)$ for $B$ in $\TT$ so that $\Gamma(B):= \gamma(P_W) = c$ if $k=0$, i.e., $B = \varnothing$, and for  $k >0$ we set
\begin{equation}\label{pp2}
 \Gamma(B) := c_0 \delta(a(b_1)) c_1   \ldots \delta(a(b_k)) c_k  ,
\end{equation}
where $c_i = \gamma(d_i)$  and $d_i$ is a subpath of $P_W$ defined by $(d_i)_- = b_i(2)$ and
$(d_i)_+ = b_{i+1}(1)$, except for the case when $i = 0$, in which case $(d_0)_- = 0$, and
except for the case when $i = k$, in which case $(d_k)_- = |W|$. For every $i=1, \ldots, k$, the path
$\delta(a(b_i))$ in \eqref{pp2} is defined by the equalities
$$
\delta(a(b_i))_- = \gamma(b_{i}(1)) , \qquad \delta(a(b_i))_+ = \gamma(b_{i}(2))
$$
and by the equality $\ph( \delta(a(b_i)) ) \equiv a_1^{k_1}a_2^{k_2}$ with some integers $k_1, k_2$.

Suppose that $B_0, B_1, \ldots, B_\ell$ is an operational sequence of \PBS s for $W$ that corresponds to a sequence
 $\Omega$ of \EO s that turns the empty \PBS\ $B_0$ into a final \PBS\  $B_\ell = \{ b_{\ell, 1} \}$. We also assume that $b_{\ell, 1}(4) = |\D_c(2)|$ and $\D_c$ is a minimal \ddd\ over \eqref{pp1} such that
 $\ph(\p \D_c) \equiv W \equiv \ph(c)$. It follows from Lemmas~\ref{91},~\ref{eh} that
\begin{equation}\label{inqd}
|\D_c(2)| = m_2(c)  \le |\p \D_c|^2/4 = |W|^2/4 .
\end{equation}

Hence, in view of Lemma~\ref{8bb}, see also inequalities \eqref{ineqn1}--\eqref{ineqn2},  we may assume that,  for every  \pbb\ $b \in \cup_{i=0}^\ell B_i$, it is true that
\begin{align}\label{ieq1}
0 & \le b(1), b(2) \le |W| , \\ \notag
0 & \le | b(3) |  \le (|W|_{a_1} + (|n_1| +|n_2|) |\D_c(2)|)/2 \\   \label{ieq2}
  & \le (|W|_{a_1} + |W|^2/2)/2 \le  |W|^2/2  ,  \\ \label{ieq3}
0 & \le b(4) \le  |\D_c(2)| \le  |W|^2/4  .
\end{align}
Thus the space needed to store a \pbb\ $b \in \cup_{i=0}^\ell B_i$ is $O(\log |W|)$.

For every \PBS\ $B_i$, denote $B_i = \{ b_{i, 1}, \ldots, b_{i, k_i} \}$, where, as above,
$b_{i, j}(2) \le b_{i, j'}(1)$ whenever $j < j'$.

For every \PBS\ $B_i$,  we consider a path $c(i) := \Gamma(B_i)$ in $\TT$ defined by the formula \eqref{pp2},
hence, we have
\begin{equation}\label{pp3}
c(i) := \Gamma(B_i) = c_0(i) \delta(a(b_{i,1})) c_1(i)   \ldots \delta(a(b_{i,k_i})) c_{k_i}(i)  .
\end{equation}
As before, for $B_0 = \varnothing$, we set $c(0) := \Gamma(B_0) = c$.

Note that the last path $c(\ell)$ in the sequence $c(0),  \ldots, c(\ell)$, corresponding to a final
\PBS\  $B_\ell = \{ b_{\ell, 1} \}$, where $b_{\ell, 1} = (0, |W|, 0, m_2(c))$, consists of  a single vertex and has the form \eqref{pp3} equal to $c(\ell) =  c_0(\ell) \delta(a(b_{\ell,1})) c_1(\ell)$,
where $c_0(\ell) =    c_1(\ell) = c_- = c_+$ and $\delta(a(b_{\ell,1})) =c$.

\begin{lem}\label{f2} Let $b_{i,j} \in B_i$ be a \pbb . Then $\ph(\delta(a(b_{i,j}))  ) \equiv a_1^{b_{i,j}(3)}$, i.e., the points $\gamma(b_{i,j}(1))$, $\gamma(b_{i,j}(2))$ of $\TT$ have the same $y$-coordinate.
\end{lem}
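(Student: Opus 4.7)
The proof will rest on two already-established facts: the group $\GG_5$ of \eqref{pp1} is abelian (its single relator is the commutator $[a_2,a_1]$) so $\GG_5 \cong \mathbb{Z}^2$ via $a_1 \mapsto (1,0)$, $a_2 \mapsto (0,1)$; and by Claim (D1) from the proof of Lemma~\ref{7bb} applied to the \pbb\ $b_{i,j}$, we have
\[
\ph(a(b_{i,j})) \overset{\GG_5}{=} a_1^{b_{i,j}(3)} .
\]
The plan is to combine these to compute the planar displacement between $\gamma(b_{i,j}(1))$ and $\gamma(b_{i,j}(2))$, and then compare with the shape of $\delta(a(b_{i,j}))$ forced by its definition.

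Step one: under the isomorphism $\GG_5 \cong \mathbb{Z}^2$, a word $w$ over $\{a_1^{\pm 1},a_2^{\pm 1}\}$ represents the pair $(\sigma_1(w), \sigma_2(w))$, where $\sigma_k(w)$ denotes the signed exponent sum of $a_k$ in $w$. From Claim~(D1) the image of $\ph(a(b_{i,j}))$ in $\mathbb{Z}^2$ equals $(b_{i,j}(3), 0)$.

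Step two: by construction of $\gamma$ together with the initial normalization $\gamma(0) = (0,0)$ made at the start of the proof of Theorem~\ref{thm5}, the point $\gamma(v)$ is obtained from the origin by traversing the prefix $f_1 \dots f_v$ of $P_W$ in $\TT$, and therefore its coordinates coincide with the $\mathbb{Z}^2$-image of $\ph(f_1 \dots f_v)$. Applying this to both endpoints of $a(b_{i,j})$ and subtracting gives
\[
\gamma(b_{i,j}(2)) - \gamma(b_{i,j}(1)) = (\sigma_1(\ph(a(b_{i,j}))), \sigma_2(\ph(a(b_{i,j})))) = (b_{i,j}(3), 0) .
\]

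Step three: by the definition in \eqref{pp2} and the sentence following it, $\delta(a(b_{i,j}))$ is a path in $\TT$ from $\gamma(b_{i,j}(1))$ to $\gamma(b_{i,j}(2))$ whose label has the form $a_1^{k_1} a_2^{k_2}$ for some integers $k_1, k_2$. Such a path produces a planar displacement of exactly $(k_1, k_2)$, so comparing with step two forces $k_1 = b_{i,j}(3)$ and $k_2 = 0$. Hence $\ph(\delta(a(b_{i,j}))) \equiv a_1^{b_{i,j}(3)}$, and in particular $\gamma(b_{i,j}(1))$ and $\gamma(b_{i,j}(2))$ share the same $y$-coordinate.

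There is no real obstacle here; the only point that must be checked carefully is the bookkeeping identification between the cellular map $\gamma$ and the abelianization map $\ph \mapsto (\sigma_1, \sigma_2)$, which is immediate from $\gamma(0)=(0,0)$ and the labelling convention for edges of $\TT$ fixed at the beginning of the proof of Theorem~\ref{thm5}.
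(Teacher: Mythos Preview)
Your proof is correct and follows essentially the same approach as the paper's: both invoke Claim~(D1) from the proof of Lemma~\ref{7bb} to obtain $\ph(a(b_{i,j})) \overset{\GG_5}{=} a_1^{b_{i,j}(3)}$, then use that $\delta(a(b_{i,j}))$ has label of the prescribed shape $a_1^{k_1}a_2^{k_2}$ and connects the same endpoints in $\TT$ to force $k_1=b_{i,j}(3)$ and $k_2=0$. The paper phrases the endpoint comparison as ``the path $\gamma(a(b_{i,j}))\,\delta(a(b_{i,j}))^{-1}$ is closed,'' whereas you unpack it via coordinate displacements, but this is only a cosmetic difference.
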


\begin{proof} Since the path $\gamma(a(b_{i,j})) \delta(a(b_{i,j})^{-1}$ in  $\TT$ is closed, it follows that
 $$
 \ph( \gamma(a(b_{i,j}))) \overset{\GG_5}{=}  \ph( \delta(a(b_{i,j})^{-1} ) ,
 $$
where $\GG_5$ is given by presentation \eqref{pp1}. On the other hand, $\ph( \gamma(a(b_{i,j} )) \equiv \ph( a(b_{i,j} ))$. Hence, by Claim~(D) of the proof of Lemma~\ref{7bb}, we get that
 \begin{equation*}
\ph( \delta(a(b_{i,j})) )  \overset{\GG_5}{=}   \ph( a(b_{i,j} ) \overset{\GG_5}{=}  a_1^{b_{i,j}(3)}  .
\end{equation*}
Since $\ph( \delta(a(b_{i,j})) ) \equiv a_1^{k_1}a_2^{k_2}$ with some integers $k_1, k_2$, it follows that
$k_1 = b_{i,j}(3)$ and $k_2 = 0$, as required.
\end{proof}

We now analyze how the path $c(i)$, defined by \eqref{pp3}, changes in comparison with the path $c(i-1)$, $i \ge 1$.

Suppose that a \PBS\ $B_i$, $i \ge 1$, is obtained from $B_{i-1}$ by an addition. Then $c(i)=c(i-1)$, hence no change over $c(i-1)$ is done.  Note that the form \eqref{pp3} does change by an insertion of a subpath consisting of
a single vertex which is the $\delta$-image of the arc of the added to $B_{i-1}$ starting \pbb .

Assume that a \PBS\ $B_i$, $i \ge 1$, is obtained from $B_{i-1}$ by an extension of type 1 on the left and
$b_{i,j} \in B_i$ is obtained from $b_{i-1,j} \in B_{i-1}$  by this \EO . Let $a(b_{i,j}), a(b_{i-1,j}) $ be the arcs of $b_{i,j}, b_{i-1,j}$, resp., and let $a(b_{i,j}) = e_1 a(b_{i-1,j})$, where $e_1$ is an edge of $P_W$ and $\ph(e_1) = a_1^{\e_1}$, $\e_1 = \pm 1$.

If $\e_1 \cdot b_{i-1,j}(3) \ge 0$, then we can see that $c(i) = c(i-1)$ because
$$
c_{j-1}(i-1)   \delta(a(b_{i-1,j}))  = c_{j-1}(i)   \delta(a(b_{i,j}))
$$
and all other syllables of the paths $c(i)$ and $ c(i-1)$, as defined in \eqref{pp3}, are identical.

On the other hand, if $\e_1 \cdot b_{i-1,j}(3) < 0$, then the subpath $c_{j-1}(i)  \delta(a(b_{i,j}))$ of
$c(i)$ differs from   the subpath $c_{j-1}(i-1)   \delta(a(b_{i-1,j}))$ of  $c(i-1)$ by cancelation of a subpath
$ee^{-1}$, where $e$ is the last edge of $c_{j-1}(i-1)$,  $e^{-1}$ is the first edge of $\delta(a(b_{i-1,j}))$,
and $\ph(e) =a_1^{- b_{i-1,j}(3) / |b_{i-1,j}(3) | } $. Since all other syllables of
the paths $c(i)$ and $ c(i-1)$, as defined in \eqref{pp3}, are identical, the change of $ c(i-1)$, resulting in $c(i)$, can be described as an \EH\ of type 1 which deletes a subpath $ee^{-1}$, where $e$ is an edge of $c$ defined by the equalities
\begin{equation}\label{eab1}
e_+ = \gamma(b_{i-1,j}(1)), \qquad  \ph(e) = a_1^{- b_{i-1,j}(3) / |b_{i-1,j}(3) | } .
\end{equation}

The case when a \PBS\ $B_i$, $i \ge 1$, is obtained from $B_{i-1}$ by an extension of type 1 on the right
is similar but we need to make some changes. Keeping most of the foregoing notation unchanged, we let
 $a(b_{i,j}) = a(b_{i-1,j})e_2 $, where $e_2$ is an edge of $P_W$ and $\ph(e_2) = a_1^{\e_2}$, $\e_2 = \pm 1$.

If $\e_2 \cdot b_{i-1,j}(3) \ge 0$, then as above we conclude that $c(i) = c(i-1)$ because
$$
\delta(a(b_{i-1,j})) c_{j}(i-1)  =   \delta(a(b_{i,j}))c_{j}(i)
$$
and all other syllables of  the paths $c(i)$ and $ c(i-1)$, as defined in \eqref{pp3}, are identical.

If $\e_1 \cdot b_{i-1,j}(3) < 0$, then we can see from the definitions and from Lemma~\ref{f2} that
the path  $c(i)$ can be obtained from  $ c(i-1)$  by an \EH\ of type 1 which deletes a subpath $e^{-1}e$, where $e$ is an edge of $c$ defined by the equalities
\begin{equation}\label{eab2}
e_- = \gamma(b_{i-1,j}(2)), \qquad  \ph(e) = a_1^{- b_{i-1,j}(3) / |b_{i-1,j}(3) | } .
\end{equation}

We  conclude that, in the case when $B(i)$ is obtained from  $B(i-1)$  by an extension of type 1, it follows from  inequalities \eqref{ieq1}--\eqref{ieq3} and from equations \eqref{eab1}--\eqref{eab2}
that the \EH\ of type 1 that produces
$c(i)$ from $ c(i-1)$   can be computed in  space $O(\log |W|)$ when the \pbb s $b_{i,j} \in B_i$ and $b_{i-1,j} \in B_{i-1}$ are known.
\medskip

Suppose that a \PBS\ $B_i$, $i \ge 1$, is obtained from $B_{i-1}$ by an extension of type 2  and
$b_{i,j} \in B_i$ is obtained from $b_{i-1,j} \in B_{i-1}$  by this \EO . Denote $a(b_{i,j}) = e_1 a(b_{i-1,j}) e_2$, where $e_1, e_2$ are edges of $P_W$ and $\ph(e_1) = \ph(e_2)^{-1} =a_2^{\e}$,   $\e = \pm 1$. According to the definition of an extension of type 2,  $b_{i-1,j}(3) \ne 0$ and, in view of the equalities $n_1=n_2 =1$, see \eqref{pr3b} and \eqref{pp1}, we can derive from Lemma~\ref{f2} that the path $ c(i-1)$ turns into the path $c(i)$ in the following fashion. Denote $c_{j-1}(i-1) = \wtl c_{j-1}(i-1) \wtl e_1$ and $c_{j}(i-1) = \wtl e_2 \wtl c_{j}(i-1)$,
where  $\wtl c_{j-1}(i-1)$,   $\wtl c_{j}(i-1)$  are subpaths of $c_{j-1}(i-1)$,  $c_{j}(i-1)$, resp., and   $\wtl e_1, \wtl e_2$ are edges such that $\gamma( e_{i'}) = \wtl e_{i'}$, $i' =1,2$. Then
$c_{j-1}(i)  = \wtl c_{j-1}(i-1) $,  $c_{j}(i)  = \wtl c_{j}(i-1)$, and the path $\delta(a(b_{i,j}))$ has the properties that
\begin{align*}
& \delta(a(b_{i,j}))_-  = (\wtl e_1)_-, \qquad \delta(a(b_{i,j}))_+ = (\wtl e_2)_+, \\
& \ph(\delta(a(b_{i-1,j})) )  \equiv a_1^{b_{i-1,j}(3)} \equiv \ph(\delta(a(b_{i,j})) ) ,
\end{align*}
see Fig.~10.2, and the other syllables of the paths $c(i)$ and $c(i-1)$, as defined in \eqref{pp3},  are identical.

\begin{center}
\begin{tikzpicture}[scale=.64]
\draw  (-2,2) rectangle (6,0);
\draw (0,0) -- (0,2);
\draw (2,0) -- (2,2);
\draw (-5,0) -- (2,0);
\draw (4,0) -- (9,0);
\draw (4,0) -- (4,2);
\draw [-latex](2.4,2) --(2.5,2);
\draw [-latex](1.4,0) --(1.6,0);
\draw [-latex](-2,0.9) --(-2,1.1);
\draw [-latex](0,0.9) --(0,1.1);
\draw [-latex](2,0.9) --(2,1.1);
\draw [-latex](4,0.9) --(4,1.1);
\draw [-latex](6,1.1) --(6,0.88);
\draw [-latex](-3.6,0) --(-3.4,0);
\draw [-latex](7.4,0) --(7.6,0);
\node at (-1.,.5) {$s_1$};
\node at (1.,.5) {$s_2$};
\node at (5,.5) {$s_k$};
\node at (2.4,2.6) {$\delta(a(b_{i-1,j}))$};
\node at (-2.99,1) {$\widetilde e_1 = g_1$};
\node at (-.45,1) {$g_2$};
\node at (1.55,1) {$g_3$};
\node at (3.35,1) {$g_{k-1}$};
\node at (7.3,1) {$\widetilde e_2 = g_k^{-1}$};

\node at (-4,-.6) {$\widetilde c_{j-1}(i-1) = c_j(i)  $};
\node at (8.2,-.6) {$\widetilde c_{j}(i-1) = c_j(i)  $};
\node at (1.5,-.6) {$\delta(a(b_{i-1,j}))$};
\node at (2,-2.) {Fig.~10.2};
\draw  (-2,0) [fill = black] circle (.055);
\draw  (-2,2) [fill = black] circle (.055);
\draw  (6,0) [fill = black] circle (.055);
\draw  (6,2) [fill = black] circle (.055);
\node at (-3.4,2.9) {$ c_{j-1}(i-1) = \widetilde c_{j-1}(i-1) \widetilde e_1  $};
\node at (7.8,2.9) {$ c_{j}(i-1) = \widetilde e_2  \widetilde c_{j}(i-1)  $};
\end{tikzpicture}
\end{center}

Denote $k := | b_{i-1,j}(3) | = | b_{i,j}(3) | >0$ and let
\begin{align*}
\delta(a(b_{i-1,j})) = d_{i-1,1} \ldots d_{i-1,k}, \qquad
\delta(a(b_{i,j})) = d_{i,1} \ldots d_{i,k} ,
\end{align*}
where $d_{i',j'}$ are edges of the paths $\delta(a(b_{i-1,j})) $, $\delta(a(b_{i,j}))$.
Also, we let $g_1, \ldots, g_k$ be the edges of $\TT$ such that
$\ph(g_{i'}) = \ph(\wtl e_1) = a_2^{\e}$ and $(g_{i'})_- =  (d_{i, i'})_-$ for $i' = 1, \ldots, k$, in particular,
$g_1 = \wtl e_1$ and $g_k = \wtl e_2^{-1}$, see Fig.~10.2.
Then there are $k$ \EHs\ of type 2 which turn $c(i-1)$ into $c(i)$
and which use $k$ squares of the region bounded by the closed path
$\wtl e_1 \delta(a(b_{i-1,j}))  \wtl e_2 \delta(a(b_{i,j}))^{-1}$, see Fig.~10.2. For example, the first \EH\ of type 2 replaces the subpath $\wtl e_1 d_{i-1,1} = g_1 d_{i-1,1}$ by $d_{i,1} g_2$. Note that
$\p s_1 = g_1 d_{i-1,1} (d_{i,1} g_2)^{-1}$ is a (negatively orientated)  boundary path of a square $s_1$ of $\TT$.
The second \EH\ of type 2 (when $k \ge 2$) replaces the subpath $g_2 d_{i-1,2}$ by $d_{i,2} g_3$, where $\p s_2 = g_2 d_{i-1,2} (d_{i,2} g_3)^{-1}$ is a (negatively orientated) boundary path of a square $s_2$  of $\TT$, and so on.

In view of inequalities \eqref{ieq1}--\eqref{ieq3},  these $k = | b_{i-1,j}(3) |$ \EHs\ of type 2 that
produce  $c(i)$ from $ c(i-1)$   can be computed in space $O(\log |W|)$ when the \pbb s $b_{i,j} \in B_i$ and $b_{i-1,j} \in B_{i-1}$ are available.
\medskip

Suppose that a \PBS\ $B_i$, $i \ge 1$, is obtained from $B_{i-1}$ by an extension of type 3  and
$b_{i,j} \in B_i$ is obtained from $b_{i-1,j} \in B_{i-1}$  by this \EO .
By the definition of an extension of type 3, $b_{i-1,j}(3) = 0$ and  $a(b_{i,j}) = e_1 a(b_{i-1,j}) e_2$, where $e_1, e_2$ are edges of $P_W$ and $\ph(e_1) = \ph(e_2)^{-1} \ne a_1^{\pm 1}$. Hence,
$\ph(e_1) = \ph(e_2)^{-1} = a_2^{\e}$,  $\e = \pm 1$. It follows from Lemma~\ref{f2} that
\begin{align*}
\delta(a(b_{i-1,j}))  =  c_{j-1}(i-1)_+  =  c_{j}(i-1)_- , \quad
\delta(a(b_{i,j}))  =  c_{j-1}(i)_+  =  c_{j}(i)_-  .
\end{align*}

As above, denote $c_{j-1}(i-1) = \wtl c_{j-1}(i-1) \wtl e_1$ and $c_{j}(i-1) = \wtl e_2 \wtl c_{j}(i-1)$,
where  $\wtl c_{j-1}(i-1)$,   $\wtl c_{j}(i-1)$  are subpaths of $c_{j-1}(i-1)$,  $c_{j}(i-1)$, resp., and   $\wtl e_1, \wtl e_2$ are edges such that $\gamma( e_{i'}) = \wtl e_{i'}$, $i' =1,2$. Then it follows from the definitions
that
$c_{j-1}(i)  = \wtl c_{j-1}(i-1) $,   $c_{j}(i)  = \wtl c_{j}(i-1)$ and that all other syllables of the paths $c(i)$ and $c(i-1)$, as defined in \eqref{pp3}, are identical.  Hence, the change of the path
$c(i-1)$ into $c(i)$ can be described as an \EH\ of type 1 that deletes the subpath $\wtl e_1 \wtl e_2$ of
$c(i-1)$. Since $(e_1)_+ = b_{i-1,j}(1)$ and $\wtl e_1 = \gamma(e_1)$, it is easy to see from  inequalities \eqref{ieq1}--\eqref{ieq3} that we can compute
this \EH\ of type 1 in space $O(\log |W|)$ when the \pbb s $b_{i,j} \in B_i$ and $b_{i-1,j} \in B_{i-1}$ are given.
\smallskip

Suppose that a \PBS\ $B_i$, $i \ge 1$, is obtained from $B_{i-1}$ by a merger operation  and
$b_{i,j} \in B_i$ is obtained from \pbb s $b_{i-1,j}, b_{i-1,j+1} \in B_{i-1}$  by this merger.

First assume that $b_{i-1,j}(3) \cdot  b_{i-1,j+1}(3) \ge 0$. It follows from  Lemma~\ref{f2} and from the definitions that
\begin{align*}
 c_{j}(i-1)  =     \delta(a(b_{i-1,j}))_+ ,  \qquad
 \delta(a(b_{i-1,j})) c_{j}(i-1) \delta(a(b_{i-1,j+1})) = \delta(a(b_{i,j}))
\end{align*}
and all  other syllables of the paths $c(i)$ and $c(i-1)$, as defined in \eqref{pp3}, are identical. Therefore, we have the equality of paths $c(i-1)= c(i)$ in this case. Note that factorizations \eqref{pp3} of  $c(i)$ and $c(i-1)$ are different.
\medskip

Now assume that $b_{i-1,j}(3) \cdot  b_{i-1,j+1}(3) < 0$. It follows from  Lemma~\ref{f2} and from the definitions that $c_{j}(i-1) =     \delta(a(b_{i-1,j}))_+$ and that the subpath $\delta(a(b_{i,j}))$ of  $c(i)$ can be obtained from the  subpath $\delta(a(b_{i-1,j})) c_{j}(i-1) \delta(a(b_{i-1,j+1})) $ of $c(i-1)$ by cancelation of
$\min(|b_{i-1,j}(3)|,  |b_{i-1,j+1}(3)| )$ pairs of edges in $c(i-1)$ so that the last edge of $\delta(a(b_{i-1,j}))$
is canceled with the first edge of $\delta(a(b_{i-1,j+1}))$ as a subpath $ee^{-1}$ in the definition
of an \EH\ of type 1 and so on until a shortest path among $\delta(a(b_{i-1,j}))$, $\delta(a(b_{i-1,j+1}))$
completely cancels. Note that all  other syllables of the paths $c(i)$ and $c(i-1)$, as defined in \eqref{pp3},
are identical. Thus the path $c(i)$ can be obtained from $c(i-1)$ by $\min(|b_{i-1,j}(3)|,  |b_{i-1,j+1}(3)| )$
\EHs\ of type 1 which, in view of   \eqref{ieq1}--\eqref{ieq3}, can be computed in space $O(\log |W|)$ when the \pbb s $b_{i,j} \in B_i$ and $b_{i-1,j}, b_{i-1,j+1} \in B_{i-1}$ are known.
\medskip

Recall that, in the proof of Theorem~\ref{thm4}, we devised an algorithm $\mathfrak{A}_{n}$ that, in deterministic space \eqref{sp}, constructs a sequence of \EO s $\Omega$ and a corresponding sequence of \PBS s
$B_0, \ldots, B_\ell$ for $W$ such that $B_0$ is empty, $B_\ell = \{ b_{\ell, 1} \}$ is final and
$b_{\ell, 1}(4) = | \D(2) | \le n$, where $\D$ is a minimal \ddd\ over \eqref{pp1} with
$\ph(\p \D) \equiv W$.  In view of inequalities \eqref{inqd}, we may assume that $n = |W|^2/4$, hence, the bound
\eqref{sp} becomes $O( (\log |W|)^3)$ and, as we saw in \eqref{ieq1}--\eqref{ieq3},
every \pbb\ $b \in \cup_{i=0}^\ell B_i$ requires space $O(\log |W|)$ to store. As was discussed above, when given
\PBS s $B_{i-1}$ and $B_{i}$, we can construct, in space $O(\log |W|)$, a sequence of \EHs\ that turns the path $c(i-1)$ into $c(i)$. Since the sequence of \PBS s
$B_0, \ldots, B_\ell$ for $W \equiv \ph(c)$ is constructible in space $O( (\log |W|)^3)$, it follows that
a sequence $\Xi$ of \EHs\ of type 1--2 that turns the path $c=c(0)$ into a vertex $c(\ell) = c_-=c_+$ can also be constructed in deterministic   space $O( (\log |W|)^3)$. This completes the proof of Theorem~\ref{thm5}.
\end{proof}

Recall that a {\em polygonal} closed curve $c$ in the plane $\mathbb R^2$, equipped with a tessellation $\TT$
into unit squares, consists of finitely many line segments $c_1, \dots, c_k$, $k >0$, whose endpoints
are vertices of $\TT$,  $c=  c_1 \dots c_k$, and $c$ is closed, i.e., $c_- = c_+$.  If $c_i \subset \TT$ then the $\TT$-length $|c_i|_{\TT}$ of $c_i$
is the number of edges of $c_i$. If
$c_i \not\subset \TT$ then the $\TT$-length $|c_i|_{\TT}$ of $c_i$
is the number of connected components in $c_i  \setminus \TT$.
We assume that $|c_i|_{\TT} >0$ for every $i$ and set $|c|_{\TT} := \sum_{i=1}^k |c_i|_{\TT}$.

\begin{T6} Suppose that $n \ge 1$ is a fixed integer and  $c$ is a  polygonal closed curve in the plane $\mathbb R^2$ with given tessellation $\TT$ into unit squares.
Then, in deterministic space $O( (\log |c|_{\TT} )^3)$  or in deterministic time $O(  |c|_{\TT}^{n+3} \log  |c|_{\TT} )$, one can compute a rational number $r_n$
such that $|A(c) - r_n | < \tfrac {1}{ |c|_{\TT}^n }$.

In particular, if the area  $A(c)$ defined by $c$ is known to be an integer multiple of $\tfrac 1 L$, where $L>0$ is a given integer and $L< |c|_{\TT}^{n}/2$,  then
$A(c)$ can be computed in deterministic space $O( (\log |c|_{\TT} )^3)$  or in deterministic time $O(  |c|_{\TT}^{n+3} \log  |c|_{\TT} )$.
\end{T6}

\begin{proof} As above, let $c = c_1 \dots c_k$, where each $c_i$ is a line segment of $c$ of
positive length that connects vertices of $\TT$. For every $c_i$, we define an approximating path
$\zeta(c_i)$ such that $\zeta(c_i)_- = (c_i)_-$, $\zeta(c_i)_+ = (c_i)_+$ and $\zeta(c_i) \subset \TT$.
If $c_i \subset \TT$ then we set $\zeta(c_i) := c_i$.

Assume that  $c_i \not\subset \TT$. let $R_i$ be a rectangle consisting of unit squares of $\TT$ so that $c_i$ is a diagonal of $R_i$. Consider the set $N(c_i) \subseteq  R_i$ of all squares $s$ of $R_i$ such that the intersection $s \cap c_i$
is not empty  and is not a single point. Assuming that the boundary path $\p N(c_i)$ is negatively, i.e., clockwise, oriented, we represent  $\p N(c_i)$ in the form
$$
\p N(c_i) = q_{1}(c_i) q_{2}(c_i)^{-1} ,
$$
where $q_{1}(c_i), q_{2}(c_i)^{-1}$ are subpath of $\p N(c_i)$ defined by the equalities
$$
q_{1}(c_i)_- =  q_{2}(c_i)_- = (c_i)_-, \qquad q_{1}(c_i)_+ =  q_{2}(c_i)_+ = (c_i)_+.
$$
It is easy to see that these  equations uniquely determine the paths $q_{1}(c_i)$, $q_{2}(c_i)$, see Fig.~10.3.

\begin{center}
\begin{tikzpicture}[scale=1.05]
\draw  (-2,2) rectangle (1.5,0);
\draw  (-2,0) -- (1.5,2);
\draw [-latex](.5,1.43) -- (.7, 1.54);
\draw [-latex](-1.27,0) -- (-1.2, 0);
\draw [-latex](1,2) -- (1.2, 2);
\node at (-0.2,-1.4) {Fig.~10.3};
\draw  (-2,0) [fill = black] circle (.05);
\draw  (1.5,2) [fill = black] circle (.05);
\node at (.7,1.27) {$c_i$};
\node at (-2.5,2) {$R_i$};
\draw (-2,0.5) -- (-1.5,0.5) -- (-1.5,1) -- (-1,1) -- (-0.5,1) --
(-0.5,1.5) -- (0,1.5) -- (0.5,1.5) -- (0.5,2) -- (1,2) -- (1.5,2);
\draw (-1,0) -- (-1,0.5) -- (0,0.5) -- (0,1) -- (1,1) -- (1,1.5) -- (1.5,1.5) --
(1.5,2);
\node at (-1.4,-.45) {$q_{2}(c_i)$};
\node at (1,2.37) {$q_{1}(c_i)$};
\draw  [dashed] (-2,1.5) -- (1.5,1.5);
\draw  [dashed] (-2,1) -- (1.5,1);
\draw  [dashed] (-2,.5) -- (1.5,.5);
\draw  [dashed] (-1.5,2) -- (-1.5,0);
\draw  [dashed] (-1.,2) -- (-1.,0);
\draw  [dashed] (-.5,2) -- (-.5,0);
\draw  [dashed] (0,2) -- (0,0);
\draw  [dashed] (0.5,2) -- (0.5,0);
\draw  [dashed] (1.5,2) -- (1.5,0);
\draw  [dashed] (1.,2) -- (1.,0);
\end{tikzpicture}
\end{center}

\begin{lem}\label{f3}  Suppose $c_i, R_i$ are defined as above,  $c_i \not\subset \TT$, and
$|R_i|_x$, $|R_i|_y$ are the lengths of horizontal, vertical, resp., sides of the rectangle $R_i$.
Then
\begin{equation}\label{qq1}
\max(|R_i|_x, |R_i|_y)  \le  |c_i|_\TT \le  2 \max(|R_i|_x, |R_i|_y) ,
\end{equation}
and the area $A(c_i q_{1}(c_i)^{-1})$ bounded by the polygonal closed curve $c_i q_{1}(c_i)^{-1}$ satisfies
\begin{equation}\label{qq2}
A(c_i q_{1}(c_i)^{-1}) \le |c_i|_\TT .
\end{equation}

In addition, the paths $q_{1}(c_i), q_{2}(c_i)$ can be constructed in deterministic space $O(\log |c|_\TT)$
or in deterministic time $O( |c_i|_{\TT}  \log |c|_\TT )$.
\end{lem}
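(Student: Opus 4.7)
The plan is to verify each of the three assertions in turn, using a direct geometric analysis of the rectangle $R_i$ and the staircase paths $q_1(c_i), q_2(c_i)$.

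For the length bounds \eqref{qq1}, I would invoke the classical count that a straight segment joining two opposite lattice-corners of an $a\times b$ rectangle crosses exactly $a+b-\gcd(a,b)$ unit squares of the integer tessellation: indeed, the segment enters a new square precisely when it meets a horizontal or vertical grid line, giving $|c_i|_\TT = |R_i|_x + |R_i|_y - \gcd(|R_i|_x,|R_i|_y)$, because by the definition of $|c_i|_\TT$ the connected components of $c_i\setminus\TT$ are in bijection with the squares $c_i$ meets in more than one point. The lower bound $\max(|R_i|_x,|R_i|_y) \le |c_i|_\TT$ then follows from $\gcd \le \min$, and the upper bound $|c_i|_\TT \le |R_i|_x + |R_i|_y \le 2\max(|R_i|_x,|R_i|_y)$ is immediate.

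For the area bound \eqref{qq2}, I would argue that the closed polygonal curve $c_iq_1(c_i)^{-1}$ bounds a region that lies entirely inside $N(c_i)$. This is because, by the definition of $N(c_i)$ and the choice of $q_1(c_i)$ as one of the two sides of $\partial N(c_i)$ from $(c_i)_-$ to $(c_i)_+$, the segment $c_i$ and the staircase $q_1(c_i)$ lie on the two sides of the same portion of $N(c_i)$. Since $|N(c_i)|$ equals exactly the number of unit squares $c_i$ traverses, which is $|c_i|_\TT$, and each unit square contributes area one, $A(c_iq_1(c_i)^{-1}) \le \mathrm{Area}(N(c_i)) = |c_i|_\TT$.

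For the constructibility claim, I would describe an incremental algorithm that traces out $q_1(c_i)$ (and symmetrically $q_2(c_i)$) one edge at a time. Having determined the current vertex $v$ on $q_1(c_i)$, the next edge is the unique edge of $\TT$ emanating from $v$ along $\partial N(c_i)$ on the $q_1$-side of $c_i$; identifying it reduces to checking, for the two candidate unit squares incident to $v$, whether their interior meets $c_i$, which in turn reduces to evaluating the sign of an integer linear form built from the coordinates of $(c_i)_-$ and $(c_i)_+$ at the corners of the candidate squares. All coordinates involved are integers bounded by $O(|c|_\TT)$, so each such test uses $O(\log|c|_\TT)$ space and time; performing at most $|c_i|_\TT$ steps (and reusing workspace between steps) gives the declared bounds $O(\log|c|_\TT)$ space and $O(|c_i|_\TT\log|c|_\TT)$ time.

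The main obstacle I anticipate is making the area step watertight, since a priori the polygon $c_iq_1(c_i)^{-1}$ need not be simple in degenerate cases (when $c_i$ passes through a lattice point of the interior of $R_i$); the cleanest fix will be to argue that in such a case $R_i$ splits into sub-rectangles on which the claim holds separately, and to sum the contributions, observing that the bookkeeping of $|c_i|_\TT$ is additive over the split. The length and constructibility parts are essentially routine once the enumeration of the squares of $N(c_i)$ is set up.
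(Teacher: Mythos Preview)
Your proposal is correct and follows essentially the same outline as the paper's proof: both establish \eqref{qq2} by observing that the region enclosed by $c_i q_1(c_i)^{-1}$ sits inside $N(c_i)$, whose area equals $|c_i|_\TT$, and both handle the constructibility claim by a local sweep through the squares of $N(c_i)$.

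The one genuine difference is in the length bounds \eqref{qq1}. You invoke the exact formula $|c_i|_\TT = |R_i|_x + |R_i|_y - \gcd(|R_i|_x,|R_i|_y)$ and deduce both inequalities from it; the paper instead argues more elementarily, getting the lower bound from the fact that $c_i$ meets a square in every row and every column, and the upper bound from the observation that when $|\mathrm{slope}|\le 1$ (resp.\ $\ge 1$) each column (resp.\ row) of $R_i$ contributes at most two squares to $N(c_i)$. Your route is shorter and sharper once the gcd formula is granted; the paper's route avoids citing that result and is entirely self-contained. As for your anticipated obstacle with non-simplicity of $c_i q_1(c_i)^{-1}$ at interior lattice points, the paper sidesteps this by noting that the curve can be made simple by an arbitrarily small perturbation while staying inside $N(c_i)$, which is quicker than your proposed subdivision into sub-rectangles, though either works.
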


\begin{proof} We say that all unit squares  of $R_i$ whose points have $x$-coordinates in the same range compose a {\em column} of $R_i$. Similarly, we say that all unit squares  of $R_i$ whose points have $y$-coordinates in the same range compose a {\em row} of $R_i$.  Since $c_i$ is a diagonal of $R_i$, it follows that $c_i$ has an intersection, different from a single point,  with a unit square from every row and from every column of $R_i$.
Hence, in view of the definition of the $\TT$-length $|c_i|_\TT$ of $c_i$, we have
$\max(|R_i|_x, |R_i|_y)  \le  |c_i|_\TT$.

It follows from the definitions of the region $N(R_i) \subseteq R_i$ and the $\TT$-length $|c_i|_\TT$ that
the number of unit squares in $N(R_i)$ is $|c_i|_\TT$, hence, the area $A(N(R_i)) $ is equal to  $|c_i|_\TT$.
Since the closed curve $c_i q_{1}(c_i)^{-1}$ is contained in $N(R_i)$ and it can be turned into a simple curve by an arbitrarily small deformation, it follows that $A(c_i q_{1}(c_i)^{-1}) \le A(N(R_i)) =  |c_i|_\TT$, as required in \eqref{qq2}.

Let  $\sll(c_i)$ denote the slope of the line that  goes through $c_i$. If $|\sll(c_i)| \le 1$ then we can see that $N(c_i)$ contains at most 2 squares in every column of $R_i$. On the other hand, if $|\sll(c_i)| \ge 1$ then  $N(c_i)$ contains at most 2 squares in every row of $R_i$. Therefore,
$
A(N(c_{i}) ) \le   2\max(|R_i|_x, |R_i|_y) .
$
Since $A(N(c_{i}) ) = |c_i|_\TT$, the inequalities \eqref{qq1} are proven.

Note that, moving along columns of $R_i$ if $\sll(c_i) \le 1$ or moving along rows of $R_i$ if
$\sll(c_i) > 1$, we can detect  all squares of $R_i$ that belong to the region $N(c_{i})$ in
space $ O(\log |c|_\TT )$ or in time  $ O( |c_i|_\TT \log |c|_\TT  )$   by checking which squares in current
column or row are intersected by $c_i$ in more than one point.  This implies that the  paths $q_{1}(c_i), q_{2}(c_i)$ can also be constructed in space $O(\log |c|_\TT)$ or in  time  $ O( |c_i|_\TT \log |c|_\TT  )$, as desired.
\end{proof}

For $c_i \not\subset \TT$, define $\zeta(c_i) := q_{1}(c_i)$. Recall that $\zeta(c_i) = c_i $ if
$c_i \subset \TT$. We can now define an approximating closed  path $\zeta(c)$ in $\TT$ for $c$ by setting
$$
\zeta(c) := \zeta(c_1) \dots \zeta(c_k) .
$$
By Lemma~\ref{f3},
\begin{align*}
|A(c) -  A(\zeta(c)) |     \le \sum_{i=1}^k A(c_i \zeta(c_i)^{-1}) \le \sum_{i=1}^k |c_i|_\TT = |c|_\TT     .
\end{align*}

Consider a refined tessellation $\TT_M$, where $M>1$ is an integer, so that every unit square $s$ of  $\TT$ is divided into $M^2$ congruent squares each of area $M^{-2}$. We repeat the foregoing definitions of the lengths
$|c_i|_{\TT_M}$, $|c|_{\TT_M}$, the paths $q_{j, M}(c_i)$, $j=1,2$, $i=1,\dots,k$,  rectangles
$R_{i,M}$, regions $N_M(c_i) \subseteq R_{i,M}$, and approximating paths $\zeta_M(c_i), \zeta_M(c)$ with respect to
the refined  tessellation $\TT_M$ in place of $\TT$.

\begin{lem}\label{f4}  Suppose $c_i \not\subset \TT$, and
$|R_{i,M}|_x$, $|R_{i,M}|_y$ denote the path length of horizontal, vertical, resp., sides of the rectangle $R_{i,M}$.
Then $|R_{i,M}|_x = M |R_{i}|_x $, $|R_{i,M}|_y = M |R_{i}|_y$,
\begin{equation}\label{qq3}
\max(|R_{i,M}|_x, |R_{i,M}|_y)  \le  |c_i|_{\TT_M}  \le  2 \max(|R_{i,M}|_x, |R_{i,M}|_y) ,
\end{equation}
and the area  bounded by the polygonal closed curve $c_i q_{1, M}(c_i)^{-1}$ satisfies
\begin{equation}\label{qq4}
A(c_i q_{1, M}(c_i)^{-1}) \le M^{-2} |c_i|_{\TT_M} .
\end{equation}

In addition, we have
\begin{equation}\label{qq5}
  M |c_i |_{\TT}/2      \le  |c_i|_{\TT_M}  \le  2 M |c_i |_{\TT} ,
\end{equation}
and the paths $q_{1, M}(c_i), q_{2, M}(c_i)$ can be constructed in deterministic space
$$
O(\log |c|_{\TT_M}) = O(\log (M |c|_{\TT}) )
$$
or in deterministic time
$
O(|c_i|_{\TT_M} \log |c|_{\TT_M}) = O( M |c_i|_{\TT} \log (M |c|_{\TT}) )
$.
\end{lem}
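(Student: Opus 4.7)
The plan is to derive Lemma~\ref{f4} as a rescaled analogue of Lemma~\ref{f3}, with the bulk of the argument obtained by applying the earlier lemma verbatim to the refined tessellation $\TT_M$ in place of $\TT$ and then translating the resulting quantities back into $\TT$-terms. I expect no deep obstruction; the only point requiring genuine care is the algorithmic bound, where one must check that coordinates in $\TT_M$ still fit into the claimed logarithmic space.

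First I would note that $R_{i,M}$ is geometrically the \emph{same} axis-aligned rectangle as $R_i$: the set of unit squares of $\TT_M$ it consists of is exactly the subdivision of the unit squares of $\TT$ comprising $R_i$. Since each $\TT$-edge is subdivided into $M$ edges of $\TT_M$, the side-length equalities $|R_{i,M}|_x = M|R_{i}|_x$ and $|R_{i,M}|_y = M|R_{i}|_y$ follow at once from the definitions.

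Next, inequalities \eqref{qq3} and \eqref{qq4} are proved by repeating the arguments for \eqref{qq1} and \eqref{qq2} with $\TT_M$ in place of $\TT$. For \eqref{qq3}, since $c_i$ is a diagonal of $R_{i,M}$ it must meet in more than a point at least one $\TT_M$-square from each row and each column of $R_{i,M}$, yielding the lower bound; the upper bound uses the slope dichotomy $|\sll(c_i)|\le 1$ versus $|\sll(c_i)|\ge 1$ to bound the number of squares of $N_M(c_i)$ per column, respectively per row, by $2$. Note that the slope $\sll(c_i)$ is an intrinsic geometric invariant of $c_i$ and is unaffected by refining the tessellation, so the case analysis transfers without change. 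For \eqref{qq4}, each $\TT_M$-square has area $M^{-2}$, so $A(N_M(c_i)) = M^{-2}|c_i|_{\TT_M}$; since the closed curve $c_i q_{1,M}(c_i)^{-1}$ lies in $N_M(c_i)$ and can be perturbed into a simple curve arbitrarily little, its enclosed area is bounded by $A(N_M(c_i))$. The comparison \eqref{qq5} is then obtained by chaining \eqref{qq1} with the just-established \eqref{qq3}: the lower bound gives $|c_i|_{\TT_M}\ge \max(|R_{i,M}|_x,|R_{i,M}|_y) = M\max(|R_i|_x,|R_i|_y) \ge M|c_i|_\TT/2$, and the upper bound gives $|c_i|_{\TT_M} \le 2M\max(|R_i|_x,|R_i|_y) \le 2M|c_i|_\TT$.

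For the algorithmic statement, I would run the same column/row sweep employed in the proof of Lemma~\ref{f3} but on $\TT_M$: move along the columns of $R_{i,M}$ when $\sll(c_i)\le 1$ and along the rows when $\sll(c_i)>1$, and in each column/row detect the (at most two) squares of $\TT_M$ that belong to $N_M(c_i)$ by a constant-time intersection test against the fixed line segment $c_i$. A vertex of $\TT_M$ is specified by a pair of integer multiples of $1/M$, i.e.\ by two integers of absolute value $O(M\max(|R_i|_x,|R_i|_y))$, and since in the intended applications $M$ is polynomially bounded in $|c|_\TT$, each such coordinate is stored in $O(\log|c|_{\TT_M}) = O(\log(M|c|_\TT))$ space. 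Summing over the $|c_i|_{\TT_M}$ such tests yields the space bound $O(\log|c|_{\TT_M})$ and the time bound $O(|c_i|_{\TT_M}\log|c|_{\TT_M})$ asserted in the lemma, completing the proof plan.
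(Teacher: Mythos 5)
Your proposal is correct and matches the paper's proof essentially verbatim: the paper likewise treats the side-length equalities as definitional, derives \eqref{qq3}--\eqref{qq4} by re-running the proofs of \eqref{qq1}--\eqref{qq2} with $\TT_M$ in place of $\TT$ (noting the square area changes to $M^{-2}$), deduces \eqref{qq5} by chaining \eqref{qq1} with \eqref{qq3}, and obtains the space/time bounds by the same column/row sweep. The aside that ``$M$ is polynomially bounded in $|c|_\TT$'' is unnecessary --- the bound $O(\log|c|_{\TT_M}) = O(\log(M|c|_\TT))$ already follows from \eqref{qq5} without any assumption on $M$ --- but it does not harm the argument.
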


\begin{proof} The equalities $|R_{i,M}|_x = M |R_{i}|_x $, $|R_{i,M}|_y = M |R_{i}|_y$ are obvious from the definitions. Proofs of inequalities \eqref{qq3}--\eqref{qq4} are analogous to the proofs of
inequalities \eqref{qq1}--\eqref{qq2} of Lemma~\ref{f3}  with the correction that the area of a
square of the tessellation $\TT_M$ is now $M^{-2}$.

The inequalities \eqref{qq5} follow from  inequalities \eqref{qq1}, \eqref{qq3}  and imply that
the space and time bounds $O(\log |c|_{\TT_M})$, $O(|c_i|_{\TT_M} \log |c|_{\TT_M})$, that are obtained as corresponding bounds of Lemma~\ref{f3}, can be rewritten in the form
$O(\log (M |c|_{\TT}) )$, $O( M |c_i|_{\TT} \log (M |c|_{\TT}) )$, resp.
\end{proof}

It follows from the definitions and Lemma~\ref{f4} that
\begin{align}\notag
|A(c) -  A(\zeta_M(c)) | & \le \sum_{i=1}^k A(c_i \zeta_M(c_i)^{-1} ) \le M^{-2} \sum_{i=1}^k
|c_i|_{\TT_M} \\  \label{qq6}
&  \le   M^{-2} \sum_{i=1}^k
2M |c_i|_{\TT}   = 2M^{-1} |c|_{\TT}
\end{align}
and that the path $\zeta_M(c_i) \subset \TT_M$ can be computed in space $O(\log (M |c|_{\TT}) )$ or
in time $O( M |c_i|_{\TT} \log (M |c|_{\TT}) )$.

Setting $M := |c|_{\TT}^{n+2}$, where $n \ge 1$ is a fixed integer,  we  can see from Lemma~\ref{f4} that the closed path $\zeta_M(c) \subset \TT_M$ can be constructed
in  space $O(\log |c|_{\TT} ) $ or in time  $O(  |c|_{\TT}^{n+3} \log  |c|_{\TT} )$. Hence, by Theorem~\ref{thm5} and Lemma~\ref{f4}, the area $A(\zeta_M(c))$
can be computed in  space $O((\log |c|_{\TT_M})^3 ) = O((\log |c|_{\TT})^3 )$ or in time
$O(  |c|_{\TT_M}^{3} \log  |c|_{\TT_M} ) = O(  |c|_{\TT}^{n+3} \log  |c|_{\TT} )$. The inequality
\eqref{qq6}  together with $M = |c|_{\TT}^{n+2}$ imply that
\begin{align}\label{qq7}
|A(c) -  A(\zeta_M(c)) | \le 2  M^{-1} |c|_{\TT} < |c|_{\TT}^{-n} ,
\end{align}
here we may assume $|c|_{\TT} > 2$ for otherwise  $A(c) = 0$ and  $r_n = 0$.
\smallskip

Finally, suppose that the area $A(c)$ is known to be an integer  multiple of $\tfrac 1L$, where $L >0$ is an integer with
$L < |c|_{\TT}^{n}/2$.  Applying the foregoing approximation result,
in deterministic space $O((\log |c|_{\TT})^3 )$  or in deterministic time  $O(  |c|_{\TT}^{n+3} \log  |c|_{\TT} )$, we can compute a rational number
$r_n = A(\zeta_M(c))$, where $M = |c|_{\TT}^{n+2}$, such that  the inequality \eqref{qq7} holds true.
It follows from Lemma~\ref{91}, inequalities \eqref{qq5} and the definitions that both numerator and denominator of $r_n$ are nonnegative integers that do not exceed
$$
\max (  |c|_{\TT_M}^2/4,   M^2 ) \le  \max (  M^2 |c|_{\TT}^{2},  M^2) \le |c|_{\TT}^{2(n+3)} .
$$
Hence, a binary representation of $r_n$ takes space $O(\log |c|_{\TT})$.

Since it is known that $\tfrac 1 L > \tfrac{2}{|c|_{\TT}^{n}}$, it follows from \eqref{qq7}  that there is
at most one integer multiple of $\tfrac 1L $ at the distance $< \tfrac{1}{|c|_{\TT}^{n}}$ from $r_n$.
This means that an integer multiple of $\tfrac 1L $ closest to  $r_n$ is the area $A(c)$.

Since  $r_n$ and $L$ are available and their  binary representations take space $O(\log |c|_{\TT})$,
it follows that a closest to  $r_n$  integer multiple of  $\tfrac 1L$,  can be computed in space
$O(\log |c|_{\TT})$ or in time  $O((\log |c|_{\TT})^2)$ and this will be the desired area $A(c)$.

Thus   the area $A(c)$  bounded by $c$ can be computed  in  space  $O(\log |c|_{\TT}^3)$ or in time
$O(  |c|_{\TT}^{n+3} \log  |c|_{\TT} )$.
Theorem~\ref{thm6} is proved.
\end{proof}

\begin{C3} Let $K \ge 1$ be a fixed integer and let $c$ be a  polygonal closed curve in the plane
$\mathbb R^2$ with given tessellation $\TT$ into unit squares such that $c$ has one
of the following two properties $\mathrm{(a)}$--$\mathrm{(b)}$.

$\mathrm{(a)}$ If $c_i, c_j$ are two nonparallel line segments of $c$ then their intersection point, if it exists, has coordinates that are integer multiples of $\tfrac 1 K$.

$\mathrm{(b)}$ If $c_i$ is a line segment of $c$ and  $a_{i,x}, a_{i,y}$ are coprime integers such that
the line given by an equation $a_{i,x}x + a_{i,y}y = b_{i}$, where $b_i$ is an integer,
contains $c_i$, then $\max(|a_{i,x}|,|a_{i,y}|) \le K$.

Then the area $A(c)$ defined by $c$ can be computed in deterministic space $O( (\log |c|_{\TT} )^3)$  or in deterministic time  $O( |c|_{\TT}^{n+3}  \log |c|_{\TT} )$, where $n$ depends on $K$.

In particular, if $\TT_\ast$ is a tessellation  of the plane $\mathbb R^2$ into  equilateral  triangles of unit area,
or into regular hexagons of unit area, and $q$ is a finite closed path in $\TT_\ast$ whose edges are edges of $\TT_\ast$, then the area $A(q)$ defined by $q$ can be computed in deterministic space $O( (\log |q |)^3)$  or in deterministic time $O( |q|^5 \log |q | )$.
\end{C3}

\begin{proof} Assume that the property (a) holds for $c$. Let $t$ be a triangle in the plane whose vertices $v_1, v_2, v_3$ are  points of the intersection of some nonparallel segments of $c$.
Then, using the determinant formula
\begin{align*}
A(t)  = \tfrac 12 | \det [\overrightarrow{v_1v_2}, \overrightarrow{v_1v_3} ] |
\end{align*}
for the area $A(t)$ of $t$,  we can see that  $A(t)$ is an integer multiple of $\tfrac {1} {2K^2}$.
Since $A(c)$ is the sum of areas of triangles such as $t$ discussed above, it follows that $A(c)$ is also an  integer multiple of $\tfrac {1} {2K^2}$. Taking  $n$  so that
\begin{align}\label{qq8a}
|c|_{\TT}^n > 4K^2 ,
\end{align}
we see that Theorem~\ref{thm6} applies with $L = 2K^2$ and yields the desired conclusion.
\medskip

Suppose that the property (b) holds for $c$.  By the Cramer's rule applied to the system of two linear equations
\begin{align*}
a_{i,x}x + a_{i,y}y & = b_{i} \\
a_{j,x}x + a_{j,y}y & = b_{j}
\end{align*}
which, as in  property (b), define the lines that contain nonparallel  segments $c_i, c_j$,   we have
that the intersection point of  $c_{i}$ and $c_{j}$ has rational coordinates whose  denominators do not exceed
\begin{align*}
\left|
\det \begin{bmatrix}
 a_{i,x}  & a_{i,y} \\
 a_{j,x} & a_{j,y} \\
\end{bmatrix} \right|  \le 2K^2 .
\end{align*}

This means that coordinates of the intersection point of two nonparallel line segments $c_{i}$, $c_{j}$ of $c$
are integer multiples of  $\tfrac{1}{ (2K^2)!}$ and the case when the property (b) holds for $c$ is reduced to the case when the  property (a)  holds for $c$  with $K' =  (2K^2)!$.
\smallskip

It remains to show the last claim of Corollary~\ref{cor3}. The case when $q$ is a closed path in the tessellation $\TT_6$ of the plane $\mathbb R^2$ into regular
hexagons of unit area can be obviously reduced to the case  when $q$ is a closed path in the tessellation $\TT_3$ of the plane $\mathbb R^2$ into equilateral triangles of unit area. For this reason we discuss the latter case only.

Consider the standard tessellation
$ \TT = \TT_4$ of the plane $\mathbb R^2$ into unit squares and draw  a diagonal in each square $s$ of  $\TT_4$
which connects the lower left vertex and the upper right vertex of $s$. Let $\TT_{4,1}$ denote  thus obtained tessellation  of $\mathbb R^2$ into triangles of area 1/2. Note that $\TT_3$ and $\TT_{4,1}$ are isomorphic
as graphs and the areas of corresponding triangles differ by a coefficient of $2^{\pm 1}$. This isomorphism  enables us to define  the image $q'$  in $\TT_{4,1}$  of  a closed path $q$ in $\TT_3$. It is clear that the area $A( q')$ defined by $ q'$ relative to  $\TT_{4,1}$  is half of the area $A(q)$ defined by $q$ relative to  $\TT_{3}$, $A( q') = A( q)/2$. We can also consider $q'$ as a polygonal closed curve relative to
the standard tessellation $\TT = \TT_4$ of  $\mathbb R^2$ into unit squares. Note that
$|q'|_{\TT_4} = |q'|_{\TT_{4,1}}= |q|_{\TT_3}$ and that the property (a) of Corollary~\ref{cor3} holds for $ q'$ with the constant $K=1$  relative to $\TT_4$. Thus, by proven part (a), the area $A(q')$   can be computed in   space
$O((\log |q'|_{\TT_4})^3 )= O((\log |q|_{\TT_3})^3 ) =  O((\log |q|)^3 ) $ or in   time
$$
O( |q'|_{\TT_4}^{5} \log |q'|_{\TT_4} )=   O( |q|_{\TT_3}^{5} \log |q|_{\TT_3} ) =
 O( |q|^{5} \log |q| )
$$
for the reason that $K=1$ and we can use $n =2$ in  \er{qq8a}  unless $|q'|_{\TT_4} = 2$ in which case $A(q') = A(q) =0$. Since $A( q) = 2A( q')$, our proof is complete.
\end{proof}

It is tempting to try to lift the restrictions of Corollary~\ref{cor3} to be able to compute, in
polylogarithmic space, the  area $A(c)$ defined by an arbitrary polygonal closed curve $c$ in the plane equipped with a tessellation $\TT$ into unit squares. Approximation approach of Theorem~\ref{thm6} together with construction  of actual \EHs\ for the approximating path $\zeta_M(c)$ of  Theorem~\ref{thm5} that seem to indicate the sign of the pieces  $A(c_i \zeta_M(c_i)^{-1})$  of the intermediate area between $c$ and $\zeta_M(c)$, both done in polylogarithmic space, provide certain credibility to this idea.

However, in the general situation, this idea would not work because the rational number $A(c)$ might have an exponentially large denominator, hence, $A(c)$ could take polynomial space just to store (let alone the computations). An example would be provided by a polygonal closed curve $c= c(n)$, where $n \ge 2$ is an integer,  such that  $|c|_\TT < (n+1)^2$  and the denominator of $A(c)$ is greater than $2^{k_1 n -1}$, where $k_1 >0$ is a constant. Below are details of such an example.
\smallskip

Let $n \ge 2$ be an integer and let $p_1, \dots, p_\ell$ be all primes not exceeding $n$. Recall that there is a constant $k_1 >0$ such that $p_1 \dots p_\ell > 2^{k_1n}$, see \cite{HW}. We construct line segments
$c_{3i+1}$, $c_{3i+2}$, $c_{3i+3}$ for $i=0, \dots, \ell-1$ and $c_{3\ell+1}$ of a polygonal closed curve $c=c(n)$
by induction as follows. Let the initial vertex $(c_1)_-$ of $c_1$ be
the point (with coordinates) $(0,0)$,  $(c_1)_-:= (0,0)$. Proceeding by induction on  $i\ge 0$, assume that the point $(c_{3i+1})_- =(x_0, y_0)$ is already defined. Then the line segment  $c_{3i+1}$ goes from the point  $(c_{3i+1})_- $ to the point $(x_0, y_0+1) = (c_{3i+1})_- +(0,1)$, written
$c_{3i+1} = [(c_{3i+1})_-, (c_{3i+1})_- +(0,1)]$.
Next, we set
$$
c_{3i+2} :=  [(c_{3i+1})_+, (c_{3i+1})_+ + (-1,0)] , \qquad c_{3i+3} :=  [(c_{3i+2})_+, (c_{3i+2})_+ + (p_{i+1},-1)]
$$
and, completing the induction step, define   $(c_{3(i+1)+1})_- :=  (c_{3i+3})_+$,  see Fig.~10.4, where the case $\ell = 3$ with $p_1 =2, p_2=3, p_3 = 5$  is depicted.
\vskip 2mm

\begin{center}
\begin{tikzpicture}[scale=1.2]
\draw  (0,0)  [fill = black] circle (.045);
\draw (0,0) -- (0,1);
\draw (0,0) -- (0,1)  -- (-1,1) -- (1,0) -- (1,1) -- (0,1) -- (3,0) -- (3,1) -- (2,1) -- (7,0) -- (0,0);
\draw  (0,1) [fill = black] circle (.045);
\draw  (-1,1) [fill = black] circle (.045);
\draw  (1,1) [fill = black] circle (.045);
\draw  (1,0) [fill = black] circle (.045);
\draw  (3,1) [fill = black] circle (.045);
\draw  (3,0) [fill = black] circle (.045);
\draw  (2,1) [fill = black] circle (.045);
\draw  (7,0) [fill = black] circle (.045);
\draw [-latex](0,.3) -- (0, .37);
\node at (-0.35,0.3) {$c_1$};
\draw [-latex](-.5,1) -- (-.6, 1);
\node at (-.5,1.27) {$c_2$};
\draw [-latex](.4,.3) -- (.48, .26);
\node at (.5,.5) {$c_3$};
\draw [-latex](1,.3) -- (1, .37);
\node at (1.35,0.3) {$c_4$};
\draw [-latex](.5,1) -- (.4, 1);
\node at (.5,1.27) {$c_5$};
\draw [-latex](1.8,.4) -- (2.1, .3);
\node at (2.1,.55) {$c_6$};
\draw [-latex](3,.3) -- (3, .37);
\node at (3.35,0.3) {$c_7$};
\draw [-latex](2.5,1) -- (2.4, 1);
\node at (2.5,1.27) {$c_8$};
\draw [-latex](4.5,.5) -- (5, .4);
\node at (4.9,.7) {$c_9$};
\draw [-latex](5.,.0) -- (4.2, 0);
\node at (4.3,-.35) {$c_{10}$};
\node at (2.7,-.9) {Fig.~10.4};
\node at (-1.,-.4) {$(c_{1})_- =(c_{10})_+ = (0,0)$};
\end{tikzpicture}
\end{center}

\noindent
Finally, we define $(c_{3\ell+1})_- := [(c_{3\ell})_+, (c_{1})_-]$ and $c=c(n) :=
c_1 \dots c_{3\ell} c_{3\ell+1}$, see Fig.~10.4 where $c_{3\ell+1} = c_{10}$.

It is not difficult to check that
$$
|c|_\TT = \ell + 2\sum_{i=1}^\ell p_i < (n+1)^2
$$
and that the area $A(c)$ defined by $c$ is the following sum of areas of $2\ell$ triangles
\begin{align*}
A(c) & =   \sum_{i=1}^\ell \tfrac 12 \left( \tfrac {1}{p_i} +  (p_i-1)(1- \tfrac {1}{p_i}) \right) =
  \tfrac 12 \sum_{i=1}^\ell \left( \tfrac {1}{p_i} +  (p_i-1) - 1 + \tfrac {1}{p_i} \right)=     \\
& = \sum_{i=1}^\ell \tfrac {1}{p_i} + \tfrac 12 \sum_{i=1}^\ell (p_i-2) =
\tfrac  { \sum_{i=1}^\ell \tfrac { p_1 \dots p_\ell  }{p_i}  }{ p_1 \dots p_\ell }   + \tfrac 12 \sum_{i=1}^\ell (p_i-2) .
\end{align*}
Since $p_1 \dots p_\ell > 2^{k_1 n}$,  it follows that, after possible cancelation of 2, the denominator of $A(c)$ is greater than $2^{k_1 n - 1}$.
\medskip

It would be of interest to study similar problems for  tessellation of the hyperbolic plane into regular congruent  $2g$-gons, where $g \ge 2$,  which would be technically close to the precise word problem and to the minimal diagram problem for the standard group presentation
\begin{equation*}
    \langle \,  a_1, a_2 \dots, a_{2g-1},  a_{2g} \ \|  \   a_1 a_2 a_1^{-1} a_2^{-1} \dots a_{2g-1} a_{2g} a_{2g-1}^{-1} a_{2g}^{-1}    =1  \,  \rangle
\end{equation*}
of the fundamental group of an orientable closed surface of genus $g \ge 2$. Hopefully, there could be developed a version of calculus of brackets for such presentations that would  be suitable for these problems.
\medskip

It is likely that suitable versions of calculus of brackets could be developed for problems on efficient planar folding of RNA strands  which would provide polylogarithmic space algorithms for such problems. Recall that
available  polynomial time algorithms for such problems, see \cite{CB1}, \cite{CB2}, \cite{CB3}, \cite{CB4}, use polynomial space.
This approach would give a chance to do, also in polylogarithmic space, maximization of foldings  relative to various parameters similar to those discussed in Theorem~\ref{thm3}.
\medskip

{\em Acknowledgements.} The author is grateful to Tim Riley  for bringing to the author's attention folklore arguments that solve the precise word problem for  presentation
$\langle \, a, b \,  \|  \,  a=1,  b=1 \,  \rangle$
in polynomial time, the  article \cite{SL1},  and the similarity between the \PWPP\ for presentation
$\langle \, a, b \,  \|  \,  a=1,  b=1 \,  \rangle$ and the problem on efficient planar folding of
RNA strands.  The author thanks the referee for a number of useful remarks and suggestions.

\end{document}